\def\balign#1\ealign{\begin{align}#1\end{align}}
\def\baligns#1\ealigns{\begin{align*}#1\end{align*}}
\def\balignat#1\ealign{\begin{alignat}#1\end{alignat}}
\def\balignats#1\ealigns{\begin{alignat*}#1\end{alignat*}}
\def\bitemize#1\eitemize{\begin{itemize}#1\end{itemize}}
\def\benumerate#1\eenumerate{\begin{enumerate}#1\end{enumerate}}
\newenvironment{talign*}
 {\csname align*\endcsname}
 {\endalign}
\newenvironment{talign}
 {\csname align\endcsname}
 {\endalign}
\def\balignst#1\ealignst{\begin{talign*}#1\end{talign*}}
\def\balignt#1\ealignt{\begin{talign}#1\end{talign}}
\let\originalleft\left
\let\originalright\right
\renewcommand{\left}{\mathopen{}\mathclose\bgroup\originalleft}
\renewcommand{\right}{\aftergroup\egroup\originalright}
\def\tinycitep*#1{{\tiny\citep*{#1}}}
\def\tinycitealt*#1{{\tiny\citealt*{#1}}}
\def\tinycite*#1{{\tiny\cite*{#1}}}
\def\smallcitep*#1{{\scriptsize\citep*{#1}}}
\def\smallcitealt*#1{{\scriptsize\citealt*{#1}}}
\def\smallcite*#1{{\scriptsize\cite*{#1}}}
\def\<{\left\langle} % Angle brackets
\def\>{\right\rangle}
\DeclareMathOperator{\Tr}{tr} % Trace
\DeclareSymbolFont{rsfs}{U}{rsfs}{m}{n}
\DeclareSymbolFontAlphabet{\mathscrsfs}{rsfs}
\providecommand{\tr}{\mathop\mathrm{tr}}
\newtheorem{theorem}{Theorem}
\newtheorem{lemma}[theorem]{Lemma}
\newtheorem{corollary}[theorem]{Corollary}
\newtheorem{definition}[theorem]{Definition}
\renewenvironment{proof}{\noindent\textbf{Proof.}\hspace*{.3em}}{\qed \vspace{.1in}}
\newenvironment{proof-sketch}{\noindent\textbf{Proof Sketch}
  \hspace*{1em}}{\qed\bigskip\\}
\newenvironment{proof-idea}{\noindent\textbf{Proof Idea}
  \hspace*{1em}}{\qed\bigskip\\}
\newenvironment{proof-of-lemma}[1][{}]{\noindent\textbf{Proof of Lemma {#1}}
  \hspace*{1em}}{\qed\\}
\newenvironment{proof-of-theorem}[1][{}]{\noindent\textbf{Proof of Theorem {#1}}
  \hspace*{1em}}{\qed\\}
\newenvironment{proof-attempt}{\noindent\textbf{Proof Attempt}
  \hspace*{1em}}{\qed\bigskip\\}
\newenvironment{remark}{\noindent\textbf{Remark.}
  \hspace*{0em}}{\smallskip}%\bigskip}
\newtheorem{proposition}[theorem]{Proposition}
\theoremstyle{definition}
\numberwithin{equation}{section}
\newcommand{\BE}{\mathbb E}
\newcommand{\BR}{\mathbb R}
\newcommand{\eps}{\varepsilon}
\renewcommand{\paragraph}{%
  \@startsection{paragraph}{4}%
  {\z@}{1.25ex \@plus 1ex \@minus .2ex}{-1em}%
  {\normalfont\normalsize\bfseries}%
}
\newcommand{\itemlabel}[1]{\item[{\crtcrossreflabel{\normalfont{(#1)}}[it:#1]}]}
\newcommand{\calA}{\mc A}
\begin{document}

\title{Query lower bounds for log-concave sampling}

 \author{\!\!\!\!\!
  Sinho Chewi\thanks{
  School of Mathematics at
  Institute for Advanced Study, \texttt{schewi@ias.edu}.
  Part of this work was done while SC was a research intern at Microsoft Research.
 }
 \ \ \
 Jaume de Dios Pont\thanks{
  Department of Mathematics at
  University of California, Los Angeles, \texttt{jdedios@math.ucla.edu}.
 }
 \ \ \
 Jerry Li\thanks{
 Microsoft Research, \texttt{jerrl@microsoft.com}.
 }
 \ \ \
 Chen Lu\thanks{
 Department of Mathematics at Massachusetts Institute of Technology, \texttt{chenl819@mit.edu}.
 }
 \ \ \
 Shyam Narayanan\thanks{
 Department of Electrical Engineering and Computer Science at Massachusetts Institute of Technology, \texttt{shyamsn@mit.edu}.
  Part of this work was done while SN was a research intern at Microsoft Research.
 }
}

\pagenumbering{gobble}
\maketitle

\begin{abstract}
  Log-concave sampling has witnessed remarkable algorithmic advances in recent years, but the corresponding problem of proving \emph{lower bounds} for this task has remained elusive, with lower bounds previously known only in dimension one.
  In this work, we establish the following query lower bounds: (1) sampling from strongly log-concave and log-smooth distributions in dimension $d\ge 2$ requires $\Omega(\log \kappa)$ queries, which is sharp in any constant dimension, and (2) sampling from Gaussians in dimension $d$ (hence also from general log-concave and log-smooth distributions in dimension $d$) requires $\widetilde \Omega(\min(\sqrt\kappa \log d, d))$ queries, which is nearly sharp for the class of Gaussians.
  Here $\kappa$ denotes the condition number of the target distribution.
  Our proofs rely upon (1) a multiscale construction inspired by work on the Kakeya conjecture in geometric measure theory, and (2) a novel reduction that demonstrates that block Krylov algorithms are optimal for this problem, as well as connections to lower bound techniques based on Wishart matrices developed in the matrix-vector query literature.
\end{abstract}

\newpage

\tableofcontents

\clearpage
\pagenumbering{arabic}

\section{Introduction}

We study the problem of sampling from a target distribution on $\R^d$ given query access to its unnormalized density.
This is a fundamental algorithmic primitive arising in diverse fields, such as Bayesian inference, numerical simulation, and randomized algorithms~\cite{robcas2004mc}.
Recently, there has been considerable progress in developing faster algorithms for this problem, particularly in the case where the target distribution is log-concave.
In large part, these results have been achieved by exploiting the rich interplay between optimization and sampling~\cite{jko1998, wibisono2018sampling}, leading to novel sampling schemes inpsired by classical optimization methods~\cite{bernton2018langevin, chewi2020exponential, zhangetal2020mirror, lee2021structured, ma2021there}, as well as new quantitative convergence guarantees for sampling \cite{dalalyan2017theoretical, durmusmajewski2019lmcconvex}. 

In light of such results, many prior works (e.g.,~\cite{chengetal2018underdamped, lee2021lower, chatterjibartlettlong2022oraclesampling}) have raised the foundational question of whether the algorithmic upper bounds are tight.
However, there is still a dearth of lower bounds for log-concave sampling.
This lies in stark contrast to the analogous setting of convex optimization, 
 in which the query complexity has been tightly characterized for a plethora of function classes~\cite{nemirovskij1983problem, nesterov2018cvxopt}.
Such lower bounds yield important insights into the limitations of our existing algorithms and provide guidance towards identifying optimal ones.

Given the deep connections between the two fields, it is natural to ask why optimization lower bounds cannot be converted into sampling lower bounds.
One way to do so is to directly reduce from optimization, as was done in~\cite{gopleeliu2022expomech}.
However, as we are interested in the intrinsic complexity of sampling, we make the standard assumption that the mode of the target distribution is zero to remove the optimization component of the sampling task, which rules out this approach.
Another avenue is to borrow the techniques used for optimization lower bounds, but there are several obstructions to doing so.
First, most optimization lower bounds hold against (classes of) deterministic algorithms and proceed by constructing specific adversarial functions~\cite{bubeck2015convex, nesterov2018cvxopt}.
In contrast, lower bounds for randomized algorithms are relatively recent and still not fully understood~\cite{woosre2017randomizedfirstorder}, which poses a major challenge for sampling algorithms, since they are inherently randomized.
Second, whereas optimization constructions can employ local perturbations to hide the minima, sampling constructions need to hide the bulk of the mass of the target distribution, making them surprisingly delicate.
 
We now describe the problem in more detail.
We consider the canonical setting in which target distribution $\pi$ on $\R^d$ is $\alpha$-strongly log-concave and $\beta$-log-smooth, with its mode located at the origin.
Namely, we assume $\pi\propto \exp(-V)$, where the potential $V:\R^d\to \R$ is twice continuously differentiable, $\alpha$-strongly convex, $\beta$-smooth, and $\nabla V(0) = 0$. 
We let $\kappa \deq \beta / \alpha$ denote the \emph{condition number} of $\pi$.
We study algorithms in which the sampler is given query access to $V$ and $\nabla V$, and the goal is to produce a sample whose law is close to $\pi$ in total variation distance. 
The complexity of the algorithm is measured by the number of queries made. Note that this oracle model captures the majority of sampling algorithms used in practice, including the unadjusted Langevin algorithm, Hamiltonian Monte Carlo, Metropolized random walks, and hit-and-run.

Despite the intense research activity centered on log-concave sampling, only a handful of works address the lower bound question, and the majority of them are either algorithm-specific or pertain to auxiliary problems such as estimation of the normalizing constant; see Section~\ref{scn:related} for related work.
To the best of our knowledge, currently the only general log-concave sampling lower bound is that of~\cite{chewietal2022logconcave1d}, which establishes a sharp query lower bound of order $\Omega(\log\log \kappa)$ in dimension one.
However, that work leaves open the question of obtaining stronger lower bounds in higher dimension, which is the more relevant case for applications.
Even beyond the log-concave setting, we are aware of only one other work that obtains query lower bounds for sampling: the recent result of~\cite{chewietal23fisherlower} is incomparable to the present work, as it considers a different setting, and we discuss it further in Section~\ref{scn:related}.
Overall, the lack of sampling lower bounds points to a lack of tools for addressing this problem and motivates the present work.

\subsection{Our contributions}

In this paper, we make significant progress on this problem by proving new lower bounds for sampling which reach beyond the one-dimensional setting considered in~\cite{chewietal2022logconcave1d}.
In fact, for some settings of interest, our lower bounds match existing upper bounds up to constants, and we therefore obtain some of the first \emph{tight} complexity results for sampling from log-concave distributions in dimension $d > 1$.
We obtain lower bounds in two regimes:

\paragraph{Lower bounds in low dimension.}  Our first lower bound gives a tight characterization of the complexity of log-sampling in any constant dimension $d \ge 2$.
We show:

\begin{theorem}[informal, see Theorem~\ref{thm:sampling-is-logk-hard}]\label{thm:logk_informal}
    For any dimension $d \geq 2$, any sampler for $d$-dimensional log-concave distributions with condition number $\kappa$ requires $\Omega (\log \kappa)$ queries.
\end{theorem}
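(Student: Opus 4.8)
The plan is to construct a large family of hard instances indexed by binary strings and then conclude by an information-theoretic averaging argument. Fix $m=\Theta(\log\kappa)$ and, for each $\sigma\in\{0,1\}^m$, build a potential $V_\sigma$ on $\R^2$ that is $\alpha$-strongly convex, $\beta$-smooth, has $\nabla V_\sigma(0)=0$, and $\beta/\alpha=\kappa$; on $\R^d$ one simply appends a fixed quadratic in the remaining $d-2$ coordinates, which carries no information and does not affect $\kappa$. The target $\pi_\sigma\propto e^{-V_\sigma}$ should be designed so that (i) a region $S_\sigma$ carrying mass at least a universal constant $p>0$ is determined by \emph{all} of $\sigma$, and the $S_\sigma$ are pairwise disjoint, so $\TV(\pi_\sigma,\pi_{\sigma'})\ge p$ for $\sigma\ne\sigma'$; and (ii) the construction is organized multiscale over the dyadic scales $1,2,\dots,\asymp\sqrt\kappa$ (the largest radius at which a target of condition number $\kappa$ with mode at the origin can still place $\Omega(1)$ mass, since there $V_\sigma\asymp\beta$), so that the portion of $V_\sigma$ visible inside the ball of radius $2^k$ depends only on the prefix $\sigma_{1:k}$, and at scale $2^k$ the construction ``branches'' according to $\sigma_{k+1}$ into angularly separated continuations of width $\asymp 2^{-k}$. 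This branching is precisely what requires $d\ge 2$: in dimension one there is no angular freedom, which is exactly why only an $\Omega(\log\log\kappa)$ bound is available there.

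Granting such a family, I would prove a \emph{locality lemma}: for any query point $x$, the response $(V_\sigma(x),\nabla V_\sigma(x))$ equals the generic, spherically symmetric response unless $x$ lies in the scale-$k$ branch selected by $\sigma_{1:k}$, in which case it depends on $\sigma$ only through $\sigma_{1:k+1}$. Because scale-$k$ branches refine scale-$(k-1)$ branches and have angular width $\asymp 2^{-k}$, conditioned on the transcript produced so far — which reveals some prefix $\sigma_{1:L}$ — a fresh query lands in the scale-$k$ branch for $k>L$ with probability only $O(2^{-(k-L)})$. Hence, running any randomized sampler, the length $L_t$ of the revealed prefix after $t$ queries satisfies $\mathbb{E}[L_t]=O(t)$ with exponential concentration; if $t\le cm$ for a small constant $c$, then $L_t<m/2$ with probability $1-o(1)$. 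On that event, the output $X$ — a deterministic function of the sampler's internal coins and of the revealed prefix $\sigma_{1:L_t}$ — lies in at most one of the $\ge 2^{m-L_t}$ pairwise-disjoint sets $S_\sigma$ compatible with that prefix, so $\mathbb{P}[X\in S_\sigma\mid \text{coins},\ \sigma_{1:L_t}]\le 2^{-(m-L_t)}\le 2^{-m/2}$. Averaging over $\sigma$ uniform on $\{0,1\}^m$ gives $\mathbb{P}[X\in S_\sigma]=o(1)$, whereas $\eps$-accuracy of the sampler forces $\mathbb{P}[X\in S_\sigma]\ge p-\eps$ for every $\sigma$; taking $\eps<p$ yields a contradiction, so $t=\Omega(m)=\Omega(\log\kappa)$.

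The main obstacle is the construction itself. Log-concavity forces the mass to concentrate near the mode, so the only room to ``hide'' it is the narrow window of radii $\|x\|\asymp\sqrt\kappa$ where $V_\sigma=O(\beta)$; and convexity of $V_\sigma$ severely constrains the geometry of its sublevel sets, so one cannot naively carve a bent, low-curvature ``needle'' without destroying convexity or inflating the transverse curvature beyond $\beta$. Encoding $\Theta(\log\kappa)$ genuinely independent, angularly-branching choices into a convex, $\beta$-smooth potential while (a) keeping $\Omega(1)$ of the mass moving with \emph{all} of them and (b) ensuring that shallow queries see essentially nothing of the deep scales is the delicate part, and is where a multiscale packing in the spirit of the Besicovitch/Perron-tree constructions for the Kakeya problem is needed. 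Once the construction and its locality property are in place, the separation estimate and the averaging argument above are routine.
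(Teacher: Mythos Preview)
Your proposal is correct and takes essentially the same approach as the paper: a family of $\kappa^{\Theta(1)}$ potentials in $\R^2$ built via a Perron-tree-style multiscale angular branching, satisfying (i) pairwise well-separation of the target measures and (ii) each oracle query revealing only $O(1)$ bits of the index on average, followed by an information-theoretic conclusion. The paper phrases locality in terms of angular distance from the shared prefix's branch (rather than the radius of the query) and concludes via Fano's inequality on mutual information instead of your direct prefix-length argument, but these are equivalent formulations; the one subtlety you may be underestimating is that mollifying the piecewise-linear potentials to achieve $\beta$-smoothness leaks information near the origin, and it is precisely this that forces the adaptive thickening of the sectors (the Perron-tree structure) rather than a purely radial hierarchy.
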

Note that this result is exponentially stronger than the $\Omega(\log\log \kappa)$ lower bound in the univariate case~\cite{chewietal2022logconcave1d}. Moreover, when the dimension $d$ is held fixed, we obtain a matching $O(\log \kappa)$ algorithmic upper bound, based on folklore ideas from the classical literature on sampling from convex bodies (Theorem~\ref{thm:low_d_upper}).
Together with the result of~\cite{chewietal2022logconcave1d} for $d = 1$, this settles the complexity of log-concave sampling in constant dimension.

On a technical level, the lower bound is based on a novel construction inspired by work on the Kakeya conjecture in geometric measure theory, which we believe may be of independent interest.
We give a detailed description of the construction in Section~\ref{sec:2d_lwr}.

\paragraph{Lower bounds in high dimension.}
Our second set of lower bounds applies to the high-dimensional setting and implies that when the dimension is sufficiently large, a polynomial dependence on the condition number $\kappa$ is unavoidable (in contrast to Theorem~\ref{thm:logk_informal}, which only gives a logarithmic dependence on $\kappa$ in low dimension).
In fact, our lower bounds hold for the special case of sampling from Gaussians, for which they are nearly tight.
We first prove the following theorem.

\begin{theorem}[informal, see Corollary~\ref{cor:wishart_lower_bd}]\label{thm:high_d_informal_1}
    Any sampler for centered $d$-dimensional Gaussians with condition number $\kappa$ requires $\Omega (\min (\sqrt{\kappa}, d))$ queries.
\end{theorem}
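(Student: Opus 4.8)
**Proof proposal for Theorem~\ref{thm:high_d_informal_1} (Gaussian sampling requires $\Omega(\min(\sqrt\kappa, d))$ queries).**

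The plan is to exhibit a family of centered Gaussians that is hard to distinguish under a bounded number of queries, and then argue that any sampler that succeeds in total variation must implicitly distinguish within this family. I would take the covariance structure to be of the form $\Sigma_v = \ident + (\beta-1)\, vv^\top$ (rank-one spikes) or, for the full $\sqrt\kappa$ bound, a planted block whose spectrum lives on the scale $\kappa$ along an unknown low-dimensional subspace, with the remaining directions isotropic. The potential is $V_v(x) = \tfrac12 x^\top \Sigma_v^{-1} x$, so a gradient query at $x$ returns $\Sigma_v^{-1} x$ and a value query returns a scalar quadratic form; the key observation is that these are \emph{linear} (resp.\ quadratic) in $x$, so an adaptive sequence of $T$ queries only reveals the action of $\Sigma_v^{-1}$ on the $T$-dimensional (random) Krylov-type subspace spanned by the query points and their images. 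This is the point where I would invoke the reduction, mentioned in the abstract, showing block Krylov algorithms are optimal: up to the loss of a logarithmic/constant factor, WLOG the sampler picks an initial (possibly random) block, applies powers of $\Sigma_v^{-1}$, and outputs a sample measurable with respect to that information.

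Next I would set up the information-theoretic core. Draw the spike direction $v$ (or the planted subspace $U$) uniformly from the sphere (resp.\ Grassmannian). After $T$ queries the algorithm has observed the restriction of $\Sigma_v$ to a $T$-dimensional subspace chosen with only $T$ degrees of freedom; if $T \ll d$, a uniformly random direction $v$ has negligible correlation with this subspace with high probability (concentration of measure on $S^{d-1}$: the squared projection of $v$ onto a fixed $T$-dimensional subspace concentrates around $T/d$). Hence the transcript of queries is, with high probability, statistically close to what it would be under the \emph{null} covariance $\ident$ — the algorithm "cannot find the spike." Formally I would bound the total variation between the query-response distribution under $\Sigma_v$ (averaged over $v$) and under $\ident$ by something like $O(T/\sqrt d)$ or $O(T^2/d)$, using that the leftover Gaussian conditioned on the observed block is the same in both cases up to the tiny projection of the spike. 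This is exactly the place where the Wishart-matrix lower-bound technology from the matrix-vector query literature enters: the observed block behaves like a Wishart-type object, and the indistinguishability reduces to a known bound on how many matrix-vector products are needed to detect a planted low-rank perturbation. Combining the $d$-type bound ($T = \Omega(d)$ to even locate the subspace) with the $\sqrt\kappa$-type bound (even knowing the subspace, $\Omega(\sqrt\kappa)$ Krylov steps are needed to resolve the spectrum on scale $\kappa$, by the classical polynomial-approximation/Chebyshev argument underlying Krylov lower bounds) gives the stated $\Omega(\min(\sqrt\kappa, d))$.

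Finally I would close the loop from indistinguishability to a sampling lower bound. If the transcript under $\Sigma_v$ is $\eps$-close in TV to the transcript under $\ident$ for most $v$, then the sampler's output law is $\eps$-close to a $v$-independent law $\mu_0$; but $\mathcal N(0,\Sigma_v)$ and $\mathcal N(0,\Sigma_{v'})$ for two nearly-orthogonal spike directions are $\Omega(1)$-far in TV (their covariances differ by an order-$\beta$ rank-one term, easily separated by the event $\{\langle v, x\rangle^2 \text{ large}\}$), so $\mu_0$ cannot be simultaneously $o(1)$-close to both. Averaging over $v$ forces $\eps = \Omega(1)$, contradicting $T = o(\min(\sqrt\kappa,d))$. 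The main obstacle I anticipate is the adaptivity of the queries: the $T$-dimensional observed subspace depends on the responses, hence on $v$, so the "random subspace vs.\ random vector" concentration has to be run along the adaptive filtration (a martingale/union-bound over the realized Krylov space), and making the block-Krylov reduction lossless enough that it does not destroy the $\sqrt\kappa$ factor is the delicate quantitative step; handling the value oracle (quadratic, not just linear, information) alongside the gradient oracle requires checking it adds only a constant-factor increase to the effective subspace dimension.
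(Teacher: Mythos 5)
There is a genuine gap in your proposal, and it is at the very first step. The central construction --- hiding a direction $v$ in a spiked covariance $\Sigma_v = I + (\kappa-1)\,vv^\top$ (or a planted low-dimensional block) and arguing that $T \ll d$ adaptive queries cannot locate $v$ --- does not survive the matrix-vector query model. Concretely, $\Lambda_v := \Sigma_v^{-1} = I - (1-1/\kappa)\,vv^\top$, so a \emph{single} gradient query at a generic $x$ returns $\Lambda_v x = x - (1-1/\kappa)\,\langle v,x\rangle\,v$, from which the algorithm recovers $(1-1/\kappa)\,\langle v,x\rangle\,v$ and hence the direction $v$ exactly. The ``observed Krylov subspace is nearly orthogonal to a random $v$'' intuition is a sample-complexity/statistical-query phenomenon; it does not apply here because the oracle output itself is strongly correlated with $v$. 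Similarly, a planted $r$-dimensional anisotropic block is resolved in $O(r)$ queries (each new query peels off a new direction in the block), so this cannot yield an $\Omega(d)$ bound unless $r = \Omega(d)$, at which point it is not low rank and the ``hidden low-dimensional perturbation'' framing collapses.

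The paper avoids this by not using any planted low-rank structure at all: the hard precision matrix is a \emph{full-rank} Wishart matrix $W = XX^\top$, $X_{ij} \sim \mathcal{N}(0,1/d)$, whose eigenvalues are smeared from $\Theta(1/d^2)$ to $\Theta(1)$ and whose smallest eigenvalue has the heavy-tailed law $\Pr\{\lambda_{\min}(W) \le x/d^2\} \asymp \sqrt{x}$ (Proposition~\ref{prop:wishart_smallest_eig}). Sampling is first reduced to estimating $\tr(W^{-1})$ via concentration of $\|X\|^2$ for $X \sim \mathcal{N}(0,W^{-1})$ (Theorem~\ref{thm:reduce_inv_trace_sampling}); then the $\Omega(d)$ query lower bound for trace estimation (Theorem~\ref{thm:inv_trace_lower_bd}) uses the Wishart conditioning lemma of Braverman et al.\ (Proposition~\ref{prop:wishart_posterior}), which says that after any $n < d$ adaptive queries the residual block of $W$ is still $\mathrm{Wishart}(d-n)$; the heavy tail of the residual $\lambda_{\min}$ then prevents the algorithm from certifying a small trace. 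The $\Omega(\sqrt\kappa \wedge d)$ form is just a padding argument from $\kappa \approx d^2$. You also conflate this route with the Chebyshev/block-Krylov approach: those ingredients are used in the paper, but for the \emph{separate} and strictly stronger Theorem~\ref{thm:high_d_informal_2} ($\widetilde\Omega(\sqrt\kappa\log d)$), which in addition requires the delicate reduction of Section~\ref{scn:reduction-to-block-krylov}; none of that machinery is invoked for the present theorem. Your high-level scaffolding (a sampler implies a distinguisher; few queries only reveal the action on a low-dimensional subspace; handle adaptivity via conditioning) is correct, but the hard instance must be such that the \emph{unrevealed} part of the spectrum remains genuinely uncertain, which the spike model fails to achieve.
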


We emphasize the fact that in our setting, the Gaussians are centered. Note that if the Gaussians were allowed to have varying means, then one can deduce a sampling lower bound by reducing the optimization task of minimizing a convex quadratic function $x\mapsto \langle (x-x_\star), \Sigma^{-1} \, (x-x_\star)\rangle$ to the task of sampling from the corresponding Gaussian $\NN(x_\star,\Sigma)$.
However, as previously alluded to, this does not address the inherent difficulty of the sampling problem.

The proof of Theorem~\ref{thm:high_d_informal_1} rests upon an elegant technique developed in the literature on the matrix-vector query model (see Section~\ref{scn:related}) in which the  conditioning properties and sharp characterizations of the eigenvalue distribution of Wishart matrices are used to produce difficult lower bound instances for various tasks.
We adapt this method to our context by reducing the task of inverse trace estimation to sampling (see Theorem~\ref{thm:reduce_inv_trace_sampling}).

As we show in Appendix~\ref{sec:gaussian_upper}, the lower bound is nearly tight over the class of Gaussians, as it is possible to sample from a Gaussian using $O(\min(\sqrt{\kappa} \log d, d))$ queries using the block Krylov method.
However, note that the lower bound from Theorem~\ref{thm:high_d_informal_1} does not match the block Krylov upper bound, and the lower bound of Theorem~\ref{thm:high_d_informal_1} is vacuous when $\kappa$ is constant.
In particular, it leaves open the possibility that the complexity of sampling from well-conditioned Gaussians is dimension-free.
While such dimension-free rates are possible in convex optimization, our next  result shows that the same is in fact not possible for log-concave sampling:

\begin{restatable}{theorem}{highd} \emph{(informal, see Theorem~\ref{thm:lower_bound_main})}
\label{thm:high_d_informal_2}
    Let $d$ be sufficiently large, and let $\kappa \leq d^{1/5-\delta}$. Then, any sampler for $d$-dimensional Gaussians with condition number $\kappa$ requires $\Omega_\delta (\sqrt{\kappa} \log d)$ queries.
\end{restatable}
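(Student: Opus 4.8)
The plan is to exploit the fact that, for a centered Gaussian target $\mathcal{N}(0,A^{-1})$ with $A \coloneqq \Sigma^{-1}$, the value/gradient oracle is exactly a matrix-vector oracle for $A$: since $V(x)=\tfrac12\langle x,Ax\rangle$ and $\nabla V(x)=Ax$, a sampler making $T$ queries at (adaptively, randomly) chosen points $v_1,\dots,v_T$ learns only $Av_1,\dots,Av_T$ and must then output a vector whose law is $\epsilon$-close in $\TV$ to $\mathcal{N}(0,A^{-1})$. I would take the hard family to be the random inverse covariance $A=\tfrac1n WW^\top$ with $W\in\mathbb{R}^{d\times n}$ Gaussian and aspect ratio $n=d\,(1+\Theta(1/\sqrt\kappa))$, so that by the Marchenko--Pastur law $A$ has condition number $\Theta(\kappa)$ and a constant fraction of its eigenvalues lie near the lower spectral edge, i.e.\ in the region where $x\mapsto x^{-1/2}$ is hard to approximate by low-degree polynomials. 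It then suffices, by Yao's principle, to show that any deterministic algorithm needs $\Omega_\delta(\sqrt\kappa\log d)$ queries to be $\TV$-close to $\pi_A=\mathcal{N}(0,A^{-1})$ with probability bounded away from zero over $W$.

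The crux — and the step I expect to be the main obstacle — is a reduction showing that against this family a block Krylov algorithm is optimal, which I would carry out in two parts. First, block Krylov optimality: conditioning on the algorithm's internal randomness, any adaptively chosen query vector either lies in the span of the previous responses, in which case it merely enlarges a Krylov subspace, or carries a fresh isotropic direction, which by the Gaussian invariance of $W$ is distributionally equivalent to widening the starting block; hence the entire transcript after $T$ queries is subsumed by that of a block Krylov algorithm with Gaussian starting block $G$ and total Krylov dimension $\le T$, and one must additionally track that the output is a full vector in $\mathbb{R}^d$ and decompose it along the explored subspace $\mathcal S$ and its complement. Second — the delicate part — one must show that when $\dim\mathcal S\le T=o(\sqrt\kappa\log d)$, the subspace $\mathcal S\subseteq\{q(A)G:\deg q\le T\}$ cannot resolve the near-edge eigenvalues of $A$: no polynomial of degree at most $T$ can be large at one near-edge eigenvalue while small on its dense cluster of neighbors to relative accuracy $1/\mathrm{poly}(d)$, a polynomial-approximation lower bound whose rate is governed precisely by $\sqrt\kappa$. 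Consequently, conditioned on everything the algorithm sees, the restriction of $A$ to an $\Omega(d)$-dimensional hard block is still essentially a fresh Wishart-type matrix. Making this conditional-distribution statement quantitative — the conditional Gaussianity of $W$ on the unexplored coordinates, the concentration of the deflated spectrum near the Marchenko--Pastur edge, and the counting and spacing of eigenvalues in the unresolvable zone — is exactly where the Wishart machinery from the matrix-vector query literature is needed, and it is the source of the hypothesis $\kappa\le d^{1/5-\delta}$: these estimates only hold when $\kappa$ is polynomially bounded in $d$, with the exponent $1/5$ emerging from requiring the various concentration and counting bounds to be simultaneously in force.

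With this in place the conclusion is routine. Restricted to the hard block, $\pi_A$ is a Gaussian whose covariance is a randomly rotated anisotropic matrix with eigenvalues spread over a factor $\Theta(\kappa)$ on $\Omega(d)$ directions, whereas the algorithm's output restricted to that block is, by the conditional-distribution statement, a single fixed distribution independent of the rotation. No fixed distribution can be $\TV$-close to a positive fraction of such a rotation family: by the triangle inequality this would force $\mathcal{N}(0,\Sigma)$ and $\mathcal{N}(0,O\Sigma O^\top)$ to be $\TV$-close for a positive fraction of Haar $O$, but in the Gaussian Kullback--Leibler formula the log-determinant terms cancel and Cauchy--Schwarz then leaves a residual of order the dimension, so these Gaussians are at $\TV$-distance $1-o(1)$. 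Hence, with probability bounded away from zero over $W$, the sampler's output is $\TV$-far from $\pi_A$ — projecting onto the hard block only decreases $\TV$ — and therefore $T=\Omega_\delta(\sqrt\kappa\log d)$, matching the block Krylov upper bound of Appendix~\ref{sec:gaussian_upper} up to constants.
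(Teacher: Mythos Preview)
Your block-Krylov reduction is correct in spirit and matches the paper's Lemma~\ref{lem:chen_block_krylov}. The gap is in the step after: your ``fresh Wishart on the hard block'' argument does not actually use the polynomial-approximation input, and if it worked it would prove too much. The posterior lemma (Proposition~\ref{prop:wishart_posterior}) gives a $(d-T)$-dimensional fresh-Wishart block after \emph{any} $T<d$ queries, with essentially the same aspect ratio and hence condition number $\Theta(\kappa)$. So the claim that ``the algorithm's output restricted to that block is a fixed distribution while $\pi_A$ is randomly rotated'' would, if valid, forbid sampling for every $T<d$ --- contradicting the $O(\sqrt\kappa\log d)$ upper bound. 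The flaw is that the hard block is a random, data-dependent subspace and the posterior carries known off-diagonal couplings $Y_2$; you cannot project both the output and the target to such a subspace and compare total variation there. Concretely, the upper-bound algorithm outputs $q(A)\xi$, which lies entirely in the Krylov subspace generated by $\xi$, so its projection onto any fixed realization of the complement is zero, yet its unconditional law $\mathcal{N}(0,q(A)^2)$ is close to $\pi_A$. Your eigenvalue-resolution statement is true but is never tied to the sampling task, and nothing in the argument produces the $\log d$ factor from a concrete accuracy requirement.

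The paper's route is different and does not use a Wishart hard instance (the authors remark that the Wishart methods of Section~\ref{sec:wishart} appear insufficient here). It is a two-point test: two diagonal matrices $D,D'$ share eigenvalues $\lambda_1,\dots,\lambda_{K+2}$ placed at the extrema of a shifted degree-$(K{+}1)$ Chebyshev polynomial, with integer multiplicities chosen via LP duality (Proposition~\ref{prop:chebyshev_dual_lp}) so that the power sums $\sum_i N_i\lambda_i^j$ and $\sum_i N_i'\lambda_i^j$ agree for all $j\le K$ while $|\tr D^{-1}-\tr(D')^{-1}|$ is of order $d^{1-2c_0}/\kappa$. The dual LP is precisely the problem of approximating $1/x$ at the $\lambda_i$ by a degree-$K$ polynomial, whose Chebyshev-optimal error is $d^{-2c_0}$ when $K=c_0\sqrt\kappa\log d$ --- this is where the $\log d$ enters. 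The Wishart ensemble appears only instrumentally, to couple the Gram matrices $\{\langle v^{(k)}, D^j v^{(\ell)}\rangle\}_{k,\ell}$ seen by block Krylov for the two instances via a Wishart--GOE approximation (Lemma~\ref{lem:wishart_goe_similar}); balancing that coupling error against the inverse-trace gap is what yields the $d^{1/5}$ threshold. A single sample then separates $D$ from $D'$ because $\|X\|^2$ concentrates around the inverse trace.
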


In the regime for which Theorem~\ref{thm:high_d_informal_2} is valid, the lower bound matches the block Krylov upper bound up to constant factors, and hence we settle the complexity of sampling from Gaussians in this regime.
Moreover, Theorems~\ref{thm:high_d_informal_1} and~\ref{thm:high_d_informal_2} together imply the \emph{first} dimension-dependent lower bounds for general log-concave sampling. We conjecture that Theorem~\ref{thm:high_d_informal_2} holds for all $\kappa$ for which $\sqrt\kappa \log d \le d$, and we leave this question for future work.

Although Theorem~\ref{thm:high_d_informal_2} may appear to only be a mild improvement over Theorem~\ref{thm:high_d_informal_1}, analyzing this regime is quite delicate, and we believe that the tools based on Wishart matrices employed in the proof of Theorem~\ref{thm:high_d_informal_1} may be insufficient to reach Theorem~\ref{thm:high_d_informal_2}.
Instead, we prove Theorem~\ref{thm:high_d_informal_2} by first establishing sharp lower bounds on the performance of block Krylov algorithms for the sampling task, and then providing a novel reduction (Lemma \ref{lem:chen_block_krylov}) which shows that block Krylov algorithms are optimal for this task.
This reduction is quite general, and as the block Krylov algorithm and the matrix-vector query model are of wide interest in scientific computing and numerical linear algebra, we believe that our reduction may be broadly useful for tackling other problems in this space.

We remark that a concise way of summarizing Theorems~\ref{thm:high_d_informal_1} and~\ref{thm:high_d_informal_2} if we do not care about lower order terms is that 
 sampling from Gaussians requires $\widetilde{\Omega} (\min (\sqrt{\kappa} \log d, d))$ queries, where we write $f = \widetilde{\Omega} (g)$ to mean $f = \Omega(g \log^{-O(1)}(g))$.

\subsection{Related work}\label{scn:related}

There is a vast literature on from sampling log-concave (and non-log-concave) distributions, and a full survey is beyond the scope of this paper.
For a detailed exposition, see e.g.~\cite{chewi2022logconcave}.

\paragraph{Lower bounds for log-concave sampling.} As previously mentioned, the only unconditional lower bound against log-concave sampling is by~\cite{chewietal2022logconcave1d} for the one-dimensional setting, where the tight bound is $\Theta (\log \log \kappa)$.
Other prior work on sampling lower bounds has fallen largely into one of several categories.
One line of work studies lower bounds against a specific class of algorithm such as underdamped Langevin~\cite{caoluwang2021uldlowerbd} or MALA~\cite{chewi2021optimal,lee2021lower, wuschche2022minimaxmala}.
However, these lower bounds techniques are tailored to the restricted class of algorithms that they consider and are not suitable for proving general query lower bounds.
Another line of work considers lower bounds against computing normalizing constants~\cite{radvem2008vollower, ge2020estimating}. 
The work~\cite{tal2019computationalsampling} also investigates the computational complexity of sampling.

We mention two further lower bounds in different settings.
The work of~\cite{chatterjibartlettlong2022oraclesampling} proves a lower bound against stochastic gradient oracles, and the work of~\cite{gopleeliu2022expomech} proves a lower bound on the number of individual function value (i.e., zeroth-order) queries needed to sample from a density of the form $\exp(-\sum_{i\in I} f_i + \mu \,\norm\cdot^2)$, where each $f_i$ is convex, Lipschitz, and whose domain is the unit ball.
In contrast, we consider deterministic, first-order oracle access.
Moreover, their considerations are somewhat orthogonal to ours:~\cite{chatterjibartlettlong2022oraclesampling} focuses more on the role of noise, whereas we consider exact gradient access; and the lower bound of~\cite{gopleeliu2022expomech} applies a direct reduction from optimization, which is also not in the spirit of the present work (in particular, we explicitly set the mode of the target distribution to zero).

Finally, we also mention the recent work~\cite{chewietal23fisherlower}, which proves query lower bounds for non-log-concave sampling in a different metric (the Fisher information).
This work is inspired by the corresponding upper bounds of~\cite{balasubramanian2022towards} and can be viewed as lower bounds against \emph{local mixing}.

\paragraph{Upper bounds for log-concave sampling.}
Starting with the seminal papers of~\cite{dalalyantsybakov2012sparseregression, dalalyan2017theoretical,durmus2017nonasymptotic}, there has been a flurry of recent work on proving non-asymptotic guarantees for log-concave sampling, with iteration complexities that scale polynomially in the condition number and dimension.
This includes analyses for the classical Langevin dynamics~\cite{wibisono2018sampling,dalalyan2019user, 
 durmusmajewski2019lmcconvex, vempala2019ulaisoperimetry, balasubramanian2022towards, chewietal2022lmcpoincare, AltTal23Langevin}, mirror and proximal methods~\cite{wibisono2019proximal,chewi2020exponential, salric2020proximallangevin, zhangetal2020mirror, ahnchewi2021mirrorlangevin, jia2021mlmc, lee2021structured, chenetal2022proximalsampler, cheeld2022localization, khavem2022riemannianlangevin, lietal2022mirrorlangevin, fanyuanchen23improvedproximal}, the Metropolis-adjusted Langevin algorithm (MALA)~\cite{dwivedi2018log,chen2020fast,lee2020logsmooth, chewi2021optimal, wuschche2022minimaxmala, altche23warm}, and many others~\cite{chengetal2018underdamped, shen2019randomized, dalalyanrioudurand2020underdamped, ding2021random,ma2021there}.

Our upper bound for sampling from Gaussians (Theorem~\ref{thm:gaussian_upper}) is closely related to the use of the conjugate gradient algorithm for sampling from Gaussians~\cite{nissuc2022conjgradgibbs}.
Also, our $O(\log \kappa)$ upper bound algorithm is closely related to rounding procedures which have been previously used in the convex body sampling literature (see, e.g.,~\cite{lovvem2006rounding}).

\paragraph{Matrix-vector product query model.} While matrix-vector queries have been studied in scientific computing for decades (e.g.,~\cite{BaiFG96}), they have only been studied in the theoretical computer science literature recently, with a fully formalized model described in \cite{SunWYZ19}. The most relevant works to ours are those that study the matrix-vector query complexity of spectral properties, such as estimating top eigenvectors \cite{SimchowitzAR18, braverman2020regressionLB}, trace and matrix norms \cite{Hutch90, WimmerWZ14, RashtchianWZ20, DharangutteM21, MeyerMMW21}, the full eigenspectrum \cite{CohenKSV18, BravermanKM22}, and low-rank approximation \cite{MuscoM15, BakshiCW22}. We remark that the non-adaptive matrix-vector product model is closely related to sketching, which has enjoyed a large body of work (see, e.g.,~\cite{Woodruff14} for a survey).

\section{Technical overview}
Here we summarize the main technical ideas used to prove our lower bounds. For details, see Section~\ref{sec:2d_lwr} for Theorem~\ref{thm:logk_informal}, Section~\ref{sec:wishart} for Theorem~\ref{thm:high_d_informal_1}, and Section~\ref{sec:krylov} for Theorem~\ref{thm:high_d_informal_2}.

\subsection{Geometric construction in low dimension}

Theorem~\ref{thm:logk_informal} is proved with a construction in dimension two. For convenience, in this section we use radial coordinates to denote points in $\R^2$, so $\omega \deq (x,y) = (r, \theta)$, where $r\in \R_+$ and $\theta\in [0, 2\pi)$. We denote sectors of $\R^2$ enclosed by angles $\theta_1$ and $\theta_2$ as $S(\theta_1, \theta_2) \deq \{(r, \theta)\in \R^2 : \theta\in [\theta_1, \theta_2]\}$, and denote bounded sectors as $\SB(\theta_1, \theta_2, r) \deq \{(r', \theta)\in \R^2 : \theta\in [\theta_1, \theta_2],\; r' \le r\}$.

The argument is information-theoretic in nature. We will construct a family of strongly log-concave and log-smooth distributions $\{\pi_1, \dots, \pi_m\}$, where each $\pi_b \propto \exp(-V_b)$, which satisfies two key properties.
First, different distributions $\pi_b$ and $\pi_{b'}$ are well separated in total variation distance; and second, if $b$ is chosen uniformly at random from $[m]$, then querying the potential $(V_b(\omega), \nabla V_b(\omega))$ at any $\omega\in \R^2$ will reveal $O(1)$ bits of information about $b$. The lower bound in Theorem~\ref{thm:logk_informal} follows readily from the existence of such a family, provided that $m$ and $\kappa$ are polynomially related. On the one hand, because the distributions are well-separated in total variation, if we can sample well from the distribution $\pi_b$ using queries, we can identify the index $b$ with high probability. On the other hand, because there are $m$ distributions and every query reveals $O(1)$ bits of information about $b$, we need at least $\Omega(\log m) = \Omega(\log \kappa)$ queries to identify $b$, which results in a $\Omega(\log \kappa)$ query lower bound for log-concave sampling.

How do we construct such a family? A first attempt is to consider distributions supported on thin convex sets that have no overlap. For $b = \frac{1}{\kappa}, \frac{2}{\kappa}, \dots, 1$, let $\pi_b = \unif(\mc Z_b)$, where $\mc Z_b = \SB(\frac{\pi}{2}\,b, \frac{\pi}{2}\,(b + \frac{1}{2\kappa}), 1)$, and the size of the family is $m = \lfloor\kappa\rfloor$.
The potential $V_b$ is the convex indicator of $\mc Z_b$, i.e., it is $0$ on $\mc Z_b$ and $+\infty$ outside.
Morally, the distributions $\pi_b$ can be thought of as having condition number $\kappa$. 

This family does satisfy the two properties needed for the lower bound: different distributions are certainly well-separated because they have disjoint supports; and when we query any potential $V_b$ at a point $\omega\in \R^2$, we always receive one bit of information: whether or not $\omega$ lies in the support of $\pi_b$. However, the distributions in this family are neither strongly log-concave nor log-smooth. It is easy to make them strongly log-concave while still satisfying the desired properties: we can adjust the distributions by adding the same quadratic function $\frac{\norm{\cdot}^2}{2}$ to all of the potentials $V_b$. But it is much harder to make this family log-smooth.

One way to make this construction log-smooth is to let the potentials $V_b$ grow slowly (linearly) to infinity outside of the their zero sets $\mc Z_b$, which leads to a modified second attempt: for $m = \kappa^{\Omega(1)}$, $b = \frac{1}{m}, \dots, 1$, let $\pi_b$ have potential $V_b = \tilde V_b + \frac{\norm{\cdot}^2}{2\kappa^{O(1)}}$, where $\mc Z_b = S(\frac{\pi}{2}\,b, \frac{\pi}{2}\,(b+\frac{1}{2m}))$, and $\tilde V_b(\omega) =\kappa \dist(\omega, \mc Z_b)$.
Note that the potentials $V_b$ are in fact still not smooth at the boundaries of the sets $\mc Z_b$, but this can be fixed by mollifying $V_b$. The distributions in this family will be well-separated, because an $\Omega(1)$ fraction of the mass of $\pi_b$ will lie in $\mc Z_b$, and the sets $\mc Z_b$ are disjoint for different $b$. Unfortunately, this family no longer reveals $O(1)$ bits per query: for any $\omega\in \R^2$, we can identify $b$ with a single query to $(V_b(\omega), \nabla V_b(\omega))$, because either $\omega\in \mc Z_b$, or $\nabla V_b(\omega)$ reveals the direction of $\mc Z_b$, and in both cases the index $b$ itself is identified.

We can reduce the information revealed by queries by more carefully controlling the growth of $\tilde V_b$, so that the further away a point $\omega$ lies from $\mc Z_b$, the fewer the number of bits will be revealed by $(\tilde V_b(\omega), \nabla \tilde V_b(\omega))$. This motivates a third attempt at the construction. For $m = 2^{N} = \kappa^{\Omega(1)}$, $b = \frac{1}{m}, \dots, 1-\frac{1}{m}$, let $b = 0.b_1 \dots b_N$ be the binary expansion of $b$, and let $[b]_k = 0.b_1\dots b_k$ be the truncation of $b$ up to the $k$-th bit. For $k = 1, \dots, N$, let $\mc Z^{\text{radial}}_{k, b} = S(\frac{\pi}{2}\,[b]_k, \frac{\pi}{2}\,([b]_k + 2^{-k}))$, and let $\phi^{\text{radial}}_{k, b}(x) = \kappa^{O(1)}\, 2^{-k}\dist(x, \mc Z^{\text{radial}}_{k,b})$. Finally, let $V^{\text{radial}}_b = \frac{\norm{\cdot}^2}{2\kappa^{O(1)}} + \tilde V^{\text{radial}}_b$, where
\begin{align*}
    \tilde V^{\text{radial}}_b = \max_{k = 1, \dots, N} \phi^{\text{radial}}_{k, b}\, .
\end{align*}
The potentials $V^{\text{radial}}_b$ will again have to be mollified to be made smooth. It turns out that the potentials $\tilde V^{\text{radial}}_b$ will grow fast enough outside $\mc Z^{\text{radial}}_{N, b}$ such that the distributions will be well-separated. It also turns out that queries indeed reveal $O(1)$ bits of information on average. This can be seen as follows: note that the sets $\mc Z^{\text{radial}}_{k,b}$ are sectors such that $\mc Z^{\text{radial}}_{k, b} \supset \mc Z^{\text{radial}}_{k+1, b}$, and as $k$ increases, $\mc Z^{\text{radial}}_{k,b}$ becomes thinner around the ray $\{\theta = \frac{\pi}{2}\,b\}$; also note that as $k$ increases, the growth rate of $\phi^{\text{radial}}_{k, b}$ outside its zero set $\mc Z^{\text{radial}}_{k, b}$ is decreasing; these two properties imply that if we query a point $\omega = (r,\theta)$ that is far from the sector $\mc Z^{\text{radial}}_{i, b}$ (in the sense that $\theta \not\in [\frac{\pi}{2}\,[b]_i - 100\cdot 2^{-i}, \frac{\pi}{2}\,[b]_i + 100\cdot 2^{-i}]$), then the value of $\tilde V^{\text{radial}}_b(\omega)$ will not depend on any $\phi^{\text{radial}}_{k, b}$ for $k > i$, and hence querying $\tilde V^{\text{radial}}_b(\omega)$ will only reveal $b$ up to the $i$-th bit. As a result, if $b$ is chosen uniformly, then for a fixed query $\omega$ with high probability we will have $\omega\not\in \mc Z^{\text{radial}}_{k, b}$ for any $k = O(1)$, so the query will only reveal $O(1)$ bits of information about $b$.

Yet this construction fails because of the mollification step, which we have so far ignored. To make the potentials $V_b$ smooth, we will instead take $V_b = \chi_\delta * \tilde V^{\text{radial}}_b + \frac{\norm{\cdot}^2}{2\kappa^{O(1)}}$, where $\chi_\delta$ is supported on a ball of radius $\delta < 2^{-2N}$. We would hope that the potential $\chi_\delta * \tilde V^{\text{radial}}_b$ still satisfies the property that querying a point $\omega = (r,\theta)$ that is far from $\mc Z^{\text{radial}}_{i, b}$ only reveals $b$ up to the $i$-th bit. When $r$ is not too close to the origin (say $r > 100\cdot 2^{-i}$), this is indeed still true: if $\omega$ satisfies $\theta \not\in [\frac{\pi}{2}\,[b]_i - 200\cdot 2^{-i}, \frac{\pi}{2}\,[b]_i + 200\cdot 2^{-i}]$, then the entire $\delta$-neighbourhood of $\omega$ will satisfy $\theta \not\in [\frac{\pi}{2}\,[b]_i - 100\cdot 2^{-i}, \frac{\pi}{2}\,[b]_i + 100\cdot 2^{-i}]$, so the value of $\tilde V^{\text{radial}}_b$ on the $\delta$-neighbourhood of $\omega$ will not depend on any $\phi^{\text{radial}}_{k, b}$ for $k > i$, hence the value of $(\chi_\delta * \tilde V^{\text{radial}}_b)(\omega)$ will also not reveal any information of $b$ beyond the $i$-th bit.
But when $\omega$ is very close to the origin ($r < \delta$), the $\delta$-neighbourhood of $\omega$ will intersect $\mc Z^{\text{radial}}_{N, b}$, which means that the value of $(\chi_\delta * \tilde V^{\text{radial}}_b)(\omega)$ will depend on $\phi^{\text{radial}}_{k, b}$ for all $k$ and hence on all bits of $b$. In other words, mollification leaks information around the origin. As a result, if we query points $\delta$-close to the origin, we will again identify $b$ in a single query.

The way to resolve the leakage at the origin is to create a branching structure, such that all $V_b$ are equal near the origin so that no information is leaked at small scales, and such that far away from the origin $V_b$ is small around the ray $\{\theta = \frac{\pi}{2}\, b\}$ so that $\pi_b$ still concentrates around different sectors. We keep the choices of $m$ and $b$ from the previous construction. The potentials will be $V_b = \chi_\delta * \tilde V_b + \frac{\norm{\cdot}^2}{2\kappa^{O(1)}}$, where $\tilde V_b = \max_{k=1, \dots, N} \phi_{k, b}$, and  $\phi_{k, b}(\omega) = \kappa^{O(1)}\, 2^{-k} \dist(\omega, \mc Z_{k,b})$. The zero set $\mc Z_{k, b}$, instead of being a radial sector like $\mc Z^{\text{radial}}_{k, b}$, is now thickened adaptively.

We intuitively describe how to generate $\mc Z_{k, b}$. Each $\mc Z_{k, b}$ will be a thickening of $\mc Z_{k, b}^{\text{radial}}$, by simply including all points within some distance $d_k$ of $\mc Z_{k, b}^{\text{radial}}$. We define $\mc Z_{\le k, b} := \bigcap_{k' \le k} \mc Z_{k', b}$: note that each $\mc Z_{\le k, b}$ is getting smaller as $k$ increases, and $\mc Z_{\le N, b}$ is the zero set of $\tilde V_b$.

Consider some radii $r_0 < r_1 < r_2 < \dots$.
To generate $\mc Z_{1, b}$, we thicken $\mc Z_{1, b}^{\text{radial}}$ (corresponding to the radial sector matching on the first bit), so that it contains $\SB(0, \pi/2, r_0)$ (corresponding to the quarter-circle near the origin). This avoids leaking information near the origin, as every $x$ within radius $r$ will be in $\mc Z_{1, b}$, which means $\phi_{1, b}$ will also be $0$.
Indeed, we can thicken $\mc Z_{1, b}^{\text{radial}}$ just the right amount so that it contains $\SB(0, \pi/2, r_0).$
For the concrete example where $N = 4$, and $b = 0.1010$, we show a description of $\mc Z_{1, b}$ in Figure \ref{fig:1a}: we shade $\SB(0, \pi/2, r_0)$ in dark blue, $Z_{1, b}^{\text{radial}} = S(\pi/4, \pi/2)$ in medium blue, and the additional thickening required in light blue.

To generate $\mc Z_{k, b}$ for $k \ge 2$, we thicken a much thinner angular sector. This ensures that at large radii, the arc of $\mc Z_{k, b}$ is not too big. We will inductively thicken $\mc Z_{k, b}$ by some amount $d_k$ just enough to contain $\mc Z_{k-1, b} \cap \SB(0, \pi/2, r_{k-1})$. 
Consider one more example for $k = 2$ (again for $N = 4$, and $b = 0.1010$), in Figure \ref{fig:1b}. Note that $\mc Z^{\text{radial}}_{2,b}$ is the sector $S(\frac{\pi}{4}, \frac{3\pi}{8})$ (shaded in medium blue), and the thickened region (in light blue emanating from both sides of the sector) is just enough to capture all of $\mc Z_{1, b}$ that was within radius $r_1$. However, for larger radii, $\mc Z_{2, b}$ is much thinner than $\mc Z_{1, b}$.
In addition, if we know the first bit $b_1 = 1$, then querying $V_b$ anywhere in $\{r \le r_1\}$ will not reveal any information about the second bit $b_2$. This is because either we were in $\mc Z_{1, b}$ which only depends on $b_1$ (in which case $\phi_{1, b} = \phi_{2, b} = 0$ as we thickened to make sure $\mc Z_{2, b} \supset Z_{1, b} \cap \SB(0, \pi/2, r_1)$), or we weren't, in which case $\phi_{1, b}$ grows much more quickly than $\phi_{2, b}$.

We can also continue this process inductively for $k = 3, 4$ (Figures \ref{fig:1c} and \ref{fig:1d}): we show $\mc Z_{\le k, b}$.
The intuition for why this prevents leaking of information near the origin is that even if $k$ is large, $\mc Z_{k, b}$ in the smaller-radius regions is decided by $\mc Z_{k', b}$ for $k' \ll k,$ so we cannot learn any later bits.

The comparisons of $\mc Z^{\text{radial}}_{k,b}$ and $\mc Z_{\le k,b}$ for $b = 0.1010$ and for all $k\le 4$ are shown together in Figure~\ref{fig:angular_sectors_1}. The picture is not to scale, and the radial arcs represent the radii $r_i = 2^i r_0$, for $i=0, \dots, 4$.

\begin{center}
\begin{figure}[htbp]
    {\centering
    \begin{subfigure}{0.24\textwidth}
        \centering
        \includegraphics[width=\textwidth]{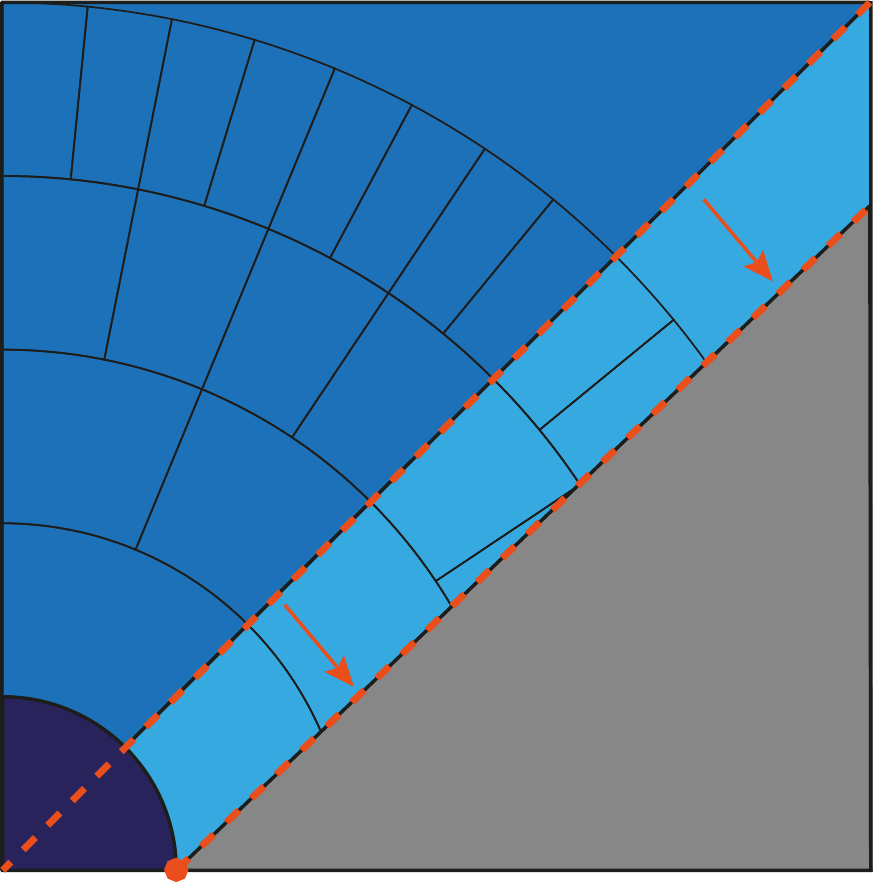}%
        \caption{$k=1$}
        \label{fig:1a}
    \end{subfigure}
    \hfill
    \begin{subfigure}{0.24\textwidth}
        \centering
        \includegraphics[width=\textwidth]{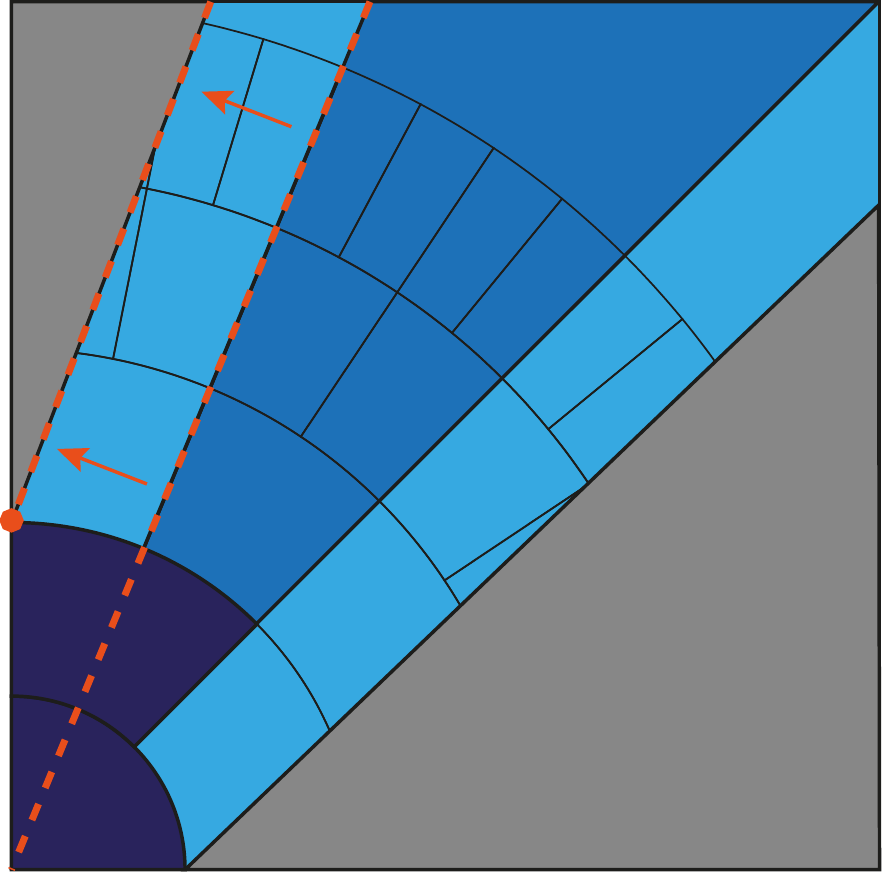}%
        \caption{$k=2$}
        \label{fig:1b}
    \end{subfigure}
    \hfill
    \begin{subfigure}{0.24\textwidth}
        \centering
        \includegraphics[width=\textwidth]{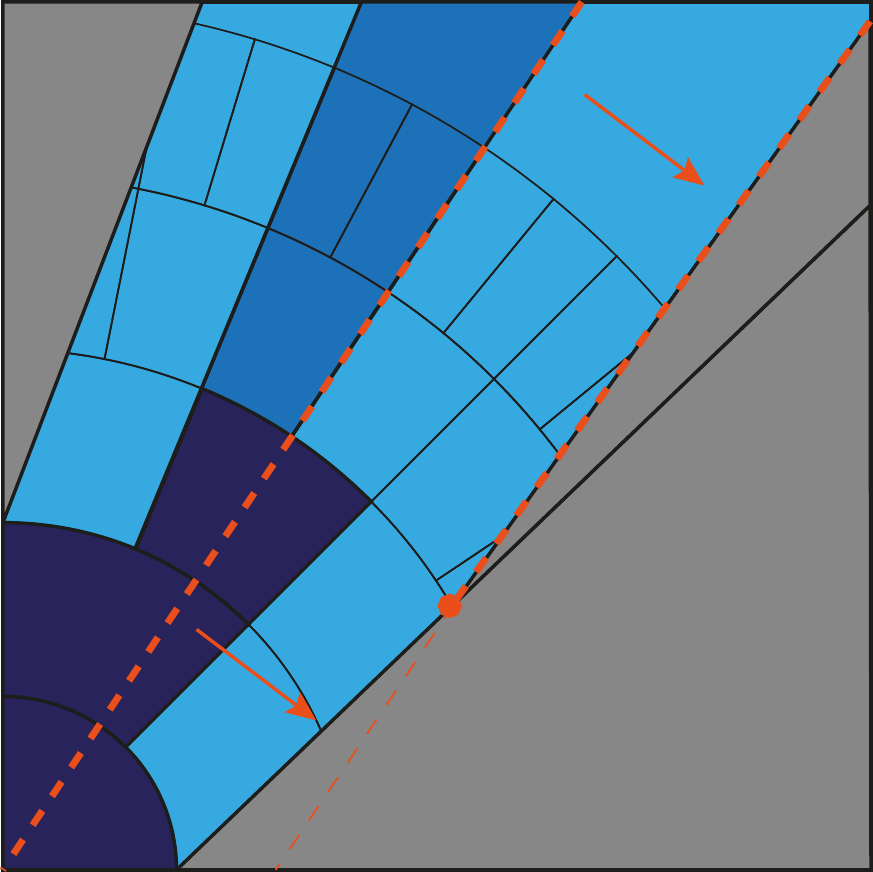}%
        \caption{$k=3$}
        \label{fig:1c}
    \end{subfigure}
    \hfill
    \begin{subfigure}{0.24\textwidth}
        \centering
        \includegraphics[width=\textwidth]{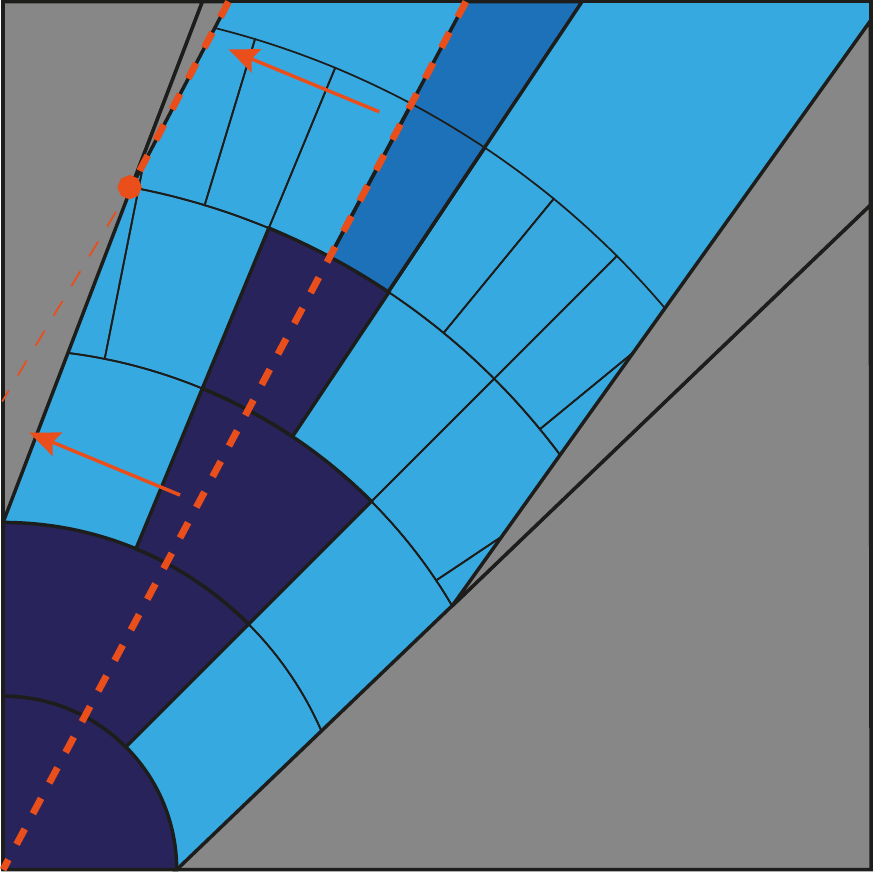}%
        \caption{$k=4$}
        \label{fig:1d}
    \end{subfigure}
    }
    \caption{Comparison of $\mc Z^{\text{radial}}_{\le k, b}$ (the sector in medium blue) with $\mc Z_{k,b}$ (union of dark, medium, and light blue), for $k = 1, 2, 3, 4$, and $b = 0.1010$. Dark blue represents the larger angular sectors closer to the origin, and light blue represents the additional fattening from taking sumsets.
    Each $\mc Z_{k,b}$ is constructed by thickening $\mc Z^{\text{radial}}_{k, b}$ enough (illustrated by the red arrows) such that no information about the $k$-th bit is revealed close to the origin, but $\mc Z_{k, b}$ continues to get thinner at large radii.}
    \label{fig:angular_sectors_1}
\end{figure}
\end{center}
\vspace{-24pt}

\noindent The construction of $\mc Z_{\le k, b}$ means that for $k>1$, querying $\phi_{k,b}$ within $\{r \le 2^{k-1}r_0\}$ will not reveal the $k$-th bit, and so even querying the mollified $\chi_\delta*\phi_{k,b}$ within $\{r \le 2^{k-2}r_0\}$ will not reveal the $k$-th bit, which stops information leaking near the origin.

Since $\tilde V_b = \max_{k=1,\dots,N} \phi_{k,b}$, the zero set of $\tilde V_b$ coincides with $\mc Z_{\le N,b}$, and for the choice of $b =0.1010$, this is shown in the first panel of Figure~\ref{fig:angular_sectors_2}. It turns out that each $\pi_b$ will concentrate around the zero set of $\tilde V_b$, and the other panels of Figure~\ref{fig:angular_sectors_2} show these zero sets for seven different values of $b$ in the set $\{\frac{1}{16},\dotsc, \frac{15}{16}\}$ at larger scales. We can see that far out from the origin the zero sets become well-separated, and hence the distributions are well-separated in total variation.

We already discussed how the thickening of $\mc Z_{k,b}$ means that querying $\phi_{k,b}$, and hence $\tilde V_b$, near the origin will not reveal the higher bits of $b$. For query points $\omega = (r, \theta)$ where $r$ is large, the same analysis on $\tilde V^{\text{radial}}_b$ tells us that $\tilde V_b(x)$ (even after mollification) will reveal $O(1)$ bits of information about $b$ when $b$ is chosen uniformly. As mentioned earlier, such a family of distributions readily leads to a sampling lower bound of $\Omega(\log m)$, where $m$ is the size of the family. Since we can choose $m = \kappa^{\Omega(1)}$, this leads to the $\Omega(\log \kappa)$ lower bound. Details of the proof can be found in Section~\ref{sec:2d_lwr}.

\begin{figure}[H]
    \centering

    \includegraphics[width=0.23\textwidth]{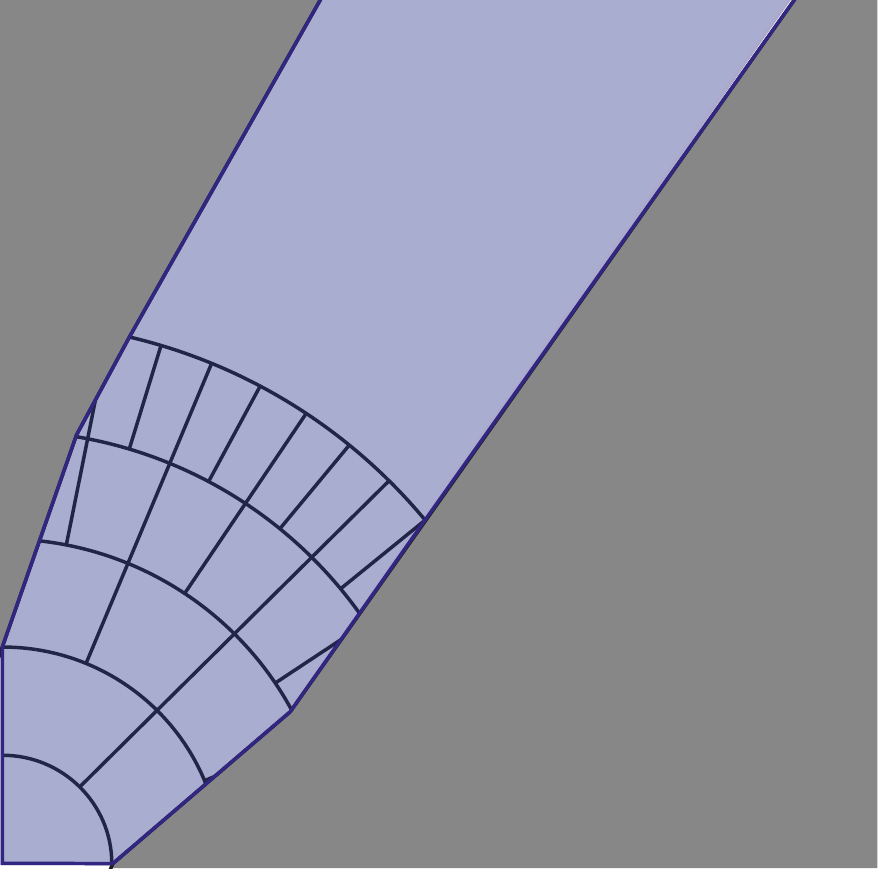}%
    \hfill
    \includegraphics[width=0.23\textwidth]{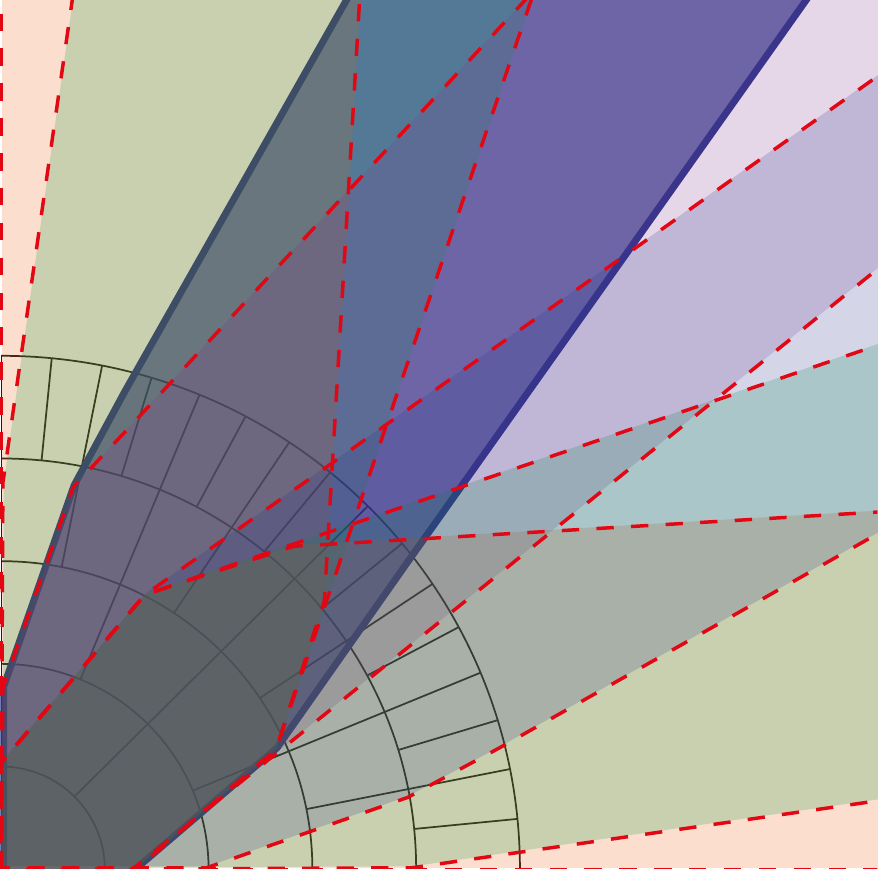}%
    \hfill
    \includegraphics[width=0.23\textwidth]{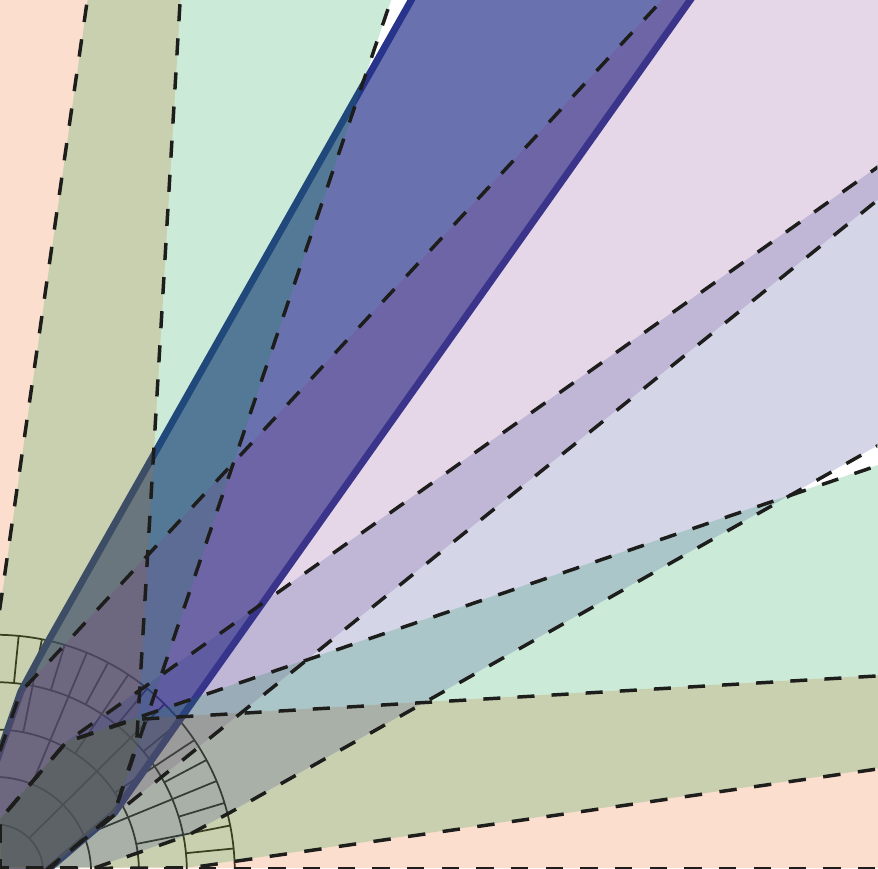}%
    \hfill
    \includegraphics[width=0.23\textwidth]{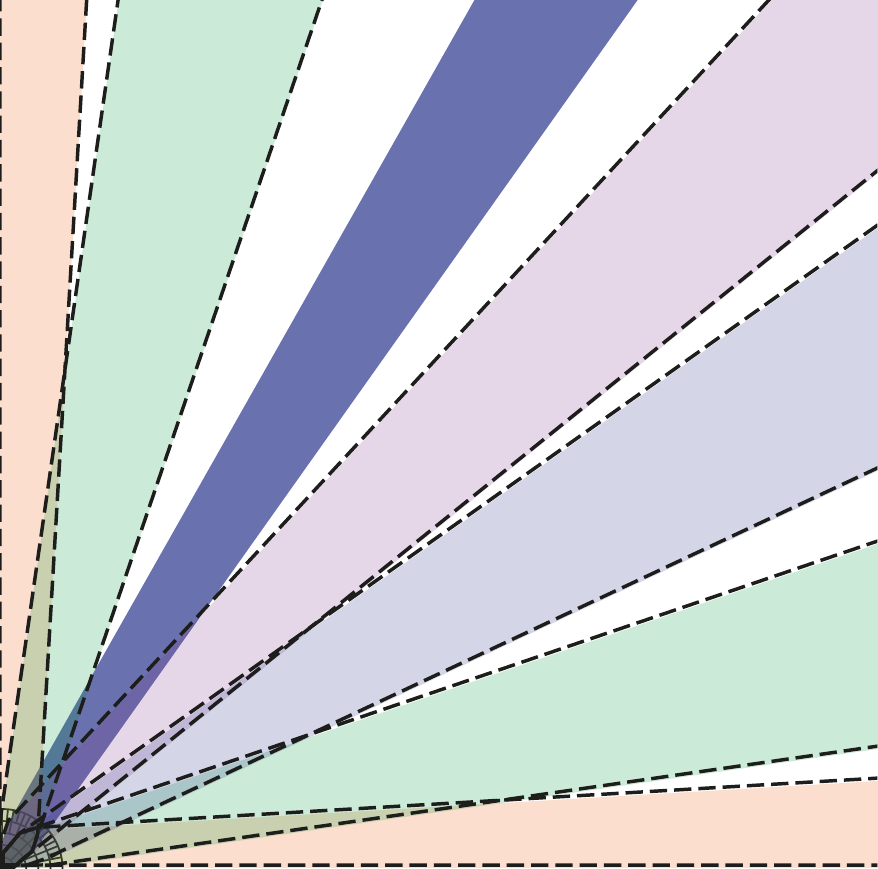}%

    \caption{Zeros sets of $\tilde V_b$. The first panel shows the zero set for $b = 0.1010$. The other panels show the zeros sets for different values of $b$ at different scales. Note that far away from the origin the zero sets become well-separated, which leads to the distributions being well-separated in total variation.
    Note that if $b, b'$ match in the first $\ell$ bits, then they will agree up to the $\ell$-th circle, as those circles only depend on $\mc Z_{\le \ell, b}$ even for $\ell$ much less than $K$.}
    \label{fig:angular_sectors_2}
\end{figure}

\vspace{-12pt}
\paragraph{Connections to Kakeya constructions.}
The construction outlined above is related to Perron's construction \cite{perron1928satz} of Besicovich (Kakeya) sets known as \emph{Perron trees}. Kakeya sets are sets with area zero that contain the translation of a unit segment in any direction.
While Kakeya sets over finite fields
have been investigated before in theoretical computer science, e.g.,~\cite{saraf2008improved, dvir2009size,jukna2011extremal}, our construction is inspired by Kakeya sets over continuous domains, namely $\R^2$.
To our knowledge, this is one of the first applications of these geometric ideas to theoretical computer science.

There are many similarities between our construction and that of Perron.
Perron's construction proceeds by the method of sprouting. Sprouting is an iterative process in which, at each step, one adds further and further smaller triangles to the pre-existing construction. The figure is then rescaled in order to have height $1$. The construction after $n$ steps contains $2^{n}$ triangles of small aperture $\Omega(2^{-n})$, and has area $O(n^{-1})$.
We do a similar process in the definition of our sets $\mc Z_{k,b}$, and indeed, ultimately our hard instance has a very similar tree-like structure.

While we were inspired by the construction of Perron trees, there are also key differences between our hard instance and Perron's construction.
Indeed, in our setting, we need to minimize overlap (so that the resulting distributions are well-separated) while simultaneously ensuring that information is not leaked by queries.
In contrast, Kakeya sets are explicitly designed to maximize overlap.
Secondly, the iterates of Perron trees are convex sets, not convex functions. 
One must turn these convex sets into convex functions somehow.
This is additionally complicated by the fact that these iterates are not nested.
In our construction, we must take great care to create nested convex sets, so that the resulting functions are convex and still maintain the structure of the sets.

\subsection{Lower bounds for sampling from Gaussians}

We now turn to our lower bounds against sampling from Gaussians.
Recall that our goal is to provide a lower bound on sampling from a Gaussian $\mc N(0, \Sigma)$, where $\Lambda \deq \Sigma^{-1}$ has condition number $\kappa$. Note that the corresponding potential is $V(x) = \frac{1}{2}\, \langle x, \Lambda \, x\rangle$, and we are allowed zeroth-order and first-order queries, which means for a query $x$, we receive $x^\top \Lambda x$ and $\Lambda x$. Hence, adaptive queries are equivalent to adaptive matrix-vector product computations with $\Lambda$.

The first observation we make is that we can reduce the problem of sampling from the Gaussian to estimating the trace of $\Sigma$.
This is because if $X$ is a sample from a distribution which is close in total variation distance to $\NN(0, \Sigma)$, then $\|X\|_2^2 \approx \tr (\Sigma)$ with high probability.
Therefore, it suffices to demonstrate a lower bound for the following problem: given matrix-vector product computations with $\Lambda$, approximately compute $\tr (\Lambda^{-1})$.

\vspace{-7pt}
\subsubsection{Lower bound via Wishart matrices}
For any $d$, let $W \in \R^{d\times d}$ have the $\Wishart (d)$ distribution.
That is, $W = XX^\top$, where $X \in \R^{d \times d}$ has i.i.d.\ $\NN(0, 1/d)$ entries.
We take $W$ to be the precision matrix, $\Lambda = W$.
Our first lower bound shows that $\Omega (d)$ matrix-vector queries with $W$ are necessary to estimate the trace of $W^{-1}$ even to constant multiplicative accuracy, with constant success probability (Theorem \ref{thm:inv_trace_lower_bd}). 
Since the condition number of $W$ is $\Theta (d^2)$ with high probability, we obtain one extreme of the claimed lower bound $\Omega (\min (\sqrt{\kappa}, d))$.
The general lower bound for all $\kappa$ then follows from a padding argument.

This lower bound approach is inspired by \cite{braverman2020regressionLB}, which proved a query lower bound for estimating the minimum eigenvalue of $W$. Their approach relies on the fact that if we condition on any sequence of $(1-\Omega(1))\,d$ adaptive queries, the posterior distribution of the remaining eigenvalues behaves similarly to the original distribution of the eigenvalues of $W$. In addition, while the smallest eigenvalue of $W$ is usually about $1/d^2$, its distribution has heavy tails: with probability $\Theta(\sqrt{\eps})$, the smallest eigenvalue of $W$ is below $\eps/d^2$. Consequently, even conditioned on $d/2$ adaptive queries, we are unable to learn the minimum eigenvalue up to a constant factor with high probability.

In our setting, we instead wish to show that learning the trace of $W^{-1}$ is hard. However, the smallest eigenvalue of the Wishart matrix is so small that with high probability, $\Tr(W^{-1}) = \Theta(\lambda_{\min}(W)^{-1})$.
While most of the time the trace is $O(d^2)$, with probability $\Theta(\sqrt\varepsilon)$ the posterior distribution of the smallest eigenvalue of $W$ after our adaptive queries may be $\eps/d^2$. Hence, we will be unable to determine whether the trace is $\le O(d^2)$ or $\ge \Omega(d^2/\eps)$ with high probability.

This lower bound technique is clean and nearly optimal, but as previously mentioned it is vacuous (of constant order) when $\kappa = O(1)$, whereas we expect the complexity of the problem to increase as $d \to \infty$. To tackle this setting, we introduce a second approach.

\vspace{-7pt}
\subsubsection{Lower bounds via reduction to block Krylov}\label{scn:block_kry_overview}

Our second technique works in two parts. 
First, we show that for a specific hard distribution over instances, any block Krylov-style algorithm requires $\Omega (\min (\sqrt{\kappa} \log d, d ))$ queries to estimate $\tr(\Sigma)$.
Then, we show a general purpose reduction which demonstrates that for this hard instance (and indeed, any rotationally invariant instance), block Krylov methods are actually optimal.

\paragraph{Lower bound for block Krylov algorithms.} 
Recall the block Krylov technique: the algorithm chooses $K$ i.i.d.\ random vectors $v_1, \dots, v_K \sim \mc N(0, I)$, and computes $\Lambda^j v_k$ for all $j \le T, k \le K$. This can be done using $KT$ adaptive queries, by querying $\Lambda^j v_k$ to learn $\Lambda^{j+1} v_k$.
For our purposes, it suffices to consider block Krylov algorithms with $K=T$ and to prove a lower bound on the smallest number $K$ needed to successfully estimate $\tr(\Sigma)$, for $\Sigma = \Lambda^{-1}$. 

We will construct two diagonal matrices $D, D'$ with all eigenvalues between $1$ and $\kappa$, such that $\Tr(D^{-1})$ and $\Tr((D')^{-1})$ are sufficiently different. In addition, if $\Lambda, \Lambda'$ are random rotations of $D, D'$, respectively, then $\{\Lambda^j v_k\}_{j,k\le K}$ and $\{(\Lambda')^j v_k\}_{j,k\le K}$ are hard to distinguish for $K \le c \sqrt{\kappa} \log d$ for a small constant $c$ (Lemma \ref{lem:block_krylov}).
Thus, unless $K\ge \Omega(\sqrt{\kappa} \log d)$, we cannot estimate the trace.

To explain the intuition behind Lemma \ref{lem:block_krylov}, we first consider what happens if we only have $\{\Lambda^j v\}_{j\le K}$ for a single random vector $v$ (i.e., power method). Letting $\lambda_1, \dots, \lambda_d$ be the eigenvalues of $\Lambda$, we have $\Lambda^j v = \sum_{i=1}^{d} \lambda_i^j \alpha_i u_i$, where $u_i$ is the $i$-th eigenvector of $\Lambda$ and $v = \sum_{i=1}^d \alpha_i u_i$. Intuitively, the only information we obtain from these vectors are their pairwise inner products, since we could have randomly rotated $\Lambda$. Therefore, the only information we have is $\langle \Lambda^j v, \Lambda^{j'} v \rangle = \sum_{i=1}^{d} \lambda_i^{j+j'} \alpha_i^2$, which is the set $\{\sum_{i=1}^d \lambda_i^j \alpha_i^2\}_{j \le 2K}$. Since $v$ is random, we may think of all of the $\alpha_i^2$ as $1$ for simplicity, and so we know $\{\sum_{i=1}^d \lambda_i^j\}_{j \le 2K}$. Our goal is to use this information to learn $\Tr(\Lambda^{-1}) = \sum_{i=1}^d \lambda_i^{-1}$. 

We connect this to the problem of estimating $1/x$ as a linear combination of $1, x, x^2, \dots, x^K$, a classic problem in approximation theory that is often tackled with \emph{Chebyshev polynomials}. Indeed, this relation to Chebyshev polynomials is the main tool in the analysis of essentially all Krylov methods. In our setting, as we desire lower bounds, we apply the fact that Chebyshev polynomials are \emph{optimal} in generating certain approximations. More concretely, suppose that there are only $K$ distinct eigenvalues $\lambda_1, \dots, \lambda_K$, with each $\lambda_i$ having some multiplicity $N_i$. Since we want to show that estimating $\tr(\Lambda^{-1})$ is hard, this amounts to showing that knowing $\sum_{i=1}^K N_i \lambda_i^j$ for $0 \le j \le K$ is insufficient to learn $\sum_{i=1}^K N_i/\lambda_i$. We express this as a linear program (if we relax the $N_i$ to be reals), the dual of which precisely captures whether $1/x$ can be approximated well by a degree-$K$ polynomial at $\lambda_1, \dots, \lambda_K$ (Proposition~\ref{prop:chebyshev_dual_lp}). If we choose the $\lambda_i$ to be the local extrema of a degree-$K$ Chebyshev polynomial, shifted so that $\lambda_1 = 1$ and $\lambda_K = \kappa$, then it is known that one cannot estimate $1/x$ up to error $d^{-\Omega(1)}$ at these points (which is needed for trace estimation), unless $K \ge \Omega(\sqrt{\kappa} \log d)$. At a high level, this is the reason why we need $\Omega(\sqrt{\kappa} \log d)$ iterations of the power method.

For general block Krylov algorithms, the algorithm obtains $\langle v_\ell, \Lambda^j\, v_k\rangle$, for $0 \le j \le K$ and $1 \le k, \ell \le K$. 
Now, the information that the algorithm sees is captured by the matrices $\{\langle v_\ell, \Lambda^j\, v_k\rangle\}_{k, \ell \le K}$, for $j = 1, \ldots, K$.
Here, we show that provided $K$ is sufficiently small compared to $d$, we can find choices of multiplicities $N_1, \dots, N_K$ and $N_1', \dots, N_K'$, such that the corresponding matrices $D, D'$ have significantly different traces (i.e., $\sum_{i=1}^K (N_i-N_i')/\lambda_i$ is large) but the information from queries is not enough to distinguish between $\Lambda$ and $\Lambda'$, which we establish via a coupling argument.

\paragraph{Reduction to block Krylov algorithms.}
The argument outlined above shows block Krylov algorithms with $K = o(\sqrt{\kappa} \log d)$ cannot distinguish between two families of randomly rotated matrices with difference traces ($\Lambda$ coming from $D$ and $\Lambda'$ coming from $D'$), and hence cannot solve the trace estimation task. 
Our next technical contribution is a reduction which allows us to simulate the output of any \emph{adaptive} algorithm with $K$ queries on our hard instance, given only the responses to a block Krylov algorithm. 
Thus, a lower bound against block Krylov methods translates into a lower bound against any query algorithm.
We now give a high-level description of the reduction.

Since we prove lower bounds based on randomized constructions, it suffices to consider adaptive deterministic algorithms, i.e., each query $v_k$ is a deterministic function of the previous queries and oracle outputs.
The difficulty of proving such a lower bound against such an algorithm is the adaptivity of the queries, which makes it difficult to reason about how much information the algorithm has learned. However, since our lower bound construction for block Krylov algorithms is rotationally invariant, intuitively the adaptivity does not help: the algorithm may as well query a random direction which it has not yet explored.

However, this intuition is not entirely correct: if the algorithm has previously queried a vector $v$ and received the information $\Lambda v$, then it may useful to query $\Lambda v$ in order to receive the information $\Lambda^2 v$, instead of querying a completely random new direction. Indeed, computing powers $v, \Lambda v, \Lambda^2 v, \dotsc$ is precisely the essence of the power method, as discussed above.
To account for this, we move to the following stronger oracle model: if the algorithm has selected vectors $v_1,\dotsc,v_{k}$, then at iteration $k$ it receives all of the information $(\Lambda^i v_j)_{i+j\le k}$ for free.
Now, there is provably no benefit to querying vectors which lie in the span of the previous queries and oracle outputs.

Recall that our goal is to argue that an adaptive deterministic algorithm can be simulated by an algorithm which simply makes i.i.d.\ Gaussian queries $z_1,z_2,\dotsc,z_K$, in the following sense.
In the stronger oracle model, at iteration $k$, the adaptive algorithm has made queries $(\valg_1,\dotsc,\valg_{k})$ and received information $(\Lambda^i \valg_j)_{i+j \le k}$ and it picks a new vector $v_{k+1}$ which lies orthogonal to its received information.
Suppose that using only the Gaussian queries $z_1,z_2,\dotsc,z_{k}$, we have simulated queries $\vssim_1,\vssim_2,\dotsc,\vssim_{k}$ which are equivalent to the execution of the adaptive algorithm in the sense that the law of the information $(\Lambda^i \vssim_j)_{i+j \le k}$ is precisely the same as the law of the algorithm's information $(\Lambda^i \valg_j)_{i+j \le k}$.
Since the algorithm is deterministic, $\valg_{k}$ is a function $v_{k}((\Lambda^i \valg_j)_{i+j < k})$ of algorithm's accumulated information.
Thus, in order to simulate the adaptive algorithm for one more step, it is natural to consider taking $\vssim_k \deq v_k((\Lambda^i \vssim_j)_{i+j < k})$.
However, we will be unable to compute $\Lambda^i \vssim_k$ for any $i \ge 1$, because the simulation must be based on the Gaussian queries $z_1,z_2,\dotsc,z_k$, whereas this definition of $\vssim_k$ requires making queries at $\vssim_1,\vssim_2,\dotsc,\vssim_{k-1}$.

Thus far, we have not invoked the rotational invariance of $\Lambda$, which is crucial to the argument.
The key is that although we cannot directly take $v_k((\Lambda^i \vssim_j)_{i+j<k})$ to be our next simulated point, we can \emph{rotate} $\vsim_k$ into $v_k((\Lambda^i \vssim_j)_{i+j<k})$ via a unitary matrix $U_k$; moreover, we can arrange that $U_k$ fixes all of the previous information $(\Lambda^i \vssim_j)_{i+j<k}$, because $v_k((\Lambda^i \vssim_j)_{i+j<k})$ lies orthogonal to this information (recall, we can assume that each deterministic function $v_k(\cdot)$ outputs a vector orthogonal to its inputs, due to our choice of oracle model).
The intuition is that due to the rotational invariance of $\Lambda$, then conditioned on the data $(\Lambda^i \vssim_j)_{i+j<k}$, the distribution of $\Lambda$ is still rotationally invariant on the orthogonal subspace of the data; hence, $U_k\vsim_k = v_k((\Lambda^i \vssim_j)_{i+j<k})$ ought to have the same law as $\vsim_k$, i.e., querying the completely random direction $\vsim_k$ is just as good as querying according to what the adaptive algorithm specifies.

Unfortunately there are further difficulties to overcome with this approach. Namely, suppose that we define each simulated point $\vssim_k$ to be the output $U_k \vsim_k$ of a rotation matrix applied to $\vsim_k$.
We would like to take $U_k$ such that $U_k\vsim_k = v_k((\Lambda^i \vssim_j)_{i+j<k})$ but this is no longer computable based on $(\Lambda^i \vsim_j)_{i+j<k}$.
However, we note that $\Lambda^i \vssim_j = \Lambda^i U_j \vsim_j = U_j \tilde \Lambda^i \vsim_j$ where $\tilde\Lambda \deq U_j^\T \Lambda U_j$.
This shows that $\Lambda^i \vssim_j$ is computed from the query of $\vsim_j$, not on the original matrix $\Lambda$ but on the modified matrix $\tilde\Lambda$, together with the matrix $U_j$.
Since we hope that $\tilde\Lambda$ has the same law as $\Lambda$, then this is good enough for the purposes of simulating the adaptive algorithm.
Actually, in order for the induction to work out, it becomes clear that we need to define a sequence of matrices $\Lambda_1,\Lambda_2,\dotsc,\Lambda_k$, where each $\Lambda_k$ is related to the previous $\Lambda_{k-1}$ via $\Lambda_k = U_k^\T \Lambda_{k-1} U_k$, and $U_k$ is chosen such that $\vssim_k = U_k\vsim_k = v_k((\Lambda_{k-1}^i \vssim_j)_{i+j<k})$.
Then, we must argue that the simulated sequence $\vssim_1,\vssim_2,\dotsc,\vssim_k$ has the same law as the algorithm's sequence $\valg_1,\valg_2,\dotsc,\valg_k$.

This last step, however, turns out to be delicate.
Indeed, although it is obvious that for a \emph{fixed} orthogonal matrix $U'$, the law of $\Lambda$ is the same as the law of $(U')^\T \Lambda U'$, the rotation matrices $U_k$ we choose in the above argument are dependent on the previous queries and oracle outputs, and are hence dependent on $\Lambda$ itself.
In the presence of such dependence, it is not obvious why the law of $\Lambda_k$ should be the same as the law of $\Lambda$, and to address this we prove a conditioning lemma in Section~\ref{scn:conditioning}.
Once the conditioning lemma is proved, the remainder of the proof follows along the lines just described, and the details of the induction are carried out in Section~\ref{scn:from_query}.

\section{A general sampling lower bound in dimension two}\label{sec:2d_lwr}

\subsection{Overview}\label{scn:2d_overview}

Our goal is to show the following theorem:

\begin{theorem}[lower bound in dimension two]\label{thm:sampling-is-logk-hard}
There is a universal constant $\varepsilon_0 > 0$ such that the following holds.
The query complexity of sampling from the class of distributions $\pi\propto\exp(-V)$ on $\R^2$ such that $V$ is $1$-strongly convex, $\kappa$-smooth, and minimized at $0$, with accuracy $\varepsilon_0$ in total variation distance, is at least $\Omega(\log \kappa)$.
\end{theorem}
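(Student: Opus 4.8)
The plan is an information-theoretic packing argument built on the multiscale construction outlined in Section~\ref{scn:2d_overview}. Fix $N$ with $m := 2^N = \kappa^{\Omega(1)}$. For each $b = 0.b_1\cdots b_N \in \{0,1\}^N$, I would define $V_b = \chi_\delta * \tilde V_b + \tfrac{1}{2\kappa^{c}}\,\|\cdot\|^2$, where $\tilde V_b = \max_{1\le k\le N}\phi_{k,b}$, $\phi_{k,b}(\omega) = \kappa^{c'}\,2^{-k}\,\operatorname{dist}(\omega,\mathcal{Z}_{k,b})$, and $\chi_\delta$ is a smooth bump supported on a ball of radius $\delta < 2^{-2N}$ (the mollification simultaneously smooths the distance functions and the maximum while preserving convexity). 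The sets $\mathcal{Z}_{k,b}$ are the adaptively thickened nested sectors: $\mathcal{Z}_{1,b}$ is $\mathcal{Z}^{\mathrm{radial}}_{1,b}$ fattened just enough to contain $\SB(0,\pi/2,r_0)$, and inductively $\mathcal{Z}_{k,b}$ is $\mathcal{Z}^{\mathrm{radial}}_{k,b}$ fattened just enough to contain $\mathcal{Z}_{\le k-1,b}\cap\SB(0,\pi/2,r_{k-1})$, with radii $r_k = 2^k r_0$. The constants $c,c',r_0,\delta$ are then tuned so that each $V_b$ is $1$-strongly convex (from the quadratic term), $\kappa$-smooth (the mollification at scale $\delta$ renders $\chi_\delta*\tilde V_b$ smooth with Hessian controlled by the Lipschitz constants $\kappa^{c'}2^{-k}$ of the $\phi_{k,b}$ over $\delta$), and minimized at the origin (every $\mathcal{Z}_{k,b}$ contains a neighborhood of $0$, so $\tilde V_b\equiv 0$ near $0$). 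Set $\pi_b\propto\exp(-V_b)$.

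I would then prove two structural properties of $\{\pi_b\}$. \emph{(Separation.)} Because $\tilde V_b$ grows at rate $\gtrsim \kappa^{c'}2^{-N}$ outside $\mathcal{Z}_{\le N,b}$ while the quadratic confinement keeps the bulk of the mass at radius $\kappa^{O(1)}$, an $\Omega(1)$ fraction of the mass of $\pi_b$ lies in the portion $A_b$ of $\mathcal{Z}_{\le N,b}$ at radii beyond $r_N$; and the sets $A_b$ are pairwise disjoint over distinct $b$, since at large radii the thin zero sets separate (cf.\ Figure~\ref{fig:angular_sectors_2}). Hence from a single draw $X$ with $\TV(\mathrm{law}(X),\pi_b)\le\varepsilon_0$ one recovers $b$ with probability $\Omega(1)-\varepsilon_0$, provided $\varepsilon_0$ is a small enough universal constant. \emph{(Low information per query.)} For a fixed query $\omega=(r,\theta)$ and $B$ uniform on $\{0,1\}^N$, I claim $(V_B(\omega),\nabla V_B(\omega))$ reveals $O(1)$ bits about $B$ in expectation. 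This is exactly what the branching design buys: for scales $i$ at which $\theta$ avoids a $\Theta(2^{-i})$-window around the ray $\theta=\tfrac{\pi}{2}[B]_i$, the values of $\tilde V_B$ on the entire $\delta$-ball about $\omega$ are governed only by $\phi_{k,B}$ with $k\le i$, hence depend on $B$ only through $[B]_i$; and (i) for large $r$ and uniform $B$ the least such $i$ is $O(1)$ outside an event of probability $2^{-\Omega(i)}$, while (ii) for small $r$ the thickening ensures $\mathcal{Z}_{k,B}$ already contains the disk of radius $\sim r$ about $0$ together with its $\delta$-neighborhood (using $\delta < r_0$), so $\phi_{k,B}$ and all higher $\phi_{k',B}$ vanish near the origin and nothing beyond $O(1)$ bits leaks there.

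With these in hand the lower bound follows by a standard argument. Reduce to a deterministic $T$-query algorithm by averaging out its internal coins; running it on $\pi_B$ with $B\sim\mathrm{Unif}(\{0,1\}^N)$ yields a transcript $\Pi=(Y_1,\dots,Y_T)$ with $Y_t=(V_B(\omega_t),\nabla V_B(\omega_t))$ and $\omega_t$ a deterministic function of $Y_{<t}$. By the chain rule $I(B;\Pi)=\sum_{t=1}^T I(B;Y_t\mid Y_{<t})$, and since the posterior of $B$ given $Y_{<t}$ is a mixture of uniform laws over sub-cubes of $\{0,1\}^N$ of the same branching type, the low-information estimate gives $I(B;Y_t\mid Y_{<t})=O(1)$, whence $I(B;\Pi)=O(T)$. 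If the algorithm $\varepsilon_0$-samples from $\pi_B$, then its output $X$ (a function of $\Pi$ and independent coins) identifies $B$ with constant probability by the separation property, so Fano's inequality gives $I(B;X)=\Omega(\log m)$; the data-processing inequality then yields $\Omega(\log m)\le I(B;X)\le I(B;\Pi)=O(T)$, i.e.\ $T=\Omega(\log m)=\Omega(\log\kappa)$.

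The step I expect to be the main obstacle is the low-information property near the origin after mollification: this is what forces the adaptive-thickening (Perron-tree-like) construction, and making it rigorous requires choosing the radii $r_k$, the thickening amounts $d_k$, and the mollification scale $\delta$ so that "no leakage near the origin" coexists with "well-separated far from the origin" and with the tight $1$-strong-convexity / $\kappa$-smoothness budget. A secondary technical point is making the chain-rule step fully rigorous in the presence of adaptive queries, i.e.\ tracking that the posterior on $B$ stays of the branching form so that each conditional term remains $O(1)$.
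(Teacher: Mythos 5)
Your plan matches the paper's approach essentially exactly: the same multiscale potential with adaptively thickened nested sectors, the same mollification plus quadratic regularization, the same two structural properties (well-separated masses plus an $O(1)$-bits-per-query bound), and the same Fano/chain-rule finish. The two obstacles you flag are precisely what the paper resolves by (i) choosing the slick explicit formula $\phi_{k,b}(x,y)=(|y-[b]_k\,x|-(2^{-k}x+2^{-(3N-k)}))_+$, where the $2^{-(3N-k)}$ offset grows in $k$ and thereby realizes your near-origin thickening without ever defining $\mathcal{Z}_{k,b}$ explicitly, and (ii) passing to a still-stronger oracle that reveals a bit prefix $[b]_{\ell+1}$ outright, making the posterior on $b$ exactly uniform on a subcube so each conditional mutual-information term is straightforwardly $O(1)$.
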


The strategy to do so will be to construct a finite family $\mc S$ of potentials in the given class which satisfies the following two properties:
\begin{itemize}
    \item The potentials are \emph{hard to identify via queries} (in the sense of Definition~\ref{def:hard-to-distinguish} below), and therefore any algorithm must query $V$ at $\Omega(\log\kappa)$ points in order to identify which $V \in \mc S$ the algorithm is querying.
    \item The potentials are \emph{well-separated} (in the sense of Definition~\ref{def:well-separated} below), which loosely means that they have mostly non-overlapping support and hence (by  Proposition~\ref{prop:well-separated-sample-separated}) a single sample from $\pi\propto \exp(-V)$ suffices to identify $V \in \mc S$ with constant probability.
\end{itemize}

Before describing the potentials $\mc S$ in more detail, we note some basic definitions.

\begin{definition}
    Given two functions $f, g: \BR^d \to \BR$, the \emph{convolution} $f \ast g$ is the function defined as $(f \ast g)(x) \deq \int_{\BR^d} f(y)\, g(x-y)\, \D y$, for all $x \in \BR^d$.
\end{definition}

\begin{definition}
    For $\delta > 0$, we define $\chi_\delta$ to be the \emph{indicator function of the ball $B_\delta$} of radius $\delta$ around the origin. By this, we mean $\chi_\delta(x) = 1$ if $\|x\|_2 \le \delta$, and $\chi_\delta(x) = 0$ otherwise.
\end{definition}

The family $\mc S$ of potentials will have cardinality $\kappa^{\Omega(1)}$, so that identification of the potential requires $\Omega(\log \kappa)$ bits of information. Actually, by rescaling the potentials, it suffices for each potential $V$ to be $\kappa^{-O(1)}$-convex and $\kappa^{O(1)}$-smooth.
Our eventual construction also satisfies the following properties.

\begin{itemize}
    \item Each $V \in \mc S$ is of the form $V = \tilde V \ast \chi_{\delta}+ \norm \cdot^2/(2\kappa^{O(1)})$, where $\tilde V : \R^2\to\R$ is a convex, non-negative, and piecewise linear potential, and $\delta$ will have scale $\delta = \kappa^{-\Theta(1)}$.
    \item Each $V\in\mc S$ is zero in a small neighbourhood of a ray $\ell$ emanating from the origin, and grows fast outside of this ray; hence, the potentials are well-separated.
    \item Suppose that $\ell$, $\ell'$ are the rays corresponding to two potentials $V, V' \in \mc S$.
    At distances from $\ell$ and $\ell'$ that are much larger than the angle $\angle(\ell,\ell')$, the potentials $V$, $V'$ are exactly equal. This is the property makes the potentials hard to identify via queries.
\end{itemize}

Throughout the proof, we assume that $\kappa$ is sufficiently large, $\kappa \ge \Omega(1)$.

\subsection{Definitions and the information-theoretic argument}

\begin{definition}[density and normalizing constant]
Given a strictly convex function $V : \R^d\to\R$, we denote by $P_V$ the probability distribution with density $Z^{-1}\exp(-V)$ w.r.t.\ Lebesgue measure, where $Z \deq \int \exp(-V)$ is the normalizing constant. In an abuse of notation, we also use $P_V$ to refer to the density itself.
\end{definition}

\begin{definition}[queries and extended oracle]\label{def:extended_oracle}
For a fixed potential $V$, and given a query $x\in \R^d$, the extended oracle responds with $V(B_\delta (x_1))$, which consists of the value of $V$ for all points in the ball of radius $\delta$ centered at $x$. For a sequence of (possibly adaptive and randomized) queries $x_1, \dotsc, x_n$ and observations $V(B_\delta(x_1)), \dotsc, V(B_\delta(x_n))$, we denote the information from the $i$-th query by $\xi_i \deq \{x_i, V(B_\delta(x_i))\}$, and the information from all the queries by
\begin{align*}
    \xi_{1:n}
    &\deq \{\xi_1,\dotsc,\xi_n\}\,.
\end{align*}
\end{definition}

Note that the extended oracle in Definition~\ref{def:extended_oracle} provides more information (the set of values of the potential in some ball around the query point $x$) to the algorithm than our original first-order query model, from which the algorithm only observes $(V(x), \nabla V(x))$ at the query $x$. A lower bound for sampling in this stronger query model clearly implies a lower bound in the original query model. We consider the stronger model out of technical convenience, as this notion is robust to the mollification in the construction of the potentials.

\begin{definition}[hard to identify via queries]\label{def:hard-to-distinguish}
A finite set $\mc S$  of potentials in $\mathbb R^d$ is called $\mc I$-\emph{hard to identify with queries at scale $\delta$} if the following holds: for $V\sim \unif(\mc S)$, any sequence of queries $x_1,\dotsc,x_n$ to the extended oracle made by a deterministic adaptive algorithm satisfies
\begin{align*}
    I(\xi_{1:n}; V) \le \mc I n\, ,
\end{align*}
where $I$ denotes the mutual information.
\end{definition}

\begin{definition}[well-separated set]
\label{def:well-separated}
A set $\mc S$ of potentials is \emph{well-separated} if there is a family of measurable sets ${(\Omega_V)}_{V\in \mc S}$ where the sets $\Omega_V$ are disjoint, and a universal constant $c > 0$ such that
\begin{align*}
    P_V(\Omega_V) \ge c\,, \qquad\text{for all}~V\in \mc S\,.
\end{align*}
\end{definition}

The motivation for this definition is the following fact:

\begin{proposition} 
[one sample identifies well-separated distributions]
\label{prop:well-separated-sample-separated}
Let $\mc S$ be a well-separated set of potentials and conditionally on $V \sim \unif(\mc S)$, suppose that $X$ is a sample from a probability measure $\widehat P_V$ which is at most $\frac{c}{2}$ away from $P_V$ in total variation distance.
Then,
\begin{align*}
    \Pr\{X \in \Omega_V\}
    &\ge \frac{c}{2}\,.
\end{align*}

\end{proposition}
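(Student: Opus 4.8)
The plan is to prove Proposition~\ref{prop:well-separated-sample-separated} by a direct computation using the definition of total variation distance together with the defining property of well-separated sets. Fix $V \in \mc S$. By Definition~\ref{def:well-separated}, there is a measurable set $\Omega_V$ with $P_V(\Omega_V) \ge c$, and the sets $(\Omega_V)_{V \in \mc S}$ are pairwise disjoint. First I would recall the standard fact that for any two probability measures $P$ and $Q$ on a common space and any measurable event $A$, we have $|P(A) - Q(A)| \le \TV(P, Q)$; this is immediate from the variational characterization $\TV(P,Q) = \sup_A |P(A) - Q(A)|$.

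Applying this with $P = P_V$, $Q = \widehat P_V$, and $A = \Omega_V$, and using the hypothesis $\TV(P_V, \widehat P_V) \le c/2$, we get
\begin{align*}
    \widehat P_V(\Omega_V) \ge P_V(\Omega_V) - \TV(P_V, \widehat P_V) \ge c - \frac{c}{2} = \frac{c}{2}\,.
\end{align*}
Since this bound holds pointwise for every fixed $V \in \mc S$, and the event $\{X \in \Omega_V\}$ conditionally on $V$ has probability $\widehat P_V(\Omega_V)$, I would then take expectations over $V \sim \unif(\mc S)$:
\begin{align*}
    \Pr\{X \in \Omega_V\} = \E_{V \sim \unif(\mc S)}\bigl[\widehat P_V(\Omega_V)\bigr] \ge \frac{c}{2}\,.
\end{align*}

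There is essentially no obstacle here — the statement is a one-line consequence of the coupling/variational inequality for total variation, and the disjointness of the $\Omega_V$'s is not even needed for this particular claim (it would be used elsewhere, e.g.\ to argue that observing $X \in \Omega_V$ actually pins down $V$). The only point requiring minor care is the measure-theoretic bookkeeping: one should make sure the map $V \mapsto \widehat P_V(\Omega_V)$ is measurable so that the expectation over $V$ is well-defined, but since $\mc S$ is a finite set this is automatic and the outer expectation is just a finite average. I would keep the proof to a few lines.
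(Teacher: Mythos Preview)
Your proof is correct and is essentially identical to the paper's: both condition on $V$, apply the variational inequality $\widehat P_V(\Omega_V) \ge P_V(\Omega_V) - \norm{P_V - \widehat P_V}_{\rm TV} \ge c - c/2$, and then average over $V$. Your remark that the disjointness of the $\Omega_V$ is not used here is also accurate.
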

\begin{proof}
    By conditioning on $V$,
    \begin{align*}
        \Pr\{X\in\Omega_V\}
        = \E\Pr\{X \in \Omega_V \mid V\}
        = \E\widehat P_V(\Omega_V)
        \ge \E\bigl[P_V(\Omega_V) - \norm{P_V - \widehat P_V}_{\rm TV}\bigr]
        \ge \frac{c}{2}\,,
    \end{align*}
    which is what we wanted to show.
\end{proof}

This shows that the minimum-distance estimator
\begin{align}\label{eq:min_dist_est}
    \widehat V
    &\deq \argmin_{V\in \mc S} \inf_{z\in \Omega_V}{\norm{X-z}}
\end{align}
succeeds at estimating the randomly drawn $V$ with constant probability.
On the other hand, we have Fano's inequality from information theory.

\begin{theorem}[{Fano's inequality,~\cite[Theorem 2.10.1]{coverthomas2006infotheory}}]\label{thm:fano}
    Suppose that $\mc S$ is a finite set and $V \sim \unif(\mc S)$. Suppose that $\widehat V$ is any estimator which is based on some data $\xi$. Then,
    \begin{align*}
        \Pr\{\widehat V \ne V\}
        &\ge 1 - \frac{I(\xi; V) + \log 2}{\log{\abs{\mc S}}}\,.
    \end{align*}
\end{theorem}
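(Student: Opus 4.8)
The plan is to give the classical entropy-based proof of Fano's inequality. Since $\widehat V$ is produced from the data $\xi$ (possibly together with additional independent randomness), the triple forms a Markov chain $V \to \xi \to \widehat V$, so the data processing inequality yields $I(\widehat V; V) \le I(\xi; V)$; hence it suffices to prove the bound with $I(\xi; V)$ replaced by $I(\widehat V; V)$. First I would introduce the error indicator $E \deq \ind{\widehat V \ne V}$, a Bernoulli random variable, and set $P_e \deq \Pr\{\widehat V \ne V\} = \Pr\{E = 1\}$.

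The heart of the argument is to expand the conditional entropy $H(E, V \mid \widehat V)$ via the chain rule in two different orders. On one hand,
\begin{align*}
    H(E, V \mid \widehat V) = H(V \mid \widehat V) + H(E \mid V, \widehat V) = H(V \mid \widehat V)\,,
\end{align*}
since $E$ is a deterministic function of the pair $(V, \widehat V)$. On the other hand,
\begin{align*}
    H(E, V \mid \widehat V) = H(E \mid \widehat V) + H(V \mid E, \widehat V) \le \log 2 + H(V \mid E, \widehat V)\,,
\end{align*}
using $H(E \mid \widehat V) \le H(E) \le \log 2$. For the last term I would condition on the value of $E$: when $E = 0$ we have $V = \widehat V$, so that conditional entropy vanishes, and when $E = 1$ the variable $V$ ranges over at most $\abs{\mc S} - 1$ values, so $H(V \mid \widehat V, E = 1) \le \log\abs{\mc S}$; averaging gives $H(V \mid E, \widehat V) \le P_e \log\abs{\mc S}$.

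Combining the two expansions yields $H(V \mid \widehat V) \le \log 2 + P_e \log\abs{\mc S}$. Finally, since $V \sim \unif(\mc S)$ we have $H(V) = \log\abs{\mc S}$, whence $H(V \mid \widehat V) = H(V) - I(\widehat V; V) \ge \log\abs{\mc S} - I(\xi; V)$ by data processing; rearranging the inequality $\log\abs{\mc S} - I(\xi;V) \le \log 2 + P_e \log\abs{\mc S}$ gives exactly $P_e \ge 1 - \frac{I(\xi;V) + \log 2}{\log\abs{\mc S}}$. I do not expect any genuine obstacle here, as this is a textbook result; the only points requiring a little care are invoking the data processing inequality (so that estimators using external randomness are covered) and the bookkeeping of conditional entropies in the two chain-rule expansions.
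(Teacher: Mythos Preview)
Your proof is correct and is exactly the standard textbook argument. The paper does not give its own proof of this statement; it simply cites it as a known result from Cover and Thomas, so there is nothing to compare against.
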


Fano's inequality enables us to reduce Theorem~\ref{thm:sampling-is-logk-hard} to the following proposition:

\begin{proposition}
[well-separated set which is hard to identify via queries]
\label{prop:construction_exists}
Let $\kappa \ge \Omega(1)$. Then, there is a set $\mc S$ of potentials such that: 

\begin{enumerate}
    \item All elements of $\mc S$ are $\kappa^{-O(1)}$-convex and $\kappa^{O(1)}$-smooth, and have their minimum at zero.
    \item $\mc S$ has cardinality $\kappa^{\Omega(1)}$.
    \item $\mc S$ is well-separated with $c = \Omega(1)$.
    \item $\mc S$ is hard to identify via queries at scale $\delta = \kappa^{-\Theta(1)}$, and with $\mc I = O(1)$.
\end{enumerate}
\end{proposition}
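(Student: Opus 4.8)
The plan is to produce $\mc S$ by the explicit branching construction sketched in Section~\ref{scn:2d_overview} and then verify the four listed properties, fixing the (polynomially related) parameters only at the end. Fix $N$ with $2^N = \kappa^{\Theta(1)}$, a base radius $r_0 = \kappa^{-\Theta(1)}$, dyadic radii $r_k \deq 2^k r_0$, a mollification scale $\delta = \kappa^{-\Theta(1)}$ with $\delta \ll r_0\,2^{-N}$, and a large exponent $\Gamma = \kappa^{\Theta(1)}$. For each dyadic $b = 0.b_1\cdots b_N \in \{2^{-N},\dotsc,1-2^{-N}\}$ let $\mc Z^{\mathrm{rad}}_{k,b} \deq S\bigl(\tfrac{\pi}{2}[b]_k,\ \tfrac{\pi}{2}([b]_k+2^{-k})\bigr)$ be the nested wedges; let $\mc Z_{1,b}$ be $\mc Z^{\mathrm{rad}}_{1,b}$ enlarged by the least Euclidean amount $d_1 = \Theta(r_0)$ needed to contain $\SB(0,\tfrac{\pi}{2},r_0)$, and inductively let $\mc Z_{k,b}$ be $\mc Z^{\mathrm{rad}}_{k,b}$ enlarged by the least amount $d_k = \Theta(r_0)$ needed to contain $\mc Z_{k-1,b}\cap\SB(0,\tfrac{\pi}{2},r_{k-1})$; put $\mc Z_{\le k,b} \deq \bigcap_{k'\le k}\mc Z_{k',b}$, $\phi_{k,b}(\omega)\deq \Gamma\,2^{-k}\dist(\omega,\mc Z_{k,b})$, $\tilde V_b \deq \max_{1\le k\le N}\phi_{k,b}$, and $V_b \deq \chi_\delta\ast\tilde V_b + \norm{\cdot}^2/(2\Gamma)$; finally set $\mc S \deq \{V_b\}_b$. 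Item~2 is then immediate since $\abs{\mc S} = 2^N-1 = \kappa^{\Omega(1)}$. For item~1: each $\mc Z^{\mathrm{rad}}_{k,b}$ is a planar wedge of opening $<\pi$, hence convex; enlargement by a ball and finite intersection preserve convexity, so every $\mc Z_{k,b}$ and $\mc Z_{\le k,b}$ is convex, each $\dist(\cdot,\mc Z_{k,b})$ is convex and $1$-Lipschitz, and $\tilde V_b$ is a finite max of such functions, hence convex, nonnegative, piecewise linear off finitely many boundary arcs, and $\Gamma$-Lipschitz; convolving with $\chi_\delta$ keeps it convex and nonnegative and makes its gradient $O(\Gamma\delta)$-Lipschitz (using $\nabla(\chi_\delta\ast\tilde V_b)=\chi_\delta\ast\nabla\tilde V_b$ and $\abs{B_\delta(x)\triangle B_\delta(x')}=O(\delta\norm{x-x'})$ in the plane); adding $\norm{\cdot}^2/(2\Gamma)$ then gives $\tfrac{1}{\Gamma}\,\ident \preceq \nabla^2 V_b \preceq O(\Gamma\delta+\tfrac{1}{\Gamma})\,\ident$, i.e.\ $\kappa^{-O(1)}$-strong convexity and $\kappa^{O(1)}$-smoothness, and since $\tilde V_b\ge 0 = \tilde V_b(0)$ the minimizer of $V_b$ lies within $O(\Gamma^2\delta)$ of the origin, which an inconsequential translation (absorbed into the constants) moves to $0$. (If full $C^2$ regularity is wanted, replace $\chi_\delta$ by a smooth bump of scale $\delta$; nothing below changes.)

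For item~3, take $\Omega_{V_b} \deq \{(r,\theta): r\in[R_\star,2R_\star],\ \abs{\theta-\tfrac{\pi}{2}b}\le\rho\}$ with $R_\star \asymp \sqrt{\Gamma}$ and $\rho = \kappa^{-\Theta(1)}$. The ray $\{\theta=\tfrac{\pi}{2}b\}$ lies in every $\mc Z^{\mathrm{rad}}_{k,b}$, hence in $\mc Z_{\le N,b}$, which at radius $R$ contains an angular tube of half-width $\Theta(r_0/R)$ about this ray; choosing $\rho \lesssim r_0/R_\star$ puts $\Omega_{V_b}$ inside $\mc Z_{\le N,b}=\{\tilde V_b=0\}$, so $V_b = \norm{\cdot}^2/(2\Gamma) = O(R_\star^2/\Gamma) = O(1)$ there and $\int_{\Omega_{V_b}}e^{-V_b}\gtrsim \operatorname{area}(\Omega_{V_b})\asymp \rho R_\star^2$. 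On the other hand, outside $\mc Z_{\le N,b}$ one has $\tilde V_b\ge \phi_{N,b}\gtrsim \Gamma 2^{-N}\dist(\cdot,\mc Z_{\le N,b})$, so $e^{-V_b}$ concentrates on a $O(2^N/\Gamma)$-neighbourhood of $\mc Z_{\le N,b}$ out to radius $O(\sqrt{\Gamma})$, of area $O(\sqrt{\Gamma}\cdot 2^N/\Gamma)$ up to the fattening; hence $P_{V_b}(\Omega_{V_b}) = \Omega(1)$ once $2^N \ll r_0\Gamma$. Finally the $\Omega_{V_b}$ are pairwise disjoint because distinct dyadics $b,b'$ satisfy $\abs{\tfrac{\pi}{2}b-\tfrac{\pi}{2}b'}\ge\tfrac{\pi}{2}2^{-N}$, so it suffices that $\rho\lesssim 2^{-N}$, which is compatible with the preceding constraints once $\Gamma$ is a sufficiently large power of $\kappa$; thus $\mc S$ is well-separated with $c=\Omega(1)$.

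Item~4 is the crux. The branching design is tailored to yield a \emph{revealed-level} bound: for every query point $x=(r,\theta)$ there is an integer $\Lambda(x,b)\le N$ such that the extended-oracle response $V_b(B_\delta(x))$ is a deterministic function of $x$ and the prefix $[b]_{\Lambda(x,b)}$ (prefix-determinacy); $\Lambda(x,b)$ is itself a function of $x$ and $[b]_{\Lambda(x,b)}$ (self-certification); and $\Lambda(x,b) \le \min\{\ell_{\mathrm{rad}}(x),\,\ell_{\mathrm{ang}}(x,b)\} + O(1)$, where $\ell_{\mathrm{rad}}(x)\deq\max\{j: r> r_j\}$ depends only on $x$ and $\ell_{\mathrm{ang}}(x,b)\deq\max\{i: \abs{\tfrac{\pi}{2}b-\theta}<C\,2^{-i}\}$ for a suitable constant $C$. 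The two caps capture the two mechanisms of the overview: when $\theta$ is $100\cdot2^{-i}$-far from $\{\theta=\tfrac{\pi}{2}[b]_i\}$, every $\phi_{k,b}$ with $k>i$ is dominated on $B_\delta(x)$ by $\max_{k'\le i}\phi_{k',b}$ because it carries the smaller Lipschitz constant $\Gamma 2^{-k}$ while its thinner zero set is no closer to $x$; and for small $r$ the inductive thickening forces $\tilde V_b$ restricted to $\{r\le r_j\}$ (and, after mollification, to $\{r\le r_{j-1}\}$) to depend on $b$ only through $[b]_{j+O(1)}$ — this last point is exactly what prevents mollification from leaking all of $b$ near the origin, the failure mode of the non-branching attempt.

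Given the revealed-level lemma, write $I(\xi_{1:n};V)=\sum_{i=1}^n I(\xi_i;V\mid\xi_{1:i-1})$. Because each response is self-certifying and prefix-determined, for any history $\xi_{1:i-1}=\bar\xi$ the posterior of $b$ is uniform on a subtree $\{b\in\mc S:[b]_\ell=w\}$ for a prefix $w$ of some length $\ell=\ell(\bar\xi)$ read off from $\bar\xi$; the next query $x_i=x_i(\bar\xi)$ is then fixed, conditionally $b_{\ell+1},b_{\ell+2},\dotsc$ are uniform, so $\{\ell_{\mathrm{ang}}(x_i,b)\ge\ell+t\}$ is a fixed-string match on $\Theta(t)$ free bits, of probability $O(2^{-t})$; hence $\Lambda(x_i,b)-\ell$ has a geometric upper tail conditionally, the response reveals only the already-known prefix $[b]_\ell$ together with $O(1)$ fresh bits in expectation, and $I(\xi_i;V\mid\xi_{1:i-1}=\bar\xi)\le H(\text{fresh bits})=O(1)$ uniformly in $\bar\xi$; summing over $i$ gives $I(\xi_{1:n};V)=O(n)$, i.e.\ item~4 holds with $\mc I=O(1)$. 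The main obstacle is precisely this last conditional-entropy estimate: adaptivity makes the posterior of $b$ depend on $V$, so a priori the geometric decay of angular alignment need not survive conditioning — it is the self-certifying, prefix-determined form of the responses that keeps the posterior a clean subtree and hence rescues the decay. Engineering the sets $\mc Z_{k,b}$ so that prefix-determinacy and both caps hold simultaneously \emph{after} mollification, without destroying convexity (item~1) or the separation geometry (item~3), is where the delicacy of the construction concentrates.
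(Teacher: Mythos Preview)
Your approach is the paper's: the same branching family of fattened wedges, the same mollify-then-add-quadratic scheme, and the same prefix-entropy endgame for item~4. But the proposal stops short exactly where the work is. The genuine gap is item~4: you state the revealed-level lemma (prefix-determinacy plus the radial and angular caps on $\Lambda(x,b)$) and give the right intuition, but do not prove it for your construction. The concrete obstacle is the pointwise domination $\phi_{k,b}\lesssim\phi_{k-1,b}$ on the region where $\theta$ is $\gtrsim 2^{-(k-1)}$-far from $\tfrac{\pi}{2}[b]_{k-1}$: the heuristic ``smaller Lipschitz constant, zero set no closer'' does not by itself deliver this inequality, because $\dist(\cdot,\mc Z_{k,b})$ and $\dist(\cdot,\mc Z_{k-1,b})$ are distances to \emph{different} convex sets whose comparison depends on the thickenings $d_k$---and your claim $d_k=\Theta(r_0)$ is wrong (the recursion $\mc Z_{k,b}\supseteq\mc Z_{k-1,b}\cap\{r\le r_{k-1}\}$ forces $d_k$ to grow with $k$). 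The paper sidesteps these polar-coordinate headaches by abandoning the wedge picture in the actual construction and working directly with Cartesian slabs
\[
\phi_{k,b}(x,y)=\bigl(|y-[b]_k\,x|-2^{-k}x-2^{-(3N-k)}\bigr)_+\,,
\]
with the thickening $2^{-(3N-k)}$ growing geometrically in $k$; the needed domination $\phi_k\le 2\phi_{k-1}$ on the relevant set is then a two-line triangle-inequality computation (Propositions~\ref{prop:far-away-inequality} and~\ref{prop:far-away-inequality-step}), and prefix-determinacy after mollification follows from the explicit description of the set in~\ref{it:P3}.

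A secondary gap is item~3. The step ``$e^{-V_b}$ concentrates on an $O(2^N/\Gamma)$-neighbourhood of $\mc Z_{\le N,b}$'' does not follow from $\tilde V_b\ge\phi_{N,b}$, because $\mc Z_{N,b}\supsetneq\mc Z_{\le N,b}$ and so $\phi_{N,b}$ can vanish far from $\mc Z_{\le N,b}$. What you actually need is linear growth of $\tilde V_b$ away from a fixed thin set containing the whole zero locus; the paper isolates this as a separate property~\ref{it:P2} (growth at rate $\kappa^4$ outside a $1$-neighbourhood of the ray $\tilde{\mc Z}_b$) and then bounds the normalizing constant by an explicit four-region decomposition of $\R^2$. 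Also, no translation is needed for the minimum: since the zero set of $\tilde V_b$ contains a ball of radius $\gg\delta$ about the origin by construction, $\chi_\delta\ast\tilde V_b$ vanishes at $0$ exactly.
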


\medskip{}

\begin{proof}[Proof of Theorem~\ref{thm:sampling-is-logk-hard}]
    Suppose that there is a sampling algorithm which, given any target distribution $\pi\propto\exp(-V)$ on $\R^2$ such that $V$ is $1$-strongly convex, $\bar\kappa$-smooth, and minimized at $0$, outputs a sample $X$ whose law is $\varepsilon_0$ close in total variation distance to $\pi$ using $n(\bar\kappa)$ queries to the extended oracle. Let $\mc S$ be the family in Proposition~\ref{prop:construction_exists}.
    By choosing $\varepsilon_0 = c/2 = \Omega(1)$ and rescaling the potentials accordingly, then Proposition~\ref{prop:well-separated-sample-separated} implies that the sampling algorithm can identify $V \sim \unif(\mc S)$ using $n(\bar\kappa)$ queries with constant probability, where $\bar\kappa = \kappa^{O(1)}$. Namely, for the estimator $\widehat V$ in~\eqref{eq:min_dist_est},
    \begin{align}\label{eq:samp_err}
        \Pr\{\widehat V = V\}
        &\ge \frac{c}{2} = \Omega(1)\,.
    \end{align}

    On the other hand, we can prove a lower bound for the error probability of any estimator $\widehat V$ constructed using adaptive queries. First we assume that the estimator is deterministic given previous queries. Because the set $\mc S$ is hard to identify, by Fano's inequality (Theorem~\ref{thm:fano}) we have
    \begin{align}\label{eq:est_err}
        \Pr\{\widehat V \ne V\}
        &\ge 1 - \frac{I(\xi_{1:n(\bar\kappa)}; V) + \log 2}{\log{\abs{\mc S}}}
        \ge 1 - \frac{\mc In(\bar\kappa) + \log 2}{\log{\abs{\mc S}}}
        = 1 - O\Bigl(\frac{n(\bar\kappa)}{\log\kappa}\Bigr)\,,
    \end{align}
    for all $n(\bar\kappa) \le c\, |\mc S| = O(\log \kappa)$. If the estimator is instead randomized, it depends on a random seed $\zeta$ that is independent of $V$. In this case, the same argument as above conditional on $\zeta$ gives
    \begin{align*}
        \Pr\{\widehat V \not= V \mid \zeta \} \ge 1 - \Omega \Bigl(\frac{n(\bar\kappa)}{\log \kappa}\Bigr)\, .
    \end{align*}
    Taking expectation over $\zeta$, we see that \eqref{eq:est_err} holds also for randomized algorithms. Combined with \eqref{eq:samp_err}, we see that $n(\bar\kappa) \ge \Omega(\log\kappa) =\Omega(\log\bar\kappa)$.
\end{proof}

\subsection{Reductions and properties of the construction}

Recall from Section~\ref{scn:2d_overview} that each $V \in \mc S$ is of the form $V = \tilde V * \chi_\delta + \norm\cdot^2/(2\kappa^{O(1)})$.
In this section, we reduce the desired properties of $\mc S$, namely that $\mc S$ is well-separated and hard to identify via queries, to geometric properties of the potentials summarized in Proposition \ref{prop:geometric_construction} below.

By increasing $\kappa$ by a factor of at most two, which will not harm the final lower bound, we can assume that $\kappa = 2^N$ for some positive integer $N$.
We also set $\delta \deq \kappa^{-5}$.
Let $B_N$ denote the set of binary strengths of length $N$.
For each $b \in B_N$ and $\ell \in [N]$, we let $[b]_\ell \deq 0.00b_1\dots b_\ell$ in binary representation, and set $[b] \deq [b]_N$.

\begin{proposition}[geometric properties]
\label{prop:geometric_construction}
There are functions $\tilde V_b$, for $b\in B_N$, such that:

\begin{enumerate}
     \itemlabel{P0} $\tilde V_b$ is convex and $\kappa^{O(1)}$-smooth on average at scale $\delta = \kappa^{-5}$, i.e., $\tilde V_b * \chi_\delta$ is $\kappa^{O(1)}$-smooth, and attains its minimum $V_b(0)=0$ at zero.

    \itemlabel{P1} The zero set $\mc Z_b \deq \{\tilde V_b = 0\}$ contains the $10^3\delta$-neighborhood of the set
    \begin{align}\label{eq:tilde_Z}
        \tilde{\mc Z}_b
        &\deq\{(x,\beta x) \in \R^2 \mid x \ge 0,\; [b]-2^{-N} \le \beta \le [b]+2^{-N}\}\,,
    \end{align}
    and is contained in the $1$-neighbourhood of $\tilde{\mc Z}_b$. 

    \itemlabel{P2}Moreover, for all $x,y\in\R^2$,
    $$
    \tilde V_b(x, y) \ge \kappa^{4}\, \bigl(\dist((x,y), \tilde{\mc Z}_b)-1\bigr)_+\,.
    $$

    \itemlabel{P3} If $b$, $b'$ coincide in the first $\ell$ bits then $\tilde V_b$ and $\tilde V_{b'}$ coincide in the set 
    $$\bigl\{(x,y)\in\R^2\bigm\vert x<\frac{1}{4}\, 2^{-3N}~\text{or}~|y-[b]_\ell\, x|>{100}
    \cdot 2^{-\ell} x\bigr\}\,.$$
\end{enumerate}
\end{proposition}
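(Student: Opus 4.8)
The plan is to realize the $\tilde V_b$ explicitly along the lines of Section~\ref{scn:2d_overview}: each is a maximum of scaled distance functions to a nested, adaptively thickened family of convex sectors, one per prefix of $b$. Put $r_0 \deq 2^{-3N}$, $r_k \deq 2^k r_0$, $\mu \deq \kappa^5$, and recall $\delta = \kappa^{-5}$, so $\delta \ll r_0 = \kappa^{-3}$. For $k<N$ let $\mc Z^{\mathrm{rad}}_{k,b}$ be the convex sector $\{(t,\beta t):t\ge0,\ |\beta-[b]_k|\le 2^{-k}\}$ and set $\mc Z^{\mathrm{rad}}_{N,b}\deq\tilde{\mc Z}_b$; using $|[b]_{k+1}-[b]_k|\le 2^{-k-2}$ one checks these are nested, all contain $\tilde{\mc Z}_b$ and the origin, and $\mc Z^{\mathrm{rad}}_{k,b}$ depends only on $b_1,\dots,b_k$. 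Take a geometrically growing thickening $d_1\deq r_0$, $d_k\deq 2^{k-1}r_0$, and set $\mc Z_{k,b}\deq \mc Z^{\mathrm{rad}}_{k,b}+B(0,Cd_k)$ for an absolute constant $C\ge1$ chosen so that (i) $\mc Z_{1,b}\supseteq B_{r_0}(0)$ and (ii) $\mc Z_{k,b}\supseteq \mc Z_{k-1,b}\cap B(0,2r_{k-1})$ for every $k\ge2$; (i) holds since a fattening of any cone through $0$ by $Cr_0\ge r_0$ swallows $B_{r_0}(0)$, and (ii) follows from an induction in which the extra thickening needed at step $k$ is $O(r_{k-1})$, absorbed by the doubling. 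Finally $\phi_{k,b}\deq \mu 2^{-k}\,\dist(\cdot,\mc Z_{k,b})$ and $\tilde V_b\deq\max_{k\in[N]}\phi_{k,b}$.

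Properties \ref{P0}--\ref{P2} are then relatively routine. Each $\mc Z_{k,b}$ is convex (a Minkowski sum of a convex set with a ball), so each $\phi_{k,b}$ is convex and nonnegative and hence so is $\tilde V_b$, with $\tilde V_b(0)=0$ (as $0\in\mc Z_{k,b}$) a minimum; the only nontrivial part of \ref{P0} is smoothness, and since $\phi_{k,b}$ is $\mu 2^{-k}$-Lipschitz, $\tilde V_b$ is $\tfrac12\mu$-Lipschitz, so $\tilde V_b*\chi_\delta$ has gradient Lipschitz with constant $O(\mu/\delta)=O(\kappa^{10})$ by the usual estimate $|B_\delta(x)\triangle B_\delta(x')|=O(\delta|x-x'|)$ (after normalizing by $|B_\delta|$). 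For \ref{P1}, the lower inclusion follows from $(\tilde{\mc Z}_b)^{+10^3\delta}\subseteq(\mc Z^{\mathrm{rad}}_{k,b})^{+10^3\delta}\subseteq\mc Z_{k,b}$ for every $k$ (using $\tilde{\mc Z}_b\subseteq\mc Z^{\mathrm{rad}}_{k,b}$ and $Cd_k\ge Cd_1\gg10^3\delta$) and intersecting over $k$; the upper inclusion is contained in \ref{P2}. For \ref{P2} it suffices to bound $\tilde V_b$ below by $\phi_{N,b}$: since $\mc Z_{N,b}=(\tilde{\mc Z}_b)^{+Cd_N}$ with $Cd_N=O(r_0)\le1$, we get $\phi_{N,b}(x,y)=\mu 2^{-N}\dist\bigl((x,y),(\tilde{\mc Z}_b)^{+Cd_N}\bigr)\ge \mu 2^{-N}\bigl(\dist((x,y),\tilde{\mc Z}_b)-1\bigr)_+=\kappa^4\bigl(\dist((x,y),\tilde{\mc Z}_b)-1\bigr)_+$, because $\mu 2^{-N}=\kappa^5/\kappa=\kappa^4$.

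Property \ref{P3} is the heart of the matter. If $b,b'$ agree on the first $\ell$ bits then $\mc Z_{k,b}=\mc Z_{k,b'}$ and $\phi_{k,b}=\phi_{k,b'}$ for all $k\le\ell$, so it is enough to show that on the set $R$ in the statement one has $\tilde V_b=\max_{k\le\ell}\phi_{k,b}$, i.e.\ $\phi_{k,b}(P)\le\max_{j\le\ell}\phi_{j,b}(P)$ for all $k>\ell$ and $P\in R$. I would prove this from two complementary mechanisms. \emph{(a) Shielding near the origin:} since $\mc Z_{k,b}\supseteq B(0,Cd_k)$ with $d_k$ growing geometrically, $\phi_{k,b}(P)=0$ once $|P|\le Cd_k$, i.e.\ once $k\gtrsim\log_2(|P|/r_0)$; and (ii) gives $\phi_{k,b}(P)\le\phi_{k-1,b}(P)$ once $|P|\le r_{k-1}$. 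Together these yield $\tilde V_b(P)=\max_{j\le m}\phi_{j,b}(P)$ whenever $|P|\le r_m$, and in particular $\tilde V_b\equiv0$ on $B_{r_0}(0)$; one then checks that the part of $\{x<\tfrac14 r_0\}$ with slope within $100\cdot2^{-\ell}$ of $[b]_\ell$ sits inside $B_{r_0}(0)$ (the slope bound and $x<\tfrac14 r_0$ force $|P|<r_0$ for all but finitely many small $\ell$, handled directly), so $\tilde V_b$ vanishes there. \emph{(b) Domination away from the ray:} for $P$ whose slope is at angular distance $\gtrsim 100\cdot2^{-\ell}$ from $[b]_\ell$ — which covers the rest of $R$, since small $x$ with large $|P|$ forces $P$ nearly vertical and hence far from $[b]_\ell$, as well as the second region of $R$ — every sector $\mc Z^{\mathrm{rad}}_{j,b}$ with $j\ge\ell$ lies within angular distance $O(2^{-\ell})$ of the ray, so $P$ is far from all of them and the distances $\dist(P,\mc Z_{j,b})$, $j\ge\ell$, agree up to an absolute constant factor; as the prefactors $\mu 2^{-j}$ decay geometrically and the doubling of $d_k$ makes the additive $-Cd_k$ corrections line up, $\phi_{j,b}(P)\le\max_{i\le\ell}\phi_{i,b}(P)$ for $j>\ell$ (the dominating term being $\phi_{\ell,b}$ for the second region, and $\phi_{1,b}$ for the nearly-vertical part of the first, using that $\mc Z^{\mathrm{rad}}_{1,b}$ has bounded aperture).

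The hard part is tuning the single constant $C$ (equivalently the thickening radii $Cd_k$) and the constant in the width of $\mc Z^{\mathrm{rad}}_{k,b}$ so that mechanisms (a) and (b) jointly cover \emph{all} of $R$: the thickening must be large enough for the inclusions (i), (ii) and so that no information leaks at small scales, yet small enough that the $10^3\delta$- and $1$-neighbourhood bounds of \ref{P1} and the $\kappa^4$ growth of \ref{P2} survive. I expect the most delicate regime to be $|P|$ comparable to $r_\ell$ in mechanism (b), where the fat ball $B(0,Cd_\ell)$ and the sector $\mc Z^{\mathrm{rad}}_{\ell,b}$ contribute comparably to $\dist(P,\mc Z_{\ell,b})$; the tree-like structure of the zero sets in Figures~\ref{fig:angular_sectors_1}--\ref{fig:angular_sectors_2} is precisely what makes this balancing possible.
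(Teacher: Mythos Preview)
Your construction is essentially the paper's: a max over $k$ of scaled ``distance'' functions to sectors thickened at scale $\asymp 2^k\cdot 2^{-3N}$, with the same geometric decay of prefactors. Your treatment of \ref{it:P0}--\ref{it:P2} is correct and matches the paper. The substantive difference is how the building blocks $\phi_{k,b}$ are realised: you use the Euclidean distance to a Minkowski sum $\mc Z^{\mathrm{rad}}_{k,b}+B(0,Cd_k)$, whereas the paper writes down the explicit piecewise-linear function
\[
\phi_{k,b}(x,y)=\bigl(|y-[b]_k\,x|-2^{-k}x-2^{-(3N-k)}\bigr)_+,\qquad \tilde V_b=2^{7N}\max_k 2^{-k}\phi_{k,b},
\]
whose zero set is the same sector thickened by a \emph{slab} in the $y$-direction of width $2^{-(3N-k)}=2^k r_0$. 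This is the same tree structure, but the explicit formula pays off precisely at \ref{it:P3}: the paper reduces \ref{it:P3} to the single one-step inequality $\phi_k\le 2\phi_{k-1}$ on the set in question whenever $k>\ell$ and $\phi_k>0$ (their Proposition~\ref{prop:far-away-inequality-step}), and this is proved by three lines of algebra in each of the cases $x\le 0$, $0<x\le \tfrac14\,2^{-3N}$, and $x>\tfrac14\,2^{-3N}$ with $|y-[b]_\ell x|\ge 100\cdot 2^{-\ell}x$. No separate ``mechanisms'', no tuning of $C$, no borderline regime $|P|\sim r_\ell$.

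Your \ref{it:P3} argument is a correct outline but not a proof as written. Mechanism~(a) is fine once you note (as you do) that $|P|\le r_{k-1}$ forces the nearest point of $\mc Z_{k-1,b}$ to lie in $B(0,2r_{k-1})$ and hence in $\mc Z_{k,b}$, yielding $\phi_{k,b}\le \tfrac12\phi_{k-1,b}$. Mechanism~(b), however, is only sketched: the claim that for $P$ at angular distance $\gtrsim 100\cdot 2^{-\ell}$ from $[b]_\ell$ the distances $\dist(P,\mc Z_{j,b})$ for $j\ge\ell$ agree up to an absolute constant is plausible but needs the ball-thickenings $Cd_j$ and the sector widths handled together, and the ``nearly-vertical, small-$x$, large-$|P|$'' piece of the first region of $R$ (which for small $\ell$ does \emph{not} fall under the slope condition $|y-[b]_\ell x|>100\cdot 2^{-\ell}x$) must be treated separately. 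These are exactly the places you flag as delicate. If you want to finish your version, the cleanest route is to replace the Euclidean distance by the paper's slab formula (equivalently, thicken in the $y$-direction only); then the comparison $\phi_k\le 2\phi_{k-1}$ on $S_\ell(b)$ becomes a direct inequality between affine expressions and the case analysis you anticipate collapses.
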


We check that these properties imply that Proposition~\ref{prop:construction_exists} holds.

\medskip{}

\begin{proof}[Proof of Proposition~\ref{prop:construction_exists}]
Let $\mc S$ be the collection of potentials $V_b  \deq  \tilde V_b\ast \chi_\delta + \norm\cdot^2/(2\kappa^{16})$ for $b \in B_N$, where $\{\tilde V_b : b\in B_N\}$ are the functions from Proposition~\ref{prop:geometric_construction}. We now verify the four properties of Proposition~\ref{prop:construction_exists}.

\underline{\textbf{Proof of 1.}}
By~\ref{it:P0}, we know that $\tilde V_b$ is convex, which implies that $\tilde V_b \ast \chi_\delta$ is also convex. Therefore, $V_b$ is $\kappa^{-16}$-strongly convex. In addition, by \ref{it:P0}, $\tilde V_b \ast \chi_\delta$ is $\kappa^{O(1)}$-smooth, which means that $V_b$ is $\kappa^{O(1)}+\kappa^{-16} \le \kappa^{O(1)}$-smooth.

\underline{\textbf{Proof of 2.}}
By construction, $\abs{\mc S} = \kappa$.

\underline{\textbf{Proof of 3.}}
We now show that $\mc S$ is $c$-separated. For any string $b$, recall the definition of $\tilde{\mc Z}_b$ from~\eqref{eq:tilde_Z}. Define the set
\begin{align*}
    \Omega_b \deq \{(x, \beta x) \in \R^2 \mid x \ge 2^{-3N},\; [b]-0.4 \cdot 2^{-N} \le \beta \le [b]+0.4 \cdot 2^{-N}\}\,.
\end{align*}
It is clear that $\{\Omega_b : b\in B_N\}$ is a family of disjoint sets. By~\ref{it:P1} we know that the zero set $\mc Z_b$ of $\tilde V_b$ contains a $10^3\delta$-neighborhood of $\tilde{\mc Z}_b$. Since $\Omega_b \subset \tilde{\mc Z}_b$, it follows that $\tilde V_b \ast \chi_\delta = 0$ on $\Omega_b$.

Let $\tilde\Omega_b \deq \{(x,y)\in \Omega_b : \norm{(x,y)} \le \kappa^8\}$.
Note that the full set of points $(x, y)$ with $\norm{(x,y)} \le \kappa^8$ has volume $\pi \kappa^{16}$, and $\Omega_b$ is a sector of the plane with arc $\Theta(2^{-N})$, minus a small set of points (specifically, the points in the sector with $x \le 2^{-3N}$, which also means $y \le O(2^{-3N})$). Therefore, the volume of $\tilde\Omega_b$ is $\Theta(\kappa^{16} \cdot 2^{-N}) = \Theta(\kappa^{15})$. In addition, all points $(x,y) \in \tilde\Omega_b$ have $V_b(x,y) = -\|(x,y)\|^2/(2\kappa^{16}) \ge -1/2$. Hence,
\begin{align}\label{eq:lower_bd_mass}
    \int_{\Omega_b} \exp(-V_b)
    &\ge \int_{\tilde\Omega_b} \exp(-V_b)
    \ge \Omega(\kappa^{15})\,.
\end{align}

Next, we bound the full integral of $\exp(-V_b)$ across $\BR^d$ by splitting $\R^d$ into four regions $\R^d = \tilde{\mc Z_b} \cup \Psi_{1,b} \cup \Psi_{2,b} \cup \Psi_{3,b}$, defined as follows:
\begin{itemize}
    \item $\Psi_{1,b} \deq \{(x,y) \in \R^2 \setminus \tilde{\mc Z}_b : \dist((x,y), \tilde{\mc Z_b}) \le 2, \; \norm{(x,y)} \le \kappa^9\}$.
    \item $\Psi_{2,b} \deq \{(x,y) \in \R^2 \setminus (\tilde{\mc Z}_b \cup \Psi_{1,b}) : \norm{(x,y)} \le \kappa^9\}$.
    \item $\Psi_{3,b} = \R^2 \setminus (\tilde{\mc Z}_b \cup \Psi_{1,b} \cup \Psi_{2,b})$.
\end{itemize}
Note that all points $\Psi_{3,b}$ have norm at least $\kappa^9$.
To show that most of the mass of $P_{V_b}$ is concentrated on $\tilde{\mc Z_b}$, we must show that the integrals over $\Psi_{1,b}$, $\Psi_{2,b}$, and $\Psi_{3,b}$ are small. In a nutshell, the integral over $\Psi_{1,b}$ is small because the $2$-neighborhood of $\tilde{\mc Z_b}$ is small (relative to the size of $\tilde{\mc Z}_b$ itself); the integral over $\Psi_{2,b}$ is small because $\tilde V_b$ increases rapidly outside $\tilde{\mc Z_b}$; and the integral over $\Psi_{3,b}$ is small because the Gaussian part of $V_b$ is small over this region.

On these four regions, we have the following bounds.
First, $\int_{\BR^2} \exp(-\norm\cdot^2/(2\kappa^{16})) = 2\pi \kappa^{16}$. Therefore, since the sector $\tilde{\mc Z}_b$ has arc $\Theta(2^{-N})$, by rotational symmetry
\begin{align*}
    \int_{\tilde{\mc Z_b}} \exp(-V_b)
    &\le \int_{\tilde{\mc Z_b}} \exp\bigl(- \frac{\norm\cdot^2}{2\kappa^{16}}\bigr)
    \le O(2^{-N}) \int_{\R^2} \exp\bigl(-\frac{\norm\cdot^2}{2\kappa^{16}}\bigr)
    \le O(\kappa^{15})\,.
\end{align*}
Note that $\Psi_{1,b}$ consists of two strips adjacent to $\tilde{\mc Z_b}$, where each strip has width $2$ and length $O(\kappa^9)$, together with a piece of area $O(1)$ near the origin.
Thus, $\vol(\Psi_{1,b}) \le O(\kappa^9)$, yielding
\begin{align*}
    \int_{\Psi_{1,b}} \exp(-V_b)
    \le \vol(\Psi_{1,b})
    \le O(\kappa^9)\,.
\end{align*}
Next, for $(x,y) \in \R^2$ such that $\dist((x,y), \tilde{\mc Z}_b) \ge 3/2$, by~\ref{it:P2} we have $\tilde V_b(x, y) \ge \kappa^4$.
After mollification at scale $\delta \le 1/2$, we conclude that $\tilde V_b * \chi_\delta \ge \kappa^4$ on $\Psi_{2,b}$.
In addition, $\Psi_{2,b}$ is contained in the ball of radius $\kappa^9$, so the volume of $\Psi_{2, b}$ is at most $\pi \kappa^{18}$. Therefore,
\begin{align*}
    \int_{\Psi_{2, b}} \exp(-V_b)
    &\le \pi \kappa^{18} \exp(-\kappa^4)\,.
\end{align*}
Finally, all points in $\Psi_{3, b}$ have $\ell_2$ norm at least $\kappa^9$, so
\begin{align*}
    \int_{\Psi_{3, b}} \exp(-V_b)
    &\le \iint_{\|\cdot\| \ge \kappa^9} \exp\bigl(-\frac{\norm{\cdot}^2}{2\kappa^{16}}\bigr)
    \le O(\kappa^8) \exp\bigl(-\Omega(\kappa^2)\bigr)\,,
\end{align*}
by standard Gaussian tail estimates.
Therefore,
\begin{align}\label{eq:upper_bd_mass}
    \int_{\R^2}\exp(-V_b)
    &\le O\Bigl(\kappa^{15} + \kappa^9 + \exp\bigl(-\Omega(\kappa^4)\bigr) + \exp\bigl(-\Omega(\kappa^2)\bigr)\Bigr)
    \le O(\kappa^{15})\,.
\end{align}

Overall,~\eqref{eq:lower_bd_mass} and~\eqref{eq:upper_bd_mass} together imply that $P_{V_b}(\Omega_b) \ge \Omega(1)$, i.e., $\mc S$ is $\Omega(1)$-well-separated.

\underline{\textbf{Proof of 4.}}
Finally, we show that $\mc S$ is hard to identify via queries at scale $\delta = \kappa^{-\Theta(1)}$ with $\mc I = O(1)$. We consider $b$ drawn uniformly at random from $B_N$.

First, however, we need to extend~\ref{it:P3} to $V_b$ (i.e., taking into account the mollification at scale $\delta$).
We claim that if $b$, $b'$ coincide in the first $\ell$ bits, then $V_b$ and $V_{b'}$ coincide in the set
\begin{align}\label{eq:modified_P3}
    \bigl\{(x,y)\in\R^2 \bigm\vert x < \frac{1}{8} \, 2^{-3N}~\text{or}~\abs{y-[b]_\ell \, x} > 200 \cdot 2^{-\ell} x\bigr\}\,.
\end{align}
In light of~\ref{it:P3}, it suffices to show that if $(x,y)$ lies in this set and $\norm{(x',y') - (x,y)} \le \delta$, then $x' < \frac{1}{4} \, 2^{-3N}$ or $\abs{y' - [b]_\ell \, x'} > 100 \cdot 2^{-\ell} \, x'$. In other words, the $\delta$-neighborhood of~\eqref{eq:modified_P3} is contained in the set in~\ref{it:P3}.
In the first case, $x' < \frac{1}{4} \, 2^{-3N}$ follows if $\delta < \frac{1}{8} \,2^{-3N}$, but since $\delta = \kappa^{-5} = 2^{-5N}$ this holds for large $\kappa$.
In the second case,
\begin{align*}
    \abs{y' - [b]_\ell \,x'}
    &\ge \abs{y - [b]_\ell \, x} - \delta - [b]_\ell \, \delta
    \ge 200 \cdot 2^{-\ell} \, x - 2\delta\,.
\end{align*}
This is greater than $100 \cdot 2^{-\ell} \, x$ provided that $2\delta \le 100 \cdot 2^{-\ell} \, x$, but this follows because $\delta = 2^{-5N}$ and $x \ge \frac{1}{8} \, 2^{-3N}$ (as we are in the negation of the first case).
In fact, by replacing $\delta$ with $2\delta$, the same argument shows that for all $(x,y)$ lying in the set~\eqref{eq:modified_P3}, we have $V_b(B_\delta(x,y)) = V_{b'}(B_\delta(x,y))$.
Note also that~\eqref{eq:modified_P3} shows that it is useless to query any points $(x,y)$ with $x < \frac{1}{8} \, 2^{-3N}$, so for the remainder of the proof we assume that the algorithm does not do so.

We now move to a stronger oracle model.
Namely, given a query point $(x, y) \in \R^2$, let $\ell$ be the largest integer such that $\abs{y-[b]_\ell \, x} \le 200\cdot 2^{-\ell} \, x$.
Then, the oracle outputs $\hat\xi \deq [b]_{\ell+1}$, i.e., the oracle reveals the first $\ell+1$ bits of $b$.
To see that this new oracle is indeed stronger, observe that we can simulate the previous oracle using the revealed bits $[b]_{\ell+1}$; namely, pick any bit string $b'$ which is consistent, in the sense that $[b']_{\ell+1} = [b]_{\ell+1}$. Then, by the choice of $\ell$, we have $\abs{y-[b]_{\ell+1}\,x} > 200 \cdot 2^{-(\ell+1)}\,x$, so that $V_b(B_\delta(x,y)) = V_{b'}(B_\delta(x,y))$, and hence we can output $V_b(B_\delta(x,y))$ given knowledge of $[b]_{\ell+1}$.
It therefore suffices to bound the mutual information $I(\hat \xi_{1:n}; b)$ where $\hat \xi_{1:n}$ denotes the output of the stronger oracle on a sequence of adaptive but deterministic queries $(x_1,y_1),\dotsc,(x_n,y_n)$.

We can then write
\begin{align}
    I(\hat\xi_{1:n}; b) &= \sum_{i=1}^n I(\hat\xi_i; b \mid \hat\xi_{1:i-1})\\
    &= \sum_{i=1}^n \{H(\hat\xi_i \mid \hat\xi_{1:i-1}) - H(\hat\xi_i, \mid \hat\xi_{1:i-1}, b)\}\\
    &\le \sum_{i=1}^n H(\hat\xi_i \mid \hat\xi_{1:i-1})\, ,\label{eq:ent_i}
\end{align}
where $H(\cdot\mid \cdot)$ denotes the conditional entropy. 
The first line follows from the chain rule for mutual information, the second line follows from definition of mutual information, and third line follows from non-negativity of conditional entropy.
Thus, we are done if we can show that $H(\hat\xi_i \mid \hat\xi_{1:i-1}) \le O(1)$, for all $i \le c\, |\mc S|$.

Conditionally on any particular realization of $\hat\xi_{1:i-1}$, let $\ell_0$ denote the number of bits of $b$ revealed thus far and let $[b_0]_{\ell_0}$ denote the revealed bits.
Clearly the bit string $b$ is uniformly distributed on the set $B_N'$ of bit strings $b'$ with $[b']_{\ell_0} = [b_0]_{\ell_0}$. 
Also, since we have assumed that the algorithm's queries are deterministic given the past history, the next query point $(x_i, y_i)$ is deterministic.
Then, the conditional probability that $\ell \ge \ell_0$ bits are revealed by the next query is
\begin{align*}
    &\Pr\{200 \cdot 2^{-\ell} \,x_i < \abs{y_i - [b]_\ell \, x_i} \le 200 \cdot 2^{-(\ell-1)} \, x_i \mid \hat \xi_{1:i-1}\}\\
    &\qquad \le \Pr\bigl\{\frac{y_i}{x_i} - 200 \cdot 2^{-(\ell-1)} \le [b]_\ell \le \frac{y_i}{x_i} + 200 \cdot 2^{-(\ell-1)} \bigm\vert \hat\xi_{1:i-1}\bigr\}\,.
\end{align*}
This is the probability that a uniformly chosen element of $B_N'$ belongs to an interval of length $\Theta(2^{-\ell})$.
Since there are $2^{N-\ell_0}$ elements of $B_N'$, and $\Theta(2^{N-\ell})$ of them belong to any fixed interval of length $\Theta(2^{-\ell})$, we conclude that the above probability is $O(2^{-(\ell-\ell_0)})$.

We then have
\begin{align}
    H(\hat\xi_i \mid \hat\xi_{1:i-1})
    &\le \E \sum_{\ell\ge \ell_0} (\ell - \ell_0) \, O(2^{-(\ell-\ell_0)})
    \le O(1)\,,
\end{align}
where the expectation is taken over $\ell_0$ (which depends on the realization of $\hat\xi_{1:i-1}$).
Substituting the above bound into~\eqref{eq:ent_i}, we conclude that $I(\xi_{1:n}; b) = O(n)$, which implies that $\mc S$ is indeed hard to identify via queries.
\end{proof}

\subsection{Construction of the distributions}

This section contains the proof of Proposition \ref{prop:geometric_construction}.

For integers $1 \le k \le N$, let $[b]_k$ be the number $0.00b_1b_2 \dots b_k$ in binary representation, and let $[b]_k \deq [b] \deq [b]_N$ for $k\ge N$. Define
\begin{align}\label{eq:def_phi}
    \phi_{k,b}(x,y) \deq  \bigl(|y - [b]_k\, x|-(2^{-k}\,x + 2^{-(3N-k)})\bigr)_+\,.
\end{align}
Here, the term $2^{-(3N-k)}$ essentially controls the thickness of the slab, and in particular, the slab becomes thicker for larger $k$; this ensures that the maximum of the $\phi_{k,b}$ will be dominated by small $k$ far away.
We also write $\phi_k \deq \phi_{k,b}$ when $b$ is clear from context.
For $x \ge 0$, the function $\phi_k$ essentially measures the distance to the set 
$$
\{(x,[b]_k\, x +\xi_k) \in \R^2 : x \ge 0,\; |\xi_k| \le 2^{-k}\,x + 2^{-(3N-k)}\}\,.
$$
Finally, we define the potential
\begin{align}\label{eq:def_potential}
    \tilde V_b(x,y) \deq  2^{7N} \max_{k=1, \dotsc, N} 2^{-k}\phi_k(x,y)\,.
\end{align}

\medskip{}

\begin{proof}[Proof of Proposition~\ref{prop:geometric_construction}]
We prove that the construction~\eqref{eq:def_potential} satisfies each of the four properties in turn.

\underline{\textbf{Proof of Property~\ref{it:P0}.}}
The convexity of $\tilde V_b$ follows because each $\phi_k$ is convex.
To check that $\tilde V_b$ is $\kappa^{O(1)}$-smooth on average, using the compositionality of the maximum (i.e., $\max(a,\max(b,c)) = \max(a,b,c)$) we see that that $\tilde V_b$ can be written as a maximum of affine functions, each of slope $\kappa^{O(1)}$; hence, $\tilde V_b$ is $\kappa^{O(1)}$-Lipschitz.
Differentiating under the integral,
\begin{align*}
    \nabla (\tilde V_b * \chi_\delta)(x,y)
    &= \iint_{B_\delta} \nabla \tilde V_b(x+u,y+v) \, \D u \, \D v
    = \iint \nabla \tilde V_b \one_{B_\delta(x,y)}\,,
\end{align*}
where the expression makes sense because $\tilde V_b$ is Lipschitz and hence differentiable a.e.\ by Rademacher's theorem, and the absolute continuity of $\tilde V_b$ ensures the validity of the fundamental theorem of calculus.
Then, by H\"older's inequality,
\begin{align*}
    \norm{\nabla (\tilde V_b * \chi_\delta)(x,y) - \nabla (\tilde V_b * \chi_\delta)(x',y')}
    &\le \bigl(\sup{\norm{\nabla \tilde V_b}}\bigr) \, \norm{\one_{B_\delta(x,y)} - \one_{B_\delta(x',y')}}_{L^1} \\
    &\le \kappa^{O(1)} \vol\bigl(B_\delta(x,y) \bigtriangleup B_\delta(x',y')\bigr)\,.
\end{align*}
By elementary considerations, the volume of the symmetric difference between the balls is bounded by $O(\kappa^{O(1)} \, \norm{(x,y) - (x',y')})$, and therefore $\nabla (\tilde V_b * \chi_\delta)$ is $\kappa^{O(1)}$-Lipschitz.

Finally, it is obvious that $\tilde V_b \ge 0$ and $\tilde V_b = 0$ at the origin.

\underline{\textbf{Proof of Property~\ref{it:P1}.}}
We only need to verify that any point $(x, y)$ which is $10^3 \delta$-close to $\tilde{\mc Z}_b$ satisfies $\tilde V_b(x, y) = 0$, as the second part of Property~\ref{it:P1} is automatically implied by Property~\ref{it:P2}. For such a point $(x, y)$, there exists $(x', y')$ such that
\begin{align*}
    x' \ge 0\,, \qquad |x'-x| \wedge |y'-y| \le 10^3 \delta\,,\qquad \text{and}\qquad |y'-[b]\, x'| \le 2^{-N} x'\,.
\end{align*}
This also implies $|y' - [b]_k\, x'| \le 2^{-k}\, x'$ for all $1 \le k \le N$, since $|[b]_k-[b]| \le 2^{-k}-2^{-N}$. Therefore, for all $1 \le k \le N$, $|y - [b]_k\, x| \le 2^{-k}\, (x + 10^3 \delta) + 2 \cdot 10^3 \delta \le 2^{-k} x + 2^{-(3N-k)},$ since $\delta = 2^{-5N}$.
By the definition~\eqref{eq:def_potential} of $\tilde V_b$ and the definition of $\phi_k$ in~\eqref{eq:def_phi}, it follows that $\tilde V_b(x,y) = 0$.

\underline{\textbf{Proof of Property \ref{it:P2}.}}
We just need to check that
\begin{align*}
2^{6N} \phi_N(x, y) \ge \kappa^4 \,\bigl(\dist((x, y), \tilde{\mc Z}_b) - 1\bigr)_+\,,
\end{align*}
or equivalently, $2^{2N} \phi_N(x, y) \ge (\dist((x, y), \tilde{\mc Z}_b) - 1)_+$. 
We first consider the case when $x \ge 0$, and we may assume that $(x, y) \not\in \tilde{\mc Z}_b$ as otherwise the claim is obvious. If $(x, y)$ has distance $\Delta$ to its closest point in $\tilde{\mc Z}_b$, then any $y'$ such that $(x, y') \in \tilde{\mc Z}_b$ must satisfy $|y-y'| \ge \Delta$. Applying this to $y' = [b]\,x \pm 2^{-N} \,x$, we obtain
\begin{align*}
    \dist\bigl((x,y), \tilde{\mc Z_b}\bigr)
    &\le \abs{y - [b] \, x + 2^{-N} x} \wedge \abs{y-[b]\, x - 2^{-N} \, x}
    = \abs{y - [b] \, x} - 2^{-N} \, x\,.
\end{align*}
In turn, it implies that $\phi_N(x, y) \ge (\dist((x, y), \tilde{\mc Z}_b) - 2^{-(3N-k)})_+ \ge (\dist((x, y), \tilde{\mc Z}_b) - 1)_+$.

If $x < 0$, then $\dist((x, y), \tilde{\mc Z}_b) \le \norm{(x,y)} \le \sqrt 2 \,\max(\abs x, \abs y)$.
Then, for $N$ large,
\begin{align*}
    2^{2N} \phi_N(x,y)
    &= 2^{2N} \,\bigl(\abs{y-[b]\,x} - 2^{-N} \, x - 2^{-(3N-k)}\bigr)_+ \\
    &= 2^{N-1/2} \, \bigl( 2^{N+1/2}\, \abs{y-[b] \, x} + \sqrt 2 \, \abs x - 2^{-(2N-k)+1/2} \bigr)_+ \\
    &\ge 2^{N-1/2} \, \Bigl( 2^{3/2} \max\bigl(0, \abs y - \frac{1}{2}\, \abs x\bigr) + \sqrt 2 \, \abs x - 1 \Bigr)_+ \\
    &\ge 2^{N-1/2} \, \bigl( \sqrt 2 \max(\abs x, \abs y) - 1 \bigr)_+
    \ge \bigl(\dist((x,y), \tilde{\mc Z}_b) - 1\bigr)_+\,.
\end{align*}
The first inequality follows because $\abs{y-[b]\,x} = \abs{\abs y - [b]\sgn(y)\,x} \ge \abs y - \frac{1}{2}\,\abs x + \frac{1}{2}\,\abs x - [b]\sgn(y)\,x \ge \abs y - \frac{1}{2}\,\abs x$ and because $N$ is sufficiently large.

\underline{\textbf{Proof of Property~\ref{it:P3}}}.
The last property follows from Proposition~\ref{prop:far-away-inequality} below, because if $b$, $b'$ agree on the first $\ell$ bits, then on the set in the statement of Property~\ref{it:P3},
\begin{align*}
    \tilde V_b
    &= 2^{7N} \max_{k=1,\dotsc,N} 2^{-k} \phi_{k,b}
    = 2^{7N} \max_{k=1,\dotsc,\ell} 2^{-k} \phi_{k,b}
    = 2^{7N} \max_{k=1,\dotsc,\ell} 2^{-k} \phi_{k,b'}
    = 2^{7N} \max_{k=1,\dotsc,N} 2^{-k} \phi_{k,b'}
    = \tilde V_{b'}\,.
\end{align*}
The second and fourth equalities invoke Proposition~\ref{prop:far-away-inequality}, and the third equality uses the fact that $\phi_{k,b}$ only depends on $b$ through $[b]_k$.
This completes the proof.
\end{proof}

\begin{proposition}[potentials agree if bits agree]
\label{prop:far-away-inequality}
Let $S_\ell(b)$ be the set 
$$S_\ell(b) \deq \bigl\{(x,y)\in\R^2 : x<\frac{1}{4} \, 2^{-3N}~\text{or}~|y-[b]_\ell\, x| \ge {100} \cdot 2^{-\ell} x
\bigr\}\,.$$
Then, for $x,y \in S_\ell(b)$,
$$ 
\max_{k=1,\dotsc, N} 2^{-k}\phi_k(x,y) = 
\max_{k=1,\dotsc,\ell} 2^{-k}\phi_k(x,y)\,.
$$
\end{proposition}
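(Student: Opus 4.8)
The plan is to reduce the claim to the pointwise bound $2^{-k}\phi_k(x,y)\le 2^{-\ell}\phi_\ell(x,y)$, valid for all $(x,y)\in S_\ell(b)$ and all $k$ with $\ell<k\le N$. Granting this, every term indexed by $\ell<k\le N$ in $\max_{1\le k\le N}2^{-k}\phi_k$ is dominated by $2^{-\ell}\phi_\ell\le\max_{1\le k\le\ell}2^{-k}\phi_k$, so $\max_{1\le k\le N}2^{-k}\phi_k=\max_{1\le k\le\ell}2^{-k}\phi_k$, which is exactly the proposition (the case $\ell=N$ being vacuous). Two elementary facts will be used throughout: first, $0\le [b]_k-[b]_\ell\le 2^{-(\ell+2)}$ (a geometric-series estimate on the tail of the binary expansion), which gives $\bigl\lvert\,|y-[b]_kx|-|y-[b]_\ell x|\,\bigr\rvert\le 2^{-(\ell+2)}\,|x|$; and second, since $k\ge\ell+1$, the slab term satisfies $2^{-(3N-k)}\ge 2\cdot 2^{-(3N-\ell)}$. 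It is convenient to write $\phi_j(x,y)=(s_j)_+$ with $s_j\deq|y-[b]_jx|-2^{-j}x-2^{-(3N-j)}$, and to use that $a\le b$ implies $(a)_+\le(b)_+$ and that $(2b)_+=2\,(b)_+$.

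\textbf{Case 1: $x<\tfrac14\,2^{-3N}$} (which in particular covers $x\le 0$). Here I will prove the stronger bound $\phi_k(x,y)\le\phi_\ell(x,y)$ by showing $s_k\le s_\ell$. Writing $s_k-s_\ell$ as the sum of the change in $|y-[b]_jx|$, the change in $-2^{-j}x$, and the change in $-2^{-(3N-j)}$, the first two contribute at most $\bigl(2^{-(\ell+2)}+2^{-\ell}\bigr)|x|\le\tfrac54\,2^{-\ell}|x|$ in total, while the slab contribution equals $-(2^{-(3N-k)}-2^{-(3N-\ell)})\le -2^{-(3N-\ell)}$. Since $|x|<\tfrac14\,2^{-3N}$ and $\ell\ge1$, one has $2^{-(3N-\ell)}=2^{2\ell}\,2^{-(3N+\ell)}\ge 4\cdot 2^{-(3N+\ell)}>\tfrac54\,2^{-\ell}|x|$, so the slab term dominates, $s_k<s_\ell$, and hence $\phi_k\le\phi_\ell$; combined with $2^{-k}\le 2^{-\ell}$ and $\phi_\ell\ge0$ this yields $2^{-k}\phi_k\le 2^{-\ell}\phi_\ell$.

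\textbf{Case 2: $x\ge\tfrac14\,2^{-3N}$}, so in particular $x>0$, and since $(x,y)\in S_\ell(b)$ we must have $A\deq|y-[b]_\ell x|\ge 100\cdot 2^{-\ell}x$. Here I only aim for $\phi_k(x,y)\le 2\,\phi_\ell(x,y)$, which is enough because $2^{-k}\le 2^{-(\ell+1)}$ then gives $2^{-k}\phi_k\le 2^{-(k-1)}\phi_\ell\le 2^{-\ell}\phi_\ell$. Using $|y-[b]_kx|\le A+2^{-(\ell+2)}x$ together with $2^{-(3N-k)}\ge 2\cdot 2^{-(3N-\ell)}$, I obtain $s_k\le A+2^{-(\ell+2)}x-2^{-k}x-2\cdot 2^{-(3N-\ell)}$, whereas $2 s_\ell=2A-2^{-\ell+1}x-2\cdot 2^{-(3N-\ell)}$. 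Subtracting, $2 s_\ell-s_k\ge A-\bigl(2^{-\ell+1}+2^{-(\ell+2)}-2^{-k}\bigr)x\ge A-\tfrac94\,2^{-\ell}x>0$, using $A\ge 100\cdot 2^{-\ell}x$. Thus $s_k\le 2 s_\ell$, and taking positive parts, $\phi_k\le 2\phi_\ell$.

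I expect the work to be bookkeeping rather than conceptual, with Case 2 the slightly more delicate one because of the extra factor of two that must be absorbed. The only points to watch are that the split on $S_\ell(b)$ is exhaustive and that, in each case, every perturbation of size $\sim 2^{-\ell}|x|$ is compared against a larger quantity — the extra slab thickness $2^{-(3N-k)}-2^{-(3N-\ell)}$ in Case 1, and the separation $A$ in Case 2 — that beats it by a wide margin, thanks respectively to the exponent $3N$ in the slab term and the constant $100$ in the definition of $S_\ell(b)$. Because of these margins no optimization of constants is needed, and the positive-part truncations are handled monotonically throughout.
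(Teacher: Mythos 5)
Your approach is essentially a direct-comparison variant of the paper's: the paper proves the one-step inequality $\phi_k\le 2\phi_{k-1}$ (Proposition~\ref{prop:far-away-inequality-step}, under the hypothesis $\phi_k>0$) and telescopes, while you compare $\phi_k$ to $\phi_\ell$ in a single step. Your Case 2 is correct and matches the spirit of the paper's second subcase, and the reduction to $2^{-k}\phi_k\le 2^{-\ell}\phi_\ell$ is sound.

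However, Case 1 has a genuine gap. You split on the hypothesis $x<\tfrac14\,2^{-3N}$, which includes all $x\le 0$, but your estimate then uses \emph{``Since $|x|<\tfrac14\,2^{-3N}$''} to deduce $\tfrac54\,2^{-\ell}|x|<2^{-(3N-\ell)}$. The hypothesis only gives a one-sided bound $x<\tfrac14\,2^{-3N}$, not $|x|<\tfrac14\,2^{-3N}$; for $x$ sufficiently negative, $|x|$ can be arbitrarily large and the inequality $\tfrac54\,2^{-\ell}|x|<2^{-(3N-\ell)}$ fails, so the argument does not establish $s_k\le s_\ell$ there. The claim is still true for $x<0$, but for a different reason: in that regime the term $(2^{-\ell}-2^{-k})x$ is \emph{negative}, with magnitude at least $\tfrac12\,2^{-\ell}|x|$ (since $2^{-\ell}-2^{-k}\ge 2^{-(\ell+1)}$), and this strictly dominates the $\le \tfrac14\,2^{-\ell}|x|$ contribution from $\bigl|\,|y-[b]_kx|-|y-[b]_\ell x|\,\bigr|$, so $s_k<s_\ell$ regardless of $|x|$. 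Your argument, by taking an absolute-value bound on both of the first two changes, discards exactly the sign information that makes the $x<0$ case work. To repair the proof, split Case 1 into $x\le 0$ (handled by the sign argument above, with no need for the slab term at all) and $0\le x<\tfrac14\,2^{-3N}$ (handled by your written argument). The paper handles $x\le 0$ separately for this very reason, proving $\phi_{k-1}\ge\phi_k$ directly without invoking the slab thickness.
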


In turn, Proposition~\ref{prop:far-away-inequality} follows by induction from: 

\begin{proposition}[induction]
\label{prop:far-away-inequality-step}
If $(x,y) \in  S_\ell(b)$, and for some $k>\ell$ we have $\phi_k(x,y)>0$, then $\phi_k(x,y)\le 2\phi_{k-1}(x,y)$. 
\end{proposition}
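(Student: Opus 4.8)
The plan is to prove Proposition~\ref{prop:far-away-inequality-step} by a direct computation with the explicit formula~\eqref{eq:def_phi}, exploiting the fact that for $(x,y) \in S_\ell(b)$ and $k > \ell$, the truncations $[b]_k$ and $[b]_\ell$ differ by at most $2^{-\ell} - 2^{-k} \le 2^{-\ell}$, while the point $(x,y)$ is already far (at angular scale $100 \cdot 2^{-\ell} x$) from the ray of slope $[b]_\ell$. The key quantity to control is $|y - [b]_k x|$, and the triangle inequality gives
\begin{align*}
    \bigl|\,|y - [b]_k x| - |y - [b]_\ell x|\,\bigr| \le |[b]_k - [b]_\ell|\,x \le 2^{-\ell} x\,.
\end{align*}
Combined with the hypothesis $|y - [b]_\ell x| \ge 100 \cdot 2^{-\ell} x$ (in the second case of $S_\ell(b)$), this shows $|y - [b]_k x|$ and $|y - [b]_\ell x|$ are comparable up to a $1 \pm \frac{1}{100}$ factor, so that both are $\Theta(2^{-\ell} x)$ times a large constant. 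In particular, the "slab half-width" $2^{-k} x + 2^{-(3N-k)}$ subtracted in $\phi_k$ is tiny — of order $2^{-k} x \le \frac{1}{2} \cdot 2^{-\ell} x$ (plus the small additive constant) — compared to $|y - [b]_k x|$, which is at least $\approx 99 \cdot 2^{-\ell} x$. So $\phi_k(x,y) \approx |y - [b]_k x|$ and $\phi_{k-1}(x,y) \approx |y - [b]_{k-1} x|$, and both of these are within a constant factor of $|y - [b]_\ell x|$; chasing the constants carefully should give $\phi_k \le 2\phi_{k-1}$ with room to spare. The first case of $S_\ell(b)$, where $x < \frac14 2^{-3N}$, needs to be handled separately: here the additive term $2^{-(3N-k)}$ in $\phi_k$ dominates the $2^{-k}x$ term, and one checks directly that the "thickness" terms force $\phi_k$ to be small (and $\phi_{k-1}$ at least comparably large, or $\phi_k = 0$, contradicting the hypothesis) — I'd expect to show $\phi_k(x,y) = 0$ outright in a sub-case and otherwise bound it against $\phi_{k-1}$.

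Concretely, the steps in order: (i) Fix $(x,y) \in S_\ell(b)$ and $k > \ell$ with $\phi_k(x,y) > 0$; without loss of generality reduce to $x \ge 0$ (if $x < 0$ then $[b]_k x$ and $[b]_{k-1} x$ are both nonpositive multiples and a short argument handles it, or one observes $\phi_k, \phi_{k-1}$ behave monotonically). (ii) Case $x \ge \frac14 2^{-3N}$: record $|[b]_k - [b]_{k-1}| = b_k 2^{-k} \le 2^{-k}$ and $|[b]_\ell - [b]_k| \le 2^{-\ell}$; deduce $|y - [b]_{k-1}x| \ge |y-[b]_k x| - 2^{-k} x$. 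Then expand $\phi_k(x,y) = |y-[b]_k x| - 2^{-k}x - 2^{-(3N-k)}$ (positive by hypothesis) and bound $\phi_{k-1}(x,y) \ge |y - [b]_{k-1} x| - 2^{-(k-1)} x - 2^{-(3N-k+1)} \ge |y-[b]_k x| - 2^{-k}x - 2^{-(k-1)}x - 2^{-(3N-k+1)}$; using $|y - [b]_k x| \ge |y-[b]_\ell x| - 2^{-\ell}x \ge 99\cdot 2^{-\ell}x \gg 2^{-k}x$, conclude $\phi_k \le 2\phi_{k-1}$. (iii) Case $x < \frac14 2^{-3N}$: here $2^{-k} x < \frac14 2^{-(3N+k)} \ll 2^{-(3N-k)}$, so the slab half-width is essentially $2^{-(3N-k)}$; similarly for $k-1$ it is essentially $2^{-(3N-k+1)} = 2 \cdot 2^{-(3N-k)}$. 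Show $|y - [b]_k x|$ and $|y - [b]_{k-1}x|$ differ by at most $2^{-k}x \le \frac14 2^{-(3N+k)}$, which is negligible against $2^{-(3N-k)}$; then $\phi_k(x,y) > 0$ forces $|y - [b]_k x|$ to exceed $2^{-(3N-k)}$ by the amount $\phi_k$, and the same excess over $2 \cdot 2^{-(3N-k)}$ for $\phi_{k-1}$ is at least $\phi_k - 2^{-(3N-k)}$; combined with the fact that $\phi_k$ itself, when positive, is at least... — actually the cleanest route here is to note $2^{-(3N-k)} - 2^{-(3N-k+1)} = -2^{-(3N-k)}$ goes the wrong way, so instead I'd argue that in this regime either $\phi_k = 0$ (contradiction) or, using $x$ so small that the $2^{-k}x$-terms are utterly negligible, reduce to the inequality $(t - a)_+ \le 2(t - 2a)_+$ which fails in general, signalling that one must instead use that $|y-[b]_\ell x| \ge 100 \cdot 2^{-\ell} x$ is automatically vacuous for tiny $x$ and that the binding constraint comes from the $x$-independent term — I would re-examine the exact constants $\frac14$, $100$, $7N$ to see which sub-case is live, and most likely show $\phi_k = 0$ here, closing the case by contradiction with the hypothesis $\phi_k > 0$.

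The main obstacle I anticipate is precisely case (iii), the small-$x$ regime near the origin: the additive constants $2^{-(3N-k)}$ were engineered (per the remark after~\eqref{eq:def_phi}) so that the slab gets \emph{thicker} as $k$ grows, and the inequality $\phi_k \le 2\phi_{k-1}$ is the quantitative expression of "larger $k$ is dominated by smaller $k$" — but the naive bound $(t-a)_+ \le 2(t-2a)_+$ is false, so success must hinge on extra slack, either that $|y - [b]_k x|$ is much larger than the thickness (pushing us back toward the large-$x$ analysis) or that $\phi_k = 0$ identically in the relevant range. Pinning down which of these holds, and verifying it is consistent with the constants chosen in~\eqref{eq:def_phi} and~\eqref{eq:def_potential} (especially the interplay of $2^{-(3N-k)}$ with $x < \frac14 2^{-3N}$ and the $100 \cdot 2^{-\ell}$ threshold defining $S_\ell(b)$), is the delicate part; the large-$x$ case should be a routine triangle-inequality computation once the constants are tracked.
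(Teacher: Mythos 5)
Your approach is the same as the paper's (direct computation via the triangle inequality, split into cases on $x$), and your handling of the large-$x$ case (ii) is essentially correct. The $x<0$ reduction in (i) is hand-waved but not a real obstacle. The problem is entirely in case (iii), and it stems from a concrete arithmetic error: you write ``the slab half-width is essentially $2^{-(3N-k)}$; similarly for $k-1$ it is essentially $2^{-(3N-k+1)} = 2\cdot 2^{-(3N-k)}$.'' This last equation is wrong: the exponent $-(3N-k+1) = -3N+k-1$ is one \emph{less} than $-(3N-k) = -3N+k$, so $2^{-(3N-k+1)} = \tfrac{1}{2}\,2^{-(3N-k)}$, not $2\cdot 2^{-(3N-k)}$. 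In other words, the additive slab-thickness term $2^{-(3N-k)}$ \emph{increases} with $k$ (as you correctly noted earlier in the same paragraph), so the slab at level $k-1$ is the \emph{thinner} one, not the thicker one.

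Once the sign is fixed, case (iii) becomes the easy case rather than the problematic one. When $x \le \tfrac14\,2^{-3N}$ the $2^{-k}x$ terms are dominated by $2^{-(3N-k)}$, and the heuristic picture is $\phi_k \approx (t - a)_+$ with $\phi_{k-1} \approx (t - \tfrac12 a)_+$ where $a = 2^{-(3N-k)}$, giving $\phi_{k-1} \ge \phi_k$ outright (much stronger than the claimed factor of $2$). Concretely, the same triangle-inequality expansion you use in case (ii) yields
\begin{align*}
\phi_{k-1}(x,y) &\ge |y-[b]_k x| - 3\cdot 2^{-k}x - 2^{-(3N-k+1)}
= \phi_k(x,y) - 2\cdot 2^{-k}x + \bigl(2^{-(3N-k)} - 2^{-(3N-k+1)}\bigr)\,,
\end{align*}
and since $2^{-(3N-k)} - 2^{-(3N-k+1)} = 2^{-(3N-k+1)} \ge 2\cdot 2^{-k}x$ for $x \le \tfrac14\,2^{-3N}$, the right-hand side is at least $\phi_k(x,y)$. (This is exactly the paper's sub-case for small $x$.) There is no sub-case where $(t-a)_+ \le 2(t-2a)_+$ is needed, no need to hope $\phi_k = 0$, and no need to revisit the constants; the anticipated obstacle disappears once the direction of the inequality $2^{-(3N-k+1)}$ versus $2^{-(3N-k)}$ is corrected.
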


\begin{proof} 
First, we may assume that $x > 0$. This is because if $x \le 0$,
\begin{align*}
    \phi_{k-1}(x,y) &\ge |y-[b]_k\,x|-|[b]_{k-1}-[b]_k|\, |x| - 2^{-(k-1)}\, x - 2^{-(3N-k+1)} \\
    &\ge |y-[b]_k\,x| + 2^{-k} x - 2^{-(k-1)} x - 2^{-(3N-k+1)} \\
    &= |y-[b]_k\,x| - 2^{-k} x - 2^{-(3N-k+1)} \\
    &\ge \phi_k(x, y)\,,
\end{align*}
    since we are assuming $\phi_k(x, y) > 0$.

Now, since $x > 0$, we start by estimating
\begin{align*}
    \phi_{k-1}(x,y) &\ge |y-[b]_k\,x|-|[b]_{k-1}-[b]_k|\, x - 2^{-(k-1)} x - 2^{-(3N-k+1)} \\
    &\ge |y-[b]_k\,x|- 3 \cdot 2^{-k} x - 2^{-(3N-k+1)} \\
    &= \phi_k(x, y) - 2\cdot 2^{-k} x + 2^{-(3N-k+1)}
\end{align*}
and
$$
\phi_k(x,y) = |y - [b]_k\, x|-(2^{-k}x + 2^{-(3N-k)})\,.
$$

First, suppose that $x \le \frac{1}{4} \, 2^{-3N}$. Then, $2^{-(3N-k+1)} \ge 2 \cdot 2^{-k} x$, so in fact $\phi_{k-1}(x, y) \ge \phi_k(x, y)$. 
Alternatively, if $x \ge \frac{1}{4} \, 2^{-3N}$ and $|y-[b]_\ell\, x| \ge {100} \cdot 2^{-\ell} x$, then
\begin{align*}
    2\phi_{k-1}(x,y) &\ge 2\,|y - [b]_\ell\, x| - 2\,|[b]_\ell - [b]_{k-1}|\, x - 4\cdot 2^{-k} x - 2^{-(3N-k)}\\
    &\ge 2\,|y - [b]_\ell\, x| - 6\cdot 2^{-\ell} x - 2^{-(3N-k)}\,,\\
    \phi_k(x,y) &\le |y - [b]_\ell\, x| + |[b]_\ell - [b]_k|\, x - 2^{-k} x - 2^{-(3N-k)}\\
    &\le |y - [b]_\ell\, x| + 2^{-\ell}x - 2^{-(3N-k)}\, .
\end{align*}
As a result, when $|y - [b]_\ell\, x| \ge 100 \cdot 2^{-\ell} x$, we see that $\phi_{k-1}(x,y) \ge \frac{1}{2}\,\phi_k(x,y)$. 
\end{proof}

\section{A lower bound for sampling from Gaussians via Wishart matrices}\label{sec:wishart}

We define $W \sim \Wishart(d)$ to mean $W = XX^\top$ where each entry of $X \in \BR^{d \times d}$ is $\mathcal{N}(0, \frac{1}{d})$.
We aim to prove the following two theorems, which together imply a query complexity lower bound for sampling from Gaussians.

\begin{theorem}[reducing inverse trace estimation to sampling]\label{thm:reduce_inv_trace_sampling}
    Let $\delta > 0$.
    There is a universal constant $c > 0$ (depending only on $\delta$) such that the following hold.
    Suppose that $d\ge c^{-1}$ and there exists a query algorithm such that, for any Gaussian target distribution $\pi \deq \NN(0, \Sigma)$ in $\R^d$ with $cd^{-2}\,I_d \preceq \Sigma^{-1} \preceq c^{-1}\, I_d$, outputs a sample from a distribution $\widehat \pi$ such that either $\norm{\widehat\pi-\pi}_{\rm TV} \le c$ or $\sqrt{cd^{-2}} \, W_2(\widehat\pi,\pi) \le c$, using $n$ queries to $\pi$.
    
    Then, given $W \sim \Wishart(d)$, there exists an algorithm which makes at most $c^{-1} n$ matrix-vector queries to $W$ and outputs an estimator $\trhat$ such that $\frac{1}{2} \tr(W^{-1}) \le \trhat \le  2\tr(W^{-1})$ with probability at least $1-\delta$.
\end{theorem}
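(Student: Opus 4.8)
The plan is to feed the assumed sampler the Gaussian $\pi \deq \NN(0,\Sigma)$ whose precision matrix is $\Sigma^{-1} = W$, and to estimate $\Tr(W^{-1}) = \Tr(\Sigma)$ from the squared norms of the returned samples. The point of departure is that a single matrix-vector product $x \mapsto Wx$ supplies both $\nabla V(x) = Wx$ and $V(x) = \tfrac12\,\langle x, Wx\rangle$ for the potential $V$ of $\pi$, so every query of the sampler is answered by one matrix-vector query with $W$. It then remains to check that $\pi$ is an admissible target with large probability, and that a handful of samples give a factor-$2$ estimate of $\Tr(W^{-1})$.

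For admissibility, I would invoke two standard spectral facts about $W \sim \Wishart(d)$. Fixing first a threshold $\rho = \rho(\delta) > 0$ and then taking $c = c(\delta) > 0$ much smaller than $\rho$, the event
\[
    \mathcal G \deq \bigl\{\, cd^{-2}\, I_d \preceq W \preceq c^{-1}\, I_d\,\bigr\}\cap\bigl\{\,\Tr(W^{-1}) \ge \rho\, d^2\,\bigr\}
\]
has probability at least $1 - \delta/2$ (the hypothesis $d \ge c^{-1}$ ensures the estimates hold with the stated constants): the bound $\lambda_{\max}(W) \le c^{-1}$ uses $\lambda_{\max}(W) \to 4$ with exponential concentration; the bound $\lambda_{\min}(W) \ge cd^{-2}$ uses the classical anti-concentration estimate $\Pr\{\lambda_{\min}(W) \le \eps/d^2\} = O(\sqrt\eps)$ for the least singular value of a square Gaussian matrix; and $\Tr(W^{-1}) \ge \lambda_{\min}(W)^{-1} \ge \rho\, d^2$ follows from the upper tail of $\lambda_{\min}(W)\,d^2$. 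On $\mathcal G$, the potential $V(x) = \tfrac12\,\langle x, Wx\rangle$ is $cd^{-2}$-strongly convex, $c^{-1}$-smooth, and minimized at $0$, so the sampler's guarantee applies.

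For the estimate, run the sampler $k = \Theta(1/\delta)$ times with fresh independent randomness to obtain i.i.d.\ $X_1,\dotsc,X_k \sim \widehat\pi$, and output $\trhat \deq \frac1k \sum_{j=1}^k \norm{X_j}_2^2$; the total cost is $kn \le c^{-1}n$ queries once $c$ is small in $\delta$. Compare with the idealized $\bar Z_\star \deq \frac1k \sum_j \norm{X_j^\star}_2^2$ built from i.i.d.\ $X_j^\star \sim \pi$: conditionally on $W$, $\E\norm{X^\star}_2^2 = \Tr(W^{-1})$ and $\Var\norm{X^\star}_2^2 = 2\,\Tr(W^{-2}) \le 2\,\Tr(W^{-1})^2$, so Chebyshev gives $\bar Z_\star \in [\tfrac9{10},\tfrac{11}{10}]\,\Tr(W^{-1})$ with conditional probability $\ge 1 - O(1/k)$, for every $W$. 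It remains to transfer this to $\trhat$. If $\norm{\widehat\pi - \pi}_{\rm TV} \le c$, then $\norm{\widehat\pi^{\otimes k} - \pi^{\otimes k}}_{\rm TV} \le kc$, so $\trhat$ and $\bar Z_\star$ couple to be equal except with probability $kc$, which is $\le\delta/8$ for $c$ small. If instead $\sqrt{cd^{-2}}\,W_2(\widehat\pi,\pi) \le c$, i.e.\ $W_2(\widehat\pi,\pi)^2 \le cd^2$, then applying the triangle inequality for the $\ell^2$-norm on $\R^{kd}$ to the product of optimal couplings gives $\bigl|\sqrt{\trhat} - \sqrt{\bar Z_\star}\bigr| \le \bigl(\frac1k\sum_j \norm{X_j - X_j^\star}_2^2\bigr)^{1/2}$, a quantity with conditional mean at most $W_2(\widehat\pi,\pi)^2 \le cd^2$, hence (by Markov) at most $2\sqrt{c/\delta}\,d$ with probability $\ge 1-\delta/8$; on $\mathcal G$ this is a negligible fraction of $\sqrt{\Tr(W^{-1})} \ge \sqrt\rho\,d$ once $c \ll \rho$, so $\trhat$ stays within a factor $2$ of $\Tr(W^{-1})$. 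A union bound over $\mathcal G^{\mathrm c}$ and the above events yields success probability $\ge 1-\delta$.

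The step I expect to be the main obstacle is establishing $\mathcal G$ with probability $1-\delta/2$, which hinges on the sharp anti-concentration bound for the smallest eigenvalue of a square Wishart matrix --- the only non-elementary ingredient; the rest is a second-moment computation together with bookkeeping of how small $c$ must be taken relative to $\delta$ and $\rho$. The one remaining subtlety is the Wasserstein branch, where a transport bound has to be converted into control of $\norm{\cdot}_2^2$: routing this through the triangle inequality for $X \mapsto \norm{X}_2$ in $L^2$, rather than trying to bound fourth moments of $\widehat\pi$, sidesteps any need for regularity of $\widehat\pi$.
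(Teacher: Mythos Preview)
Your proposal is correct and follows the same reduction as the paper: feed the sampler the Gaussian $\NN(0,W^{-1})$, average the squared norms of the returned samples, control the idealized estimator $\bar Z_\star$ via Chebyshev, and then transfer to $\trhat$. The total variation case is handled identically in both. In the Wasserstein case you take a slightly different route: you bound $\lvert\sqrt{\trhat}-\sqrt{\bar Z_\star}\rvert$ by the reverse triangle inequality in $\R^{kd}$, whereas the paper bounds $\E[\lvert\trhat-\tr\Sigma\rvert\mid W]$ through the polarization identity $\norm{x}^2-\norm{y}^2=\langle x-y,x+y\rangle$ and a Young-type splitting with an auxiliary parameter $\lambda$. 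Both arguments ultimately require the high-probability lower bound $\Tr(W^{-1})\ge\Omega_\delta(d^2)$ (your event $\mathcal G$; the paper's event $\mc E'$, stated there with what appears to be a typo for $\lambda_{\max}(W^{-1})$) to convert the absolute error $O(\sqrt{c}\,d)$ into a relative one. Your square-root approach isolates this comparison more transparently and avoids the extra free parameter; the paper's approach stays at the level of squared norms and yields a direct first-moment bound on $\lvert\trhat-\tr\Sigma\rvert$.
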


\begin{theorem}[lower bound for inverse trace estimation]\label{thm:inv_trace_lower_bd}
    Let $W\sim\Wishart(d)$ for $d\ge 2$.
    For any $C > 0$, there exists $\delta > 0$ (depending only on $C$) such that any algorithm which makes $n$ matrix-vector queries to $W$ and outputs an estimator $\trhat$ such that $C^{-1} \tr(W^{-1}) \le \trhat \le C\tr(W^{-1})$ with probability at least $1-\delta$ must use $n \ge \Omega(d)$ queries.
\end{theorem}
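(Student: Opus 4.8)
The plan is to show that for a small absolute constant $c>0$, any deterministic algorithm making at most $n\le(1-c)\,d$ matrix--vector queries to $W$ fails to estimate $\tr(W^{-1})$ to within a factor $C$ with probability at least some $\delta=\delta(C)>0$; once $n>(1-c)\,d$ there is nothing left to prove. (As in the proof of Theorem~\ref{thm:sampling-is-logk-hard}, randomized algorithms are handled by conditioning on the internal seed.) Since $W\sim\Wishart(d)$ is rotationally invariant, the first step is to argue --- via the rotation/conditioning machinery already needed for block Krylov methods (Sections~\ref{scn:conditioning}--\ref{scn:from_query}), and following \cite{braverman2020regressionLB} --- that an adaptive sequence of $n$ matrix--vector queries reveals no more than the action of $W$ on a fixed $n$-dimensional coordinate subspace; thus I may assume the transcript is exactly the first $n$ columns of $W$, equivalently the top-left block $W_{11}\in\R^{n\times n}$ and the off-diagonal block $W_{21}=W_{12}^{\top}$.

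The second step is a Schur-complement identity. Writing $W=XX^{\top}$ with $X$ having i.i.d.\ $\NN(0,1/d)$ entries and splitting the rows of $X$ into $X_1\in\R^{n\times d}$ and $X_2\in\R^{(d-n)\times d}$, the Schur complement of $W_{11}$ is $S\deq W_{22}-W_{21}W_{11}^{-1}W_{12}=X_2\,P\,X_2^{\top}$, where $P$ projects onto the orthogonal complement of the row span of $X_1$. By the independence of orthogonal Gaussian projections, $S$ is independent of the transcript and distributed as $\tfrac{d-n}{d}\,\Wishart(d-n)$. The block-inverse formula then gives the exact identity
\[
\tr(W^{-1}) \;=\; \tr(W_{11}^{-1})\;+\;\tr\bigl(S^{-1}\,(I+W_{21}W_{11}^{-2}W_{12})\bigr),
\]
in which $\tr(W_{11}^{-1})$ and $M\deq I+W_{21}W_{11}^{-2}W_{12}\succeq I$ are measurable functions of the transcript, while $S$ is a fresh, independent Wishart matrix of dimension $m\deq d-n=\Omega(d)$.

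The third step combines this with three standard facts. (a) Heavy lower tail (Edelman): $\Pr[\lambda_{\min}(\Wishart(m))\le\eps/m^2]\ge\Omega(\sqrt\eps)$, so $\tr(\Wishart(m)^{-1})\ge\eps^{-1}m^2$ with probability $\Omega(\sqrt\eps)$. (b) Light upper tail: $\E\bigl[\sum_{i\ge2}\lambda_i(\Wishart(m))^{-1}\bigr]=O(m^2)$, which with (a) yields, for every $\eta>0$, a constant $A(\eta)$ with $\Pr[\tr(\Wishart(m)^{-1})\ge A(\eta)\,m^2]\le\eta$, uniformly in $m$. (c) Because $X_1$ is $n\times d$ with $n\le(1-c)\,d$, Davidson--Szarek bounds give $\lambda_{\min}(W_{11})=\Omega(1)$ and $\|W_{12}\|=O(1)$ with probability $1-e^{-\Omega(d)}$, hence the transcript is ``good'' --- meaning $\tr(W_{11}^{-1})=O(d)$ and $\lambda_{\max}(M)=O(1)$ --- with probability at least $1/2$ for $d$ large. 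Now condition on a good transcript. Using $S^{-1}\succeq0$ and $I\preceq M\preceq\lambda_{\max}(M)\,I$ in the identity above together with (a) and (b): with probability $\Omega(\sqrt\eps)$ we get $\tr(W^{-1})\ge\tr(S^{-1})=\Omega(d^2/\eps)$, while with probability $1-\eta$ we get $\tr(W^{-1})\le\tr(W_{11}^{-1})+\lambda_{\max}(M)\,\tr(S^{-1})=O_\eta(d^2)$. Fixing $\eta=1/2$ and then choosing $\eps$ small enough, depending only on $C$ and the absolute constants above, so that the two scales $\Omega(d^2/\eps)$ and $O(d^2)$ differ by more than a factor $C^2$, no single number $\trhat$ (the deterministic output given the transcript) can lie within a factor $C$ of $\tr(W^{-1})$ on both events; hence the algorithm errs with conditional probability $\Omega(\sqrt\eps)$, and therefore with unconditional probability $\delta=\Omega(\sqrt\eps)=\delta(C)>0$. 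This forces $n>(1-c)\,d=\Omega(d)$.

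The main obstacle is the reduction in the first step: formalizing that adaptive matrix--vector queries to a rotationally invariant matrix are no more powerful than revealing a fixed coordinate block. The subtlety is that querying a previously returned vector $Wv$ (to obtain $W^2v$, etc.) can be genuinely useful, so the explored directions form a Krylov-type span rather than a bare coordinate subspace; controlling this requires the conditioning lemma used later in the paper. The remaining ingredients --- the Schur-complement identity and the random matrix estimates --- are routine, though (b) and (c) must be stated non-asymptotically and uniformly in the dimension.
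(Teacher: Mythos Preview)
Your proposal is correct in outline and shares the paper's core idea: after $n\le d/2$ adaptive queries the residual is an independent smaller Wishart, whose smallest eigenvalue has a heavy lower tail, forcing a $C$-factor estimator of $\tr(W^{-1})$ to fail with some probability $\delta(C)>0$. However, you take a more laborious route than necessary, and your stated ``main obstacle'' is not actually an obstacle.

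First, the adaptive-to-block reduction in your step~1 is \emph{not} the delicate part. You invoke the rotation/conditioning machinery of Section~\ref{scn:reduction-to-block-krylov} and worry about Krylov-type spans, but this is unneeded here: $n$ matrix--vector queries to $W$ always reveal exactly the action of $W$ on (at most) an $n$-dimensional subspace, regardless of whether later queries are $Wv$, $W^2v$, etc. The paper simply cites the required structural fact as Proposition~\ref{prop:wishart_posterior} (from \cite{braverman2020regressionLB}): after any $n$ adaptive queries, one may orthogonally conjugate $W$ so that the bottom-right residual $\widetilde W$ is an independent $\Wishart(d-n)$, with the remaining blocks determined by the transcript. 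This is precisely your $S$ (up to the scaling $\tfrac{d-n}{d}$), and its proof is a short Gaussian conditioning argument, not the block-Krylov reduction.

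Second, where you use the full Schur-complement identity $\tr(W^{-1})=\tr(W_{11}^{-1})+\tr(S^{-1}M)$ together with a ``good transcript'' event bounding $\tr(W_{11}^{-1})$ and $\lambda_{\max}(M)$, the paper bypasses both: for the lower bound it uses only $\tr(W^{-1})\ge\lambda_{\max}(W^{-1})\ge\lambda_{\max}(\widetilde W^{-1})$ via the eigenvalue comparison of Proposition~\ref{prop:wishart_smaller_than_posterior}, and for the upper bound it uses the \emph{unconditional} tail $\Pr\{\tr(W^{-1})\le C'd^2\}\ge 1/2$ from Proposition~\ref{prop:wishart_inverse_trace}. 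This avoids your Davidson--Szarek estimates on $W_{11}$ and $M$ altogether. Your Schur route does buy a marginally sharper inequality ($\tr(W^{-1})\ge\tr(S^{-1})$ rather than $\ge\lambda_{\max}(\widetilde W^{-1})$), but this is not needed for the stated theorem.
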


\begin{remark}
    Suppose that we want to sample from a target distribution $\pi$ which is $\alpha$-strongly log-concave. It is straightforward to check that total variation guarantees are invariant under rescaling the target (replacing $\pi$ with $S_\# \pi$, where $S : \R^d\to\R^d$ is the scaling map $Sx \deq \zeta x$ for some $\zeta > 0$), whereas Wasserstein guarantees are not. Instead, the scale-invariant quantity is $\sqrt \alpha \, W_2$, which is what appears in Theorem~\ref{thm:reduce_inv_trace_sampling}.
\end{remark}

Consider the class of centered Gaussian distributions on $\R^d$ which are $\alpha$-strongly log-concave and $\beta$-log-smooth; let $\kappa \deq \beta/\alpha$ denote the condition number.
Let $\ms C_{\msf G, \msf d}(\kappa, d, \varepsilon)$ denote the query complexity of outputting a sample which is $\varepsilon$-close in the metric $\msf d$ to a target distribution in this class, where $\msf d$ is one of the scale-invariant distances $\msf d \in \{\text{TV},\, \sqrt \alpha \,W_2\}$. Then, Theorems~\ref{thm:reduce_inv_trace_sampling} and~\ref{thm:inv_trace_lower_bd} (with $C = 2$ and $\delta$, $c$ being universal constants) show that for $d \ge c^{-1}$,
\begin{align}\label{eq:preliminary_lower_bd}
    \ms C_{\msf G, \msf d}(c^{-2} d^2, d, c)
    &\ge \Omega(d)\,.
\end{align}

By embedding the construction into higher dimensions, we obtain the following corollary.

\begin{corollary}[query lower bound via Wishart matrices]\label{cor:wishart_lower_bd}
    For $\msf d \in \{\mathrm{TV},\, \sqrt \alpha \,W_2\}$, there is a universal constant $c > 0$ such that
    \begin{align*}
        \ms C_{\msf G, \msf d}(\kappa, d, c)
        &\ge \Omega\bigl(\sqrt \kappa \wedge d\bigr)\,.
    \end{align*}
\end{corollary}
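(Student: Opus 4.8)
**The plan is to deduce Corollary~\ref{cor:wishart_lower_bd} from the preliminary bound~\eqref{eq:preliminary_lower_bd} by a padding/embedding argument.** The preliminary bound says that for the specific condition number $\kappa_0 \deq c^{-2}d_0^2$ in dimension $d_0$, sampling requires $\Omega(d_0)$ queries. We want a bound for arbitrary $(\kappa, d)$. The idea is to take the hard instance in a suitable smaller dimension $d_0 \le d$ and embed it into $\R^d$ by making the target Gaussian a product of the hard $d_0$-dimensional instance with a standard (well-conditioned) Gaussian in the remaining $d - d_0$ coordinates.

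\textbf{Step 1: choosing the sub-dimension.} Given target parameters $\kappa$ and $d$, set $d_0 \deq \lfloor c\sqrt\kappa \rfloor \wedge d$ (up to constants), so that the Wishart hard instance in dimension $d_0$ has condition number $\Theta(d_0^2) \le O(\kappa)$, i.e.\ fits inside the allowed condition-number budget, and also $d_0 \le d$. This is exactly the regime where~\eqref{eq:preliminary_lower_bd} applies in dimension $d_0$ (as long as $d_0 \ge c^{-1}$; when $\sqrt\kappa \wedge d = O(1)$ the claimed bound $\Omega(\sqrt\kappa \wedge d)$ is trivial, so we may assume $d_0$ is large).

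\textbf{Step 2: the embedding.} Let $\Sigma_0$ be (an instance of) the hard covariance in $\R^{d_0}$ from the Wishart construction, scaled so that its precision matrix has eigenvalues in $[\alpha, \beta]$ with $\beta/\alpha = \Theta(d_0^2) \le \kappa$. Define the target in $\R^d$ to be $\pi \deq \NN(0,\Sigma)$ with $\Sigma \deq \Sigma_0 \oplus \tfrac1\alpha I_{d-d_0}$ (block diagonal); its condition number is still at most $\kappa$, and it is $\alpha$-strongly log-concave. A query algorithm for $\pi$ in $\R^d$ that outputs a sample $\varepsilon$-close in $\msf d$ restricts (by marginalizing onto the first $d_0$ coordinates, which only decreases TV and $W_2$) to a query algorithm for the $d_0$-dimensional instance $\NN(0,\Sigma_0)$ with the same query count: a matrix-vector query against $\Sigma^{-1}$ is simulated by one against $\Sigma_0^{-1}$ on the relevant block, since the potential splits as a sum over the two blocks and the gradient/value in the first block depends only on the first-block coordinates. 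Hence the $d$-dimensional query complexity is at least the $d_0$-dimensional one, which by~\eqref{eq:preliminary_lower_bd} is $\Omega(d_0) = \Omega(\sqrt\kappa \wedge d)$.

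\textbf{Main obstacle.} The routine part is checking that queries in the embedded instance reduce to queries in the low-dimensional instance — this is immediate because the Gaussian potential is additively separable across the two blocks, so $V(x) = V_0(x_{1:d_0}) + \tfrac\alpha2\|x_{d_0+1:d}\|^2$ and both the value and gradient in the first block are functions of $x_{1:d_0}$ alone. The one point requiring a little care is matching the metric: we need that closeness of the law of $X \in \R^d$ to $\pi$ in $\msf d$ implies closeness of the law of $X_{1:d_0}$ to $\NN(0,\Sigma_0)$ in the \emph{same scale-invariant metric} $\msf d$. For TV this is the data-processing inequality; for $\sqrt\alpha\,W_2$ it follows because marginalization is $1$-Lipschitz for $W_2$ and $\alpha$ (hence the scaling) is unchanged by the embedding. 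So the embedding preserves exactly the hypotheses of~\eqref{eq:preliminary_lower_bd}, and the corollary follows. I do not expect any genuinely hard step here; the content is entirely in Theorems~\ref{thm:reduce_inv_trace_sampling} and~\ref{thm:inv_trace_lower_bd}, and this corollary is a bookkeeping reduction to arbitrary $(\kappa,d)$.
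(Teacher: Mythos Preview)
Your proposal is correct and takes essentially the same approach as the paper's own proof: choose $d_\star \approx c\sqrt\kappa \wedge d$, invoke the preliminary bound~\eqref{eq:preliminary_lower_bd} in dimension $d_\star$ (using that a larger condition-number budget only enlarges the class), and embed into $\R^d$ by padding with a well-conditioned Gaussian. The paper states the embedding step in one line without justification, whereas you spell out the separability of the potential and the data-processing/marginalization argument for the metrics; this extra detail is fine and does not constitute a different route.
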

\begin{proof}
    If $\kappa \ge c^{-2} d^2$, then~\eqref{eq:preliminary_lower_bd} yields
    \begin{align*}
        \ms C_{\msf G, \msf d}(\kappa, d, c)
        &\ge \Omega(d)
        \ge \Omega\bigl(\sqrt\kappa \wedge d\bigr)\,.
    \end{align*}
    Otherwise, if $\kappa \le c^{-2} d^2$, let $d_\star$ be the largest integer such that $\kappa \ge c^{-2} d_\star^2$. Then, by embedding the $d_\star$-dimensional construction into dimension $d$,
    \begin{align*}
        \ms C_{\msf G, \msf d}(\kappa, d, c)
        &\ge \ms C_{\msf G, \msf d}(\kappa, d_\star, c)
        \ge \Omega(d_\star)
        \ge \Omega\bigl(\sqrt \kappa \wedge d\bigr)\,,
    \end{align*}
    which concludes the proof.
\end{proof}

\subsection{Reducing inverse trace estimation to sampling}

In this section, we prove Theorem~\ref{thm:reduce_inv_trace_sampling}, which is based on the concentration of the squared norm of a Gaussian. 
We recall the following identity:

\begin{lemma}[concentration of the squared norm]\label{cor:conc_sq_norm}
    Let $Z \sim \NN(0,\Sigma)$. Then,
    \begin{align*}
        \var(\norm Z^2)
        &= 2\,\norm\Sigma_{\HS}^2\,.
    \end{align*}
\end{lemma}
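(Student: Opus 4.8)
The plan is to compute the variance directly by diagonalizing $\Sigma$. Write $\Sigma = U \Lambda U^\T$ with $U$ orthogonal and $\Lambda = \diag(\lambda_1,\dotsc,\lambda_d)$; then $Z \eqd U \Lambda^{1/2} g$ for a standard Gaussian vector $g \sim \NN(0,I_d)$, so $\norm Z^2 = \sum_{i=1}^d \lambda_i g_i^2$ in distribution. Since the $g_i$ are independent, $\var(\norm Z^2) = \sum_{i=1}^d \lambda_i^2 \, \var(g_i^2)$, and a one-dimensional computation gives $\var(g_i^2) = \E[g_i^4] - (\E[g_i^2])^2 = 3 - 1 = 2$. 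Hence $\var(\norm Z^2) = 2 \sum_{i=1}^d \lambda_i^2 = 2\,\norm\Lambda_{\HS}^2 = 2\,\norm\Sigma_{\HS}^2$, where the last equality is the orthogonal invariance of the Hilbert--Schmidt norm.

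Alternatively, and perhaps more cleanly, I would avoid the diagonalization and use the quadratic-form identity: for $Z \sim \NN(0,\Sigma)$ and any symmetric matrix $A$, one has $\E[Z^\T A Z] = \tr(A\Sigma)$ and $\var(Z^\T A Z) = 2\tr(A\Sigma A \Sigma)$, which follows from Isserlis'/Wick's theorem for Gaussian fourth moments. Applying this with $A = I_d$ gives $\var(\norm Z^2) = 2\tr(\Sigma^2) = 2\,\norm\Sigma_{\HS}^2$ immediately.

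There is no real obstacle here; this is a standard identity. The only point requiring a line of care is the fourth-moment computation — either verifying $\E[g^4]=3$ for a standard scalar Gaussian, or invoking Wick's theorem to expand $\E[(\sum_i \lambda_i g_i^2)^2] = \sum_{i,j}\lambda_i\lambda_j\,\E[g_i^2 g_j^2]$ and using $\E[g_i^2 g_j^2] = 1 + 2\delta_{ij}$. Everything else is bookkeeping.
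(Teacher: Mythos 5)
Your first argument — diagonalize $\Sigma$, reduce $\norm Z^2$ to a weighted sum of independent chi-squared variables, and sum the variances — is precisely the paper's proof, just spelled out in more detail. The alternative via Wick's theorem is also fine, but no comparison is needed: your main route is the paper's route.
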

\begin{proof}
    Note that since all quantities are rotationally invariant, we may assume without loss of generality that $\Sigma$ is diagonal.
    Then the equality claimed is just the variance of a non-homogenous chi-squared random variable.
\end{proof}

We now prove Theorem~\ref{thm:reduce_inv_trace_sampling}.

\medskip{}

\begin{proof}[Proof of Theorem~\ref{thm:reduce_inv_trace_sampling}]
    Let $W\sim \Wishart(d)$ and let $\Sigma \deq W^{-1}$. By Proposition~\ref{prop:wishart_smallest_eig}, there exists $c > 0$ (depending only on $\delta$) such that with probability at least $1-\delta/3$, it holds that
    \begin{align*}
        cd^{-2}\,I_d \preceq \Sigma^{-1} \preceq c^{-1}\,I_d\,.
    \end{align*}
    We work on the event $\mc E$ that this holds.

    \textbf{Case 1: total variation distance.} From Lemma~\ref{cor:conc_sq_norm} and Chebyshev's inequality, we deduce that if $Z_1,\dotsc,Z_m \simiid \NN(0,\Sigma)$ and $\trhat_\star \deq m^{-1} \sum_{i=1}^m \norm{Z_i}^2$,
    \begin{align*}
        \Pr\bigl\{\bigl\lvert\trhat_\star - \tr \Sigma\bigr\rvert \ge \frac{1}{2} \tr \Sigma\bigr\}
        &\le \frac{\var \trhat_\star}{{(\tr \Sigma)}^2/4}
        = \frac{8}{m} \cdot \frac{\tr(\Sigma^2)}{\tr(\Sigma)^2} \le \frac{8}{m}\,.
    \end{align*}
    Take $m \ge 48/\delta$ so that this probability is at most $\delta/3$.
    Conditionally on $W$, let $\widehat \pi_W$ denote the law of the sample $X$ of the algorithm when run on the target $\NN(0,\Sigma)$.
    By running the sampling algorithm $m$ times, we can obtain i.i.d.\ samples $X_1,\dotsc,X_m\simiid \widehat \pi_W$.
    Then, for $\trhat \deq m^{-1} \sum_{i=1}^m \norm{X_i}^2$,
    \begin{align*}
        &\Pr\bigl\{\bigl\lvert\trhat - \tr \Sigma\bigr\rvert \ge \frac{1}{2} \tr \Sigma\bigr\}
        \le \Pr(\mc E^\comp) +  \Pr\bigl\{\bigl\lvert\trhat - \tr \Sigma\bigr\rvert \ge \frac{1}{2}\tr\Sigma, \; \mc E\bigr\} \\
        &\qquad \le \frac{\delta}{3} + \E\bigl[\Pr\bigl\{\bigl\lvert\trhat_\star - \tr \Sigma\bigr\rvert \ge \frac{1}{2} \tr \Sigma \bigm\vert W\bigr\} \one_{\mc E}\bigr] + \E\bigl[\norm{\widehat \pi_W^{\otimes m}- {\NN(0,\Sigma)}^{\otimes m}}_{\rm TV} \one_{\mc E}\bigr] \\
        &\qquad \le \frac{\delta}{3} + \frac{\delta}{3} + cm\,.
    \end{align*}
    If we choose $c \le \delta/(3m)$, then $\trhat$ is an estimator of $\tr(W^{-1})$ with multiplicative error at most $2$ which succeeds with probability at least $1-\delta$. Note that both $c$ and $m$ depend only on $\delta$.
    
    \textbf{Case 2: Wasserstein distance.} Consider a coupling of $X$ and $Z$ such that, conditionally on $W$, we have $\E[\norm{X-Z}^2 \mid W] = \E[W_2^2(\widehat \pi_W, \NN(0,\Sigma)) \mid W]$. Let $(X_1,Z_1),\dotsc,(X_m,Z_m)$ be i.i.d.\ copies of this coupling.
    Also, let $\mc E'$ denote the event that $\lambda_{\min}(W^{-1}) \ge \bar c d^2$, where $\bar c$ is a constant depending only on $\delta$, chosen so that $\Pr(\mc E'^\comp) \le \delta/3$ using Proposition~\ref{prop:wishart_smallest_eig}.
    Then, conditionally on $W$ in the event $\mc E \cap \mc E'$,
    \begin{align*}
        \E\bigl[\abs{\trhat - \tr \Sigma} \bigm\vert W\bigr]
        &\le \E\bigl[\abs{\trhat - \trhat_\star} \bigm\vert W\bigr] + \E\bigl[\abs{\trhat_\star - \tr \Sigma} \bigm\vert W\bigr] \\
        &\le \E\bigl[\abs{\trhat - \trhat_\star} \bigm\vert W\bigr] + \frac{2\tr\Sigma}{\sqrt m}\,,
    \end{align*}
    where we used Lemma~\ref{cor:conc_sq_norm}. Using $\norm x^2 - \norm y^2 = \langle x-y, x+y\rangle$, for any $\lambda > 0$,
    \begin{align*}
        \E\bigl[\abs{\trhat - \trhat_\star} \bigm\vert W\bigr]
        &\le \E\bigl[\abs{\,\norm X^2 - \norm Z^2\,} \bigm\vert W\bigr]
        \le \E\bigl[\norm{X-Z}^2 \bigm\vert W\bigr] + 2\E\bigl[\abs{\langle X-Z, Z\rangle} \bigm\vert W\bigr] \\
        &\le (1+\lambda) \E\bigl[\norm{X-Z}^2 \bigm\vert W\bigr] + \frac{1}{\lambda} \E\bigl[\norm Z^2 \bigm\vert W\bigr] \\
        &\le (1+\lambda) \, c^3 d^2 + \frac{\tr \Sigma}{\lambda}
        \le (1+\lambda) \, \frac{c^3}{\bar c} \tr \Sigma + \frac{\tr \Sigma}{\lambda}\,.
    \end{align*}
    For the last line, recall that we are assuming $\E[W_2^2(\widehat \pi_W, \mc N(0,\Sigma)) \mid W] \le c^3 d^2$.
    If we take $\lambda = 18/\delta$, $m\ge {(36/\delta)}^2$, and if $c$ is sufficiently small (depending only on $\delta$), we obtain
    \begin{align*}
        \E\bigl[\abs{\trhat- \tr \Sigma} \bigm\vert W\bigr]
        &\le \frac{\delta \tr \Sigma}{6}\,.
    \end{align*}
    By Markov's inequality,
    \begin{align*}
        \Pr\bigl\{\bigl\lvert\trhat - \tr \Sigma\bigr\rvert \ge \frac{1}{2}\tr\Sigma\bigr\}
        &\le \Pr(\mc E^\comp) + \Pr(\mc E'^\comp) + \E\bigl[ \Pr\bigl\{\bigl\lvert\trhat - \tr \Sigma\bigr\rvert \ge \frac{1}{2}\tr\Sigma \bigm\vert W\bigr\} \one_{\mc E \cap \mc E'}\bigr] \\
        &\le \frac{\delta}{3} + \frac{\delta}{3} + \frac{\delta}{3}
        \le \delta\,.
    \end{align*}
    We conclude as before.
\end{proof}

\subsection{Lower bound for inverse trace estimation}

In this section, we prove Theorem~\ref{thm:inv_trace_lower_bd}. The idea is that due to the heavy tails of $\lambda_{\min}(W^{-1})$ implied by Proposition~\ref{prop:wishart_smallest_eig}, with some small probability $\delta$, $\tr(W^{-1})$ will be very large. An algorithm for inverse trace estimation which succeeds with probability at least $1-\delta$ must be able to detect this event, and we show that this requires making $\Omega(d)$ queries.

The key technical tools are the following propositions, due to~\cite{braverman2020regressionLB}.

\begin{proposition}[{\cite[Lemma 3.4]{braverman2020regressionLB}}] \label{prop:wishart_posterior} 
    Let $W \sim \Wishart(d).$ Then, for any sequence of $n < d$ (possibly adaptive) queries $v_1, \dotsc, v_n$ and responses $w_1 = W v_1, \dotsc, w_n = W v_n$, there exists an orthogonal matrix $V \in \BR^{d \times d}$ and matrices $Y_1 \in \BR^{n \times n}, Y_2 \in \BR^{(d-n) \times n}$ that only depend on $v_1, \dots, v_n, w_1, \dots, w_n$, such that $V W V^\top$ has the block form
    \begin{align*}
        VWV^\top = \begin{bmatrix} Y_1 Y_1^\top & Y_1 Y_2^\top \\ Y_2 Y_1^\top & Y_2 Y_2^\top + \widetilde{W} \end{bmatrix}\,.
    \end{align*}
    Here, conditionally on $v_1, \dotsc, v_n, w_1, \dotsc, w_n$, the matrix $\widetilde{W}$ has the $\Wishart(d-n)$ distribution.
\end{proposition}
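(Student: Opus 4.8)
The plan is to prove the statement by induction on $n$, peeling off one query at a time, with rotational invariance of the Wishart law absorbing the adaptivity. Since the conclusion is purely distributional, I would first condition on the algorithm's internal coins, so that each $v_k$ is a deterministic function of the earlier data $(v_1,w_1,\dots,v_{k-1},w_{k-1})$, and discard any query lying in the span of the earlier ones (it reveals nothing new), so that $v_1,\dots,v_n$ may be assumed linearly independent. The inductive engine is the following \emph{single-step lemma}: if $M\sim\Wishart(m)$ and $v\in\R^m$ is a unit vector, then there exist a rotation $V_0=V_0(v)$ and matrices $Y_1\in\R^{1\times1}$, $Y_2\in\R^{(m-1)\times1}$, all measurable with respect to $(v,Mv)$, such that $V_0MV_0^\top$ has the asserted $1+(m-1)$ block form and the leftover block $\widetilde M$ defined by that identity satisfies $\widetilde M\mid Mv\sim\Wishart(m-1)$.

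To prove the single-step lemma, rotate so that $v=e_1$ (folding the rotation into $V_0$) and write $M=XX^\top$ with $X$ having i.i.d.\ $\mathcal N(0,1/m)$ entries. Condition on the first row $x_1^\top$ of $X$, which is a.s.\ nonzero; the remaining rows $X'$ are independent of $x_1$, and $Me_1=(\norm{x_1}^2,\,X'x_1)$. Choosing $R\in\R^{m\times(m-1)}$ with orthonormal columns spanning $x_1^\perp$, the block $X'R$ is i.i.d.\ Gaussian and, because $[x_1/\norm{x_1},R]$ is an orthonormal basis, it is independent of $(x_1,X'x_1)$ with a law not depending on $x_1$. Since the lower-right $(m-1)\times(m-1)$ block of $M$ equals $(X'R)(X'R)^\top+(X'x_1)(X'x_1)^\top/\norm{x_1}^2$, setting $Y_1\deq\sqrt{M_{11}}$ and letting $Y_2$ be the last $m-1$ entries of $Me_1$ divided by $\sqrt{M_{11}}$ (both functions of $Me_1$) gives $\widetilde M=(X'R)(X'R)^\top$, which is hence $\Wishart(m-1)$ conditionally on $Me_1$; symmetry of $M$ gives $Y_1Y_2^\top=M_{1,2:m}$, so the block identity is exact.

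For the induction, maintain the invariant that after $k-1$ steps there are data-measurable $V^{(k-1)}$, $Y_1^{(k-1)}\in\R^{(k-1)\times(k-1)}$, $Y_2^{(k-1)}\in\R^{(d-k+1)\times(k-1)}$ putting $V^{(k-1)}WV^{(k-1)\top}$ in the target block form with $\widetilde W^{(k-1)}\mid(v_i,w_i)_{i<k}\sim\Wishart(d-k+1)$. Given the $k$-th query $v_k$ (determined by the past data), set $\tilde v_k\deq V^{(k-1)}v_k$ and split it as $(\tilde v_k',\tilde v_k'')$ along the block structure. Reading off the block form, $w_k=Wv_k$ is, given the past data, equivalent to revealing the single vector $\widetilde W^{(k-1)}\tilde v_k''$: the top $k-1$ coordinates of the product are past-data-determined, and the bottom part minus past-data-determined terms equals $\widetilde W^{(k-1)}\tilde v_k''$. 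Representing $\widetilde W^{(k-1)}$, conditionally on the past data, as a genuine $\Wishart(d-k+1)$ matrix and applying the single-step lemma with $v=\tilde v_k''/\norm{\tilde v_k''}$ produces a rotation $\widetilde V$ of the lower block and matrices $Y_1',Y_2'$; I then splice these in by setting $V^{(k)}\deq\operatorname{blockdiag}(I_{k-1},\widetilde V)\,V^{(k-1)}$, letting $g^\top$ and $G$ be the first row and the remaining rows of $\widetilde V Y_2^{(k-1)}$, defining $Y_1^{(k)}$ to be $Y_1^{(k-1)}$ with a zero column appended on the right and then the row $(g^\top,Y_1')$ appended at the bottom, and $Y_2^{(k)}\deq[\,G\mid Y_2'\,]$. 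A routine block multiplication checks the invariant at step $k$, and the single-step lemma gives $\widetilde W^{(k)}\mid(v_i,w_i)_{i\le k}\sim\Wishart(d-k)$. Taking $k=n$ proves the proposition.

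The main obstacle is the conditioning bookkeeping: one must verify at every stage that the leftover block has conditional law \emph{exactly} $\Wishart(d-k)$ given the accumulated query--response data, with no residual dependence on the auxiliary Gaussian rows conditioned on inside earlier invocations of the single-step lemma — this is why one conditions on the full row $x_1^\top$ (not just $Me_1$) inside the lemma and then checks the resulting law is $x_1$-free. The fact that makes this work, and that simultaneously neutralizes the adaptivity, is the rotational-invariance observation from the single-step lemma: decomposing an i.i.d.\ Gaussian matrix into its action along a fixed direction and its action on the orthogonal complement yields, in any orthonormal basis of that complement, a fresh i.i.d.\ Gaussian matrix independent of the first part; thus, conditioned on the data so far, the unexplored part of $W$ remains rotationally invariant, so querying the adaptively chosen $v_k$ is no more informative than querying a fixed direction. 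The remaining work — tracking measurability and the block algebra — is routine.
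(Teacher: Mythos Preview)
The paper does not prove this proposition; it is simply cited from \cite{braverman2020regressionLB}. Your inductive argument---peeling off one query at a time via the single-step lemma and splicing the resulting $1\times 1$ block into the running $k\times k$ block form---is correct and is essentially the standard proof. The conditioning bookkeeping you flag (conditioning on the full first row $x_1$ inside the single-step lemma, then observing that the leftover law is $x_1$-free) is handled properly, and the block-algebra check for $Y_1^{(k)}$ and $Y_2^{(k)}$ goes through as you describe.

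One small caveat: under the paper's normalization of $\Wishart(m)$ (entries of $X$ are $\mathcal N(0,1/m)$), your leftover matrix $\widetilde M=(X'R)(X'R)^\top$ has entries of $X'R$ with variance $1/m$, not $1/(m-1)$. Iterating, $\widetilde W$ is distributed as $\frac{d-n}{d}$ times a $\Wishart(d-n)$ matrix rather than exactly $\Wishart(d-n)$. This is a harmless constant factor for how the result is used downstream (where $n\le d/2$), and is likely just a slip in the paper's restatement of the cited lemma, but it is worth being aware of.
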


\begin{proposition}[{\cite[Lemma 3.5]{braverman2020regressionLB}}] \label{prop:wishart_smaller_than_posterior} 
    For any matrices $Y_1 \in \BR^{n \times n}$, $Y_2 \in \BR^{(d-n) \times n}$, and any symmetric matrix $\widetilde{W} \in \BR^{(d-n) \times (d-n)}$, it holds that
    \begin{align*}
        \lambda_{\min}\Bigl(\begin{bmatrix} Y_1 Y_1^\top & Y_1 Y_2^\top \\ Y_2 Y_1^\top & Y_2 Y_2^\top + \widetilde{W} \end{bmatrix} \Bigr)
        \le \lambda_{\min}(\widetilde W)\,.
    \end{align*}
\end{proposition}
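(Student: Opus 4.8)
The plan is to reduce the inequality to producing a single favourable test vector via the Rayleigh quotient, and then to locate such a vector by an elementary dimension count. Throughout I will use that in the relevant application $\widetilde W$ is positive semidefinite: by Proposition~\ref{prop:wishart_posterior} it is distributed as $\Wishart(d-n)$, so in particular $\lambda_{\min}(\widetilde W)\ge 0$.

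First I would rewrite the block matrix compactly as $M = YY^\top + \widehat W$, where $Y \deq \begin{bmatrix} Y_1 \\ Y_2\end{bmatrix}\in\R^{d\times n}$ and $\widehat W \deq \begin{bmatrix} 0 & 0 \\ 0 & \widetilde W\end{bmatrix}\in\R^{d\times d}$, and record the identity $u^\top M u = \norm{Y^\top u}^2 + u^\top \widehat W u$, valid for all $u\in\R^d$. Since $\lambda_{\min}(M)\le u^\top M u/\norm u^2$ for every nonzero $u$, it suffices to exhibit a nonzero $u$ lying in $\ker(Y^\top)$ with $u^\top \widehat W u\le \lambda_{\min}(\widetilde W)\,\norm u^2$; for such a $u$ the first term vanishes and the claim follows.

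To find such a $u$, I would set $K\deq\ker(Y^\top)$, which has $\dim K\ge d-n$ because $Y$ has only $n$ columns, and let $N$ be the span of the eigenvectors of $\widehat W$ whose eigenvalue is at most $\lambda_{\min}(\widetilde W)$, so that $u^\top \widehat W u\le \lambda_{\min}(\widetilde W)\,\norm u^2$ for all $u\in N$ by the spectral theorem. The key claim is that $\dim N\ge n+1$: the coordinate vectors $e_1,\dots,e_n$ are eigenvectors of $\widehat W$ with eigenvalue $0$, which is $\le\lambda_{\min}(\widetilde W)$ since $\widetilde W\succeq 0$; and any $\lambda_{\min}(\widetilde W)$-eigenvector of $\widetilde W$, embedded into the last $d-n$ coordinates, is a further eigenvector of $\widehat W$ lying in $N$ that is supported away from the first $n$ coordinates, hence independent of $e_1,\dots,e_n$. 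Therefore $\dim K+\dim N\ge (d-n)+(n+1)>d$, so $K\cap N$ contains a nonzero vector $u$, and the proposition follows.

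The step I expect to be the only genuine obstacle is precisely upgrading the naive count $\dim N\ge n$ (coming just from the structural zeros of $\widehat W$) to $\dim N\ge n+1$, since the naive count would only give $\dim(K\cap N)\ge 0$. This upgrade is exactly where $\widetilde W\succeq 0$ is used, and the hypothesis is essential (for indefinite $\widetilde W$ the stated inequality can fail), but positive semidefiniteness always holds for the Wishart $\widetilde W$ at hand. Everything else is routine. As an alternative one-line route, the same bound follows from Weyl's inequality together with $\rank{YY^\top}\le n$: writing the eigenvalues of a symmetric matrix in decreasing order as $\mu_1\ge\cdots\ge\mu_d$, one has $\lambda_{\min}(M)=\mu_d(YY^\top+\widehat W)\le \mu_{n+1}(YY^\top)+\mu_{d-n}(\widehat W)=0+\lambda_{\min}(\widetilde W)$, where the last equality again uses $\widetilde W\succeq 0$ to identify the $(d-n)$-th largest eigenvalue of $\widehat W=\diag(0_n,\widetilde W)$.
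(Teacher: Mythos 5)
Your proof is correct; note that the paper itself cites this result to an external reference without reproducing a proof, so there is no in-paper argument to compare against. Both of your routes --- the dimension count showing $\ker(Y^\top)\cap N\neq\{0\}$, and the one-line Weyl bound $\lambda_{\min}(YY^\top+\widehat W)\le\mu_{n+1}(YY^\top)+\mu_{d-n}(\widehat W)$ --- are standard and sound under the hypothesis $\widetilde W\succeq 0$.

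You have also correctly spotted that the proposition as printed (``any symmetric matrix $\widetilde W$'') is slightly too strong. For instance, with $n=1$, $d=2$, $Y_1=Y_2=1$, $\widetilde W=-1$, the block matrix $\begin{bmatrix}1&1\\1&0\end{bmatrix}$ has $\lambda_{\min}=(1-\sqrt5)/2\approx-0.618>-1=\lambda_{\min}(\widetilde W)$, so the inequality fails. Positive semidefiniteness of $\widetilde W$ is genuinely needed, and it does hold everywhere the paper invokes the lemma, since there $\widetilde W\sim\Wishart(d-n)$ by Proposition~\ref{prop:wishart_posterior}. Stating $\widetilde W\succeq 0$ as an explicit hypothesis up front would make your write-up cleaner than pointing it out mid-argument, but this is purely presentational; there is no mathematical gap.
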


We are now ready to prove Theorem~\ref{thm:inv_trace_lower_bd}. Note that this result is very similar to that of~\cite{braverman2020regressionLB}, except that we work with the inverse trace rather than the minimum eigenvalue.

\medskip{}

\begin{proof}[Proof of Theorem~\ref{thm:inv_trace_lower_bd}]
    Let $\delta > 0$ be chosen later. We first argue that $\trhat$ must not be too large. Applying Proposition~\ref{prop:wishart_inverse_trace}, we conclude that there is a universal constant $C' > 0$ such that $\tr(W^{-1}) \le C'd^2$ with probability at least $1/2$. Hence,
    \begin{align*}
        \Pr\bigl\{\trhat \le CC'd^2\bigr\}
        &\ge \Pr\bigl\{\tr(W^{-1}) \le C'd^2~\text{and}~\trhat \le C\tr(W^{-1})\bigr\} \\
        &\ge \Pr\{\tr(W^{-1}) \le C'd^2\} - \Pr\{\trhat > C\tr(W^{-1})\}
        \ge \frac{1}{2} - \delta\,.
    \end{align*}
    Next, suppose for the sake of contradiction that $n \le d/2$. Let $\ms F_n$ denote the $\sigma$-algebra generated by the information available to the algorithm up to iteration $n$, that is, the queries $v_1,\dotsc,v_n$, the responses $w_1,\dotsc,w_n$, and any external randomness used by the algorithm (which is independent of $W$).
    Applying Propositions~\ref{prop:wishart_posterior} and~\ref{prop:wishart_smaller_than_posterior},
    \begin{align*}
        \Pr\bigl\{\trhat < C^{-1} \tr(W^{-1})\bigr\}
        &\ge \Pr\bigl\{\trhat \le CC' d^2~\text{and}~\lambda_{\max}(W^{-1}) > C^2 C' d^2\bigr\} \\
        &\ge \Pr\bigl\{\trhat \le CC' d^2~\text{and}~\lambda_{\max}(\widetilde W^{-1}) > C^2 C' \, d^2\bigr\} \\
        &= \E\bigl[ \one\{\trhat \le CC' d^2\} \, \Pr\{\lambda_{\max}(\widetilde W^{-1}) \ge C^2 C' d^2 \mid \ms F_n\}\bigr]\,.
    \end{align*}
    According to Proposition~\ref{prop:wishart_posterior}, conditionally on $\ms F_n$, $\widetilde W$ has the $\Wishart(d-n)$ distribution. By applying Proposition~\ref{prop:wishart_smallest_eig},
    \begin{align*}
        \Pr\{\lambda_{\max}(\widetilde W^{-1}) \ge C^2 C' d^2 \mid \ms F_n\}
        &\ge \Pr\{\lambda_{\max}(\widetilde W^{-1}) \ge 4C^2 C' \, {(d-n)}^2 \mid \ms F_n\} \\
        &= \Pr\Bigl\{\lambda_{\min}(\widetilde W) \le \frac{1}{4C^2 C' \, {(d-n)}^2} \Bigm\vert \ms F_n\Bigr\}
        \gtrsim \frac{1}{C\sqrt{C'}}\,.
    \end{align*}
    Therefore,
    \begin{align*}
        \Pr\bigl\{\trhat < C^{-1} \tr(W^{-1})\bigr\}
        &\gtrsim \Pr\bigl\{\trhat \le CC' d^2\bigr\} \, \frac{1}{C\sqrt{C'}}
        \ge \frac{1/2-\delta}{C\sqrt{C'}}\,,
    \end{align*}
    which is larger than $\delta$ provided that $\delta$ is chosen sufficiently small (depending only on $C$). This contradicts the success probability of the algorithm, and hence we deduce that $n\ge d/2$.
\end{proof}

\subsection{Useful facts about Wishart matrices}

We collect together useful facts about Wishart matrices which are used in the proofs.

\begin{proposition}[extreme singular values of a Gaussian matrix]\label{prop:wishart_smallest_eig}
    Let $W \sim \Wishart(d)$.
    For any $x \in [0, 1]$,
    \begin{align*}
        \Pr\bigl\{\lambda_{\min}(W) \le \frac{x}{d^2}\bigr\}
        \asymp \sqrt x\,.
    \end{align*}
    Also, there is a universal constant $C > 0$ such that
    \begin{align*}
        \Pr\{\lambda_{\max}(W) \ge C \, (1+t)\}
        &\le 2\exp(-dt)\,.
    \end{align*}
\end{proposition}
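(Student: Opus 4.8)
The plan is to treat the two tail bounds separately, as they use genuinely different tools. For the upper tail $\lambda_{\max}(W) = \|X\|_{\rm op}^2$, I would invoke the standard non-asymptotic bound on the operator norm of a Gaussian matrix: if $X$ has i.i.d.\ $\mc N(0,1/d)$ entries, then $\sqrt d\,X$ has i.i.d.\ standard Gaussian entries, so by the Davidson--Szarek / Gaussian concentration estimate, $\|\sqrt d\,X\|_{\rm op} \le 2\sqrt d + s$ with probability at least $1 - 2\exp(-s^2/2)$. Dividing by $\sqrt d$ gives $\|X\|_{\rm op} \le 2 + s/\sqrt d$, hence $\lambda_{\max}(W) \le (2 + s/\sqrt d)^2 \le 8 + 2s^2/d$. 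Setting $s^2 = dt/2$ and absorbing constants yields $\Pr\{\lambda_{\max}(W) \ge C\,(1+t)\} \le 2\exp(-dt)$ for a suitable universal $C$; I would just state this with a reference to a standard source (e.g.\ Vershynin's book) rather than reprove the concentration inequality.

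For the lower tail, the claim $\Pr\{\lambda_{\min}(W) \le x/d^2\} \asymp \sqrt x$ for $x\in[0,1]$ is a known sharp estimate on the smallest eigenvalue of a square Gaussian (Wishart) matrix — essentially the Edelman result on the density of $d^2\,\lambda_{\min}(W)$. The cleanest route is to cite Edelman's exact formula: for a square $d\times d$ matrix with i.i.d.\ $\mc N(0,1)$ entries, $n\,\lambda_{\min}$ converges in distribution (and with explicit density bounds for finite $n$) to a law with density proportional to $\frac{1+\sqrt t}{2\sqrt t}\,e^{-t/2 - \sqrt t}$ on $(0,\infty)$; after rescaling by $d$ (to account for the $1/d$ variance normalization, which turns $d\,\lambda_{\min}$ into $d^2\,\lambda_{\min}$) this density behaves like $\Theta(t^{-1/2})$ near $0$, so integrating from $0$ to $x$ gives $\Theta(\sqrt x)$. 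I would present this as: by~\cite[appropriate reference]{edelman}, the random variable $d^2\,\lambda_{\min}(W)$ has a density $f_d$ satisfying $c_1 t^{-1/2} \le f_d(t) \le c_2 t^{-1/2}$ for $t\in[0,1]$ uniformly in $d$, and then $\Pr\{\lambda_{\min}(W)\le x/d^2\} = \int_0^x f_d(t)\,dt \asymp \sqrt x$.

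The main obstacle is pinning down the lower-tail estimate with the correct two-sided $\asymp \sqrt x$ behavior uniformly in $d$, rather than just an asymptotic statement as $d\to\infty$ or a one-sided bound. The upper bound $\Pr\{\lambda_{\min}(W) \le x/d^2\} \lesssim \sqrt x$ can also be obtained more robustly via a union bound / $\varepsilon$-net argument on the unit sphere (the probability that $\|Xu\|$ is small for a fixed $u$ is $O(\sqrt x\,/d)$ since $\|Xu\|^2$ is $\chi^2_d/d$-distributed and... actually one needs the anti-concentration of the minimum over the net), but the matching lower bound $\gtrsim \sqrt x$ genuinely requires the sharp small-ball analysis. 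Since this proposition is used only as a black box for Theorems~\ref{thm:inv_trace_lower_bd} and~\ref{thm:reduce_inv_trace_sampling}, I would lean on the literature (Edelman; Szarek; Rudelson--Vershynin for invertibility of square Gaussian matrices) and keep this subsection to citations plus the short rescaling computation, flagging that both directions of the $\asymp$ are needed: the lower bound $\gtrsim\sqrt x$ drives the heavy-tail argument in Theorem~\ref{thm:inv_trace_lower_bd}, while the upper bound $\lesssim\sqrt x$ is what makes the conditioning step (Proposition~\ref{prop:wishart_posterior}) produce a usable estimate.
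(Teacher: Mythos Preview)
Your proposal is correct and matches the paper's approach: the paper's proof is a one-line citation to Edelman's thesis for the $\lambda_{\min}$ estimate and Vershynin's book for the $\lambda_{\max}$ tail, exactly the two sources you identify. Your added rescaling computation and discussion of which direction of the $\asymp$ is used where are helpful elaborations, but the underlying argument is the same deferral to standard random matrix results.
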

\begin{proof}
    See, e.g.,~\cite[Theorem 5.1]{edelman_thesis} and~\cite[Theorem 4.4.5]{vershynin_book}.
\end{proof}

\begin{proposition}[bound on the inverse trace] \label{prop:wishart_inverse_trace}
    Let $W \sim \Wishart(d)$.
    Then, for any $\delta > 0$, with probability at least $1-\delta$, it holds that $\tr(W^{-1}) \le C_\delta d^2$ where $C_\delta$ is a constant depending only on $\delta$.
\end{proposition}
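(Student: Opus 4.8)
The plan is to bound $\tr(W^{-1}) = \sum_{k=1}^d \lambda_k(W)^{-1}$ by isolating the contribution of the smallest eigenvalue of $W$ --- which is genuinely heavy-tailed --- from the rest, which can be controlled in expectation. Write $W = XX^\top$ with $X \in \R^{d\times d}$ having i.i.d.\ $\NN(0,1/d)$ entries (so $W \succ 0$ almost surely), order the eigenvalues $0 < \lambda_1(W) \le \cdots \le \lambda_d(W)$, and for $0 \le j \le d-1$ let $X_{(j)}$ denote the matrix formed by the first $d-j$ rows of $X$ and $W_{(j)} \deq X_{(j)} X_{(j)}^\top$, which is the leading $(d-j)\times(d-j)$ principal submatrix of $W$. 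By Cauchy's interlacing theorem, $\lambda_{j+1}(W) \ge \lambda_{\min}(W_{(j)}) = \sigma_{\min}(X_{(j)})^2$, and hence
\begin{align*}
    \tr(W^{-1}) = \sum_{j=0}^{d-1} \lambda_{j+1}(W)^{-1} \;\le\; 2\,\sigma_{\min}(X)^{-2} + \sum_{j=2}^{d-1} \sigma_{\min}(X_{(j)})^{-2}\,,
\end{align*}
where for the $j=1$ term I simply used $\lambda_2(W)^{-1} \le \lambda_1(W)^{-1} = \sigma_{\min}(X)^{-2}$.

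To finish I would bound the two pieces separately. The first piece is $\sigma_{\min}(X)^{-2} = \lambda_{\min}(W)^{-1}$, and Proposition~\ref{prop:wishart_smallest_eig} gives $\Pr\{\lambda_{\min}(W) \le x/d^2\} \asymp \sqrt{x}$, so $\Pr\{\sigma_{\min}(X)^{-2} \ge C\,\delta^{-2} d^2\} \le \delta/2$ for a suitable universal constant $C$. For the sum, observe that $X_{(j)}$ is $d^{-1/2}$ times a standard $(d-j)\times d$ Gaussian matrix, i.e.\ it is ``wide by $j$''; for $j \ge 2$, the standard lower-tail estimates for the smallest singular value of a rectangular Gaussian matrix (see, e.g., \cite{edelman_thesis, vershynin_book}) give $\Pr\{\sigma_{\min}(X_{(j)}) \le \eps\, j/d\} \le (C\eps)^{j+1}$ for $\eps \in (0,1)$; integrating this tail (the exponent $j+1 \ge 3$ makes $t \mapsto t^{-(j+1)/2}$ integrable at infinity) yields $\E[\sigma_{\min}(X_{(j)})^{-2}] \le C'\, d^2/j^2$. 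Summing, $\E\bigl[\sum_{j=2}^{d-1} \sigma_{\min}(X_{(j)})^{-2}\bigr] \le C' d^2 \sum_{j\ge 2} j^{-2} \le C'' d^2$, so by Markov's inequality this sum is at most $2C''\,\delta^{-1} d^2$ with probability at least $1-\delta/2$. A union bound then gives $\tr(W^{-1}) \le (2C\delta^{-2} + 2C''\delta^{-1})\,d^2$ with probability at least $1-\delta$, proving the claim with $C_\delta \deq 2C\delta^{-2} + 2C''\delta^{-1}$. (For small $d$, the sum over $j\ge 2$ is empty and only the first bound is needed.)

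The main obstacle is conceptual rather than computational: one genuinely cannot prove this bound in expectation, since $\E[\lambda_{\min}(W)^{-1}] = \infty$ because $\lambda_{\min}(W)$ is heavy-tailed at scale $d^{-2}$ (this is exactly the content of Proposition~\ref{prop:wishart_smallest_eig}), so the bottom eigenvalue must be treated by a high-probability estimate. The role of the interlacing decomposition is precisely that after deleting $j \ge 2$ rows the matrix $X_{(j)}$ becomes genuinely rectangular, which tames its smallest singular value enough that $\E[\sigma_{\min}(X_{(j)})^{-2}] = O(d^2/j^2)$ is summable. A related pitfall is that the easier estimate $\sigma_{\min}(X_{(j)})^{-2} \le \tr(W_{(j)}^{-1})$ together with the classical inverse-Wishart mean formula only gives $\tr(W^{-1}) = O(d^2 \log d)$; obtaining the sharp $O(d^2)$ bound requires the sharp rectangular smallest-singular-value tail estimate quoted above.
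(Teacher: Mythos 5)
Your proof is correct and takes a genuinely different route from the paper's. The paper invokes a result of Szarek (\cite[Theorem 1.2]{sza1991condnumber}) which directly bounds the lower tail of the $j$-th smallest eigenvalue of the \emph{square} $d\times d$ Wishart matrix, $\Pr\{1/\lambda_j(W) \ge d^2/(\alpha^2 j^2)\} \le (C\alpha)^{j^2}$, and then applies a union bound over $j$ to produce a high-probability event $E_\alpha^\comp$ on which $\tr(W^{-1}) \le \frac{\uppi^2 d^2}{6\alpha^2}$. You instead avoid Szarek's theorem by using Cauchy interlacing to reduce $\lambda_{j+1}(W)$ to the smallest eigenvalue of the $(d-j)\times(d-j)$ principal submatrix $W_{(j)}$ --- which is a genuinely \emph{rectangular} Wishart and hence has a much tamer smallest eigenvalue --- and then bound $\sum_{j\ge 2}\E[\sigma_{\min}(X_{(j)})^{-2}]$ directly, handling the $j\in\{0,1\}$ terms separately by a high-probability estimate. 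Both approaches isolate the heavy tail of the bottom eigenvalue (your remark that $\E[\lambda_{\min}(W)^{-1}]=\infty$ rules out a pure-expectation argument mirrors the paper's own remark following the proposition), but the technical ingredients are different: the paper buys a clean one-liner at the cost of quoting Szarek; your argument is more self-contained, reducing everything to interlacing plus the standard rectangular-Gaussian smallest-singular-value tail.

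One point worth being careful about: the tail estimate $\Pr\{\sigma_{\min}(X_{(j)}) \le \eps j/d\} \le (C\eps)^{j+1}$ must be the \emph{exact Gaussian} version (which follows from the explicit smallest-eigenvalue density of a rectangular Wishart matrix in Edelman's thesis). The general Rudelson--Vershynin bound for subgaussian rectangular matrices, and the version in Vershynin's book, carry an extra additive term of the form $e^{-cd}$, and with that floor the integral $\E[\sigma_{\min}(X_{(j)})^{-2}] = \int_0^\infty \Pr\{\sigma_{\min}(X_{(j)}) < s\}\,2s^{-3}\,ds$ would formally diverge. For Gaussian matrices this term is absent and your expectation computation $\E[\sigma_{\min}(X_{(j)})^{-2}] \le C' d^2/j^2$ (with $C'$ uniform in $j\ge 2$) goes through as written, so the argument is sound; I flag this only because the reference you cite for this tail (Theorem 5.1 of \cite{edelman_thesis}, as used by the paper's Proposition~\ref{prop:wishart_smallest_eig}) is stated for the square case and you are implicitly relying on the rectangular analogue.
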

\begin{proof}
    According to~\cite[Theorem 1.2]{sza1991condnumber}, there is a universal constant $C > 0$ such that for each $j=1,\dotsc,d$ and $\alpha \ge 0$,
    \begin{align*}
        \Pr\Bigl\{\frac{1}{\lambda_j(W)} \ge \frac{d^2}{\alpha^2 j^2}\Bigr\}
        &\le {(C\alpha)}^{j^2}\,.
    \end{align*}
    Let $\alpha < 1/C$ and let $E_\alpha \deq \{1/\lambda_j(W) \ge d^2/(\alpha^2 j^2)~\text{for some}~j=1,\dotsc,d\}$. By the union bound,
    \begin{align*}
        \Pr(E_\alpha)
        &\le \sum_{j=1}^d {(C\alpha)}^{j^2}
        \lesssim \frac{1}{\sqrt{\log(1/(C\alpha))}}\,.
    \end{align*}
    On the event $E_\alpha^\comp$,
    \begin{align*}
        \tr(W^{-1})
        &\le \sum_{j=1}^d \frac{d^2}{\alpha^2 j^2}
        = \frac{\uppi^2 d^2}{6\alpha^2}\,,
    \end{align*}
    which is the claimed result upon taking $\alpha$ sufficiently small.
\end{proof}

\begin{remark}
    The proof only shows that $\Pr\{\tr(W^{-1}) \ge \eta d^2\} \lesssim 1/\sqrt{\log \eta}$ for $\eta \gg 1$, which is not enough to conclude that $\E \tr(W^{-1})$ is finite.
    In fact, it holds that $\E \tr(W^{-1}) = \infty$, which can already be seen from Proposition~\ref{prop:wishart_smallest_eig}.
\end{remark}

\section{A lower bound for sampling from Gaussians via reduction to block Krylov}\label{sec:krylov}

In this section, we prove Theorem \ref{thm:high_d_informal_2}.
Our proof procedes in two parts: we first show a lower bound against the block Krylov method, and then a reduction showing that an arbitrary adaptive algorithm can be simulated via a block Krylov method.

\subsection{Preliminaries} \label{subsec:prelim_block_krylov}

We first record some important facts that we will use later on.
Throughout, let $K$ be an odd integer.
The following is a standard approximation-theoretic result:

\begin{proposition}[{\cite[Proposition 2.4, rephrased]{sachdeva_survey}}] \label{prop:chebyshev_optimal}
    Let $T_K$ be the degree-$K$ Chebyshev polynomial, and let $1 = \beta_1 > \cdots > \beta_{K+1} = -1$ be the set of real values $\beta$ such that $T_K(\beta) \in \{-1, 1\}$. Then, for any real degree-$K$ polynomial $p$ such that $|p(\beta_i)| \le 1$ for all $\beta_i$, we have $|p(x)| \le |T_K(x)| \le {(|x|+\sqrt{x^2-1})}^K$ for all $\abs x > 1$.
\end{proposition}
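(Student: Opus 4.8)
The plan is to prove the classical extremal property of the Chebyshev polynomial $T_K$ by an equioscillation argument. It suffices to establish the pointwise bound $\abs{p(x)} \le \abs{T_K(x)}$ for an arbitrary fixed $x$ with $\abs x > 1$; the estimate $\abs{T_K(x)} \le {(\abs x + \sqrt{x^2-1})}^K$ is a separate closed-form computation handled at the end. Using that the node set is invariant under $x \mapsto -x$ (from $T_K(\cos\theta) = \cos(K\theta)$ one identifies $\beta_i = \cos\frac{(i-1)\pi}{K}$, so $-\beta_i = \beta_{K+2-i}$) together with $T_K(-x) = {(-1)}^K T_K(x)$, replacing $p(x)$ by $p(-x)$ reduces everything to the case $x = x_0 > 1$.

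The core of the argument: first record the equioscillation $T_K(\beta_i) = \cos((i-1)\pi) = {(-1)}^{i-1}$, and note $\abs{T_K(x_0)} > 1$ because $\abs{x_0} > 1$. Assume for contradiction $\abs{p(x_0)} > \abs{T_K(x_0)}$. Then $p(x_0) \neq 0$ (as $\abs{T_K(x_0)} > 1 > 0$), so set $\lambda \deq T_K(x_0)/p(x_0)$, which satisfies $0 < \abs\lambda < 1$, and define the degree-$\le K$ polynomial $h \deq \lambda p - T_K$. By construction $h(x_0) = 0$. Since $\abs{\lambda p(\beta_i)} \le \abs\lambda < 1 = \abs{T_K(\beta_i)}$, the dominant term of $h(\beta_i) = \lambda p(\beta_i) - {(-1)}^{i-1}$ forces $\operatorname{sign} h(\beta_i) = {(-1)}^i$, so $h$ strictly alternates in sign across $\beta_1 > \cdots > \beta_{K+1}$; the intermediate value theorem then gives a root of $h$ in each of the $K$ intervals $(\beta_{i+1}, \beta_i)$. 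Together with the root at $x_0 \notin [-1,1]$ this yields at least $K+1$ distinct roots of a polynomial of degree $\le K$, hence $h \equiv 0$, i.e.\ $\lambda p \equiv T_K$. Evaluating at $\beta_1 = 1$ gives $p(1) = 1/\lambda$, so $\abs{p(1)} = 1/\abs\lambda > 1$, contradicting $\abs{p(\beta_1)} \le 1$. This proves $\abs{p(x_0)} \le \abs{T_K(x_0)}$.

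For the remaining inequality I would use the representation $T_K(x) = \cosh(K\,\mathrm{arccosh}\,x) = \tfrac12\,[{(x+\sqrt{x^2-1})}^K + {(x-\sqrt{x^2-1})}^K]$ valid for $x \ge 1$. Since $x + \sqrt{x^2-1} \ge 1$ and $x - \sqrt{x^2-1} = {(x+\sqrt{x^2-1})}^{-1} \in (0,1]$, the second term is at most $1$ and the first is at least $1$, giving $T_K(x) \le {(x+\sqrt{x^2-1})}^K$; the case $x \le -1$ follows by applying this to $-x$ and using $\abs{T_K(x)} = \abs{T_K(-x)}$ and $\sqrt{x^2-1} = \sqrt{{(-x)}^2-1}$.

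The whole proof is essentially bookkeeping, so the only delicate points — the nearest thing to an obstacle — are the degenerate cases one must rule out: that $p(x_0) \neq 0$ so $\lambda$ is well defined; that $\abs\lambda < 1$ \emph{strictly}, which is what keeps the sign alternation of $h$ at the $\beta_i$ strict even when some $p(\beta_i) = 0$ or when $\deg p < K$; and that $x_0$ is genuinely distinct from the $K$ interior roots. Each of these is immediate from the two facts $\abs{x_0} > 1 \Rightarrow \abs{T_K(x_0)} > 1$ and the standing hypothesis $\abs{p(\beta_i)} \le 1$.
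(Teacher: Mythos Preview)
The paper does not prove this proposition; it simply cites it from \cite{sachdeva_survey} and moves on. Your argument is the standard equioscillation proof and is correct: the sign-alternation of $h=\lambda p - T_K$ at the $K+1$ extremal nodes yields $K$ interior roots, the extra root at $x_0$ forces $h\equiv 0$, and the resulting identity $\lambda p = T_K$ contradicts $\abs{p(1)}\le 1$. One small wording point: in the final bound you say ``the second term is at most $1$'' to conclude $\tfrac12(A+A^{-1}) \le A$, but what you actually need (and have) is $A^{-1}\le A$ since $A\ge 1$; the conclusion is right, just make the justification explicit.
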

\noindent
Let $c_0 > 0$ be a constant to be chosen later. The above proposition immediately implies: 

\begin{corollary}[approximation error] \label{cor:chebyshev_scaled_optimal}
    Suppose that $K \le c_0 \sqrt{\kappa} \log d$. Then, there exist $\kappa = \lambda_1 > \cdots > \lambda_{K+2} = 1$ (that only depend on $K$ and $\kappa$) such that for any real degree-$K$ polynomial $P$, $\max_{1 \le i \le K+2}{\abs{\frac{1}{\lambda_i}-P(\lambda_i)}} \ge d^{-2c_0 - O(1/\sqrt\kappa)}/\kappa$.
\end{corollary}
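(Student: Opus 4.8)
The plan is to deduce the corollary from Proposition~\ref{prop:chebyshev_optimal} by an affine change of variables mapping the Chebyshev interval $[-1,1]$ to $[1,\kappa]$, and then tracking how the error $d^{-\Theta(1)}/\kappa$ at the remapped extrema forces $K$ to be at least $\Omega(\sqrt\kappa\log d)$ — or contrapositively, that if $K\le c_0\sqrt\kappa\log d$, no degree-$K$ polynomial can interpolate $1/x$ to better than this accuracy at those points. First I would set up the affine map $\ell:[-1,1]\to[1,\kappa]$, $\ell(\beta) = \frac{\kappa+1}{2} + \frac{\kappa-1}{2}\,\beta$, and define $\lambda_i \deq \ell(\beta_i)$ where $1=\beta_1>\cdots>\beta_{K+1}=-1$ are the alternation points of $T_K$ from Proposition~\ref{prop:chebyshev_optimal}; this gives $\lambda_1=\kappa$ and $\lambda_{K+1}=1$. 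I would actually want $K+2$ points as in the statement, so I would insert one extra point (e.g. include both endpoints plus a point slightly inside, or simply note that $T_K$ has $K+1$ alternation points and we throw in one more interior node to reach $K+2$ — the precise bookkeeping of "$K+1$ versus $K+2$" is a minor adjustment that does not affect the exponent).

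Next I would argue by contradiction: suppose $P$ is a real degree-$K$ polynomial with $\max_i |\tfrac{1}{\lambda_i} - P(\lambda_i)| < \epsilon$ where $\epsilon \deq d^{-2c_0 - O(1/\sqrt\kappa)}/\kappa$. Consider the polynomial $q(\beta) \deq P(\ell(\beta)) - \tfrac{1}{\ell(\beta)}$ — except $1/\ell(\beta)$ is not a polynomial, so instead I would work with $r(\beta) \deq \ell(\beta)\,P(\ell(\beta)) - 1$, a polynomial of degree $K+1$ in $\beta$, satisfying $|r(\beta_i)| = |\ell(\beta_i)|\cdot|P(\lambda_i) - \tfrac1{\lambda_i}| \le \kappa\epsilon$ at the $K+1$ (or $K+2$) alternation points. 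Then I would apply the extremal property of Chebyshev polynomials (Proposition~\ref{prop:chebyshev_optimal}, possibly its degree-$(K+1)$ version, or a direct divided-difference / Markov-brothers type bound): a degree-$(K+1)$ polynomial that is $\le \kappa\epsilon$ in absolute value at the $K+2$ alternation points of $T_{K+1}$, and that additionally equals $-1$ at $\beta$ corresponding to $x=0$ (i.e. at $\beta = \ell^{-1}(0) = -\tfrac{\kappa+1}{\kappa-1}$, which lies outside $[-1,1]$), cannot have $|r(\ell^{-1}(0))|=1$ unless $\kappa\epsilon \cdot |T_{K+1}(\ell^{-1}(0))| \ge 1$. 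Since $\ell^{-1}(0) = -1 - \tfrac{2}{\kappa-1}$, we have $|\ell^{-1}(0)| + \sqrt{\ell^{-1}(0)^2-1} = 1 + O(1/\sqrt\kappa)$, so $|T_{K+1}(\ell^{-1}(0))| \le (1+O(1/\sqrt\kappa))^{K+1} \le \exp(O(K/\sqrt\kappa)) \le d^{O(c_0)}$ for $K\le c_0\sqrt\kappa\log d$. This forces $\kappa\epsilon \ge d^{-O(c_0)}$, i.e. $\epsilon \ge d^{-O(c_0)}/\kappa$, matching the claimed bound $d^{-2c_0 - O(1/\sqrt\kappa)}/\kappa$ after choosing constants carefully.

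The main obstacle I anticipate is making the extremal/dual step fully rigorous: Proposition~\ref{prop:chebyshev_optimal} is stated for a polynomial $p$ bounded by $1$ at the alternation points of $T_K$ and concludes $|p(x)|\le|T_K(x)|$ outside $[-1,1]$, but here $r$ has degree $K+1$ while the natural alternation set to use has $K+2$ points, so I must either invoke the degree-$(K+1)$ version of the proposition (same statement with $K\mapsto K+1$) or carefully re-index. The cleanest route: directly choose the $\lambda_i$ to be the $K+2$ points $\ell(\beta_i')$ where $\beta_i'$ are the alternation points of $T_{K+1}$, define $r(\beta) \deq \ell(\beta)P(\ell(\beta)) - 1$ of degree $K+1$, rescale $r$ by $1/(\kappa\epsilon)$ so it is bounded by $1$ at those $K+2$ points, apply Proposition~\ref{prop:chebyshev_optimal} (with $K+1$ in place of $K$) at the point $x_0 = 0 \in (0,1)$ — wait, here I must be careful that $0 \notin [1,\kappa]$ so $\ell^{-1}(0)\notin[-1,1]$, which is exactly the "$|x|>1$" regime where the proposition applies — and conclude $|r(\ell^{-1}(0))|/(\kappa\epsilon) \le |T_{K+1}(\ell^{-1}(0))|$. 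Since $r(\ell^{-1}(0)) = 0\cdot P(0) - 1 = -1$, this gives $1/(\kappa\epsilon) \le |T_{K+1}(\ell^{-1}(0))| \le d^{O(c_0)}$, and tracking the $O(1/\sqrt\kappa)$ slack in the exponent of $(|\ell^{-1}(0)|+\sqrt{\ell^{-1}(0)^2-1})^{K+1}$ yields the stated $d^{-2c_0 - O(1/\sqrt\kappa)}/\kappa$ lower bound. The remaining work is purely the arithmetic of bounding $|\ell^{-1}(0)| + \sqrt{\ell^{-1}(0)^2 - 1}$, which is routine.
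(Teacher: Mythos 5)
Your ``cleanest route'' at the end is essentially the paper's proof: same affine map $\ell$, same choice of the $K+2$ alternation points of $T_{K+1}$, the same auxiliary degree-$(K+1)$ polynomial $1-xP(x)$ (you write $xP(x)-1$, its negative) evaluated at $x=0$, and the same bound $\bigl(\lvert\ell^{-1}(0)\rvert+\sqrt{\ell^{-1}(0)^2-1}\bigr)^{K+1}\le d^{2c_0+O(1/\sqrt\kappa)}$; the only cosmetic difference is that you run it by contradiction with a rescaling by $1/(\kappa\epsilon)$, whereas the paper rescales by $d^{2c_0+O(1/\sqrt\kappa)}$ and argues directly. The earlier hedging about ``$K+1$ versus $K+2$ alternation points'' is resolved correctly once you switch to $T_{K+1}$, so the proposal is correct.
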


\begin{proof}
    Set $\beta_1, \dots, \beta_{K+2}$ to be the solutions of $T_{K+1} \in \{-1, 1\}$, and for each $1 \le i \le K+2$, set $\lambda_i \deq \frac{(\kappa-1)}{2}\, (\beta_i+1)+1$; by construction, $\kappa = \lambda_1 > \cdots > \lambda_{K+2} = 1$.
    Given any polynomial $Q$ of degree at most $K+1$, note that if $|Q(\lambda_i)| \le 1$ for all $i$, then the polynomial $p$ given by $p(x)  \deq  Q(\frac{\kappa-1}{2}\, (x+1) + 1)$ satisfies $|p(\beta_i)| \le 1$ for all $i$. By Proposition~\ref{prop:chebyshev_optimal}, for $x_0 \deq -(1 + \frac{2}{\kappa-1})$,
    \begin{align*}
        |Q(0)| = |p(x_0)|
        &\le \bigl(\abs{x_0} + \sqrt{x_0^2 -1}\bigr)^{K+1}
        \le \Bigl(1 + \frac{2}{\sqrt{\kappa}} + O\bigl(\frac{1}{\kappa}\bigr)\Bigr)^{K+1} \\
        &< \exp\Bigl(\bigl(\frac{2}{\sqrt{\kappa}} + O\bigl(\frac{1}{\kappa}\bigr)\bigr) \, \bigl(c_0\sqrt\kappa \log d + 1\bigr)\Bigr)
        = d^{2c_0 + O(1/\sqrt\kappa)}\,.
    \end{align*}
    
    Next, for a degree-$K$ polynomial $P$, consider $Q(x) \deq d^{2c_0 + O(1/\sqrt\kappa)} \, (1 - x P(x))$. Note that $Q$ has degree $K+1$ and $|Q(0)| = d^{2c_0 + O(1/\sqrt\kappa)}$, which implies that $|Q(\lambda_i)| > 1$ for some $i$, which in turn shows that $|\frac{1}{\lambda_i} - P(\lambda_i)| \ge d^{-2c_0 - O(1/\sqrt\kappa)}/\kappa$.
\end{proof}

We also introduce random matrix ensembles that are used in the proof, together with basic facts and properties.

Interestingly, as in the previous section, Wishart matrices are also useful for understanding block Krylov algorithms, but for a completely different reason. This time, we will study inner products between random vectors, which is also captured by a Wishart matrix. We denote by $\Wishart(K, N)$ the law of the random matrix $XX^\top \in \R^{K\times K}$, where the entries of $X \in \BR^{K \times N}$ are i.i.d.\ \emph{standard} Gaussians. Note that this is a different convention from the previous section, in which each entry of $X$ was i.i.d.\ $\mc N(0, \frac{1}{d})$.

We also define the Gaussian orthogonal ensemble (GOE) of size $K$, denoted $\GOE(K)$. This is the law of a random symmetric matrix $G\in \BR^{K \times K}$ where each diagonal entry $G_{i, i}$ is distributed as $\mc N(0, 1)$, and each off-diagonal entry $G_{i, j} = G_{j, i}$ is distributed as $\mc N(0, \frac{1}{2})$.
Also, the entries $\{G_{i,j} : 1\le i \le K,\; j\le i\}$ are independent.

A long line of work (see, e.g.,~\cite{jiali2015betalaguerre, bubetal2016geometryrandomgraph, bubgan2018entropicclt, racric2019wishartgoe, brebrehua2021definetti, mik2022cltwishart}) shows that when $N \gg K^3$, the Wishart ensemble is well-approximated by a scaled and shifted GOE, a fact which we shall invoke in the sequel.

\begin{lemma}[equivalence of Wishart and GOE] \label{lem:wishart_goe_similar}
    Let $W \sim \Wishart(K, N)$ be drawn from the Wishart distribution, and let $W_0$ be drawn from the distribution of symmetric matrices where the diagonal and above-diagonal entries are mutually independent, each diagonal entry is drawn as $\mathcal{N}(N, 2N)$, and each above-diagonal entry is drawn as $\mathcal{N}(0, N)$. (Equivalently, we can write $W_0 = NI + \sqrt{2N}\, G$, where $G\sim \GOE(K)$.) Then,
    \begin{align*}
        \norm{\law(W) - \law(W_0)}_{\rm TV} \le O\Bigl(\frac{K^{3/2}}{N^{1/2}}\Bigr)\,.
    \end{align*}
\end{lemma}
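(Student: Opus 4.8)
The strategy is to bound the relative entropy $\mathrm{KL}(\law(W)\,\Vert\,\law(W_0))$ and then apply Pinsker's inequality. We may assume $N \ge K^{3}$: for smaller $N$ one has $K^{3/2}/N^{1/2} > 1$, so the asserted bound holds trivially (taking the implicit constant to be at least $1$). In this regime $N \ge K$, so $W$ is a.s.\ invertible and $\law(W)$ is absolutely continuous with respect to $\law(W_0)$, and Pinsker gives
\[
\norm{\law(W) - \law(W_0)}_{\rm TV} \le \sqrt{\tfrac12\,\mathrm{KL}\bigl(\law(W)\,\Vert\,\law(W_0)\bigr)}\,,
\]
so it suffices to prove $\mathrm{KL}(\law(W)\,\Vert\,\law(W_0)) = O(K^{3}/N)$.

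To evaluate the relative entropy, write out the two densities on the space of symmetric $K \times K$ matrices: for $M \succ 0$,
\[
p_W(M) = \frac{(\det M)^{(N-K-1)/2}\,e^{-\tr(M)/2}}{2^{NK/2}\,\Gamma_K(N/2)}\,, \qquad p_{W_0}(M) = \frac{e^{-\tr((M-NI)^2)/(4N)}}{Z_0}\,,
\]
with $\Gamma_K$ the multivariate Gamma function, $Z_0$ the explicit Gaussian normalizer, and $p_W$ vanishing off the positive cone. This gives the exact identity
\[
\mathrm{KL}\bigl(\law(W)\,\Vert\,\law(W_0)\bigr) = -h(W) + \tfrac{1}{4N}\,\E\bigl[\tr((W - NI)^2)\bigr] + \log Z_0\,,
\]
where $h(W)$ is the differential entropy of the Wishart law. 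All three terms admit closed forms: $h(W)$ in terms of $\log\Gamma_K(N/2)$ and the classical identity $\E[\log\det W] = \sum_{i=1}^{K}\psi(\tfrac{N-i+1}{2}) + K\log 2$ (with $\psi$ the digamma function); $\log Z_0$ is an explicit polynomial in $K$ times $\log N$; and the second moment is the elementary computation $\E[\tr((W-NI)^2)] = NK(K+1)$.

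Substituting, the relative entropy reduces to $\sum_{i=1}^{K}\bigl[\log\Gamma(\tfrac{N-i+1}{2}) - \tfrac{N-K-1}{2}\,\psi(\tfrac{N-i+1}{2})\bigr]$ plus explicit elementary terms. Since each argument equals $\tfrac N2\,(1 - O(K/N))$ and $K/N \le K^{-2}$, the plan is to insert the classical asymptotic expansions $\log\Gamma(z) = z\log z - z - \tfrac12\log z + \tfrac12\log(2\pi) + \tfrac1{12z} + O(z^{-3})$ and $\psi(z) = \log z - \tfrac1{2z} - \tfrac1{12z^2} + O(z^{-4})$, and sum over $i$. A direct (if somewhat lengthy) calculation shows that all the large terms — those of order $N\log N$, $N$, $K^2\log N$, and the $O(K^2)$ constants involving $\log 2$ and $\log\pi$ — cancel identically, and that the first surviving contribution is of order $K^{3}/N$; since $N \gtrsim K^{3}$, the remaining error terms are $o(K^{3}/N)$. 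Hence $\mathrm{KL}(\law(W)\,\Vert\,\law(W_0)) = O(K^{3}/N)$, and Pinsker completes the proof.

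The main obstacle is exactly this bookkeeping: one must push the expansions to enough orders to witness the wholesale cancellation of everything above order $K^{3}/N$. The cubic threshold is visible more transparently through the alternative route of expanding $\log(p_W/p_{W_0})$ directly after the substitution $M = NI + \sqrt{2N}\,H$ (under which $H \sim \GOE(K)$ for $W_0$): the $\tr H$ and $\tr(H^2)$ parts of $\tfrac{N-K-1}{2}\log\det M$ cancel against the linear term of $\tfrac12\tr M$ and the Gaussian potential $\tfrac1{4N}\tr((M-NI)^2)$, and the leading discrepancy is the cubic term $\tfrac{\sqrt2}{3\sqrt N}\tr(H^{3})$, whose expectation under $\law(W)$ is controlled by the $\Theta(1/\sqrt N)$ skewness of the centered, normalized Wishart entries and equals $\Theta(K^{3}/N)$ — so the threshold $K^{3}/N$ is precisely this skewness effect. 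In either formulation this is the content of the Wishart-to-GOE central limit theorems cited above, so an alternative to reproducing the computation is to invoke \cite{racric2019wishartgoe} (or \cite{bubgan2018entropicclt, jiali2015betalaguerre}) directly for the stated rate.
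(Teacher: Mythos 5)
The paper does not actually prove Lemma~\ref{lem:wishart_goe_similar}; it states it as a known fact, and the preceding sentence cites a line of prior work (Jiang--Li, Bubeck--Ganguly, R\'acz--Richey, and others) that establishes the Wishart-to-GOE approximation for $N\gg K^3$. So there is no in-paper proof to compare against, and the relevant comparison is to that literature. Your proposal follows exactly the standard route taken there: express $\mathrm{KL}(\law(W)\,\|\,\law(W_0))$ in closed form via $\log\Gamma_K$, the digamma identity for $\mathbb{E}[\log\det W]$, and elementary Wishart second moments, then apply Pinsker. The triviality reduction to $N\ge K^3$, the two density formulas, the exact KL identity, and $\mathbb{E}[\tr((W-NI)^2)]=NK(K+1)$ are all correct, and as you yourself note, invoking the references directly for the $K^3/N$ rate is equivalent to what the paper does.

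Two cautionary points. The displayed sum carries the wrong overall sign: unwinding $-h(W)$ actually gives $\sum_{i=1}^K\bigl[\tfrac{N-K-1}{2}\psi(\tfrac{N-i+1}{2})-\log\Gamma(\tfrac{N-i+1}{2})\bigr]$, not its negative (harmless for the order of magnitude, but worth fixing). More substantively, in the ``alternative route'' the $\tr H$ and $\tr(H^2)$ parts do \emph{not} fully cancel after the substitution $M=NI+\sqrt{2N}H$: collecting coefficients leaves residuals $-\tfrac{\sqrt{2}(K+1)}{2\sqrt{N}}\tr H$ and $\tfrac{K+1}{2N}\tr(H^2)$, and the second, using $\mathbb{E}[\tr(H^2)]=K(K+1)/2$, already contributes $\tfrac{K(K+1)^2}{4N}=\Theta(K^3/N)$ on its own. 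So the cubic skewness term is not the sole source of the discrepancy; the quadratic residual enters at the same order, and attributing the threshold entirely to skewness is a misleading heuristic. Finally, the statement that all terms of order larger than $K^3/N$ cancel identically is the whole content of the lemma and is asserted rather than verified; that bookkeeping is precisely what the cited papers carry out, and your proposal would need to either reproduce it or cite it (as the paper does) to be complete.
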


\noindent
Finally, we also require the following basic linear algebraic fact:
\begin{proposition}[rotating the right singular vectors] \label{prop:rotation}
    Let $V, V' \in \BR^{K \times N}$ be such that $VV^\top = (V')(V')^\top$. Then, there exists an orthogonal matrix $U \in \BR^{N \times N}$ such that $VU = V'$.
\end{proposition}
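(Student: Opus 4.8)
The plan is to reinterpret the hypothesis $VV^\top = V'(V')^\top$ as the statement that the \emph{rows} of $V$ and the rows of $V'$ have the same Gram matrix, and then to invoke the classical fact that two tuples of vectors in $\R^N$ with identical Gram matrices are related by an orthogonal change of coordinates. Writing $v_1,\dotsc,v_K\in\R^N$ for the rows of $V$ and $v_1',\dotsc,v_K'$ for the rows of $V'$, we have $(VV^\top)_{ij} = \langle v_i, v_j\rangle$ and $(V'(V')^\top)_{ij} = \langle v_i', v_j'\rangle$, so the hypothesis is precisely that $\langle v_i, v_j\rangle = \langle v_i', v_j'\rangle$ for all $i,j$.

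The key step is to produce a linear isometry matching up the two row systems. First I would set $W \deq \operatorname{span}\{v_1,\dotsc,v_K\}$ and $W' \deq \operatorname{span}\{v_1',\dotsc,v_K'\}$ and define $\phi : W \to \R^N$ on the spanning set by $\phi(v_i) \deq v_i'$. To check that $\phi$ is well defined and an isometry onto $W'$, observe that for any scalars $c_1,\dotsc,c_K$,
\[
    \Bignorm{\textstyle\sum_i c_i v_i'}^2 = \sum_{i,j} c_i c_j \, \langle v_i', v_j'\rangle = \sum_{i,j} c_i c_j \, \langle v_i, v_j\rangle = \Bignorm{\textstyle\sum_i c_i v_i}^2 ,
\]
so $\sum_i c_i v_i = 0$ forces $\sum_i c_i v_i' = 0$ (well-definedness), and the same identity with a general linear combination in place of $0$ shows $\phi$ preserves inner products. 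In particular $\dim W = \dim W'$, hence $\dim W^\perp = \dim (W')^\perp$, and I can extend $\phi$ to an orthogonal matrix $O \in \R^{N\times N}$ by mapping an orthonormal basis of $W^\perp$ to an orthonormal basis of $(W')^\perp$.

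Finally, I would take $U \deq O^\top$, which is orthogonal, and verify the conclusion row by row: the $i$-th row of $VU$ is $v_i^\top O^\top = (O v_i)^\top = (v_i')^\top$, the $i$-th row of $V'$, so $VU = V'$. There is no real obstacle here; the only point requiring care is the well-definedness of $\phi$ — which is exactly where the hypothesis $VV^\top = V'(V')^\top$ is used — together with the bookkeeping that $W$ and $W'$ have the same dimension so that their orthogonal complements can be identified.
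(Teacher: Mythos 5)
Your proof is correct, and the argument is complete: the Gram-matrix identity gives both well-definedness of $\phi$ on $W = \operatorname{span}\{v_1,\dotsc,v_K\}$ and that $\phi$ is an isometry onto $W'$, the dimension count lets you extend $\phi$ to an orthogonal $O$ on all of $\R^N$, and the row-by-row check that $VU = V'$ with $U = O^\top$ is right. The paper states this proposition as a basic linear-algebraic fact without supplying a proof, so there is nothing to compare against; what you have written is the standard Gram-matrix argument and fills the gap cleanly.
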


\subsection{Lower bound against block Krylov algorithms}

We start with the following proposition, which will be useful in establishing the existence of matrices with different inverse traces but which generate similar power method iterates.

\begin{proposition}[polynomial approximation and duality] \label{prop:chebyshev_dual_lp}
    Suppose that $K \le c_0 \sqrt{\kappa} \log d$. Then, there exist $\kappa = \lambda_1 > \lambda_2 > \cdots > \lambda_{K+2}=1$ and non-negative real numbers $x_1, \dots, x_{K+2}; x_1', \dots, x_{K+2}'$, such that:
\begin{enumerate}
    \item For all $0 \le j \le K$, $\sum_{i=1}^{K+2} x_i \lambda_i^j = \sum_{i=1}^{K+2} x_i' \lambda_i^j$.
    \item $\sum_{i=1}^{K+2} x_i = \sum_{i=1}^{K+2} x_i' = d$.
    \item $\sum_{i=1}^{K+2} x_i/\lambda_i - \sum_{i=1}^{K+2} x_i'/\lambda_i \ge 2d^{1-2c_0-O(1/\sqrt\kappa)}/\kappa$.
\end{enumerate}
\end{proposition}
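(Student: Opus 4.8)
The plan is to realize Proposition~\ref{prop:chebyshev_dual_lp} as an instance of strong linear-programming duality: the largest achievable gap between $\sum_i x_i/\lambda_i$ and $\sum_i x_i'/\lambda_i$, over all non-negative weights with matching moments up to order $K$ and total mass $d$, is dual to the error of the best degree-$K$ polynomial approximation of $1/x$ at the points $\lambda_1,\dotsc,\lambda_{K+2}$ — and the latter is exactly what Corollary~\ref{cor:chebyshev_scaled_optimal} lower bounds. So the whole statement should drop out of that corollary once the duality bookkeeping is done.

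Concretely, I would first fix $\kappa = \lambda_1 > \cdots > \lambda_{K+2} = 1$ to be the points supplied by Corollary~\ref{cor:chebyshev_scaled_optimal}. Writing $y_i \deq x_i - x_i'$, note that Property~1 says precisely that $y$ lies in the kernel of the moment map $y \mapsto (\sum_i y_i \lambda_i^j)_{0 \le j \le K}$, while Property~2 fixes a normalization; conversely, any $y$ in this kernel with $\sum_i \lvert y_i\rvert \le 2d$ lifts to a valid pair via $x_i \deq (y_i)_+ + t_i$, $x_i' \deq (-y_i)_+ + t_i$ for suitable $t_i \ge 0$ (here one uses that the degree-$0$ constraint forces $\sum_i y_i = 0$, hence $\sum_i (y_i)_+ = \sum_i(-y_i)_+ = \tfrac12 \sum_i \lvert y_i\rvert$). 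Thus it suffices to show that the linear program $V^\star \deq \max\{\langle c, y\rangle : Ay = 0,\ \lVert y\rVert_1 \le 2d\}$, with $A_{ji} \deq \lambda_i^j$ and $c_i \deq 1/\lambda_i$, satisfies $V^\star \ge 2d^{1-2c_0-O(1/\sqrt\kappa)}/\kappa$: one then takes an optimal $y^\star$, which is nonzero since $V^\star>0$, rescales it so that $\lVert y^\star\rVert_1 = 2d$ (legitimate, since $Ay=0$ is homogeneous and rescaling up can only increase a positive objective), and reads off $x_i \deq (y_i^\star)_+$, $x_i' \deq (-y_i^\star)_+$, which then satisfy all three properties, with the Property~3 gap equal to $V^\star$.

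The core of the argument is the evaluation of $V^\star$. Dualizing the equality constraint and applying Sion's minimax theorem (the objective is bilinear and $\{\lVert y\rVert_1 \le 2d\}$ is compact and convex — equivalently, invoking strong LP duality) gives
\begin{align*}
    V^\star = \inf_{\mu \in \reals^{K+1}} \ \max_{\lVert y \rVert_1 \le 2d} \langle c - A^\top \mu,\, y\rangle = 2d \inf_{\mu \in \reals^{K+1}} \lVert c - A^\top \mu\rVert_\infty = 2d \inf_{\deg p \le K}\ \max_{1 \le i \le K+2}\Bigl\lvert\frac{1}{\lambda_i} - p(\lambda_i)\Bigr\rvert\,,
\end{align*}
where the last step uses that $A^\top\mu$ has $i$-th coordinate $\sum_j \mu_j \lambda_i^j = p_\mu(\lambda_i)$ and that $p_\mu$ ranges over all real polynomials of degree at most $K$. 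Applying Corollary~\ref{cor:chebyshev_scaled_optimal} to exactly these $\lambda_i$ bounds the final infimum below by $d^{-2c_0-O(1/\sqrt\kappa)}/\kappa$, which delivers the required lower bound on $V^\star$.

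I do not expect a genuine obstacle here; the argument is essentially a duality computation on top of an already-established approximation-theoretic fact. The two points that need care are (i) confirming the minimax swap / strong duality is valid — immediate once $\{\lVert y\rVert_1 \le 2d\}$ is viewed as a polytope, or by citing Sion — and (ii) the bookkeeping that converts an optimal $y^\star$ into genuinely non-negative weights with total mass exactly $d$, which is precisely where the degree-$0$ moment identity $\sum_i y_i^\star = 0$ and the normalization $\lVert y^\star\rVert_1 = 2d$ are used.
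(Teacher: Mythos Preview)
Your proposal is correct and takes essentially the same approach as the paper: fix the $\lambda_i$ from Corollary~\ref{cor:chebyshev_scaled_optimal}, cast the maximal gap as a linear program, and use strong duality to identify its value as $2d$ times the best degree-$K$ approximation error of $1/x$ at those nodes. The only cosmetic difference is that the paper keeps all $2(K{+}2)$ variables $x_i,x_i'$ in the primal, whereas you first substitute $y_i=x_i-x_i'$ and dualize via Sion; the resulting dual and the final appeal to Corollary~\ref{cor:chebyshev_scaled_optimal} are identical.
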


\begin{proof}
    If we fix the values of the $\lambda_i$ to be the choices in Corollary~\ref{cor:chebyshev_scaled_optimal}, this becomes a linear program in the variables $\{x_i\}_{i = 1}^{K+2}, \{x_i'\}_{i = 1}^{K+2}$. By writing $\textbf{x} = (x_1, \dots, x_{K+2}, x_1', \dots, x_{K+2}'),$ our goal is to maximize $\textbf{c}^\top \textbf{x}$ over $\textbf{x} \ge 0$ subject to $A \textbf{x} = \textbf{b}.$ In our case, we set
\[\textbf{c}  \deq  \begin{bmatrix} \lambda_1^{-1} \\ \vdots \\ \lambda_{K+2}^{-1} \\[0.25em] -\lambda_1^{-1} \\ \vdots \\ -\lambda_{K+2}^{-1} \end{bmatrix}\,, \qquad A \deq \begin{bmatrix} 1 & \cdots & 1 & 1 & \cdots & 1 \\ 1 & \cdots & 1 & -1 & \cdots & -1 \\ \lambda_1 & \cdots & \lambda_{K+2} & -\lambda_1 & \cdots & -\lambda_{K+2} \\ \vdots & \ddots & \vdots & \vdots & \ddots & \vdots \\ \lambda_1^K & \cdots & \lambda_{K+2}^K & -\lambda_1^K & \cdots & -\lambda_{K+2}^K \end{bmatrix}\,,\qquad \textbf{b} = \begin{bmatrix} 2d \\ 0 \\ \vdots \\ 0 \end{bmatrix}\,.\]

We can consider the dual linear program, and by strong duality this maximization is equivalent to minimizing $\textbf{b}^\top \textbf{y}$ over $\textbf{y}$ such that $A^\top \textbf{y} \ge \textbf{c}$. By writing $\textbf{y} = (z, y_0, y_1, \dotsc, y_K)$, this means we wish to minimize $2dz$ subject to $z + (y_0 + y_1 \lambda_i + \cdots + y_K \lambda_i^K) \ge \frac{1}{\lambda_i}$ and $z - (y_0 + y_1 \lambda_i + \cdots + y_K \lambda_i^K) \ge -\frac{1}{\lambda_i}$ for all $1 \le i \le K+2$. Equivalently, we wish to minimize $2dz$ subject to the existence of a polynomial $P$ of degree at most $K$ (with coefficients $y_0, \dotsc, y_K$) such that $z \ge \abs{\frac{1}{\lambda_i}-P(\lambda_i)}$ for all $i \le K+2$.

The minimum for the dual linear program (and thus the maximum for the primal linear program), is $2d \inf_{P\in\mc P_K} \max_{1 \le i \le K+2}{\abs{\frac{1}{\lambda_i}-P(\lambda_i)}}$, where $\mc P_K$ is the set of polynomials of degree at most $K$ with real coefficients. By Corollary~\ref{cor:chebyshev_scaled_optimal}, this quantity is at least $2d^{1-2c_0-O(1/\sqrt\kappa)}/\kappa$.
\end{proof}

We note that a slightly strengthened version of Proposition \ref{prop:chebyshev_dual_lp} holds. Let $0 < c_1 < 1$.

\begin{corollary}[existence of good solutions] \label{cor:chebyshev_dual_lp}
    Proposition~\ref{prop:chebyshev_dual_lp} holds, where we also ensure that each $x_i$ and $x_i'$ is at least $\frac{d}{2\,(K+2)}$ and $\frac{|x_i-x_i'|}{x_i} \le \frac{2c_1}{1-c_1}$, though the right-hand side of the third condition becomes $c_1 d^{1-2c_0-O(1/\sqrt\kappa)}/\kappa$.
\end{corollary}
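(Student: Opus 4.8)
The plan is to build the required $(x_i),(x_i')$ by post-processing the solution handed to us by Proposition~\ref{prop:chebyshev_dual_lp}. Write $(a_i)_{i=1}^{K+2},(a_i')_{i=1}^{K+2}$ for that solution, let $\kappa=\lambda_1>\cdots>\lambda_{K+2}=1$ be the associated nodes, and abbreviate $M\deq d^{1-2c_0-O(1/\sqrt\kappa)}/\kappa$, so the three conclusions read: $a_i,a_i'\ge 0$; $\sum_i a_i\lambda_i^j=\sum_i a_i'\lambda_i^j$ for $0\le j\le K$; $\sum_i a_i=\sum_i a_i'=d$; and $\sum_i a_i/\lambda_i-\sum_i a_i'/\lambda_i\ge 2M$. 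I would then output
\begin{align*}
    \hat x_i \deq \frac{d}{2\,(K+2)} + \frac{(1+c_1)\,a_i + (1-c_1)\,a_i'}{4}\,,\qquad
    \hat x_i' \deq \frac{d}{2\,(K+2)} + \frac{(1-c_1)\,a_i + (1+c_1)\,a_i'}{4}\,.
\end{align*}
In words: first ``swap-symmetrize'' the pair $(a,a')$ with a mixing weight governed by $c_1$, then average the result against the uniform profile $i\mapsto d/(K+2)$ with equal weights $\tfrac12$.

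First I would check that $(\hat x,\hat x')$ still satisfies all three conclusions of Proposition~\ref{prop:chebyshev_dual_lp}. Non-negativity is clear since $0<c_1<1$. Summing gives $\sum_i\hat x_i=\tfrac d2+\tfrac14\bigl((1+c_1)d+(1-c_1)d\bigr)=d$, and symmetrically $\sum_i\hat x_i'=d$. For the moments, because $\sum_i a_i\lambda_i^j=\sum_i a_i'\lambda_i^j$, both $\sum_i\hat x_i\lambda_i^j$ and $\sum_i\hat x_i'\lambda_i^j$ collapse to $\tfrac{d}{2(K+2)}\sum_i\lambda_i^j+\tfrac12\sum_i a_i\lambda_i^j$, hence agree. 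Finally the uniform part cancels in the gap and the swap contributes $\hat x_i-\hat x_i'=\tfrac{c_1}{2}(a_i-a_i')$, so $\sum_i\hat x_i/\lambda_i-\sum_i\hat x_i'/\lambda_i=\tfrac{c_1}{2}\bigl(\sum_i a_i/\lambda_i-\sum_i a_i'/\lambda_i\bigr)\ge c_1 M$, which is exactly the weakened form of the third condition.

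It then remains to verify the two new properties. The lower bound is immediate: $\hat x_i\ge\tfrac{d}{2(K+2)}$ because the second summand is non-negative, and likewise for $\hat x_i'$. For the relative-difference bound, I drop the (non-negative) uniform term in the denominator and use $|\hat x_i-\hat x_i'|=\tfrac{c_1}{2}|a_i-a_i'|$ to get
\begin{align*}
    \frac{|\hat x_i-\hat x_i'|}{\hat x_i}\ \le\ \frac{2c_1\,|a_i-a_i'|}{(1+c_1)\,a_i+(1-c_1)\,a_i'}\,.
\end{align*}
A one-line case split finishes it: if $a_i\ge a_i'$, bound the numerator by $2c_1a_i$ and the denominator below by $(1+c_1)a_i$, giving $\le\tfrac{2c_1}{1+c_1}$; if $a_i'\ge a_i$, bound the numerator by $2c_1a_i'$ and the denominator below by $(1-c_1)a_i'$, giving $\le\tfrac{2c_1}{1-c_1}$; and if $a_i=a_i'=0$ the ratio is $0$. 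In every case the bound is $\le\tfrac{2c_1}{1-c_1}$.

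I do not expect a genuine obstacle here; the argument is essentially bookkeeping. The one thing to get right — and the reason the statement has the shape it does — is to resist trying to control the individual LP weights $a_i,a_i'$ (which the linear program does not pin down and which may be badly unbalanced) and instead observe that the weight-$c_1$ swap is precisely calibrated so that the per-coordinate relative perturbation is $O(c_1)$ uniformly, which is what produces $\tfrac{2c_1}{1-c_1}$ rather than a $K$-dependent constant. The only care needed is matching the uniform-mixing weight $\tfrac12$ simultaneously against the lower bound $d/(2(K+2))$ and against the factor-$2$ slack in the third conclusion of Proposition~\ref{prop:chebyshev_dual_lp}; the choices above make both tight.
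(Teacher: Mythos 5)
Your construction is precisely the composition of the paper's two sequential replacements (first averaging against the uniform profile $d/(K+2)$, then the weight-$c_1$ swap mix), written out as a single closed-form formula; all verifications match, and the only cosmetic difference is that you bound the relative perturbation via a two-case argument while the paper observes directly that $|\tilde x_i - \tilde x_i'| \le c_1\,|x_i-x_i'|$ and $\tilde x_i \ge \tfrac{1-c_1}{2}(x_i + x_i')$. Correct, and essentially the same proof.
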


\begin{proof}
    First, replace every $x_i$ with $\frac{1}{2}\, (x_i + \frac{d}{K+2})$ and $x_i'$ with $\frac{1}{2}\, (x_i' + \frac{d}{K+2})$. Then, we have that the replaced $x_i, x_i'$ are at least $\frac{d}{2\,(K+2)}$, and the remaining statements in Proposition~\ref{prop:chebyshev_dual_lp} hold, except the third which has the right-hand side replaced with $d^{1-2c_0 - O(1/\sqrt\kappa)}/\kappa$.
    
    Next, replace every $x_i$ with $\tilde x_i \deq \frac{1 + c_1}{2} \, x_i + \frac{1 -c_1}{2} \, x_i'$, and every $x_i'$ with $\tilde x_i' \deq \frac{1 + c_1}{2} \, x_i' + \frac{1 - c_1}{2} \, x_i$. We still have that every $\tilde x_i, \tilde x_i'$ is at least $\frac{d}{2\,(K+2)}$, the first two conditions still hold, and the right-hand side of third condition is now $c_1 d^{1-2c_0 - O(1/\sqrt\kappa)}/\kappa$. Finally, note that $|\tilde x_i-\tilde x_i'| \le c_1 \, \abs{x_i - x_i'}$, whereas $\tilde x_i \ge \frac{1-c_1}{2} \, (x_i + x_i')$.
    This implies that $\frac{\abs{\tilde x_i - \tilde x_i'}}{\tilde x_i} \le \frac{2c_1}{1-c_1}$.
\end{proof}

We now have the necessary tools to prove our lower bound against block Krylov algorithms.
Before doing so, we establish that there exist diagonal matrices $D, D'$ which have substantially different inverse traces, but block Krylov algorithms cannot distinguish between them. To prove our actual lower bound, we show the same claim holds even if $D, D'$ are randomly rotated, and the inverse trace difference is enough for a single sample to distinguish between them.

\begin{lemma}[construction of diagonal matrices] \label{lem:moment_matching}
    Suppose that $K \le c_0 \sqrt{\kappa} \log d$ and $K \le O(d)$.
    Then, there exist diagonal matrices $D, D' \in \BR^{d \times d}$ with all diagonal entries between $1$ and $\kappa$ with the following properties.
\begin{enumerate}
    \item $|\Tr(D^{-1})-\Tr(D'^{-1})| \ge c_1 d^{1-2c_0 - O(1/\sqrt\kappa)}/\kappa - 2\,(K+2)$.
    \item Consider sampling $K$ $d$-dimensional random vectors $v^{(1)},\dotsc,v^{(K)} \overset{i.i.d.}{\sim} \mathcal{N}(0, I_d)$. Then, the distributions of $\{\langle v^{(k)}, D^j\, v^{(\ell)}\rangle\}_{j \le K+2;\; k, \ell \le K}$ and $\{\langle v^{(k)}, D'^j\, v^{(\ell)}\rangle\}_{j \le K+2;\, k, \ell \le K}$ differ in total variation distance by at most $O(c_1 K^3 + K^3/d^{1/2})$.
\end{enumerate}
\end{lemma}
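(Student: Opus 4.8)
The plan is to build $D$ and $D'$ directly from the vectors $x_1,\dots,x_{K+2}$ and $x_1',\dots,x_{K+2}'$ supplied by Corollary~\ref{cor:chebyshev_dual_lp}, rounded to integers. Concretely, let $\lambda_1 > \dots > \lambda_{K+2}$ and the weights be as in that corollary with the constants $c_0,c_1$ as in the statement; set $N_i \deq \lfloor x_i \rfloor$ and $N_i' \deq \lfloor x_i' \rfloor$, and let $D$ (resp.\ $D'$) be the diagonal matrix that has eigenvalue $\lambda_i$ with multiplicity $N_i$ (resp.\ $N_i'$), padding the remaining at most $K+2$ coordinates with the value $1$ (which lies in $[1,\kappa]$) so that the total dimension is exactly $d$. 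Since each $x_i,x_i' \ge \frac{d}{2(K+2)}$ and $\sum x_i = \sum x_i' = d$ with $K = O(d)$, this is consistent and all diagonal entries lie in $[1,\kappa]$. For item~1, the inverse trace of $D$ is $\sum_i N_i/\lambda_i$ plus a contribution of at most $K+2$ from the padding, and $|N_i - x_i| \le 1$ so $|\sum N_i/\lambda_i - \sum x_i/\lambda_i| \le K+2$; combining these with the third bullet of Corollary~\ref{cor:chebyshev_dual_lp} gives $|\Tr(D^{-1}) - \Tr(D'^{-1})| \ge c_1 d^{1-2c_0-O(1/\sqrt\kappa)}/\kappa - 2(K+2)$ after absorbing the rounding errors.

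For item~2, I would first observe that the information $\{\langle v^{(k)}, D^j v^{(\ell)}\rangle\}_{j\le K+2;\,k,\ell\le K}$ is a deterministic function of the $K\times(K+2)$ ``sketch'' blocks: write $v^{(k)} = \sum_i \sqrt{N_i}\, g^{(k)}_i$ where, grouping coordinates by eigenvalue, $g^{(k)}_i \in \R^{N_i}$ is a standard Gaussian vector (the $\sqrt{N_i}$ because we can take the sum of $N_i$ i.i.d.\ standard Gaussians in a single direction), so that $\langle v^{(k)}, D^j v^{(\ell)}\rangle = \sum_i \lambda_i^j\, \langle v^{(k)}_i, v^{(\ell)}_i\rangle$ where $v^{(k)}_i$ is the restriction of $v^{(k)}$ to the $\lambda_i$-eigenspace. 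Since the Vandermonde-type matrix $(\lambda_i^j)_{0\le j\le K+1,\, 1\le i\le K+2}$ is invertible (distinct $\lambda_i$), the query responses for $0\le j\le K+1$ determine and are determined by the matrices $M_i \deq (\langle v^{(k)}_i, v^{(\ell)}_i\rangle)_{k,\ell\le K} \in \R^{K\times K}$ for $i = 1,\dots,K+2$ — and the $j=K+2$ response adds nothing new as it is a fixed linear combination of lower moments only if $K+2$ terms... actually here I would just note that $j$ ranging over $0,\dots,K+2$ gives at most $K+3$ linear functionals of the $K+2$ matrices $M_i$ plus possibly one more moment, which I handle by either including $M$ for one extra ``virtual'' value or, more cleanly, by noting the map from $(M_1,\dots,M_{K+2})$ to the responses is a bijection after adjoining the single scalar $\sum_i \langle v^{(k)}_i,v^{(\ell)}_i\rangle\lambda_i^{K+2}$, which is itself a smooth function of the $M_i$ — so it suffices to couple the laws of $(M_1,\dots,M_{K+2})$ under $D$ and under $D'$. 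Now each $M_i$ is exactly a $\Wishart(K, N_i)$ matrix (the $K\times K$ Gram matrix of $K$ i.i.d.\ standard Gaussian vectors in $\R^{N_i}$), and the $M_i$ are mutually independent across $i$; the same holds for $D'$ with $N_i'$.

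The coupling is then done blockwise and has two sources of error. First, apply Lemma~\ref{lem:wishart_goe_similar}: $\law(\Wishart(K,N_i))$ is within $O(K^{3/2}/N_i^{1/2}) \le O(K^{3/2}/\sqrt{d/(2(K+2))}) = O(K^2/\sqrt d)$ of $\law(N_i I + \sqrt{2N_i}\,G_i)$ with $G_i\sim\GOE(K)$, and likewise for $N_i'$; summing over the $K+2$ blocks and using $K=O(d)$ contributes $O(K^3/\sqrt d)$ total. Second, having passed to Gaussians, I couple $N_i I + \sqrt{2N_i}\, G_i$ with $N_i' I + \sqrt{2N_i'}\, G_i'$ by using the \emph{same} GOE matrix $G_i = G_i'$: the TV distance between these two is the TV distance between two Gaussians with means $N_i I$, $N_i' I$ and covariances scaled by $\sqrt{2N_i},\sqrt{2N_i'}$, which — since $|N_i - N_i'| \le |x_i - x_i'| + 2 \le \frac{2c_1}{1-c_1} x_i + 2 = O(c_1 x_i)$ by the bound $|x_i-x_i'|/x_i \le 2c_1/(1-c_1)$ in Corollary~\ref{cor:chebyshev_dual_lp}, and $x_i \ge d/(2(K+2))$ so the standard deviation per entry is $\Theta(\sqrt{N_i})$ — is $O(K\cdot c_1)$ per block by a direct Gaussian TV estimate (the mean shift of $O(c_1 N_i)$ against a standard deviation of $\Theta(\sqrt{N_i})$ in each of $O(K^2)$ coordinates gives total TV $O(K\cdot c_1 N_i/\sqrt{N_i}\cdot 1/\sqrt{N_i}) = O(K c_1)$; being careful, the mean-shift-to-stddev ratio on the diagonal is $O(c_1\sqrt{N_i})$ which is \emph{large}, so I instead compare after rescaling, i.e.\ couple $G_i$ so that $\sqrt{2N_i}G_i + N_i I$ and $\sqrt{2N_i'}G_i + N_i' I$ — here the relevant object is the distribution of the $M_i$ which enters only through $\lambda_i^j M_i$ summed, so I track the aggregate). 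Summing the second source over blocks gives $O(c_1 K^3)$. Combining, the total TV distance is $O(c_1 K^3 + K^3/\sqrt d)$, as claimed.

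\textbf{Main obstacle.} The delicate point is the second coupling step: naively, the diagonal entries of $N_i I + \sqrt{2N_i} G_i$ have a mean that differs by $\Theta(c_1 N_i)$ between the $D$ and $D'$ constructions, which is \emph{huge} compared to the fluctuation scale $\Theta(\sqrt{N_i})$, so one cannot directly couple the raw $\Wishart(K,N_i)$ blocks to small TV error. The resolution is that these large deterministic shifts are harmless because they are \emph{known} — the query algorithm, knowing $d$, already knows $\sum_i N_i\lambda_i^j$ is close to $\sum_i x_i\lambda_i^j = \sum_i x_i'\lambda_i^j$ by the moment-matching property (bullets 1--2 of Corollary~\ref{cor:chebyshev_dual_lp}), so the \emph{mean} of the response vector $(\langle v^{(k)}, D^j v^{(\ell)}\rangle)_j$ is (nearly) the same under $D$ and $D'$. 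Thus the correct move is to subtract off the (common, up to rounding) means first and couple only the \emph{centered} fluctuations $\sqrt{2N_i}\,G_i$ versus $\sqrt{2N_i'}\,G_i$, whose scales differ only by a factor $1 + O(c_1)$ — and two centered Gaussians with proportional covariances, ratio $1+O(c_1)$, are within $O(c_1)$ in TV per scalar coordinate, hence $O(c_1 K^2)$ per block and $O(c_1 K^3)$ overall. Making the bookkeeping of the rounding-induced mean discrepancy (which is only $O(K+2) = O(K)$ in $\ell_1$ after multiplying by the $\lambda_i^j$, absorbed into the other error terms) and the Vandermonde change of variables fully rigorous is the part that requires the most care.
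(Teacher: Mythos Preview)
Your setup is essentially the paper's: round the weights $x_i, x_i'$ of Corollary~\ref{cor:chebyshev_dual_lp} to integer multiplicities, reduce the responses to the independent Wishart blocks $M_i \sim \Wishart(K, N_i)$, and invoke Lemma~\ref{lem:wishart_goe_similar} to pass to $N_i I + \sqrt{2N_i}\,G_i$. Item~1 is fine. You also correctly identify the main obstacle in item~2: the diagonal mean shift $|N_i - N_i'| = \Theta(c_1 N_i)$ is $\Theta(c_1\sqrt{N_i})$ standard deviations, which is \emph{not} small in the regime of interest.

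However, your proposed resolution does not close the gap. You observe that the \emph{aggregate} means $\sum_i N_i \lambda_i^j$ and $\sum_i N_i'\lambda_i^j$ are nearly equal by moment matching, and suggest subtracting a common mean and coupling only the centered fluctuations. But you yourself note that the Vandermonde map $(M_i)_i \mapsto \bigl(\sum_i \lambda_i^j M_i\bigr)_{0\le j\le K+1}$ is a bijection; hence the total variation distance between the response distributions equals the total variation between the laws of $(M_1,\dots,M_{K+2})$ and $(M_1',\dots,M_{K+2}')$. No amount of post-processing or ``tracking the aggregate'' can make this smaller. And the latter TV is governed by the blockwise mean shifts $|N_i - N_i'|/\sqrt{N_i} = \Theta(c_1\sqrt{N_i})$, which can be $c_1\sqrt{d}$ --- far from $o(1)$ for the values of $c_1$ eventually used. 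Your appeal to ``rounding-induced mean discrepancy \dots\ absorbed into the other error terms'' is thus not the fix: the large shift comes from $x_i - x_i'$ itself, not from rounding.

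The missing idea --- and the one the paper uses --- is to build the shift directly into the coupling: couple so that $M_i - M_i' = (x_i - x_i')\,I_K$ with high probability, using the \emph{unrounded} values $x_i, x_i'$. Then the response difference is $\sum_i \lambda_i^j (x_i - x_i')\,I_K$, which vanishes \emph{exactly} for every $j \le K$ by the moment-matching property of Corollary~\ref{cor:chebyshev_dual_lp}. Concretely, for off-diagonal entries couple $\mathcal{N}(0,N_i)$ to $\mathcal{N}(0,N_i')$ (TV $\lesssim |N_i'/N_i - 1| = O(c_1 + K/d)$), and for diagonal entries couple $\mathcal{N}(N_i, 2N_i)$ to $\mathcal{N}(N_i' + x_i - x_i', 2N_i')$; the mean shift in the latter is now only $|(N_i - x_i) - (N_i' - x_i')| \le 2$, which is $O(1/\sqrt{N_i}) = O(K^{1/2}/d^{1/2})$ standard deviations. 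The union bound over $O(K^3)$ entries then gives $O(c_1 K^3 + K^3/d^{1/2})$.
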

\begin{proof}
Choose $\{x_i\}_{i=1}^{K+2}, \{x_i'\}_{i=1}^{K+2}$, and $\{\lambda_i\}_{i=1}^{K=2}$ satisfying Corollary \ref{cor:chebyshev_dual_lp}.
Define integers $\{N_i\}_{i=1}^{K+2}$ such that each $N_i$ is either $\lfloor x_i \rfloor$ or $\lceil x_i \rceil$ and $\sum_{i=1}^{K+2} N_i = d$; define $\{N_i'\}_{i=1}^{K+2}$ similarly in terms of $\{x_i'\}_{i=1}^{K+2}$. We let $D, D'$ be diagonal matrices such that for all $i$, $D$ has $N_i$ diagonal entries equal $\lambda_i$, and $D'$ has $N_i'$ diagonal entries equal to $\lambda_i$. Now, let $v^{(1)}, \dots, v^{(K)} \in \BR^d$ be $K$ random vectors drawn i.i.d.\ from $\mathcal{N}(0, I_d)$ (and define $v^{(1) \prime}, \dots, v^{(K) \prime}$ similarly). For $1 \le i \le K+2$, we define $v^{(k, i)}$ to be the projection of $v^{(k)}$ onto the dimensions corresponding to the diagonal entry $\lambda_i$ for $D$. Note that $\{v^{(k, i)}\}_{i\le K+2, \; k\le K}$ are independent, and $v^{(k,i)} \sim \mathcal{N}(0, I_{N_i})$. Likewise, define $\{v^{(k, i) \prime}\}_{i\le K+2,\;k\le K}$ accordingly in terms of $D'$.

Note that $\Tr(D^{-1})-\Tr(D'^{-1}) = \sum_{i=1}^{K+2} N_i/\lambda_i - \sum_{i=1}^{K+2} N_i'/\lambda_i$. Since $|N_i-x_i|, |N_i'-x_i'| \le 1$, and since each $\lambda_i \ge 1$, it implies
\begin{align*}
    \tr(D^{-1}) - \tr(D'^{-1})
    &\ge \frac{c_1\, d^{1-2c_0 -O(1/\sqrt\kappa)}}{\kappa} - 2\,(K+2)\,.
\end{align*}

Next, we let $W^{(i)}$ represent the $K \times K$ matrix with entries $W^{(i)}_{k, \ell} = \langle v^{(k, i)}, v^{(\ell, i)}\rangle$ and define $W^{(i) \prime}$ similarly.
Note that the matrices $W^{(i)}, W^{(i) \prime}$ over all $i$ are independent. In addition, $W^{(i)}$ has the $\Wishart(K, N_i)$ distribution, and $W^{(i) \prime}$ has the $\Wishart(K, N_i')$ distribution. In addition, for any $k, \ell \le K$ and $j \le T$, we have that $\langle v^{(k)}, D^j\, v^{(\ell)}\rangle = \sum_{i=1}^{K+2} \lambda_i^j W^{(i)}_{k, \ell}$.

Now, we attempt to design a coupling between the matrices $\{W^{(i)}\}_{i=1}^{K+2}$ and $\{W^{(i) \prime}\}_{i=1}^{K+2}$ such that $W^{(i)}-W^{(i) \prime} = (x_i-x_i')\, I_K$ for all $i \le K+2$, with high probability.
Note that this implies our claim, due to Corollary~\ref{cor:chebyshev_dual_lp}.
To design this coupling, first note that by Lemma \ref{lem:wishart_goe_similar}, if we draw $Z^{(i)} \sim N_i \, I_K + \sqrt{2 N_i} \GOE(K)$, then $\norm{\law(W^{(i)}) - \law(Z^{(i)})}_{\rm TV} \le O(K^{3/2}/N_i^{1/2})$, and a similar statement holds if we define $Z^{(i)\prime}$ and compare its law to that of $W^{(i)\prime}$.

Note that the entries of $Z^{(i)}$ and $Z^{(i) \prime}$ are independent (apart from the requirement of symmetry), so we will attempt a coupling between the entries $Z^{(i)}_{k, \ell}$ and $Z^{(i) \prime}_{k, \ell}$. For $k < \ell$, since $Z^{(i)}_{k, \ell} \sim \mathcal{N}(0, N_i)$ and $Z^{(i) \prime}_{k, \ell} \sim \mathcal{N}(0, N_i')$, the total variation distance between their distributions is bounded up to a constant, using Corollary~\ref{cor:chebyshev_dual_lp}, by
\begin{align*}
    \bigl\lvert \frac{N_i'}{N_i} - 1 \bigr\rvert
    &\le \bigl\lvert \frac{x_i'}{x_i} - 1 \bigr\rvert + \bigl\lvert \frac{N_i' - x_i'}{N_i} \bigr\rvert + \bigl\lvert \frac{x_i' \, (N_i - x_i)}{N_i x_i} \bigr\rvert
    \le O\bigl(c_1 + \frac{K}{d}\bigr)
\end{align*}
under our assumptions.
Therefore, we can couple $Z^{(i)}_{k, \ell}$ and $Z^{(i)'}_{k, \ell}$ such that they fail to coincide with this probability. For $k = \ell$, we have $Z^{(i)}_{k, k} \sim \mathcal{N}(N_i, 2N_i)$ and $Z^{(i) \prime}_{k, k} + x_i - x_i' \sim \mathcal{N}(N_i'+x_i - x_i', 2N_i')$.
The total variation distance between their distributions is bounded by a constant times
\begin{align*}
    \bigl\lvert \frac{N_i'}{N_i} - 1\bigr\rvert + \frac{\abs{N_i' - x_i' + x_i - N_i}}{\sqrt{N_i}}
    \le O\bigl(c_1 + \frac{K^{1/2}}{d^{1/2}}\bigr)\,.
\end{align*}
Therefore, we can couple the two random variables together so that $Z_{k,k}^{(i)} = Z_{k,k}^{(i)\prime} + x_i - x_i'$ fails with the above probability.

By a union bound, the coupling $Z^{(i)} = Z^{(i)\prime} + (x_i - x_i') \, I_K$ for all $i$ fails with probability at most
\begin{align*}
    O\Bigl( K^3\,\bigl(c_1 + \frac{K}{d}\bigr) + K^2 \, \bigl(c_1 + \frac{K^{1/2}}{d^{1/2}}\bigr)\Bigr)
    = O\bigl(c_1 K^3 + \frac{K^{5/2}}{d^{1/2}}\bigr)\,.
\end{align*}
We dropped the $c_1 K^4/d$ term because of our assumption $K \le O(d)$.
Combining this with comparison between the Wishart and GOE ensembles and another union bound, we obtain the result.
\end{proof}

Finally, we are able to prove our main lower bound against block Krylov algorithms.

\begin{lemma}[lower bound against block Krylov algorithms]\label{lem:block_krylov}
    Let $\kappa, K, D, D'$ be as in Lemma \ref{lem:moment_matching}. Then, let $U$ be a uniformly random orthogonal matrix in $\BR^{d \times d}$, and let $\Lambda = U^\top D U$ and $\Lambda' = U^\top D' U$. Let $v^{(1)}, \dots, v^{(K)} \overset{i.i.d.}{\sim} \mathcal{N}(0, I_d)$.
    Then, for any $\delta > 0$, provided $K \le O_\delta(\sqrt\kappa \log d)$ and $\kappa \le d^{1/5-\delta}$, the distributions of $\{\Lambda^j v^{(k)}\}_{j \le (K+2)/2,\; k \le K}$ and $\{\Lambda'^j v^{(k)}\}_{j \le (K+2)/2;\; k \le K}$ differ in total variation distance by at most $o(1)$.
    On the other hand, drawing a sample either from $\mathcal{N}(0, \Lambda^{-1})$ or $\mathcal{N}(0, \Lambda'^{-1})$ can, with probability $1-o(1)$, distinguish between the two cases.
\end{lemma}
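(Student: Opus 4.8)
The plan is to establish the two halves of Lemma~\ref{lem:block_krylov} separately, reducing each to Lemma~\ref{lem:moment_matching} by exploiting the rotational invariance of $\Lambda = U^\top D U$.

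\textbf{Reduction to the moment data.} The crucial point is that the block Krylov vectors $\{\Lambda^j v^{(k)}\}$ carry no more information than their Gram matrix, which is governed by the moment data controlled in Lemma~\ref{lem:moment_matching}(2). Write $w^{(k)} \deq U v^{(k)}$; these are i.i.d.\ $\NN(0,I_d)$, independent of $U$, and $\Lambda^j v^{(k)} = U^\top D^j w^{(k)}$. Collecting the vectors $\{D^j w^{(k)}\}_{j\le (K+2)/2,\,k\le K}$ as the columns of $A \in \R^{d\times m}$ (with $m = \Theta(K^2)$), the collection of interest is $U^\top A$, and $(A^\top A)_{(j,k),(j',\ell)} = \langle w^{(k)}, D^{j+j'} w^{(\ell)}\rangle$, so $A^\top A$ is a deterministic function of the moment data $M \deq \{\langle v^{(k)}, \Lambda^m v^{(\ell)}\rangle\}_{m\le K+2,\,k,\ell\le K}$ (note $\langle v^{(k)},\Lambda^m v^{(\ell)}\rangle = \langle w^{(k)}, D^m w^{(\ell)}\rangle$, and $j+j' \le K+2$). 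Since $U$ is Haar and independent of $A$, the conditional law of $U^\top A$ given $A$ is that of a Haar rotation of the fixed matrix $A$, which depends on $A$ only through $A^\top A$; integrating out $A$, the conditional law of $\{\Lambda^j v^{(k)}\}$ given $M$ is a fixed Markov kernel, identical in the $D$ and $D'$ cases. Hence, by the data-processing inequality,
\begin{align*}
  \norm{\law(\{\Lambda^j v^{(k)}\}) - \law(\{\Lambda'^j v^{(k)}\})}_{\rm TV}
  \le \norm{\law(M) - \law(M')}_{\rm TV}\,.
\end{align*}
By rotational invariance of the i.i.d.\ Gaussians, $\law(M) = \law(\{\langle v^{(k)}, D^m v^{(\ell)}\rangle\}_{m\le K+2,\,k,\ell\le K})$ and similarly for $M'$ with $D'$, so Lemma~\ref{lem:moment_matching}(2) bounds the right-hand side by $O(c_1 K^3 + K^3/d^{1/2})$. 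Under $K \le c_0\sqrt\kappa\log d$ and $\kappa \le d^{1/5-\delta}$ we have $K^3 \le c_0^3\, d^{3/10-3\delta/2}\,\polylog(d)$, so $K^3/d^{1/2} = o(1)$; choosing $c_1 = d^{-3/10}$ additionally gives $c_1 K^3 = o(1)$, which proves the first claim.

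\textbf{Distinguishing via a single sample.} The same parameter choices, with $c_0$ a sufficiently small constant and $\kappa$ above a constant threshold depending on $\delta$ (so that the $2(K+2)$ and $d^{O(1/\sqrt\kappa)}$ factors in Lemma~\ref{lem:moment_matching}(1) are negligible), yield $\abs{\tr(\Lambda^{-1}) - \tr(\Lambda'^{-1})} = \abs{\tr(D^{-1}) - \tr(D'^{-1})} \ge \Delta$ with $\Delta = d^{1/2 + \Omega_\delta(1)}$. On the other hand, for $X \sim \NN(0,\Lambda^{-1})$ we have $\E\norm X^2 = \tr(\Lambda^{-1})$, and since the eigenvalues of $\Lambda$ lie in $[1,\kappa]$, Lemma~\ref{cor:conc_sq_norm} gives $\var(\norm X^2) = 2\norm{\Lambda^{-1}}_{\HS}^2 \le 2\tr(\Lambda^{-1}) \le 2d$, and identically for $\Lambda'$. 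By Chebyshev's inequality, $\norm X^2$ lies within $\Delta/3$ of $\tr(\Lambda^{-1})$ with probability $1 - O(d/\Delta^2) = 1-o(1)$; thus the rule that reports whichever of $\tr(D^{-1}), \tr(D'^{-1})$ is closer to $\norm X^2$ identifies the correct case with probability $1-o(1)$.

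\textbf{Main obstacle.} The delicate step is the reduction in the first paragraph: showing that, conditionally on the moment data $M$, the law of the block Krylov vectors is independent of the underlying diagonal matrix. A configuration of $\Theta(K^2)$ vectors in $\R^d$ is pinned down by its Gram matrix only up to a global orthogonal transformation, and it is precisely the rotational invariance of $\Lambda = U^\top D U$ that lets one wash out this residual ambiguity — intuitively, querying $\Lambda$ reveals only inner products of the query vectors with their images. The remaining pieces (optimizing $c_0, c_1$ and the elementary second-moment estimate) are routine.
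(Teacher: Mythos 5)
Your proof is correct, and it uses a genuinely different mechanism than the paper for the first (and harder) half. The paper constructs an \emph{explicit coupling}: starting from the coupling provided by Lemma~\ref{lem:moment_matching}, it invokes Proposition~\ref{prop:rotation} to manufacture a rotation $U_0$ with $D'^j v^{(k)\prime} = U_0 D^j v^{(k)}$, and then exhibits matched representatives $\{U^\top D^j U \cdot U^\top v^{(k)}\}$ and $\{(U_0 U)^\top D'^j (U_0 U) \cdot (U_0 U)^\top v^{(k)\prime}\}$ that agree on the high-probability coupling event. You instead argue by \emph{data processing}: observing that the conditional law of the block Krylov data given the moment tuple $M$ is the fixed Markov kernel ``choose any $A_0$ with $A_0^\top A_0$ equal to the Gram matrix extracted from $M$, then apply an independent Haar rotation,'' which is identical for $D$ and $D'$. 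The two arguments rest on the same fact --- a tuple of vectors is determined by its Gram matrix up to a global rotation, which Haar randomness washes out --- but your formulation cleanly factors the TV bound as $\mathrm{TV}(\text{Krylov data}) \le \mathrm{TV}(M, M')$ without needing to exhibit the coupling, and (unlike the paper's version, which is a bit terse about why $U_0$, being random, can be absorbed) makes the independence structure explicit. Both are valid; yours is arguably tidier. Your parameter choice $c_1 = d^{-3/10}$ differs from the paper's $c_1 = 1/(\kappa^{3/2}\log^4 d)$ but serves the same purpose, and your second-moment distinguishing step (Chebyshev with $\var(\|X\|^2)\le 2d$ versus a mean gap of $d^{1/2+\Omega_\delta(1)}$) matches the paper's.
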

\begin{proof}
    The following calculations are contingent on the values of the various parameters that we will choose at the end of the proof.
    From Lemma~\ref{lem:moment_matching}, there is a coupling such that the tuples $\{\langle v^{(k)}, D^j \, v^{(\ell)}\rangle\}_{j\le K+2,\;k,\ell\le K}$ and $\{\langle v^{(k)\prime}, D'^j\,v^{(\ell)\prime}\rangle\}_{j\le K+2,\; k,\ell \le K}$ are equal with high probability. In particular, it holds that $\langle D^i \, v^{(k)}, D^j \, v^{(\ell)}\rangle = \langle D'^i\,v^{(k)\prime}, D'^j\,v^{(\ell)\prime}\rangle$ for all $i,j\le (K+2)/2$ and $k\le K$ with high probability. By Proposition~\ref{prop:rotation}, there is a unitary matrix $U_0$ such that $D'^j \, v^{(k)\prime} = U_0 D^j\, v^{(k)}$ for all $j\le (K+2)/2$ and all $k\le K$ with high probability.
    Note then that the tuples $\{U^\T D^j U\, U^\T v^{(k)}\}_{j\le (K+2)/2, \; k\le K}$ and $\{U^\T U_0^\T D'^j U_0 U\, U^\T U_0^\T v^{(k)\prime}\}_{j\le (K+2)/2, \; k \le K}$ are equal with high probability, and this is a coupling which witnesses the fact that the distributions of $\{\Lambda^j v^{(k)}\}_{j\le (K+2)/2, \; k\le K}$ and $\{\Lambda'^j v^{(k)}\}_{j\le (K+2)/2, \; k\le K}$ are at most $O(c_1 K^3 + K^3/d^{1/2})$ apart in total variation distance.
    
    Finally, we note that from a single sample it is easy to distinguish between $\mathcal{N}(0, \Lambda^{-1})$ and $\mathcal{N}(0, \Lambda'^{-1})$. This is because if $X \sim \mathcal{N}(0, \Lambda^{-1}),$ then $\BE[\|X\|^2] = \Tr(\Lambda^{-1}) = \Tr(D^{-1}) = \sum_{i=1}^{K+2} N_i/\lambda_i$, but one checks that $\var(\|X\|^2) = O(\sum_{i=1}^{K+2} N_i/\lambda_i^2) \le O(d)$. Likewise, if $X' \sim \mathcal{N}(0, \Lambda'^{-1})$, then we have $\BE[\|X'\|^2] = \sum_{i=1}^{K+2} N_i'/\lambda_i$ but $\var(\|X'\|^2) = O(d)$. So, the difference in their expectations at least $c_1 d^{1-2c_0 - O(1/\sqrt\kappa)}/\kappa - 2\,(K+2)$, whereas the standard deviations are bounded by $O(d^{1/2})$.

    To finish the proof, we must choose the values of $c_0$ and $c_1$.
    We require the following conditions:
    \begin{enumerate}
        \item $c_1 K^3 = o(1)$.
        \item $K^3/d^{1/2} = o(1)$.
        \item $d^{1/2} = o(c_1 d^{1-2c_0 - O(1/\sqrt \kappa)}/\kappa - 2\,(K+2))$.
    \end{enumerate}
    For the second condition, we can assume $\kappa \le d^{1/3}/\log^4(d)$.
    To satisfy the first condition, we can set $c_1 = 1/(\kappa^{3/2} \log^4(d))$.
    Finally, if $\kappa$ is sufficiently large and if $c_0$ is chosen depending on $\delta$, then the third condition requires $\sqrt\kappa \log d + d^{1/2} = o(d^{1-\delta}/\kappa^{5/2})$, and it suffices for $\kappa \le d^{1/5-\delta}$.
\end{proof}

\begin{remark}
    We did not attempt to optimize the exponent in the condition $\kappa \le d^{1/5-\delta}$. Indeed, by using the chain rule for the KL divergence rather than a union bound in the proof of Lemma~\ref{lem:moment_matching}, we believe that the total variation bound can be improved to $O(c_1 K^{3/2} + K^{5/2}/d^{1/2})$, and a back-of-the-envelope calculation suggests that this could improve the condition to $\kappa \le d^{2/7-\delta}$. Nevertheless, this falls short of capturing the full regime $\sqrt\kappa \log d \le O(d)$, and we leave this as an open question.
\end{remark}

\subsection{Reduction to block Krylov algorithms} \label{scn:reduction-to-block-krylov}

In this section, we show that in order to prove a lower bound for sampling from Gaussians against any query algorithm, it suffices to prove a lower bound against block Krylov algorithms.

\subsubsection{Setup}\label{scn:setup}

Let $\Lambda = U^\top D U$, where $D$ is a (possibly random) diagonal matrix, $U$ is a Haar-random orthogonal matrix, and $U$ and $D$ are independent. We consider the following model, which is a strengthening of the matrix-vector product model:

\begin{definition}[extended oracle model]\label{def:extended-oracle-model}
Given $K \in \mathbb{N}$, for all  $k \in [K]$, the algorithm chooses a new query point $v_k$, and receives the information $\{\Lambda^i v_j\}_{(i, j) \in H_k},$ where $H_k := \{(i, j): i+j \le k+1, i \ge 0, 1 \le j \le k\}$ is a set of ordered pairs of nonnegative integers. We use the following notation $\{\Lambda^i v_j\}_{S}$ for any set $S$ to denote $\{\Lambda^i v_j\}_{(i, j) \in S}$.  
\end{definition}

This is clearly a stronger oracle model than before, so a lower bound against algorithms in the extended oracle model implies a lower bound against algorithms in the original matrix-vector model.

\begin{definition}[adaptive deterministic algorithm]
\label{def:adaptive-deterministic-algorithm}
An \textit{adaptive deterministic algorithm} $\calA$ that makes $K$ extended oracle queries (see Definition~\ref{def:extended-oracle-model}) is given by a deterministic collection of functions $v_1,v_2(\cdot),\dotsc,v_{K}(\cdot)$, where $v_1$ is constant and each $v_k (\cdot)$ is a function of $\frac{k\,(k+1)}{2}-1$ inputs.
This corresponds to a sequence of queries where the $k$-th query $v_k(\{\Lambda^i v_j\}_{H_{k-1}})$ is chosen adaptively based on the information available to the algorithm at the start of iteration $k$.
(Note that $v_1$ has no inputs.)
When the choice of the inputs is clear from context, we may simply write $v_k = v_k(\{\Lambda^i v_j\}_{H_{k-1}})$.
\end{definition}

In the extended oracle model, the next lemma shows that we can assume that each $v_k$ is a unit vector orthogonal to its inputs.

\begin{lemma}[extended oracle and orthogonal queries]\label{lem:extended-oracle-queries-are-orthogonal}
For $k \in [2, K]$, let $v_k$ be as stated in Definition~\ref{def:adaptive-deterministic-algorithm} and let $\{\Lambda^{i} v_j\}_{H_{k-1}}$ be as stated in Definition~\ref{def:extended-oracle-model}. Then, without loss of generality, we may assume that $v_k$ is orthogonal to the subspace spanned by the vectors in $\{\Lambda^{i} v_j\}_{H_{k-1}}$. 
\end{lemma}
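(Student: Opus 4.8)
The plan is to prove the lemma by a simulation argument: given any adaptive deterministic algorithm $\calA$ in the extended oracle model, I will construct a modified algorithm $\calA'$, using the same number $K$ of queries, whose $k$-th query $\bar v_k$ is a unit vector orthogonal to the span of all information $\calA'$ has received through iteration $k-1$, and which reproduces $\calA$'s final output exactly on every $\Lambda$. Since our lower bounds are quantified over all algorithms and $\calA'$ solves whatever task $\calA$ solves with the identical success probability, it follows that restricting to algorithms of the orthogonalized form is without loss of generality, which is the content of the lemma. (When $v_1$ is concerned there is nothing to orthogonalize against, so after normalizing $v_1$ the $k=1$ case is vacuous.)

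I would build $\calA'$ by induction on $k$, carrying along the reconstruction invariant: at the start of iteration $k$, (a) $\operatorname{span}\{\Lambda^i v_j\}_{H_{k-1}} = \operatorname{span}\{\Lambda^i \bar v_j\}_{H_{k-1}}$, and (b) from its own data $\{\Lambda^i \bar v_j\}_{H_{k-1}}$ the algorithm $\calA'$ can compute, as an explicit known linear combination, every vector $\Lambda^i v_j$ with $(i,j)\in H_{k-1}$, and conversely. Granting the invariant, $\calA'$ reconstructs $\calA$'s view $\{\Lambda^i v_j\}_{H_{k-1}}$, evaluates the known function $v_k = v_k(\{\Lambda^i v_j\}_{H_{k-1}})$, forms $S_{k-1} \deq \operatorname{span}\{\Lambda^i v_j\}_{H_{k-1}}$ and the orthogonal decomposition $v_k = v_k^\parallel + v_k^\perp$ with $v_k^\parallel \in S_{k-1}$, and sets $\bar v_k \deq v_k^\perp/\|v_k^\perp\|$ (and, if $v_k^\perp = 0$, any unit vector orthogonal to $S_{k-1}$). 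By invariant (a), $S_{k-1} = \operatorname{span}\{\Lambda^i \bar v_j\}_{H_{k-1}}$, so $\bar v_k$ is orthogonal to $\calA'$'s accumulated information, as the lemma requires; and $\bar v_k$ is a deterministic function of $\calA'$'s data, so $\calA'$ is again an adaptive deterministic algorithm.

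The only real content is propagating the invariant to level $k$, and this is the step I expect to be the main obstacle — not because it is deep, but because of the bookkeeping with the index sets $H_k$. The governing identity is $\Lambda^i v_j = \|v_j^\perp\|\,\Lambda^i \bar v_j + \Lambda^i v_j^\parallel$, where $v_j^\parallel = \sum_{(i',j')\in H_{j-1}} c_{i',j'}\,\Lambda^{i'} v_{j'}$ with known coefficients, so $\Lambda^i v_j^\parallel = \sum_{(i',j')\in H_{j-1}} c_{i',j'}\,\Lambda^{i+i'} v_{j'}$. The crucial observation is that for $(i,j)\in H_k$ and $(i',j')\in H_{j-1}$ one has $i+i'+j' \le i+j \le k+1$ and $j' < j$, hence $(i+i',j')\in H_k$ with a strictly smaller query index; a secondary induction on $j$ then writes every $\Lambda^i v_j$, $(i,j)\in H_k$, as a known combination of $\{\Lambda^{i'}\bar v_{j'}\}_{H_k}$ and vice versa, yielding (b) and both inclusions of (a) at level $k$. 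For the degenerate case $v_k^\perp = 0$, the same identity together with the bound $i \le k+1-j$ valid for $(i,k)\in H_\ell$ shows that every $\Lambda^i v_k$ that $\calA$ could ever be told already lies in $\operatorname{span}\{\Lambda^{i}v_j\}_{H_\ell}$ over $j<k$, so $\calA$'s $k$-th query is redundant and may be replaced by a fresh orthogonal unit vector without affecting the simulation. Once the induction closes at $k = K$, $\calA'$ has reconstructed all of $\calA$'s information and returns $\calA$'s output verbatim, completing the argument.
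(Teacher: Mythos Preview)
Your proposal is correct and follows essentially the same approach as the paper: decompose $v_k$ into its component $v_k^\perp$ orthogonal to $\operatorname{span}\{\Lambda^i v_j\}_{H_{k-1}}$ plus a parallel part, and use the index-set arithmetic $(i',j')\in H_{j-1},\ (i,j)\in H_k \Rightarrow (i+i',j')\in H_k$ to show that all oracle responses involving the parallel part are already available, so querying $v_k^\perp$ loses nothing. Your version is somewhat more carefully written than the paper's (which compresses the later-iteration reconstruction into ``applying this reasoning inductively''), making the span-equality invariant and the degenerate case $v_k^\perp=0$ explicit, but the underlying argument is the same.
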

\begin{proof}
Assume for sake of contradiction that this were not the case. Then, we can decompose $v_k = \sum_{(i, j) \in H_{k-1}} c_{i,j} \Lambda^i v_j + c^\perp v_k^\perp$ where $v_k^\perp$ is a unit vector orthogonal to $\{\Lambda^i v_j\}_{H_{k-1}}$ and each $c_{i, j}$ and $c^\perp$ is a scalar.
At the end of iteration $k$, the new information obtained by the algorithm is $\{\Lambda^i v_j\}_{i+j=k+1, j \le k}$.
For all $(i,j) \ne (1,k)$, the new information does not depend on $v_k$.
Also, $\Lambda v_k = \sum_{(i, j) \in H_{k-1}} c_{i,j} \Lambda^{i+1} v_j + c^\perp \Lambda v_k^\perp$, where each $\Lambda^{i+1} v_j$ is information obtained by the algorithm at the end of iteration $k+1$ regardless (due to our extended query model).
Since $(i+1, j) \in H_k$ if $(i, j) \in H_{k-1}$, and since $(1, k) \in H_k$, this expression shows that the algorithm would receive the same amount of information (or more, if $c^\perp = 0$) if it queries $v_k^\perp$ instead of $v_k$.
Applying this reasoning inductively proves the claim.
\end{proof}

We compare to a \emph{block Krylov algorithm}, which makes i.i.d.\ standard Gaussian queries $z_1,\dotsc,z_K$ and then receives $\{\Lambda^i z_j\}$ for all $i, j \le K$. Recall that a block Krylov algorithm does not make \emph{adaptive} queries, so it is easier to prove lower bounds against block Krylov algorithms. Our goal is to now show that block Krylov algorithms can simulate an adaptive deterministic algorithm.

\subsubsection{Conditioning lemma}\label{scn:conditioning}

We start by proving a general conditioning lemma which will be invoked repeatedly in the reduction to block Krylov algorithms.
This lemma roughly shows that if the adaptive algorithm knows $\{\Lambda^i v_j\}_{H_k},$ the posterior distribution of $\Lambda$ given $\{\Lambda^i v_j\}_{H_k}$ is indeed rotationally symmetric on the orthogonal complement $\{\Lambda^i v_j\}_{H_k}$.

We will use the notation $\eqdist$ to denote that two random variables are equal in probability distribution (possibly conditioned on other information).

\begin{lemma}[conditioning lemma, preliminary version]\label{lem:random_rotation_fixed_subspace}
    Let $U$ be a Haar-random orthogonal matrix, and $\Lambda = U^\top D U$, where $D$ is a (possibly random) positive diagonal matrix. Suppose that $\calA$ is an adaptive deterministic algorithm that generates extended oracle queries $v_1, \dots, v_{K}$, and after the $k$-th query knows $\Lambda^i v_j$ for all $(i, j) \in H_k$.
    For any integer $m \ge 1$, let $k$ be the integer such that $\frac{k(k+1)}{2} \le m < \frac{(k+1)(k+2)}{2},$ i.e., $m$ is at least the $k$-th triangular number but less than the $(k+1)$-th triangular number.
    Consider the order of vectors $v_1, \Lambda v_1, v_2, \Lambda^2 v_1, \Lambda v_2, v_3, \Lambda^3 v_1, \dots$ (this enumerates $\Lambda^i v_j$ in order of $i+j$, breaking ties with smaller values of $j$ first).
    Let $W_m$ be the set of first $m$ of these vectors and $X_k$ be the set $\{v_1, \dots, v_k\}$. Let $V$ be a Haar-random orthogonal matrix fixing $W_m$ and acting on the orthogonal complement $W_m^\perp$. Then, $(X_k, U) \eqdist (X_k, U V)$.
\end{lemma}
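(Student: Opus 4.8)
The plan is to decompose the Haar-random orthogonal matrix $U$ into a part that determines the vectors in $W_m$ and a residual part that is still Haar-random on the orthogonal complement, and then show that inserting the extra rotation $V$ does not change the joint law. The key observation is a deterministic one: the set $W_m$ is a deterministic function of $(X_k, \Lambda)$, because $W_m$ consists exactly of the vectors $\Lambda^i v_j$ with $i+j \le $ (the appropriate threshold), the queries $v_1,\dots,v_k$ are themselves deterministic functions of the earlier accumulated information (the algorithm $\calA$ is deterministic and adaptive), and $\Lambda$ acts on them by matrix multiplication. So $W_m$ is measurable with respect to $(X_k,\Lambda)$, and since $D$ is independent of $U$, it suffices to show that conditionally on $(X_k, D)$, multiplying $U$ on the left by a $W_m$-fixing Haar rotation $V$ preserves the conditional law of $U$ — equivalently, to show $(X_k,U) \eqdist (X_k, VU)$ would be false in general, so we must be careful about which side $V$ multiplies; here $V$ is applied as $UV$, which corresponds to rotating in the domain of $\Lambda$'s eigenbasis representation, and I will track that carefully.

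First I would set up the recursion that generates $W_m$: define $v_1$ (a constant), then inductively $v_{j+1} = v_{j+1}(\{\Lambda^i v_\ell\}_{H_j})$, and observe by induction on the enumeration order $i+j$ that every vector in $W_m$ lies in the span of $\{U^\top e_1,\dots,U^\top e_d\}$ in a way that depends on $U$ only through the images $U v_1, \dots$; more precisely, $W_m$ and $X_k$ are deterministic functionals of the pair $(U, D)$. Second, by Lemma~\ref{lem:extended-oracle-queries-are-orthogonal} we may assume each $v_{j}$ is a unit vector orthogonal to all previously-seen information; I would use this to argue that the map $(U,D) \mapsto W_m$ has a nice ``filtration'' structure: revealing $W_m$ reveals the action of $\Lambda$ on an explicit subspace but nothing about its action on $W_m^\perp$. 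Third, I would invoke the standard fact about the Haar measure: if $U$ is Haar on $O(d)$ and $V$ is a (random, possibly dependent) orthogonal matrix that fixes $W_m$ pointwise and is conditionally Haar on $W_m^\perp$ given everything that determines $W_m$, then $UV \eqdist U$ conditionally on that same information. The delicate point is that $V$ is built to depend on $W_m$ (hence on $U$), so the naive ``for fixed $V'$, $U \eqdist UV'$'' is not directly applicable; instead I would condition on $W_m$ (equivalently on $(X_k, \{\Lambda^i v_j\}_{W_m})$), note that the conditional law of $U$ given this data is Haar on the coset of rotations consistent with the data, and that right-multiplication by a $W_m^\perp$-Haar $V$ is exactly the group action that preserves this conditional Haar coset while fixing $X_k \subseteq W_m$.

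The main obstacle I expect is making the conditioning argument rigorous: one must verify that the sigma-algebra generated by $W_m$ (together with $D$) is precisely rich enough that (i) $X_k$ is measurable with respect to it — which is immediate since $X_k \subseteq W_m$ — and (ii) the conditional distribution of $U$ given it is the (normalized) Haar measure on the set of orthogonal matrices agreeing with the revealed data, with no residual structure on $W_m^\perp$. This requires a careful disintegration of Haar measure along the orbit map $U \mapsto (\text{action of } \Lambda \text{ on } W_m)$, and one has to check measurable selection / regularity so that the conditional law is well-defined and genuinely Haar on the fiber. Once that disintegration is in hand, the identity $(X_k, U) \eqdist (X_k, UV)$ follows because $V$ fixes $W_m \supseteq X_k$, so $X_k$ is unchanged, and $V$ acts as a measure-preserving bijection on each Haar fiber of $U$. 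I would close by remarking that this is exactly the ``conditioned-on-the-data, $\Lambda$ is still rotationally invariant on the unexplored subspace'' intuition sketched in Section~\ref{scn:block_kry_overview}, now made precise; the full strength (allowing the rotation to be chosen by the algorithm) is what will be needed in Section~\ref{scn:from_query}.
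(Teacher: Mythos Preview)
Your high-level intuition is right, and you correctly identify the crux: the dependence of $W_m$ on $U$ means one cannot simply invoke right-invariance of Haar measure. But your proposal then defers exactly this crux. The ``standard fact'' you invoke --- that $UV \eqdist U$ conditionally on $W_m$ when $V$ is conditionally Haar on $W_m^\perp$ --- is not standard in this setting and is essentially equivalent to the lemma itself. The disintegration you sketch (push Haar through $U \mapsto W_m(U)$ and check the fiber measures are right-invariant under $W_m$-stabilizers) is the correct target, but you give no mechanism for establishing it beyond asserting it; and the measure-theoretic issues are genuine, since the stabilizer subgroup varies with the fiber, so the usual ``fix $V_0$, use uniqueness of disintegration'' argument does not apply out of the box.

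The paper avoids disintegration entirely by induction on $m$. It first isolates the deterministic fact you allude to --- if $V$ fixes $W_m(U)$ then $W_m(UV) = W_m(U)$ --- as a separate Proposition~\ref{prop:invariant}, proved by a short induction on the enumeration. Then, letting $V_1$ be Haar fixing $W_{m-1}$ and $V_2$ be Haar fixing $W_m$, it chains equalities of the form $(X_k, U) \eqdist (X_k, UV_1) \eqdist (X_k, UV_1V_2) \eqdist (X_k, UV_2)$, where the first step is the inductive hypothesis, the last uses $V_1 V_2 \mid U \eqdist V_1 \mid U$ (both Haar on $W_{m-1}^\perp$), and the middle step writes $V_2 = f(U,R)$ and applies the inductive hypothesis to the pair $(U, f(U,R))$. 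There is a genuine case split on whether $m$ is triangular: when the $m$-th vector is $\Lambda^i v_j$ with $i\ge 1$, one has $W_m(UV_1) \ne W_m(U)$ in general, which forces a conjugation trick ($f(UV_1,R) \eqdist V_1^\top f(U,R) V_1$) and the chain instead runs through $UV_2V_1$. Your direct-conditioning route could likely be made rigorous, but it would need Proposition~\ref{prop:invariant} plus a real fiberwise-invariance lemma; the inductive argument is what actually supplies that.
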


Before proving this lemma, we note that since the algorithm is deterministic and $D$ is fixed, $W_m$ and $X_k$ are deterministic functions of $\Lambda$, and thus of $U$. Hence, we can write $v_k(U'), W_m(U'), X_k(U')$ to be the $v_k, W_m, X_k$ that would have been generated if we started with $\Lambda' = (U')^\top D U'$. (If no argument is given, $v_k, W_m, X_k$ are assumed to mean $v_k(U), W_m(U), X_k(U)$, respectively.) We note the following proposition.

\begin{proposition}[fixing the first $m$ queries and responses]\label{prop:invariant}
    Suppose that $V$ is any orthogonal matrix fixing $W_m(U)$. Then, $W_m(U) = W_m(UV)$.
\end{proposition}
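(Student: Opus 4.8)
The plan is to prove Proposition~\ref{prop:invariant} by an induction along the canonical enumeration of $W_m$. First I would fix notation: write $\Lambda = U^\top D U$ and $\Lambda' \deq (UV)^\top D (UV) = V^\top \Lambda V$, so that $(\Lambda')^i = V^\top \Lambda^i V$ for every $i \ge 0$. Because $\calA$ is deterministic and $D$ is fixed, the sequence of index pairs $(i_1,j_1),(i_2,j_2),\dots$ defining the ordering $v_1, \Lambda v_1, v_2, \Lambda^2 v_1, \Lambda v_2, v_3,\dots$ of the vectors $\Lambda^{i} v_j$ does not depend on $U$; only the vectors themselves do. Hence I can write $W_m(U) = (w_1,\dots,w_m)$ with $w_\ell = \Lambda^{i_\ell} v_{j_\ell}(U)$ and, analogously, $W_m(UV) = (w_1',\dots,w_m')$ with $w_\ell' = (\Lambda')^{i_\ell} v_{j_\ell}(UV)$, and the claim becomes: $w_\ell' = w_\ell$ for all $\ell \le m$.

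I would then run a strong induction on $\ell$, with base case $\ell = 1$ immediate since $v_1$ is constant. For the inductive step I split into two cases according to the shape of the $\ell$-th pair. If $i_\ell \ge 1$, then $w_\ell = \Lambda\, w_{\ell''}$ where $w_{\ell''} = \Lambda^{i_\ell - 1} v_{j_\ell}$ occurs at a strictly earlier position $\ell'' < \ell$ in the enumeration (its weight $i_\ell - 1 + j_\ell$ is smaller); using the inductive hypothesis $w_{\ell''}' = w_{\ell''}$,
\[
    w_\ell' = \Lambda'\, w_{\ell''}' = \Lambda'\, w_{\ell''} = V^\top \Lambda V\, w_{\ell''} = V^\top \Lambda\, w_{\ell''} = V^\top w_\ell = w_\ell,
\]
where the fourth equality uses that $V$ fixes $w_{\ell''} \in W_m(U)$ and the last uses that $V$, hence $V^\top$, fixes $w_\ell = \Lambda w_{\ell''} \in W_m(U)$. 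If instead $i_\ell = 0$, then $w_\ell = v_j(U)$ for $j = j_\ell$, which happens exactly at position $\ell = \tfrac{j(j+1)}{2}$; here $v_j$ is a fixed deterministic function of $\{\Lambda^i v_{j'}\}_{H_{j-1}}$, and a short bookkeeping check identifies that tuple, in the order the function expects its arguments, with the prefix $W_{\ell-1}(U)$. By the inductive hypothesis $W_{\ell-1}(UV) = W_{\ell-1}(U)$, so the run of $\calA$ on $\Lambda'$ feeds identical inputs into $v_j(\cdot)$ and therefore produces $v_j(UV) = v_j(U)$; thus $w_\ell' = w_\ell$. Assembling the two cases and running the induction up to $\ell = m$ gives $W_m(UV) = W_m(U)$.

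The conceptual content is simply that fixing $W_m(U)$ forces $\Lambda$ and $\Lambda' = V^\top \Lambda V$ to agree on every vector of $W_{m-1}(U)$ — each of which $\Lambda$ maps into $W_m(U)$ — and a deterministic dynamics driven by such agreeing operators, started from the same seed $v_1$, traces out the same vectors. The step I expect to be the most delicate is purely combinatorial: verifying that for $i_\ell \ge 1$ the parent vector $w_{\ell''}$ genuinely precedes $w_\ell$ in the enumeration, and that the information set $\{\Lambda^i v_{j'}\}_{H_{j-1}}$ available just before query $j$ coincides, as an ordered tuple matching the argument order of $v_j(\cdot)$, with $W_{j(j+1)/2-1}(U)$. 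Both facts follow from unwinding Definitions~\ref{def:extended-oracle-model} and~\ref{def:adaptive-deterministic-algorithm} together with the ``smaller $j$ first'' tie-breaking rule, but they are where off-by-one errors would creep in.
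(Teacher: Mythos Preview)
Your proof is correct and follows essentially the same approach as the paper: strong induction along the canonical enumeration, splitting into the case where the $\ell$-th vector is a fresh query $v_j$ (triangular position) and the case where it is $\Lambda^i v_j$ with $i\ge 1$. The only cosmetic difference is that in the latter case the paper jumps directly from $V^\top \Lambda^i V v_j$ to $\Lambda^i v_j$ using that $V$ fixes both $v_j$ and $\Lambda^i v_j$, whereas you factor through the immediate predecessor $\Lambda^{i-1} v_j$; both are valid and equivalent.
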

\begin{proof}
    We prove $W_{m'}(U) = W_{m'}(UV)$ for all $m' \le m$. The base case of $k = 1$ is trivial, since $v_1$ is fixed. We now prove the induction step for $m'$.
    
    If $m' \le m$ is a triangular number, $m' = \frac{k(k+1)}{2}$, then the $m'$-th vector in $W_m$ is $v_k$. But note that $v_k(U)$ is a deterministic function of $W_{m'-1}(U)$, and $v_k(UV)$ is the same deterministic function of $W_{m'-1}(UV)$. Hence, if the induction hypothesis holds for $m'-1$, it also holds for $m$.

    If $m' \le m$ is not a triangular number, then the $m'$-th number in $W_m(U)$ is $\Lambda^i v_j$ for some $i \ge 1$. Likewise, the $m'$-th number in $W_m(UV)$ is $V^\top \Lambda^i V v_j(UV)$. Since $i \ge 1$, we know that $v_j(U) = v_j(UV)$, by the induction hypothesis on $\frac{j(j+1)}{2} < m'$. But, we know that $V$ fixes $W_m$, which means it fixes $v_j$ and $\Lambda^i v_j$. Thus, $V^\top \Lambda^i V v_j(UV) = V^\top \Lambda^i V v_j = \Lambda^i v_j$.
\end{proof}

We are now ready to prove Lemma \ref{lem:random_rotation_fixed_subspace}.

\medskip{}

\begin{proof}[Proof of Lemma~\ref{lem:random_rotation_fixed_subspace}]
    We prove this by induction on $m$. For the base case $m = 1$, $U$ is a random matrix and $V$ is a random matrix that fixes $v_1$. Note that $v_1$ is chosen independently of $\Lambda$ (and thus of $U$), so $U$ and $V$ are independent. Even for any fixed $V$, the distribution $U V$ is a uniformly random orthogonal matrix, so overall $U \eqdist U V$. Also, $v_1$ is deterministic, so $(v_1, U) \eqdist (v_1, U V)$.
    
    For the induction step, we split the proof into $2$ cases. The proofs in both cases will be very similar, but with minor differences.
    
    \paragraph{Case 1: $m$ is a triangular number.} This means that the $m$-th vector added is $v_k$, where $m = \frac{k(k+1)}{2}$. Let $V_1$ be a random orthogonal matrix fixing $W_{m-1}$ and $V_2$ be a random orthogonal matrix fixing $W_m$. Our goal is then to show $(X_k, U) \eqdist (X_k, U V_2)$.

    To make this rigorous, we note an order of generating the random variables. First, we generate $U$ randomly: $W_m$ and $X_k$ are deterministic in terms of $U$. Next, we define $V_1$ to be a random rotation fixing $W_{m-1}$. Finally, we define $V_2$ to be a random rotation fixing $W_m$, where $V_1, V_2$ are conditionally independent on $U$.

    First, we prove that $(X_k, U) \eqdist (X_k, UV_1)$. Note that $U \eqdist UV_1$ by our inductive hypothesis. In addition, since $V_1$ fixes $W_{m-1}(U)$, $W_{m-1}(U) = W_{m-1}(UV_1)$ by Proposition \ref{prop:invariant}. Since $m = \frac{k(k+1)}{2}$ is a triangular number, $X_k(\cdot)$ is a deterministic function of $W_{m-1}(\cdot)$, which means $X_k(U) = X_k(UV_1)$. Hence, $(X_k, U) \eqdist (X_k(UV_1), UV_1) = (X_k, UV_1)$.
        
    Next, we prove that $(X_k, U V_2) \eqdist (X_k, U V_1 V_2)$. It suffices to prove that 
\[(X_k, U, V_2) \eqdist (X_k, UV_1, V_2)\,.\]
    To do so, we first show that $V_2 = f(U, R)$, where $f$ is a deterministic function and $R$ represents a random orthogonal matrix over $d-\dim(W_m)$ dimensions that is independent of $U$. (Recall that $W_m$ is a deterministic function of $U$.) To define $f(U, R)$, we consider some deterministic map that sends each $W_m$ to a set of $d-\dim(W_m)$ basis vectors in $W_m^\perp$. We then define $V_2 = f(U, R)$ to act on $W_m^\perp$ using $R$ and the correspondence of basis vectors. Since $W_m$ and $X_k$ are deterministic in terms of $U$, this means $f(U, R)$ is well-defined.
    We will now show that 
\[V_2 = f(U, R) = f(UV_1, R) \hspace{0.5cm} \text{and} \hspace{0.5cm} X_k = X_k(UV_1)\,.\]
    Since $U \eqdist UV_1$ by our inductive hypothesis,
    \begin{align*}
        (X_k, U, V_2) \eqdist (X_k(UV_1), UV_1, f(UV_1, R)) = (X_k, UV_1, V_2)\,.
    \end{align*}
    By Proposition \ref{prop:invariant}, $W_{m-1}(U) = W_{m-1}(UV_1),$ and since $X_k(\cdot)$ is deterministic given $W_{m-1}(\cdot)$ for $m = \frac{k(k+1)}{2}$, $X_k(U) = X_k(UV_1)$. This implies $W_m(U) = W_m(UV_1),$ which means $f(UV_1, R) = f(U, R)$, since $f(\cdot, R)$ only depends on $W_m(\cdot)$ and $R$. This completes the proof.
        
    Next, we show that $(X_k, U V_1 V_2) \eqdist (X_k, U V_1)$. Since we chose the order with $U$ being defined first, we are allowed to condition on $U$. Since $X_k$ is deterministic in terms of $U$, it suffices to show that $V_1 V_2\mid U \eqdist V_1\mid U$. Since $W_{m-1}, W_m$ are also deterministic given $U$, note that $V_1$ is a uniformly random orthogonal matrix fixing $W_{m-1},$ and $V_2$ is a random orthogonal matrix fixing $W_{m} \supset W_{m-1}$. Since $V_1$ and $V_2$ are conditionally independent given $U$, this means $V_1 V_2\mid U$ is a uniformly random orthogonal matrix fixing $W_{m-1}$, so $V_1 V_2\mid U \eqdist V_1\mid U$.
    
    In summary, we have that
\begin{align*}
    (X_k, U) &\eqdist (X_k, U V_1) \\
    &\eqdist (X_k, U V_1 V_2) \\
    &\eqdist (X_k, U V_2)\,.
\end{align*}
    
    \paragraph{Case 2: $m$ is not a triangular number.} Again, let $V_1$ be a random orthogonal matrix fixing $W_{m-1}$ and $V_2$ be a random orthogonal matrix fixing $W_m$. Our goal is again to show that $(X_k, U) \eqdist (X_k, U V_2)$. 

    First, we again have $(X_k, U V_1) \eqdist (X_k, U)$ by our inductive hypothesis.
    
    Next, we show that $(X_k, U V_2) \eqdist (X_k, U V_2 V_1)$. It suffices to prove that 
\[(X_k, U, V_2) \eqdist (X_k, UV_1, V_1^\top V_2 V_1)\,,\]
    since $(U V_1) (V_1^\top V_2 V_1) = U V_2 V_1$.
    We recall the random variable $R$ and use the same function $V_2 = f(U, R)$.
    Since we have already shown that $U \eqdist UV_1$, this implies that $(X_k, U, V_2) \eqdist (X_k(UV_1), UV_1, f(UV_1, R))$.
    Since $m$ is not triangular, $X_k(\cdot)$ is contained in $W_{m-1}(\cdot)$, so by Proposition \ref{prop:invariant}, $X_k(U) = X_k(UV_1)$. So, we have 
\[(X_k, U, V_2) \eqdist (X_k(UV_1), UV_1, f(UV_1, R)) = (X_k, UV_1, f(UV_1, R))\,.\]
    Now, if we fix $U$ and $V_1$, $W_{m-1}(UV_1) = W_{m-1}(U)$ by Proposition \ref{prop:invariant}. However, since the $m$-th $(i, j)$ pair has $i \ge 1$ when $m$ is not triangular, the final vector in $W_m(UV_1)$ will be $V_1^\top \Lambda^i V_1 v_j = V_1^\top (\Lambda^i v_j)$. For fixed $U, V_1$, $f(U, R)$ is a random rotation fixing $W_{m-1}$ and $\Lambda^i v_j$, but $f(UV_1, R)$ is a random rotation fixing $W_{m-1}$ and $V_1^\top (\Lambda^i v_j)$. Since $V_1^\top$ fixes $W_{m-1}$ by how we defined $V_1$, this means that for fixed $U, V_1$, $f(U, R)$ is a random rotation fixing $W_m$ but $f(UV_1, R)$ is a random rotation fixing $V_1^\top W_m$. Therefore, conditioned on $U, V_1$, $f(UV_1, R)$ has the same distribution as $V_1^\top f(U, R) V_1$. Since $X_k$ is deterministic in terms of $U$, this means
\[(X_k, UV_1, f(UV_1, R)) \mid U, V_1 \eqdist (X_k, UV_1, V_1^\top f(U, R) V_1)\mid U, V_1\,.\]
    We can remove the conditioning to establish that $(X_k, UV_1, f(UV_1, R)) \eqdist (X_k, UV_1, V_1^\top f(U, R) V_1) = (X_k, UV_1, V_1^\top V_2 V_1),$ which completes the proof.
        
    Next, we show that $(X_k, U V_2 V_1) \eqdist (X_k, U V_1)$. The proof is essentially the same as in the case when $m$ is triangular. We again condition on $U$, and we have that $V_2 V_1\mid U \eqdist V_1\mid U$ have the same distribution as uniform orthogonal matrices fixing $W_{m-1}(U)$. Since $X_k$ is a deterministic function of $U$, this means $(X_k, U V_2 V_1)\mid U \eqdist (X_k, U V_1)\mid U,$ and removing the conditioning finishes the proof.
    
    In summary, 
\begin{align*}
    (X_k, U) &\eqdist (X_k, U V_1) \\
    &\eqdist (X_k, U V_2 V_1) \\
    &\eqdist (X_k, U V_2)\,. \qedhere
\end{align*}
\end{proof}

We now prove our main conditioning lemma, which will be a modification of Lemma \ref{lem:random_rotation_fixed_subspace}.

\begin{lemma}[conditioning lemma]\label{lem:conditioning_lemma}
    Let all notation be as in Lemma \ref{lem:random_rotation_fixed_subspace}, and let $V_0$ be a fixed orthogonal matrix fixing $W_m$. Importantly, $V_0$ is a deterministic function only depending on $W_m$ (and not directly on $U$). Then, $(X_k, U) \eqdist (X_k, UV_0)$.
\end{lemma}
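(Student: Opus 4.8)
The plan is to derive Lemma~\ref{lem:conditioning_lemma} from its preliminary version, Lemma~\ref{lem:random_rotation_fixed_subspace}, by disintegrating with respect to $W_m(U)$. The conceptual point is that although $V_0$ depends on $U$ (through $W_m(U)$), so one cannot naively invoke the invariance ``$\Lambda \eqdist (U')^\top\Lambda U'$ for a \emph{fixed} rotation $U'$'', the preliminary version already encodes the \emph{randomized} form of this invariance, and a fixed rotation that fixes $W_m$ is just a degenerate case of the Haar-random rotation appearing there.

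Concretely, I will take $V$ as in Lemma~\ref{lem:random_rotation_fixed_subspace}: conditionally on $U$, let $V$ be Haar-distributed on the subgroup $G_{W_m(U)}$ of orthogonal matrices fixing $W_m(U)$ pointwise, so that $(X_k, U) \eqdist (X_k, UV)$. Since $m \ge \tfrac{k(k+1)}{2}$, the vectors $v_1,\dots,v_k$ all occur among the first $m$ entries of the enumeration, hence $X_k$ is a deterministic function of $W_m$; together with $W_m(UV) = W_m(U)$ from Proposition~\ref{prop:invariant}, this means the rotation leaves $X_k$ unchanged, so $(X_k, U) \eqdist (X_k, UV)$ upgrades to $(X_k(U), W_m(U), U) \eqdist (X_k(U), W_m(U), UV)$.

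I will then disintegrate over $\{W_m(U) = w\}$. On this event $X_k(U)$ equals the fixed value $X_k(w)$, and $V$ is Haar on $G_w$ and independent of $U$ (its conditional law given $U$ depends on $U$ only through $W_m(U)$). Writing $\mu_w$ for the conditional law of $U$, the identity above reads: if $U'\sim\mu_w$ and $V'\sim\mathrm{Haar}(G_w)$ are independent, then $U'V'\sim\mu_w$. Feeding both $V'$ and $V' g^{-1}$ (each Haar on the compact group $G_w$) into this identity shows that $\mu_w$ is invariant under right multiplication by every fixed $g\in G_w$. Since $V_0 = V_0(w) \in G_w$ by hypothesis, we get $UV_0 \sim \mu_w$ conditionally on $\{W_m(U)=w\}$; and $W_m(UV_0) = W_m(U) = w$ by Proposition~\ref{prop:invariant}, so $X_k(UV_0) = X_k(w)$. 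Integrating over $w$ against the law of $W_m(U)$ yields $(X_k, U) \eqdist (X_k, UV_0)$.

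The only genuinely delicate step is the measure-theoretic disintegration with respect to $W_m(U)$: one must check that, on $\{W_m(U)=w\}$, the rotation $V$ supplied by Lemma~\ref{lem:random_rotation_fixed_subspace} is Haar on $G_w$ and conditionally independent of $U$, and that conditioning on $W_m(U)$ is legitimate (it is, since $W_m(U)$ is a measurable function of $U$). Once this is in place the passage from ``Haar-random rotation fixing $W_m$'' to ``fixed rotation fixing $W_m$'' is immediate from the bi-invariance of Haar measure on $G_w$; all the combinatorial bookkeeping ($X_k$ being a function of $W_m$, and $W_m$ being preserved by any rotation fixing it) is already recorded in Proposition~\ref{prop:invariant} and the surrounding discussion.
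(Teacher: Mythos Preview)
Your proof is correct and rests on the same ingredient (Lemma~\ref{lem:random_rotation_fixed_subspace}) as the paper, but the route differs. You disintegrate over $\{W_m(U) = w\}$, observe that the fiberwise law $\mu_w$ of $U$ is absorbed under convolution with Haar measure on $G_w$, and deduce right-invariance of $\mu_w$ under every fixed $g \in G_w$ (in particular $V_0(w)$) from the right-invariance of Haar. The paper instead works unconditionally via the chain
\[(X_k, U) \eqdist (X_k, UV) \eqdist (X_k, UVV_0) \eqdist (X_k, UV_0)\,:\]
the second step uses $VV_0 \mid U \eqdist V \mid U$ (since $V_0$ is determined by $U$ and $V$ is Haar on $G_{W_m(U)}$), and the third step uses $V_0(UV) = V_0(U)$ (from $W_m(UV) = W_m(U)$, Proposition~\ref{prop:invariant}) together with $U \eqdist UV$ to rewrite $(X_k, UVV_0)$ as $(X_k(UV), UV \cdot V_0(UV))$ and pull back along the map $b \mapsto (X_k(b), b\,V_0(b))$. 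The paper's chain argument is slicker and entirely sidesteps the disintegration machinery you flag as delicate; your version is more transparent about \emph{why} a deterministic rotation fixing $W_m$ must work (because $\mu_w$ is genuinely right-$G_w$-invariant), at the cost of the regular-conditional-probability bookkeeping.
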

\begin{proof}
    First, note that since $V_0$ is a deterministic function of $W_m$, it is also a deterministic function of $U$. We can write $V_0(\cdot)$ as this function, and $V_0 = V_0(U)$.

    Now, Lemma \ref{lem:random_rotation_fixed_subspace} proves that $(X_k, U) \eqdist (X_k, UV)$. Note that conditioned on $U$, $V$ is a random matrix fixing $W_m$ and $V_0$ is a fixed matrix fixing $W_m$, which means that $VV_0\mid U \eqdist V\mid U$. Hence, $(X_k, UV) \eqdist (X_k, UVV_0)$. But from Proposition \ref{prop:invariant}, $X_k(UV) = X_k(U)$ and $W_m(UV) = W_m(U)$, which means that $V_0(\cdot)$, which only depends on $W_m(\cdot)$, satisfies $V_0(UV) = V_0(U)$. Hence, because $U \eqdist UV$, we have $(X_k, UVV_0) = (X_k(UV), UV \cdot V_0(UV)) \eqdist (X_k(U), U \cdot V_0(U)) = (X_k, UV_0)$.

    In summary, we have that $(X_k, U) \eqdist (X_k, UV) \eqdist (X_k, UVV_0) \eqdist (X_k, UV_0)$, which completes the proof.
\end{proof}

\subsubsection{From query algorithms to block Krylov algorithms}\label{scn:from_query}

In this section, we carry out the high-level outline from Section~\ref{scn:block_kry_overview}.
We aim to prove the following result, which implies that any adaptive deterministic algorithm in the extended oracle model can be simulated by rotating the output of a block Krylov algorithm.

\begin{lemma}[reduction to block Krylov]\label{lem:chen_block_krylov}
    Suppose $\Lambda = U^\top D U$, where $U$ is a Haar-random orthogonal matrix and $D$ is a diagonal matrix drawn from some (possibly unknown) distribution.
    Let $v_1, v_2(\cdot), \dotsc, v_{K}(\cdot)$ be an adaptive deterministic algorithm that makes $K$ orthonormal queries, where $K^2 < d$. Let $\valg_1, \valg_2, \dotsc, \valg_{K}$ be recursively defined as follows: $\valg_1 = v_1$, and $\valg_k = v_k(\{\Lambda^i \valg_j\}_{H_{k-1}})$ for $k \ge 2$. Let $z_1, \dotsc, z_{K}$ be i.i.d.\ standard Gaussian vectors. Then, from the collection $\{\Lambda^i z_j\}_{H_K}$ (without knowledge of $D$ or $\Lambda$), we can construct a set of unit vectors $\vsim_1,\vsim_2, \dotsc, \vsim_{K}$, and a set of rotation matrices $\Usim_1, \Usim_2, \dotsc, \Usim_{K}$, where $\vsim_k$ and $\Usim_k$ only depend on $\{\Lambda^i z_j\}_{H_{k-1}}$ and $z_k$, and such that
\[\{(\Usim_{1:K})^\top \Lambda^i \vsim_j\}_{H_{K}} \eqdist \{ \Lambda^i \valg_j \}_{H_{K}},\]
    where $\Usim_{1:K} \deq \Usim_1 \dotsm \Usim_{K}$, and the equivalence in distribution is over the randomness of $\Lambda$ and $\{z_i\}_{i \le K}$. Moreover, $\{\Lambda^i \tilde{v}_j\}_{H_K}$ is deterministically determined by $\{\Lambda^i z_j\}_{H_K}$.
\end{lemma}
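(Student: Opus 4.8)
The plan is to first reduce, using Lemma~\ref{lem:extended-oracle-queries-are-orthogonal}, to the case where every query function $v_k(\cdot)$ returns a unit vector orthogonal to the span of its inputs, and then to construct the simulated queries $\vsim_k$, the rotations $\Usim_k$, and an auxiliary sequence of matrices $\Lambda_0\deq\Lambda$, $\Lambda_k\deq\Usim_k^\top\Lambda_{k-1}\Usim_k=(\Usim_{1:k})^\top\Lambda\,\Usim_{1:k}$ by induction on $k=1,\dots,K$. The induction maintains two invariants: (a) $U\,\Usim_{1:k}$ is again Haar-distributed, so that $\Lambda_k\eqdist\Lambda$, together with the correct joint law of the transcript and $\Lambda_k$; and (b) the simulated transcript $\{(\Usim_{1:k})^\top\Lambda^i\vsim_j\}_{H_k}$ has the same law as the transcript $\{\Lambda^i\valg_j\}_{H_k}$ of $\calA$ run on $\Lambda$.

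\textbf{Construction at step $k$.} I would first take $\vsim_k$ to be the Gram--Schmidt orthonormalization of the fresh Gaussian $z_k$ against the span of $\{\Lambda^i\vsim_j\}_{H_{k-1}}$; inductively this span equals the Krylov span $\operatorname{span}\{\Lambda^i z_j\}_{H_{k-1}}$, so $\vsim_k$ is a deterministic function of $\{\Lambda^i z_j\}_{H_{k-1}}$ and $z_k$ and lies in $\operatorname{span}(\{\Lambda^i z_j\}_{H_{k-1}}\cup\{z_k\})$. Since $z_k$ is an independent isotropic Gaussian, conditionally on the past $\vsim_k$ is uniform on the unit sphere of the orthogonal complement of that span. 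The span membership already delivers the ``moreover'' clause, because it forces each $\Lambda^i\vsim_j$, $(i,j)\in H_K$, to be a fixed linear combination of $\{\Lambda^i z_j\}_{H_K}$; here the hypothesis $K^2<d$ ensures the vectors $\{\Lambda^i z_j\}_{H_K}$ remain linearly independent almost surely, so no Gram--Schmidt step degenerates. Next I would compute $w_k\deq v_k(\{\Lambda_{k-1}^i\vsim_j\}_{H_{k-1}})$, the vector the adaptive rule would output from the simulated transcript so far. As $w_k$ and $\vsim_k$ are both unit vectors orthogonal to the span $W_{k-1}$ of $\{\Lambda_{k-1}^i\vsim_j\}_{H_{k-1}}$ (for $w_k$ this uses the orthogonality property of $v_k$), I can pick a rotation $\Usim_k$ fixing $W_{k-1}$ pointwise and sending $\vsim_k\mapsto w_k$, and I would take $\Usim_k$ to be a deterministic function of $W_{k-1}$ and $\vsim_k$ only. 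Setting $\Lambda_k\deq\Usim_k^\top\Lambda_{k-1}\Usim_k$, an invariance statement like Proposition~\ref{prop:invariant} (applied with $\Lambda_{k-1}$ in place of $\Lambda$) shows that since $\Usim_k$ fixes $W_{k-1}$ it leaves all previously generated queries and responses untouched, so the simulated transcript grows consistently with $\calA$.

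\textbf{Propagating the invariants; the main obstacle.} If $\Usim_k$ were a fixed rotation, invariant (a) would be immediate from $U\eqdist U\Usim_k$; the real difficulty --- the one the technical overview flags --- is that $\Usim_k$ depends on $\Lambda$ through $W_{k-1}$, so this invariance is not obvious, and this is exactly what Lemma~\ref{lem:conditioning_lemma} resolves. For each fixed value of $z_k$, $W_{k-1}$ is the data prefix ``$W_m$'' of that lemma with $m=\frac{k(k+1)}{2}-1$, $\Usim_k$ is a deterministic function of $W_{k-1}$ fixing $W_{k-1}$, and by invariant (a) at step $k-1$ the matrix $U\,\Usim_{1:k-1}$ is Haar; hence the lemma, combined with Proposition~\ref{prop:invariant} to carry $W_{k-1}$ along and with integration over the independent $z_k$, yields $\big(\{\Lambda_{k-1}^i\vsim_j\}_{H_{k-1}},\vsim_k,U\,\Usim_{1:k-1}\big)\eqdist\big(\{\Lambda_{k-1}^i\vsim_j\}_{H_{k-1}},\vsim_k,U\,\Usim_{1:k}\big)$, which is the propagation of (a). For (b), I would use that conditionally on $W_{k-1}$ the direction $\vsim_k$ is uniform on the unit sphere of $W_{k-1}^\perp$ while $w_k$ is a fixed point of that sphere, so $\Usim_k\vsim_k=w_k$ has the same conditional law as $\vsim_k$; together with (a) this makes the simulated $k$-th query conditionally distributed like the true adaptive query $v_k(\{\Lambda^i\valg_j\}_{H_{k-1}})$, and chaining the conditional equalities from $k=1$ (where $\vsim_1$ is a normalized Gaussian independent of $\Lambda$ and $\Usim_1$ simply rotates it onto the fixed $v_1$) up to $k=K$ proves (b). Reading off (b) at $k=K$ gives $\{(\Usim_{1:K})^\top\Lambda^i\vsim_j\}_{H_K}\eqdist\{\Lambda^i\valg_j\}_{H_K}$, and the span property of the $\vsim_j$'s gives the deterministic-computability clause. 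I expect the delicate part to be the precise bookkeeping of which subspaces the rotations $\Usim_k$ fix and the verification that the simulated transcript stays computable from $\{\Lambda^i z_j\}_{H_K}$ --- which is exactly what keeps the conditioning lemma applicable at every step.
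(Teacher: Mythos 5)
Your high-level plan is the paper's plan: track the rotated matrices $\Lambda_k=(\Usim_{1:k})^\top\Lambda\,\Usim_{1:k}$, build rotations fixing the data seen so far, and use Lemma~\ref{lem:conditioning_lemma} to argue that these data-dependent rotations leave the relevant joint law invariant. But the concrete construction of $\Usim_k$, as written, is in the wrong coordinate frame, and this is not cosmetic. You set $W_{k-1}=\operatorname{span}\{\Lambda_{k-1}^i\vsim_j\}_{H_{k-1}}$ and claim $\vsim_k$ is orthogonal to it; however, by construction $\vsim_k$ is orthogonal to $\operatorname{span}\{\Lambda^i\vsim_j\}_{H_{k-1}}$, which is a different subspace, since $\Lambda_{k-1}^i\vsim_j=(\Usim_{1:k-1})^\top\Lambda^i\,\Usim_{1:k-1}\vsim_j$ and $\Usim_{1:k-1}$ does not fix $\vsim_j$. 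So the rotation you describe need not exist, and the resulting transcripts would not be consistent. The missing bookkeeping object is the sequence the paper calls $\vssim_j\deq(\Usim_{1:j})^\top\vsim_j$: the correct $\Usim_k$ fixes the simulated transcript $\{\Lambda_{k-1}^i\vssim_j\}_{H_{k-1}}=\{(\Usim_{1:k-1})^\top\Lambda^i\vsim_j\}_{H_{k-1}}$ pointwise and sends $(\Usim_{1:k-1})^\top\vsim_k$ (which \emph{is} orthogonal to that transcript) to $\vssim_k\deq v_k(\{\Lambda_{k-1}^i\vssim_j\}_{H_{k-1}})$; see equations~\eqref{eq:uk_func}--\eqref{eq:vs_us} and Lemma~\ref{lem:krylov_aux}.

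There is a second, subtler gap in how you invoke the conditioning lemma. You apply it with $W_{k-1}$ as the lemma's ``$W_m$'' after fixing $z_k$, but Lemma~\ref{lem:conditioning_lemma} is stated for the transcript of the deterministic algorithm $\calA$ run on $\Lambda$, which is a deterministic function of $U$; the simulated data depends on all of $z_1,\dotsc,z_k$ as well as $\Lambda$, so it is not of that form, and conditioning on $z_k$ alone does not resolve this. The paper sidesteps the issue by not applying the conditioning lemma in the simulated world at all: it first uses the inductive hypothesis $(\Lambda_k,\{\vssim_j\}_{j\le k})\eqdist(\Lambda,\{\valg_j\}_{j\le k})$ to move the claim to the algorithm-world pair $((U_{k+1}^{\msf{alg}})^\top\Lambda\,U_{k+1}^{\msf{alg}},\{\valg_j\}_{j\le k})$, where the only extra randomness is a single unit vector $\hat v^{\msf{alg}}$ drawn orthogonal to the genuine transcript $\{\Lambda^i\valg_j\}_{H_k}$ and can be conditioned on, and only then invokes the conditioning lemma. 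With the frame correction and with the conditioning lemma applied on the algorithm side, your inductive scheme becomes the paper's proof.
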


Lemma \ref{lem:chen_block_krylov} says that the knowledge of $\Lambda^i z_j$ alone is sufficient to reconstruct the distribution of any adaptive algorithm's queries and responses.
The proof of the lemma requires introducing a hefty amount of notation, but we emphasize that it follows along the lines of Section~\ref{scn:block_kry_overview}.

First, we describe how to construct $\vsim_k$. Let $\vsim_1 = \frac{z_1}{\norm{z_1}}$, and for $k \ge 2$, let $\vsim_k$ be the unit vector parallel to the component of $z_k$ that is orthogonal to the span of $\{\Lambda^i z_j\}_{H_{k-1}}$. (With probability $1$, this is well-defined.)

Because each $\vsim_k$ is a linear combination of $\{\Lambda^i z_j\}_{H_{k-1}}$ and $z_k$, we can construct the set $\{\Lambda^i \vsim_j\}_{H_K}$ from the set $\{\Lambda^i z_j\}_{H_K}$.

We now construct the rotation matrices $\Usim_k$. First, we define matrix-valued functions $U_k(\cdot)$, for $k=1, \dots, K$, as follows.
\begin{definition}[rotations fixing previous queries and responses]
    For $1 \le k \le K$, the function $U_k(\cdot)$ takes arguments $\{x_{i,j}\}_{H_{k-1}}$, $y_k$, $z_k$, where the vectors $y_k$ and $z_k$ have unit norm and are both orthogonal to the collection $\{x_{i,j}\}_{H_{k-1}}$. 
    
    To define $U_1(\cdot)$: since $H_{0}$ is empty, the first function $U_1$ only takes arguments $y_1, z_1$, and is such that $U_1(y_1, z_1)$ is a deterministic orthogonal matrix that satisfies $U_1(y_1, z_1)^\top y_1 = z_1$. Note that $U_1(\cdot)$ exists because $y_1$ and $z_1$ both have unit norm; for example, we can complete $y_1$ and $z_1$ to orthonormal bases $(y_1,y_2,\dotsc,y_{d})$, $(z_1,z_2,\dotsc,z_{d})$ and take $U_1(y_1,z_1) = \sum_{i=1}^{d} y_i z_i^\top$.

To define $U_k(\cdot)$: $U_k(\{x_{i,j}\}_{H_{k-1}}, y_k, z_k)$ is a deterministic orthogonal matrix that satisfies
\begin{align}
\begin{aligned}\label{eq:uk_func}
    U_k^\top x_{i,j} &= x_{i,j}\,, \qquad \text{for all}~ (i, j) \in H_{k-1}\,, \\
    U_k^\top y_k &= z_k\,.
\end{aligned}
\end{align}
Such a choice of $U_k$ is always possible, because $k^2 < d$, and because $y_k$ and $z_k$ are orthogonal to $x_{i,j}$; for example, we can start with the identity matrix on the subspace spanned by $\{x_{i,j}\}_{H_{k-1}}$ and add to it a sum of outer products formed by completing $y_k$ and $z_k$ to two orthonormal bases of the orthogonal complement.
\end{definition}

Next, we describe how to construct $\Usim_k$. We will define $\Usim_k$ along with an auxiliary sequence $\{\vssim_k\}_{k=1,2,\dotsc,K-1}$. 

\begin{definition}[simulated sequences]
    We let $\vssim_1 = v_1$, and $\Usim_1 = U_1(\vsim_1, \vssim_1)$. For $k \ge 2$, $\vssim_k$ and $\Usim_k$ are defined recursively as follows:
\begin{align}
\begin{aligned}\label{eq:vs_us}
    \vssim_k &= v_k\bigl(\{(\Usim_{1:(k-1)})^\top \Lambda^i \vsim_j\}_{H_{k-1}}\bigr)\\
    \Usim_k &= U_k\bigl(\{(\Usim_{1:(k-1)})^\top \Lambda^i \vsim_j\}_{H_{k-1}},\; (\Usim_{1:(k-1)})^\top \vsim_k,\; \vssim_k\bigr)\, .
\end{aligned}
\end{align}
\end{definition}

Intuitively, one can think of $\vssim_k$ as the $k$th vector the simulator thinks the algorithm is querying, and $\Usim_k$ as a rotation that corresponds $\vssim_k$ to the random unit vector known by block Krylov.

\begin{proposition}[existence of rotations]
    Each $\Usim_k$ is well-defined.
\end{proposition}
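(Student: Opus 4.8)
The plan is to prove the proposition by a straightforward induction on $k$, where the inductive hypothesis asserts that $\Usim_1,\dotsc,\Usim_{k-1}$ are well-defined orthogonal matrices (so that the product $\Usim_{1:(k-1)}$ appearing in~\eqref{eq:vs_us} is itself orthogonal). The function $U_k(\cdot)$ is already guaranteed to output an orthogonal matrix satisfying~\eqref{eq:uk_func} \emph{provided} its arguments $\{x_{i,j}\}_{H_{k-1}}, y_k, z_k$ meet the stated preconditions, namely that $y_k$ and $z_k$ are unit vectors, each orthogonal to the collection $\{x_{i,j}\}_{H_{k-1}}$. Thus the entire content of the proposition reduces to checking that the particular arguments $\{(\Usim_{1:(k-1)})^\top \Lambda^i \vsim_j\}_{H_{k-1}}$, $(\Usim_{1:(k-1)})^\top \vsim_k$, and $\vssim_k$ fed into $U_k$ in the definition of $\Usim_k$ do satisfy these preconditions.

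For the base case $k=1$: $\vsim_1 = z_1/\norm{z_1}$ is almost surely a unit vector, $\vssim_1 = v_1$ is a unit vector (by Lemma~\ref{lem:extended-oracle-queries-are-orthogonal} we may assume the first query is a unit vector), and $H_0$ is empty, so $U_1(\vsim_1,\vssim_1) = \Usim_1$ is well-defined and orthogonal. For the inductive step, assume $\Usim_1,\dotsc,\Usim_{k-1}$ are well-defined and orthogonal, so $\Usim_{1:(k-1)}$ is orthogonal. Then $\vsim_k$ is a unit vector by construction (the normalized component of $z_k$ orthogonal to $\operatorname{span}\{\Lambda^i z_j\}_{H_{k-1}}$), hence so is $y_k \deq (\Usim_{1:(k-1)})^\top \vsim_k$; and $\vssim_k = v_k(\{(\Usim_{1:(k-1)})^\top \Lambda^i \vsim_j\}_{H_{k-1}})$ is, again by Lemma~\ref{lem:extended-oracle-queries-are-orthogonal}, a unit vector orthogonal to all of its inputs. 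It remains to show $y_k$ is orthogonal to $\{(\Usim_{1:(k-1)})^\top \Lambda^i \vsim_j\}_{H_{k-1}}$, which, since $\Usim_{1:(k-1)}$ is orthogonal, is equivalent to $\vsim_k \perp \{\Lambda^i \vsim_j\}_{H_{k-1}}$.

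Since $\vsim_k$ was chosen orthogonal to $\operatorname{span}\{\Lambda^i z_j\}_{H_{k-1}}$, this last point follows once we verify the span inclusion $\operatorname{span}\{\Lambda^i \vsim_j\}_{H_{k-1}} \subseteq \operatorname{span}\{\Lambda^i z_j\}_{H_{k-1}}$. This is a short bookkeeping argument: each $\vsim_j$ for $j \le k-1$ is, by construction, a linear combination of $\{\Lambda^{i'} z_{j'}\}_{H_{j-1}}$ and $z_j$, so for $(i,j)\in H_{k-1}$ the vector $\Lambda^i \vsim_j$ is a linear combination of $\Lambda^i z_j$ together with the vectors $\Lambda^{i+i'} z_{j'}$ for $(i',j')\in H_{j-1}$; using $i'+j'\le j$, $j'\le j-1$, and $i+j\le k$, $j\le k-1$, one checks $(i,j)$ and each $(i+i',j')$ lie in $H_{k-1}$, giving the inclusion. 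I expect this index-chasing (together with recording the almost-sure non-degeneracy statements — that $z_1\ne 0$ and that $z_k\notin\operatorname{span}\{\Lambda^i z_j\}_{H_{k-1}}$, valid since that span has dimension at most $|H_{k-1}| = \tfrac{k(k+1)}{2}-1 < d$ because $K^2 < d$, which is what makes $\vsim_k$ well-defined) to be the only mildly delicate part; everything else is immediate from the inductive hypothesis and the orthogonality of products of orthogonal matrices.
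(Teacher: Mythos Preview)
Your proof is correct and follows essentially the same approach as the paper: both argue inductively that $\Usim_{1:(k-1)}$ is orthogonal, check that $(\Usim_{1:(k-1)})^\top\vsim_k$ and $\vssim_k$ are unit vectors orthogonal to $\{(\Usim_{1:(k-1)})^\top\Lambda^i\vsim_j\}_{H_{k-1}}$, and reduce the orthogonality of the former (via the orthogonality of $\Usim_{1:(k-1)}$) to $\vsim_k \perp \{\Lambda^i\vsim_j\}_{H_{k-1}}$. You are slightly more explicit than the paper in justifying this last point---the paper simply says it ``follows from the definition of $\vsim_k$'', whereas you spell out the span inclusion $\operatorname{span}\{\Lambda^i\vsim_j\}_{H_{k-1}}\subseteq\operatorname{span}\{\Lambda^i z_j\}_{H_{k-1}}$ (a fact the paper notes in passing just before the proposition) and the almost-sure nondegeneracy of $\vsim_k$.
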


\begin{proof}
    To show that this choice of $\Usim_k$ is possible, we need to check that $(\Usim_{1:(k-1)})^\top \vsim_k$, $\vssim_k$ both have unit norm and are orthogonal to the subspace $S_k$ spanned by $(\Usim_{1:(k-1)})^\top \Lambda^i \vsim_j$ for $(i, j) \in H_{k-1}$. They both have unit norm because $\vsim_k$ and $\vssim_k$ are constructed to have unit norm, and inductively we can assume $\Usim_{1:(k-1)}$ is orthogonal. Note that $\vssim_k$ is orthogonal to $S_k$ by our assumption on the function $v_k(\cdot)$, and $(\Usim_{1:(k-1)})^\top \vsim_k$ is also orthogonal to $S_k$ because
\begin{align*}
    \langle (\Usim_{1:(k-1)})^\top \Lambda^i \vsim_j, (\Usim_{1:(k-1)})^\top\vsim_k\rangle = \langle \Lambda^i \vsim_j, \vsim_k\rangle = 0\, ,
\end{align*}
    where the second line follows from the definition of $\tilde v_k$.
\end{proof}

We summarize some additional properties of $\vssim_k$ and $\Usim_k$ in the following lemma.
\begin{lemma}[properties of the simulated sequences]\label{lem:krylov_aux}
The variables $\Usim_k$ and $\vssim_k$ for $k = 1, \dotsc, K$ defined above satisfy the following properties:
\begin{enumerate}[label=(P\arabic*), ref=(P\arabic*)]
\item \label{prop1} $\vssim_k$ depends only on $\{\Lambda^i \vsim_j\}_{H_{k-1}}$, and $\Usim_k$ depends only on $\{\Lambda^i \vsim_j\}_{i+j \le k}$.
\item \label{prop2} 
For any $k \ge j$, we have
\begin{align*}
    \vsim_j = \Usim_{1:k} \vssim_j\, .
\end{align*}
\item \label{prop3} For $k \ge 2$, $\vssim_k$ satisfies
\begin{align*}
    \vssim_k
    &= v_k\bigl(\{(\Usim_{1:(k-1)})^\top \Lambda^i \Usim_{1:(k-1)} \vssim_j\}_{H_{k-1}}\bigr)\,.
\end{align*}
\item \label{prop4} For $k \ge 2$, $\Usim_k$ satisfies
\begin{align*}
    \Usim_k &= U_k\bigl(\{(\Usim_{1:(k-1)})^\top \Lambda^i \Usim_{1:(k-1)}\vssim_j\}_{H_{k-1}},\; (\Usim_{1:(k-1)})^\top \vsim_k,\; \vssim_k\bigr)\, .
\end{align*}
\end{enumerate}
\end{lemma}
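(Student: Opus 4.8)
The plan is to prove the four claims in the order \ref{prop1}, \ref{prop2}, then \ref{prop3} and \ref{prop4}: the first is a purely definitional dependency check done by induction on $k$, the second is the substantive rotation identity proved by a separate induction on $k$, and the last two are immediate corollaries of \ref{prop2}.

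For \ref{prop1} I would just unwind the recursive definitions \eqref{eq:vs_us}. The vector $\vssim_k = v_k(\{(\Usim_{1:(k-1)})^\top \Lambda^i \vsim_j\}_{H_{k-1}})$ is assembled from the explicit family $\{\Lambda^i \vsim_j\}_{H_{k-1}}$ --- and $(i,j) \in H_{k-1}$ forces $i+j \le k$ and $j \le k-1$ --- together with $\Usim_1,\dots,\Usim_{k-1}$, each of which depends (inductively) only on $\{\Lambda^i \vsim_j\}_{i+j\le \ell} \subseteq \{\Lambda^i \vsim_j\}_{H_{k-1}}$ for $\ell \le k-1$. The matrix $\Usim_k$ additionally involves $\vsim_k = \Lambda^0 \vsim_k$ and $\vssim_k$, so altogether it is a function of $\{\Lambda^i \vsim_j\}_{i+j\le k}$. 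This step is pure bookkeeping with the triangular index sets $H_\ell$.

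The heart of the argument is \ref{prop2}: I would prove, by induction on $k$, that $\vsim_j = \Usim_{1:k}\vssim_j$ for every $j \le k$. For $j = k$, the definition \eqref{eq:vs_us} gives $\Usim_k = U_k(\,\cdot\,,\,(\Usim_{1:(k-1)})^\top \vsim_k,\,\vssim_k)$, and the relation $U_k^\top y_k = z_k$ from \eqref{eq:uk_func}, applied with $y_k = (\Usim_{1:(k-1)})^\top \vsim_k$ and $z_k = \vssim_k$, yields $(\Usim_{1:k})^\top \vsim_k = \Usim_k^\top (\Usim_{1:(k-1)})^\top \vsim_k = \vssim_k$. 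For $j < k$, the other relation $U_k^\top x_{i,\ell} = x_{i,\ell}$ for $(i,\ell) \in H_{k-1}$, applied with $i = 0$ and $\ell = j$ (which lies in $H_{k-1}$ since $j \le k-1$), combined with the inductive hypothesis $(\Usim_{1:(k-1)})^\top \vsim_j = \vssim_j$, gives $\Usim_k^\top \vssim_j = \vssim_j$; then the inductive hypothesis $\vsim_j = \Usim_{1:(k-1)}\vssim_j$ produces $\Usim_{1:k}\vssim_j = \Usim_{1:(k-1)}\Usim_k\vssim_j = \Usim_{1:(k-1)}\vssim_j = \vsim_j$, completing the induction.

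Finally, \ref{prop3} and \ref{prop4} follow by direct substitution: for $(i,j)\in H_{k-1}$ we have $j\le k-1$, so \ref{prop2} gives $\vsim_j = \Usim_{1:(k-1)}\vssim_j$ and hence $\Lambda^i \vsim_j = \Lambda^i \Usim_{1:(k-1)}\vssim_j$; plugging this into the arguments of $v_k(\cdot)$ and $U_k(\cdot)$ in \eqref{eq:vs_us} turns the defining formulas for $\vssim_k$ and $\Usim_k$ into precisely the expressions asserted in \ref{prop3} and \ref{prop4}. I do not expect a genuine obstacle: the only things that need care are consistently tracking membership in the sets $H_\ell$, and noticing that the inductive step for \ref{prop2} at level $k$ uses the level-$(k-1)$ statement for all $j<k$ (not just $j=k-1$), which is legitimate because the induction statement is quantified over all $j\le k$.
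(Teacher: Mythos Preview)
Your proposal is correct and matches the paper's proof essentially line for line: \ref{prop1} is immediate bookkeeping, \ref{prop2} is the core induction using the two defining relations \eqref{eq:uk_func} (the paper phrases the induction as first establishing the $k=j$ base and then propagating via the fixing property, while you handle both cases within a single induction on $k$, but this is the same argument), and \ref{prop3}--\ref{prop4} are direct substitutions of \ref{prop2} into \eqref{eq:vs_us}.
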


\begin{proof}
    \ref{prop1} is immediate from the definitions, since $\{(i, j): i+j \le k\} = H_{k-1} \cup \{(0, k)\}$. 
    
    To show \ref{prop2}, note that the second property of the function $U_k$ from \eqref{eq:uk_func} implies that
    \begin{align}\label{eq:tildev}
         \vssim_j = (\Usim_j)^\top (\Usim_{1:(j-1)})^\top\vsim_j = (\Usim_{1:j})^\top\vsim_j\, . 
    \end{align}
    This proves~\ref{prop2} for $k = j$. To prove~\ref{prop2} for $k > j$, we use induction on $k$. If~\ref{prop2} holds for $k-1 \ge j$, then
    \begin{align}
        (\Usim_{1:k})^\top \vsim_j = (\Usim_k)^\top (\Usim_{1:(k-1)})^\top \vsim_j = (\Usim_{1:(k-1)})^\top \vsim_j = \vssim_j.
    \end{align}
    Above, the middle equality holds by the first property of \eqref{eq:uk_func}, since $\Usim_k$ fixes $(\Usim_{1:(k-1)})^\top \vsim_j$ because $j \le k-1$. The final equality holds by our inductive hypothesis. So, \ref{prop2} holds for $k$.
    
    Finally, \ref{prop3} and \ref{prop4} then follow from~\ref{prop2}, since $k-1 \ge j$ if $j \in H_{k-1}$.
\end{proof}

We highlight the importance of~\ref{prop2} for $k = K$, which roughly states that $(\Usim_{1:K})^\top$ actually sends each block Krylov-generated vector $\vsim_j$ to the simulated vector $\vssim_j$.

Before proving Lemma~\ref{lem:chen_block_krylov}, we must make one more basic definition.

\begin{definition}[queries and data]
    For $k \ge 2$, given the matrix $\Lambda$ and a set $\{v_j\}_{1 \le j \le k-1}$, define $\mathfrak{C}_k$ as the function that satisfies $\mathfrak{C}_k(\Lambda, \{v_j\}_{1 \le j \le k-1}) = \{\Lambda^i v_j\}_{H_{k-1}}$. In addition, define $\mathfrak{D}_{k} = v_k \circ \mathfrak{C}_k$.
\end{definition}

We are now ready to prove Lemma~\ref{lem:chen_block_krylov}.
Although the proof is notationally burdensome, the message is that we can show the equality of distributions inductively by repeatedly invoking the conditioning lemma (Lemma~\ref{lem:conditioning_lemma}), which is designed precisely for the present situation.

\medskip

\begin{proof}[Proof of Lemma~\ref{lem:chen_block_krylov}]
    For $1 \le k \le K$, let $\Lambda_{k} \deq (\Usim_{1:k})^\top \Lambda \Usim_{1:k}$.
    Since we can write $(\Usim_{1:k})^\top \Lambda^i \vsim_j = (\Usim_{1:k})^\top \Lambda^i (\Usim_{1:k}) \vssim_j = \Lambda_k^i \vssim_j$ for any $k \ge j$ by~\ref{prop2} of Lemma \ref{lem:krylov_aux}, it suffices to inductively prove that for all $1 \le k \le K$, 
    \begin{align} \label{eq:goal2}
        (\Lambda_k, \{\vssim_j\}_{1 \le j \le k}) \eqdist (\Lambda, \{\valg_j\}_{1 \le j \le k})\,.
    \end{align}

    For the base case of $k = 1$, it suffices to show that $(\Lambda_1, \vssim_1) \eqdist (\Lambda, \valg_1)$. Note, however, that $\vssim_1 = \valg_1 = v_1$, and $\Lambda_1 = (\Usim_1)^\top \Lambda (\Usim_1) = U_1(\vsim_1, v_1)^\top \Lambda U_1(\vsim_1, v_1)$.
    Since $v_1$ is a deterministic vector, $\vsim_1$ is independent of $\Lambda$, and the distribution of $\Lambda$ is rotationally invariant, the claim follows.

    For the inductive step, assume we know $(\Lambda_k, \{\vssim_j\}_{1 \le j \le k}) \eqdist (\Lambda, \{\valg_j\}_{1 \le j \le k})$. 
    Then, note that $\valg_{k+1} = v_{k+1}(\{\Lambda^i \valg_j\}_{H_k})$ and $\vssim_{k+1} = v_{k+1}(\{\Lambda_k^i \vssim_j\}_{H_k})$. Thus, we have $\valg_{k+1} = \mathfrak{D}_{k+1}(\Lambda, \{\valg_j\}_{1 \le j \le k})$ and $\vssim_{k+1} = \mathfrak{D}_{k+1}(\Lambda_k, \{\vssim_j\}_{1 \le j \le k})$. In addition, because $\Usim_{k+1}$ fixes $\Lambda_k^i \vssim_j$ for all $(i, j) \in H_k$ by~\ref{prop4}, we also have that $\Lambda_{k+1}^i \vssim_j = \Lambda_k^i \vssim_j$ for all $(i, j) \in H_k$, which means $\vssim_{k+1} = \mathfrak{D}_{k+1}(\Lambda_{k+1}, \{\vssim_j\}_{1 \le j \le k})$. Therefore, it suffices to show 
    \begin{align} \label{eq:goal3}
        (\Lambda_{k+1}, \{\vssim_j\}_{1 \le j \le k})
        \eqdist (\Lambda, \{\valg_j\}_{1 \le j \le k})\,,
    \end{align}
    as this implies $(\Lambda_{k+1}, \{\vssim_j\}_{1 \le j \le k+1}) \eqdist (\Lambda, \{\valg_j\}_{1 \le j \le k+1})$, which completes the inductive step.

    Next, we show that $\Usim_{k+1}$ sends $\vsim_{k+1}$ to a random unit vector orthogonal to the simulated queries so far.
    Note that $\Lambda_{k+1} = (\Usim_{k+1})^\top \Lambda_k (\Usim_{k+1})$, where,  by~\ref{prop4},
    \begin{equation} \label{eq:Usim_k+1_redefined}
        \Usim_{k+1} = U_{k+1}(\{\Lambda_k^i \vssim_j\}_{H_k}, (\Usim_{1:k})^\top\tilde{v}_{k+1}, \vssim_{k+1})\,.
    \end{equation}
   Note that $\tilde{v}_{k+1}$ has the law of a random unit vector conditional on being orthogonal to $\{\Lambda^i z_j\}_{H_k}$, or equivalently, it is a random unit vector orthogonal to $\{\Lambda^i \vsim_j\}_{H_k}$. Since
   \begin{align*}
       (\Usim_{1:k})^\top \Lambda^i \vsim_j = (\Usim_{1:k})^\top \Lambda^i(\Usim_{1:k}) \vssim_j = \Lambda_k^i \vssim_j
   \end{align*}
   for all $(i, j) \in H_k$ (by~\ref{prop2}), this means that $(\Usim_{1:k})^\top \vsim_{k+1}$ is orthogonal to $\{\Lambda_k^i \vssim_j\}_{H_k}$.
   The random direction of $\tilde{v}_{k+1}$ has no dependence on $\{\Lambda^i \tilde{v}_j\}_{H_k}$ apart from being orthogonal to them, which means by~\ref{prop1}, $(\Usim_{1:k})^\top \vsim_{k+1}$ is a \emph{uniformly random} unit vector orthogonal to $\{\Lambda_k^i \vssim_j\}_{H_k}$.
    
    Recalling that $\vssim_{k+1} = \mathfrak{D}_{k+1}(\Lambda_{k}, \{\vssim_j\}_{1 \le j \le k})$, this means that we can rewrite \eqref{eq:Usim_k+1_redefined} as
    \begin{align}
        \Usim_{k+1} =& \hspace{0.1cm} U_{k+1}\left(\{\Lambda_k^i \vssim_j\}_{H_k}, \hat{v}^{\msf{sim}}, \mathfrak{D}_{k+1}(\Lambda_{k}, \{\vssim_j\}_{1 \le j \le k})\right)\,, \label{eq:uk+1-sim-new}\\
    \intertext{where $\hat{v}^{\msf{sim}}$ is a random unit vector orthogonal to $\{\Lambda_k^i \vssim_j\}_{H_k}$.
    As a result, if we define}
        U_{k+1}^{\msf{alg}} \deq& \hspace{0.1cm} U_{k+1}\bigl(\{\Lambda^i \valg_j\}_{H_k}, \hat{v}^{\msf{alg}}, \mathfrak{D}_{k+1}(\Lambda, \{\valg_j\}_{1 \le j \le k})\bigr)\,, \label{eq:uk+1-alg}
    \end{align}
    where $\hat{v}^{\msf{alg}}$ is a random unit vector orthogonal to $\{\Lambda^i \valg_j\}_{H_k}$, then 
    \begin{align*}
        (\Lambda_{k+1}, \{\vssim_j\}_{1 \le j \le k})
        &= \bigl((\Usim_{k+1})^\top \Lambda_k (\Usim_{k+1}), \{\vssim_j\}_{1 \le j \le k}\bigr) \\
        &\eqdist \bigl((U_{k+1}^{\msf{alg}})^\top \Lambda (U_{k+1}^{\msf{alg}}), \{\valg_j\}_{1 \le j \le k}\bigr)\,.
    \end{align*}
    Above, the first equality follows by definition, and the second follows from our inductive hypothesis that $(\Lambda_k, \{\vssim_j\}_{1 \le j \le k}) \eqdist (\Lambda, \{\valg_j\}_{1 \le j \le k})$, along with \eqref{eq:uk+1-sim-new} and \eqref{eq:uk+1-alg}.

    We are now in a position to apply the conditioning lemma (Lemma~\ref{lem:conditioning_lemma}).
    Note that $U_{k+1}^{\msf{alg}}$ only depends on $\{\Lambda^i \valg_j\}_{H_k}$ (as well as some randomness in $\hat{v}^{\msf{alg}}$, but the randomness is independent of everything else given $\{\Lambda^i \valg_j\}_{H_k}$, so we can safely condition on it). Hence, we can apply the conditioning lemma with $U_{k+1}^{\msf{alg}}$, to obtain that 
    \begin{align*}
        (\Lambda_{k+1}, \{\vssim_j\}_{1 \le j \le k}) \eqdist \bigl((U_{k+1}^{\msf{alg}})^\top \Lambda (U_{k+1}^{\msf{alg}}), \{\valg_j\}_{1 \le j \le k}\bigr) \eqdist \bigl(\Lambda, \{\valg_j\}_{1 \le j \le k}\bigr) \,,
    \end{align*}
    which establishes \eqref{eq:goal3} and thereby concludes the proof.
\end{proof}

With the block Krylov reduction in hand, we can now establish our second lower bound for sampling from Gaussians.

\begin{theorem}[second lower bound for sampling from Gaussians] \label{thm:lower_bound_main}
There is a universal constant $\epsilon_0 > 0$ such that the query complexity of sampling from Gaussian distributions $\mc N(0, \Sigma)$ in $\R^d$, where the condition number $\kappa$ of $\Sigma$ satisfies $\kappa \le d^{1/5-\delta}$, with accuracy $\epsilon_0$ in total variation distance is at least $\Omega_\delta(\sqrt\kappa \log d)$.
\end{theorem}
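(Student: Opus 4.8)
The plan is to assemble the three ingredients developed in this section: the block Krylov reduction (Lemma~\ref{lem:chen_block_krylov}) showing that block Krylov algorithms are optimal, the block Krylov lower bound (Lemma~\ref{lem:block_krylov}), and the concentration of the squared Gaussian norm (Lemma~\ref{cor:conc_sq_norm}) on which its distinguishing step rests. Suppose for contradiction that, for all sufficiently large $d$ and all $\kappa \le d^{1/5-\delta}$, there is a sampler for centered Gaussians of condition number at most $\kappa$ that achieves total variation accuracy $\epsilon_0$ using $n \le c_\delta \sqrt\kappa \log d$ queries, where $\epsilon_0$ and $c_\delta$ are small constants to be fixed at the end. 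Three routine reductions place us in the setting of the lemmas above. First, for the target $\mathcal N(0,\Sigma)$ with $\Lambda \deq \Sigma^{-1}$, a first-order query at $x$ returns $\Lambda x$ (together with $\langle x,\Lambda x\rangle$), so the sampler has exactly adaptive matrix-vector access to $\Lambda$, and it suffices to prove the lower bound in the stronger extended oracle model of Definition~\ref{def:extended-oracle-model}. Second, by Lemma~\ref{lem:extended-oracle-queries-are-orthogonal} we may assume all queries are unit vectors orthogonal to the information received so far, so the sampler is an adaptive algorithm in the sense of Definition~\ref{def:adaptive-deterministic-algorithm}. Third, by averaging over the sampler's internal randomness we may condition on it and assume the algorithm is deterministic; it then suffices to upper bound the total variation distance between the two output distributions of the randomized hard instance below.

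\emph{The hard instance.} Set $K \deq 2n-1$, which is odd, and let $D, D' \in \mathbb R^{d\times d}$ be the diagonal matrices furnished by Lemma~\ref{lem:moment_matching} for this $K$; note that $K \le 2c_\delta\sqrt\kappa\log d = O_\delta(\sqrt\kappa\log d)$ and $K^2 \le 4c_\delta^2\,\kappa\log^2 d \le 4c_\delta^2\,d^{1/5-\delta}\log^2 d < d$ once $d$ is large. Since all diagonal entries of $D, D'$ lie in $[1,\kappa]$ with $1$ and $\kappa$ attained, the matrices $\Lambda \deq U^\top D U$ and $\Lambda' \deq U^\top D' U$ have condition number exactly $\kappa$; moreover $\lvert \tr(D^{-1}) - \tr(D'^{-1})\rvert \gg \sqrt d$ by the parameter choices made in the proof of Lemma~\ref{lem:block_krylov}. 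Draw a uniform bit $B \in \{0,1\}$ and a Haar-random orthogonal $U$, and take the precision matrix to be $\Lambda_B$, where $\Lambda_0 \deq \Lambda$, $\Lambda_1 \deq \Lambda'$. Condition on $B$. Applying Lemma~\ref{lem:chen_block_krylov} with its query count equal to $n$ (valid since $n^2 < d$), the sampler's transcript $\{\Lambda_B^i v_j\}_{H_n}$ has the same law as a quantity constructed deterministically, given the algorithm, from the block Krylov data $\{\Lambda_B^i z_j\}_{H_n}$ obtained from fresh i.i.d.\ standard Gaussians $z_1,\dots,z_n$; hence the sampler's output $X$ (a function of its transcript and the fixed internal randomness) has the same law as a deterministic function of $\{\Lambda_B^i z_j\}_{H_n}$. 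Every pair $(i,j)\in H_n$ has $i \le n \le (K+2)/2$ and $j \le n \le K$, so $\{\Lambda^i z_j\}_{H_n}$ is a sub-collection of the vectors whose joint law Lemma~\ref{lem:block_krylov} controls; that lemma (here is where $\kappa \le d^{1/5-\delta}$ enters) yields
\[
\norm{\law\bigl(\{\Lambda^i z_j\}_{H_n}\bigr) - \law\bigl(\{\Lambda'^i z_j\}_{H_n}\bigr)}_{\rm TV} = o(1)\,.
\]
By the data processing inequality and then averaging back over the sampler's randomness, $\norm{\law(X \mid B = 0) - \law(X \mid B = 1)}_{\rm TV} = o(1)$.

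\emph{The distinguishing direction.} If the sampler is correct, then $\law(X \mid B)$ is within $\epsilon_0$ in total variation of $\mathcal N(0,\Lambda_B^{-1})$. For $Z \sim \mathcal N(0,\Lambda_B^{-1})$, Lemma~\ref{cor:conc_sq_norm} gives $\E\norm{Z}^2 = \tr(\Lambda_B^{-1}) = \tr(D_B^{-1})$ and $\Var(\norm{Z}^2) = 2\norm{\Lambda_B^{-1}}_{\HS}^2 \le 2\tr(D_B^{-1}) \le 2d$, using $\Lambda_B^{-1} \preceq I_d$. Since $\tr(D^{-1})$ and $\tr(D'^{-1})$ differ by $\gg\sqrt d$, thresholding $\norm{X}^2$ at their midpoint recovers $B$ with probability $1 - o(1) - \epsilon_0 \ge 3/4$ once $\epsilon_0$ is a small enough constant and $d$ is large --- this is exactly the distinguishing claim at the end of Lemma~\ref{lem:block_krylov}, with the extra $\epsilon_0$ absorbed into the total variation slack. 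Hence $\norm{\law(X \mid B = 0) - \law(X \mid B = 1)}_{\rm TV} \ge 1/2$, contradicting the conclusion of the previous paragraph. Therefore $n \ge \Omega_\delta(\sqrt\kappa\log d)$; since the extended oracle model only strengthens the adversary, the same bound holds in the original first-order query model, which is the theorem.

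\emph{Main obstacle.} The substantive work --- the conditioning lemma (Lemma~\ref{lem:conditioning_lemma}) and the full block Krylov reduction (Lemma~\ref{lem:chen_block_krylov}), together with the Chebyshev/linear-programming construction of $D, D'$ --- is already carried out in this section, so the only real care needed in the assembly is the choice of the block Krylov parameter $K$: it must be large enough that the block Krylov data indexed by $H_n$ (powers up to $n$ of $n$ probe vectors) suffices to simulate an $n$-query adaptive algorithm via Lemma~\ref{lem:chen_block_krylov}, yet small enough for the indistinguishability of Lemma~\ref{lem:block_krylov} --- which certifies closeness of $K$ probe vectors raised to powers up to $(K+2)/2$ --- to apply. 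The choice $K = 2n-1$ threads this needle, and the hypothesis $\kappa \le d^{1/5-\delta}$ is precisely what simultaneously keeps $K \le O_\delta(\sqrt\kappa\log d)$ and $K^2 < d$; everything else (data processing, robustness of the norm test to the $\epsilon_0$ error, and averaging out the internal randomness) is routine bookkeeping.
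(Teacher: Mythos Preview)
Your proposal is correct and follows essentially the same approach as the paper: reduce an adaptive sampler to a block Krylov algorithm via Lemma~\ref{lem:chen_block_krylov}, invoke the indistinguishability in Lemma~\ref{lem:block_krylov}, and contrast this with the norm-based distinguishing test. Your explicit choice $K=2n-1$ for the Krylov parameter (so that the powers in $H_n$, which go up to $n$, are covered by the $(K+2)/2$ powers controlled in Lemma~\ref{lem:block_krylov}) is a careful bookkeeping step that the paper's proof leaves implicit by using the same symbol $K$ for both the number of adaptive queries and the Krylov parameter.
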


\begin{proof}
    Let $U$ be a random orthogonal matrix, and let $\Lambda = U^\top D U$, $\Lambda' = U^\top D' U$ be as in Lemma \ref{lem:block_krylov}.
    We first show that if $\kappa \le d^{1/5-\delta}$ and $c$ is a sufficiently small constant, no adaptive algorithm that makes less than $c_\delta \sqrt{\kappa} \log d$ queries to the extended oracle can distinguish between $\Lambda$ and $\Lambda'$, with $\Omega(1)$ probability. 

    First we assume that the algorithm is deterministic, so its behavior is characterized by functions $v_1, v_2(\cdot), \dotsc, v_{K}(\cdot)$, as in Lemma~\ref{lem:chen_block_krylov}. The algorithm then proceeds to make queries $\valg_1, \valg_2, \dotsc, \valg_{K}$, where $\valg_k = v_k(\{\Lambda^i \valg_j\}_{H_{k-1}})$. Lemma~\ref{lem:chen_block_krylov} shows that the output of the algorithm $\{\Lambda^i \valg_j\}_{H_K}$ can be entirely simulated by a block Krylov algorithm, which receives $\{\Lambda^i z_k\}_{H_K}$, where $z_1, \dotsc, z_{K}$ are i.i.d. standard Gaussians. Lemma~\ref{lem:block_krylov} says that a block Krylov algorithm that makes $K = c_\delta\sqrt{\kappa}\log d$ queries, where $c_\delta$ is a small constant depending on $\delta$ and $\kappa \le d^{1/5-\delta}$, cannot distinguish between $\Lambda$ and $\Lambda'$ with $\Omega(1)$ advantage, which then implies the same for any deterministic algorithm.
    
    If the algorithm is randomized, then it uses a random seed $\xi$ that is independent of $\Lambda$ and $\Lambda'$. So conditional on the random seed, the algorithm will not be able to distinguish $\Lambda$ and $\Lambda'$ with $\Omega(1)$ advantage, so the overall probability that the randomized algorithm successfully distinguishes $\Lambda$ and $\Lambda'$ also cannot be $\Omega(1)$.
    
    Finally, we note that a sample from $\mc N(0, \Lambda^{-1})$ versus $\mc N(0, \Lambda'^{-1})$ can distinguish between the two cases. This means that even if we were able to draw a sample that was $\frac{1}{3}$-far in total variation distance, we could output the correct answer with probability at least $\frac{2}{3}$. This implies that any sampling algorithm must require at least $\Omega_\delta(\sqrt \kappa \log d)$ queries to the extended oracle, and hence at least same number of queries to the standard oracle.
\end{proof}

\section*{Acknowledgments}
The authors thank Ainesh Bakshi, Patrik R.\ Gerber, Piotr Indyk, Thibaut Le Gouic, Philippe Rigollet, Adil Salim, Terence Tao, and Kevin Tian for helpful conversations. 
SC was supported by the NSF TRIPODS program (award DMS-2022448).
JD was supported by a UCLA dissertation year fellowship.
CL was supported by the Eric and Wendy Schmidt Center at the Broad Institute of MIT and Harvard.
SN was supported by a Google Fellowship, the NSF TRIPODS program (award DMS-2022448), and the NSF Graduate Fellowship.

\printbibliography

\appendix

\section{Upper bound for log-concave sampling in constant dimension}\label{sec:low_d_upper}

In this section we give a simple proof that in constant dimension, one can approximately generate a sample from a log-concave distribution with condition number $\kappa$, in $O(\log \kappa)$ queries. Our query dependence also has a polylogarithmic dependence on $\frac{1}{\eps}$, if we wish to generate a sample that is $\eps$-close in TV distance to the true distribution. (We do not attempt to optimize the dependence on dimension $d$ or the polylogarithmic dependence on $\frac{1}{\eps}$.)

Let $V$ be a convex function that is $1$-strongly convex and $\kappa$-smooth, such that $V$ is minimized at the origin and $V(0) = 0$. For any real value $y \ge 0$, define $B_V(y)$ to be the set of points $x$ such that $V(x) \le y$.

First, we note the following basic facts that follow immediately from our convexity assumptions.

\begin{proposition}[basic facts about log-concavity] \label{prop:convexity_stuff}
\begin{enumerate}
    \item $B_V(y)$ is a convex body for any $y > 0$, and contains $0$.
    \item $B_V(y)$ is contained in the ball of radius $\sqrt{2 y}$ and contains the ball of radius $\sqrt{2 y/\kappa}$.
    \item For any $0 < y < y'$, $B_V(y') \subset \frac{y'}{y} \, B_V(y)$.
\end{enumerate}
\end{proposition}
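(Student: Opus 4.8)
The plan is to deduce all three items from the two-sided quadratic sandwich that strong convexity and smoothness provide, together with the normalization $V(0) = 0$ and $\nabla V(0) = 0$. First I would record that $1$-strong convexity with minimizer at the origin gives, via the second-order lower bound, $V(x) \ge V(0) + \langle \nabla V(0), x\rangle + \frac{1}{2}\,\|x\|^2 = \frac{1}{2}\,\|x\|^2$, while $\kappa$-smoothness gives the matching upper bound $V(x) \le \frac{\kappa}{2}\,\|x\|^2$. These are the only analytic inputs needed.

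Item 2 is then immediate: if $V(x) \le y$ then $\frac{1}{2}\,\|x\|^2 \le y$, so $B_V(y)$ lies in the ball of radius $\sqrt{2y}$; and if $\|x\| \le \sqrt{2y/\kappa}$ then $V(x) \le \frac{\kappa}{2}\,\|x\|^2 \le y$, so that ball lies inside $B_V(y)$. Item 1 follows from the same bounds: $B_V(y)$ is a sublevel set of a convex function, hence convex; it contains $0$ since $V(0) = 0 \le y$; it is bounded by the lower estimate and has nonempty interior by the upper estimate, so it qualifies as a convex body.

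For item 3, given $x$ with $V(x) \le y'$, I would set $t := y/y' \in (0,1)$ and use convexity together with the normalization $V(0) = 0$: $V(tx) = V(tx + (1-t)\cdot 0) \le t\,V(x) + (1-t)\,V(0) = t\,V(x) \le t y' = y$, hence $tx \in B_V(y)$, i.e., $x \in t^{-1}\,B_V(y) = \frac{y'}{y}\,B_V(y)$. Note that this step uses only convexity and $V(0)=0$, not the quantitative constants.

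I do not anticipate a genuine obstacle here; the only point meriting (routine) attention is verifying that $B_V(y)$ deserves the name \emph{convex body}, i.e., that it is bounded with nonempty interior, which is exactly what the two directions of the quadratic sandwich supply, so I would be careful to invoke both there.
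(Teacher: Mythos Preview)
Your proof is correct. The paper itself omits any proof, stating only that these facts ``follow immediately from our convexity assumptions,'' and your argument is precisely the standard verification one would supply: the quadratic sandwich $\tfrac{1}{2}\|x\|^2 \le V(x) \le \tfrac{\kappa}{2}\|x\|^2$ for items~1 and~2, and the convex combination $V(tx) \le t\,V(x)$ with $t = y/y'$ for item~3.
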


Next, we show how to obtain a crude $d^{O(1)}$-approximation for $B_V(1)$ using $d^{O(1)} \log \kappa$ first-order queries. The proof is essentially folklore and follows from the ellipsoid method.

\begin{proposition}[ellipsoid method] \label{prop:ellipsoid}
    Let $B$ be a convex body that contains $B(0, r)$ and is contained in $B(0, R)$, along with a membership and separation oracle.
    Using $d^{O(1)} \log \frac{R}{r}$ adaptive queries to the membership and separation oracle, we can find an ellipsoid $E$ centered around some point $z$ such that $E \subset B \subset E'$, where $E'$ is $E$ dilated by an $O(d^{3/2})$ factor about $z$.
\end{proposition}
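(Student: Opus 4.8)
The plan is to run the classical ellipsoid algorithm, which is entirely standard, and to track the separation of inner and outer ellipsoids carefully enough to extract the stated $O(d^{3/2})$ dilation factor. First I would set up the iteration: start with $E_0 \deq B(0,R) \supseteq B$, and at each step, given the current ellipsoid $E_t$ with center $z_t$, query the membership oracle at $z_t$. If $z_t \in B$, we retain $z_t$ as a candidate center; if $z_t \notin B$, the separation oracle returns a hyperplane through $z_t$ with $B$ on one side, and we replace $E_t$ by the minimum-volume ellipsoid $E_{t+1}$ containing the half-ellipsoid $E_t \cap \{\text{the side containing } B\}$. The key volume-shrinkage fact is that $\vol(E_{t+1}) \le e^{-1/(2(d+1))}\,\vol(E_t)$, so after $T = O(d)\cdot \log(\vol(E_0)/\vol(B))$ steps the volume of $E_T$ drops below $\vol(B(0,r)) \le \vol(B)$; since $\vol(E_0)/\vol(B) \le (R/r)^d$, this is $T = d^{O(1)}\log(R/r)$ iterations, each using $d^{O(1)}$ oracle calls (to compute the new ellipsoid), for a total of $d^{O(1)}\log(R/r)$ queries, as claimed.

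Next, I would argue that the algorithm must in fact have found a center $z_t \in B$ at some iteration: if every center queried were outside $B$, then $B$ would always lie in the retained half-ellipsoid, so $B \subseteq E_t$ for all $t$, contradicting $\vol(E_T) < \vol(B(0,r)) \le \vol(B)$ once the volume has shrunk enough. So fix the \emph{last} iteration $t^\star$ at which $z_{t^\star} \in B$ and set $z \deq z_{t^\star}$. For the outer containment, at every subsequent step the center lies outside $B$, so $B$ is contained in the retained half of each ellipsoid, hence $B \subseteq E_T$; thus taking $E' \deq E_T$ and $E \deq E_{t^\star}$ restricted appropriately would \emph{not} immediately give the dilation bound because $E$ and $E'$ need not be concentric. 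The right move is: let $E \deq B(z, r') $ or, more precisely, since $z \in B$ and $B \supseteq B(0,r)$ is convex, a standard John-ellipsoid-type argument shows $B$ contains a ball around $z$ of radius comparable to $r$ times a $\mathrm{poly}(1/d)$ factor; then take $E$ to be that ball (or the John ellipsoid of $B$ translated to be centered at $z$), and $E'$ to be the dilation of $E$ about $z$ by $O(d^{3/2})$. The $d^{3/2}$ comes from two sources: a factor $d$ from the John ellipsoid (a convex body is sandwiched between concentric ellipsoids at ratio $d$), and an extra $\sqrt d$ to account for the center $z$ being a point of $B$ rather than the John center.

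The main obstacle is precisely this last step — getting a clean $O(d^{3/2})$ dilation with $E$ and $E'$ \emph{concentric} about a computed point $z$, rather than the messier nested-but-not-concentric ellipsoids the raw ellipsoid method produces. The cleanest route is to forget about using the final ellipsoid $E_T$ as the outer body, and instead: once we have any point $z \in B$ and a crude outer radius bound $\|x - z\| \le R'$ for all $x \in B$ (which the algorithm certifies, with $R' \le R + \mathrm{poly}(d)\cdot(\text{initial radius})$, in fact one can just keep $R' = 2R$ from the start), apply Fritz John's theorem to the convex body $B - z$: there is an ellipsoid $\mathcal{E}$ with $\mathcal{E} \subseteq B - z \subseteq d\,\mathcal{E}$ (concentric), but $\mathcal{E}$ need not be centered at the origin. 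Since $0 \in B - z$ and $B-z \subseteq d\mathcal E$, the center of $\mathcal E$ is within distance $d\cdot(\text{radius of }\mathcal E)$ of $0$, so replacing $\mathcal E$ by the largest ball around $0$ inside $\mathcal E$ loses another factor $O(d)$, giving a concentric-at-$z$ pair at ratio $O(d^2)$; a sharper accounting using that $\mathcal E \subseteq B - z$ forces the center shift to be at most $O(\sqrt d)$ radii recovers the stated $O(d^{3/2})$. I would present this John-ellipsoid post-processing as the substantive content and treat the ellipsoid-method volume bookkeeping as routine.
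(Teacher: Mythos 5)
The paper does not supply a proof of this proposition; it is flagged as folklore. Evaluated on its own terms, your proposal has two genuine gaps, one of which is fatal.

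First, the ellipsoid iteration you set up does nothing in this problem. Since $B(0,r) \subseteq B$, the origin lies in $B$, so your very first center $z_0 = 0$ is already a member of $B$. Your loop only shrinks when a center lies \emph{outside} $B$ and has no mechanism to continue once a center is found inside $B$; so the ``volume argument'' never fires, you terminate immediately, and all you have is $B(0,r) \subseteq B \subseteq B(0,R)$, a dilation factor of $R/r$, not $O(d^{3/2})$. All of the content is therefore pushed onto the John-ellipsoid post-processing, and that step is both non-algorithmic and incorrect. Fritz John's theorem is an existence statement; you give no way to compute or even approximate the John ellipsoid of $B-z$ with $d^{O(1)} \log(R/r)$ membership/separation queries, which is precisely what the proposition demands. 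And the recentering argument fails outright: you cannot even be sure that $0 \in \mathcal E$ (you only know $0 \in B-z \subseteq d\mathcal E$), so ``the largest ball around $0$ inside $\mathcal E$'' may be empty; and even when $0 \in \mathcal E$, if $0$ is near the boundary of $\mathcal E$ the largest same-shape copy of $\mathcal E$ centered at $0$ inside $\mathcal E$ has radius an \emph{arbitrarily} small fraction of that of $\mathcal E$, so neither ``loses another factor $O(d)$'' nor ``center shift at most $O(\sqrt d)$ radii'' holds in general.

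The standard fix, and the reason the paper calls this folklore, is the \emph{shallow-cut} ellipsoid method of Gr\"otschel, Lov\'asz, and Schrijver. The key change is what to do when the current center $z_t$ lies in $B$: rather than stopping, you test whether the ellipsoid $E_t$ scaled about $z_t$ by $\frac{1}{d+1}$ lies inside $B$, by querying its $2d$ axis endpoints. If some endpoint lies outside $B$, the separation oracle there gives a \emph{shallow} cut (depth $\frac{1}{d+1}$), which still reduces $\vol(E_t)$ by a $1-\Omega(1/d^2)$ factor, so the total iteration count remains $d^{O(1)}\log(R/r)$. If all $2d$ axis endpoints of the scaled ellipsoid lie in $B$, then by convexity the cross-polytope they span lies in $B$, and the largest concentric inscribed ellipsoid of that cross-polytope is a further $\frac{1}{\sqrt d}$ scaling; this certifies $\frac{1}{\sqrt{d}\,(d+1)} E_t \subseteq B \subseteq E_t$ concentrically about $z_t$, which is the stated $O(d^{3/2})$ dilation. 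This is the proof you should be reconstructing; the routine ellipsoid-method bookkeeping you wrote out is correct but irrelevant here, and the John-ellipsoid shortcut cannot be made to work.
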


We can apply the above proposition to the convex body $B_V(1)$.

\begin{corollary}[sublevel set approximation] \label{cor:ellipsoid}
    Using $d^{O(1)} \log \kappa$ adaptive queries to $V$ and $\nabla V$, we can find an ellipsoid $E$ centered around some point $z$ such that $E \subset B_V(1) \subset E'$, where $E'$ is $E$ dilated by an $O(d^{3/2})$ factor about $z$.
\end{corollary}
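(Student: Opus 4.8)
The plan is to apply Proposition~\ref{prop:ellipsoid} directly to the convex body $B = B_V(1)$, using the structural facts from Proposition~\ref{prop:convexity_stuff} to verify the hypotheses and using first-order access to $V$ to simulate the required oracles. First I would note that by item (2) of Proposition~\ref{prop:convexity_stuff}, $B_V(1)$ contains the ball $B(0,\sqrt{2/\kappa})$ and is contained in the ball $B(0,\sqrt 2)$, so the ratio $R/r$ of outer to inner radius is $\sqrt\kappa$, and hence $d^{O(1)}\log(R/r) = d^{O(1)}\log\kappa$ as claimed.

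Next I would explain how a membership and separation oracle for $B_V(1)$ is implemented using one query to $(V,\nabla V)$. Given a query point $x$, we compute $V(x)$: if $V(x)\le 1$ then $x\in B_V(1)$ (membership returns ``yes''); if $V(x) > 1$ then $x\notin B_V(1)$, and by convexity of $V$ the halfspace $\{w : \langle \nabla V(x), w - x\rangle \le 0\}$ contains $B_V(1)$ (indeed it contains the sublevel set $\{w : V(w) \le V(x)\} \supseteq B_V(1)$, since $V(w)\ge V(x) + \langle\nabla V(x), w-x\rangle$). Thus $\nabla V(x)$ furnishes a valid separating hyperplane, so the separation oracle is simulated by a single first-order query as well. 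Each oracle call therefore costs $O(1)$ queries to $V$ and $\nabla V$.

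With these two ingredients in place, Proposition~\ref{prop:ellipsoid} applied to $B = B_V(1)$, $r = \sqrt{2/\kappa}$, $R=\sqrt2$ produces, using $d^{O(1)}\log(R/r) = d^{O(1)}\log\kappa$ oracle calls (hence $d^{O(1)}\log\kappa$ queries to $V,\nabla V$), an ellipsoid $E$ centered at a point $z$ with $E\subset B_V(1) \subset E'$, where $E'$ is $E$ dilated about $z$ by a factor $O(d^{3/2})$. This is exactly the statement of Corollary~\ref{cor:ellipsoid}.

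I do not anticipate a real obstacle here; the only point requiring slight care is confirming that the gradient of a convex, differentiable $V$ at an infeasible point gives a bona fide separating hyperplane for the sublevel set — this is immediate from the first-order convexity inequality as noted above — and checking that the ``membership and separation oracle'' hypothesis of Proposition~\ref{prop:ellipsoid} is met in the precise form that proposition requires. If Proposition~\ref{prop:ellipsoid} is stated for bodies centered near the origin or with a known containing ball, one simply feeds in the radii from Proposition~\ref{prop:convexity_stuff}(2); no additional argument is needed.
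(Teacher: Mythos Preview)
Your proposal is correct and follows essentially the same approach as the paper: apply Proposition~\ref{prop:ellipsoid} to $B_V(1)$ with the radii from Proposition~\ref{prop:convexity_stuff}(2), and simulate the membership/separation oracle via a single first-order query using the convexity inequality $V(x')\ge V(x)+\langle\nabla V(x),x'-x\rangle$. The paper's proof is slightly terser but identical in content.
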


\begin{proof}
    It suffices to show that from a single first-order query at a point $x$, we can generate a membership and separation oracle for $B_V(1)$. Indeed, the  membership part is straightforward as we just check whether $V(x) \le 1$ (which is equivalent to $x \in B_V(1)$). The separation oracle is also simple, and can be done using the gradient. Specifically, suppose that $V(x) > 1$; then,
    $V(x') \ge V(x) + \langle \nabla V(x), x'-x \rangle$, which means that every $x'$ with $\langle \nabla V(x), x' \rangle \ge \langle \nabla V(x), x \rangle$ is such that $V(x') \ge V(x) > 1$, i.e., $\nabla V(x)$ is a separation oracle for $B_V(1)$ at $x$.
\end{proof}

We are able to prove our sampling upper bound, using a rejection sampling approach.

\begin{theorem}[upper bound for log-concave sampling]\label{thm:low_d_upper}
    For any constant $d \ge 2$ and any $1$-strongly convex and $\kappa$-smooth function $V$ with minimum at $0$, we can approximately sample from $\pi\propto \exp(-V)$ to total variation error at most $\eps$ using $O(\log \kappa + \log^{O(1)}(1/\varepsilon))$ adaptive queries to $V$ and $\nabla V$ (here we emphasize that the asymptotic notation treats $d$ as constant).
\end{theorem}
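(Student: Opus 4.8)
The plan is to use rejection sampling with a carefully chosen proposal distribution built from the ellipsoidal approximation in Corollary~\ref{cor:ellipsoid}, combined with a truncation argument to handle the tails of $\pi$. First I would use Corollary~\ref{cor:ellipsoid} to obtain, with $O(\log\kappa)$ queries, an ellipsoid $E$ centered at $z$ such that $E \subset B_V(1) \subset E'$ where $E'$ is the $O(d^{3/2})$-dilation of $E$. Applying the affine map $T$ that sends $E$ to the unit ball, the pushforward $T_\#\pi$ has a convex sublevel set sandwiched between $B(0,1)$ and $B(0, O(d^{3/2}))$, and $\pi$ is log-concave with condition number at most $\kappa \cdot d^{O(1)}$ in these coordinates. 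Since $d$ is constant, it suffices to sample (approximately, in TV) from a log-concave density on $\R^d$ whose ``unit'' sublevel set is comparable to $B(0,1)$, i.e., we have reduced to the case of a distribution with $O(1)$-bounded geometry near its mode but potentially large condition number governing the tails.

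Next I would pass to a truncated target. By Proposition~\ref{prop:convexity_stuff}, $B_V(y) \subset \frac{y}{1} B_V(1)$ for $y \ge 1$, so $\pi$ puts mass at most $e^{-y}\operatorname{vol}(y^d B_V(1))/Z \le e^{-y} y^d \cdot (\text{const})$ outside $B_V(y)$; choosing $y = R_\varepsilon \deq O(\log(1/\varepsilon) + d\log\log(1/\varepsilon))$ makes the mass outside $B_V(R_\varepsilon)$ at most $\varepsilon/2$. So I would instead sample from $\pi$ restricted to the bounded convex body $K \deq B_V(R_\varepsilon) \subset R_\varepsilon \cdot B_V(1)$, which in the transformed coordinates is contained in a ball of radius $O(d^{3/2} R_\varepsilon)$ and contains $B(0,1)$. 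On $K$, the potential $V$ ranges over $[0, R_\varepsilon]$, so $\exp(-V)$ is bounded between $e^{-R_\varepsilon}$ and $1$. Now run rejection sampling with proposal $= \unif(K)$ (equivalently $\unif(\text{transformed }K)$, which we can sample from since it is a bounded convex body with a membership oracle — sampling uniformly from a convex body in constant dimension is cheap) and accept a proposed point $x$ with probability $\exp(-V(x))$, each trial costing one query. The acceptance probability is $\int_K \exp(-V)/\operatorname{vol}(K) \ge e^{-R_\varepsilon}/\operatorname{vol}(K)$, but this is exponentially small in $R_\varepsilon$ and hence polynomial in $1/\varepsilon$ only if $\operatorname{vol}(K)$ were controlled — which it is not directly.

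To fix the acceptance rate I would use a \emph{layered} proposal rather than a single uniform one: partition $K$ into the shells $A_j \deq B_V(2^{j}) \setminus B_V(2^{j-1})$ for $j = 1, \dots, \log_2 R_\varepsilon$, together with $A_0 \deq B_V(1)$. Since $B_V(2^j) \subset 2^j B_V(1)$, we have $\operatorname{vol}(B_V(2^j)) \le 2^{jd}\operatorname{vol}(B_V(1))$, while on $A_j$ we have $\exp(-V) \ge e^{-2^j}$, and $\pi(A_j) \asymp \operatorname{vol}(B_V(2^j)\setminus B_V(2^{j-1})) \, e^{-\Theta(2^j)}/Z$. The standard argument (as in convex-body sampling / the folklore rounding approach referenced via~\cite{lovvem2006rounding}) is that we can estimate the masses $\pi(A_j)$ up to constants using $O(1)$ volume computations per shell — each shell is a difference of two convex bodies with explicit membership oracles, so in constant dimension we may compute its volume deterministically to within a constant factor cheaply — then sample a shell index $j$ with probability $\propto$ (estimated mass), sample uniformly from $A_j$, and accept with probability $\exp(-V(x) + 2^{j-1})$; within each shell the acceptance probability is now $\Omega(1)$. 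The total number of shells is $O(\log R_\varepsilon) = O(\log\log(1/\varepsilon))$, and the total query count is $O(\log\kappa)$ (for the initial ellipsoid) plus $O(\operatorname{polylog}(1/\varepsilon))$ (for the shell volume estimates and the $O(1)$-in-expectation rejection trials, boosted to hold with high probability), giving the claimed bound. The main obstacle is the last step: making the shell decomposition and the per-shell uniform sampling rigorous with only a membership oracle and carefully tracking that all volume and mass estimates are accurate enough that the resulting law is $\varepsilon$-close in TV — but since $d$ is constant, deterministic grid-based volume estimation on each convex shell suffices and no high-dimensional volume-computation machinery is needed, so this is routine albeit tedious.
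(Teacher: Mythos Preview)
Your plan begins along the right lines (ellipsoid approximation, truncation, rejection sampling), but you abandon the simple rejection scheme too quickly because of a pessimistic bound, and the shell decomposition you introduce to ``fix'' it is both unnecessary and, as stated, incorrect.

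The paper's argument is exactly the simple rejection sampler you wrote down and then discarded. Your lower bound on the acceptance probability, $e^{-R_\varepsilon}$, comes from replacing $\exp(-V)$ by its worst value over the whole truncated region. The right bound instead looks only at $B_V(1)$: the proposal (take it to be $\unif(tE')$ with $t\asymp R_\varepsilon$, which is an explicit ellipsoid and therefore trivial to sample from) lands in $E\subset B_V(1)$ with probability $\vol(E)/\vol(tE')=(t\cdot O(d^{3/2}))^{-d}$, and on $B_V(1)$ the acceptance probability $\exp(-V(X))$ is at least $e^{-1}$. Hence the overall acceptance rate is at least $e^{-1}\,(t\cdot O(d^{3/2}))^{-d}=(\log(1/\varepsilon))^{-O(1)}$ for constant $d$, and $O((\log\tfrac1\varepsilon)^{O(1)})$ trials suffice. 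No shells, no volume estimates, no convex-body sampling subroutine are needed.

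Your layered alternative also has a concrete bug: with geometric shells $A_j=B_V(2^j)\setminus B_V(2^{j-1})$, the potential varies by $2^{j}-2^{j-1}=2^{j-1}$ on $A_j$, so the acceptance probability $\exp(-V(x)+2^{j-1})$ can be as small as $e^{-2^{j-1}}$, which is not $\Omega(1)$. Linear shells $B_V(j)\setminus B_V(j-1)$ would repair this (giving $\Omega(1)$ acceptance per shell) but at the cost of $O(\log(1/\varepsilon))$ shells rather than $O(\log\log(1/\varepsilon))$, and you would still need to estimate shell volumes and sample uniformly from non-convex annuli. All of this is avoidable once you use the sharper acceptance bound above.
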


\begin{proof}
    Given $V$ and any integer $t \ge 1$, let $p_t$ be the probability that a sample from $\pi$ lies in $t B_V(1)$. The normalizing constant is $Z \ge \int_{B_V(1)} \exp(-V) \ge e^{-1} \vol(B_V(1))$, but integral over $(t+1) B_V(1) \backslash t B_V(1)$ is
    \begin{align*}
        \int_{(t+1) B_V(1) \backslash t B_V(1)} \exp(-V) \le \exp(-t) \vol\bigl((t+1) B_V(1)\bigr) = \exp(-t) \,(t+1)^d \vol\bigl(B_V(1)\bigr)\,,
    \end{align*}
    using Proposition \ref{prop:convexity_stuff} which implies that $V(x) \ge t$ for any $x \not\in t B_V(1)$. 
    Therefore, the probability of $(t+1) B_V(1) \backslash t B_V(1)$ under $\pi$ is at most
    \begin{align*}
        \pi\bigl((t+1) B_V(1) \backslash t B_V(1)\bigr)
        &\le \frac{\exp(-t)\, {(t+1)}^d \vol(B_V(1))}{e^{-1} \vol(B_V(1))} = \exp(-(t-1)) \, {(t+1)}^d\,.
    \end{align*}
    By summing this quantity for all integers greater than $t$, the probability of the complement of $t B_V(1)$ is at most $\sum_{u \ge t} \exp(-(u-1)) \,(u+1)^d = \sum_{u\ge t} \exp(-u+d\log(u+1)+1)$. Note that for $t \ge \Omega(d \log d)$, this quantity is at most $O(\exp(-t/2))$.
    Taking $t = C \, (d\log d + \log(1/\varepsilon))$ for a large constant $C$, we obtain $\pi(\R^d\backslash tB_V(1)) \le \varepsilon/2$.
    
    The algorithm now works as follows. We use Corollary \ref{cor:ellipsoid} to find $E \subset B_V(1) \subset E'$. We pick a uniformly random point $X$ in $t E'$ for $t = C\,(d \log d + \log(1/\eps))$. We then accept the point $X$ with probability $\exp(-V(X))$, and if we reject we restart the procedure. First, note that this algorithm, upon termination, samples exactly from $\pi$ conditioned on $tE'$, which is at most $\frac{\eps}{2}$ away from $\pi$ in total variation distance. In addition, each rejection sampling step succeeds with probability at least $\vol(E)/(e\vol(t E'))$, since with probability $\vol(E)/\vol(t E')$ we choose a point in $E$ in which case $V(X) \le 1$ so we accept with probability at least $e^{-1}$. This is equal to $1/(t\, O(d^{3/2}))^d = d^{-O(d)} \, t^{-d} = d^{-O(d)} \, (\log \frac{1}{\eps})^{-d}.$ So, after $(d \log \frac{1}{\eps})^{O(d)}$ rounds of rejection sampling, each of which only needs one query to $V$, we accept the sample with probability at least $1-\frac{\eps}{2}$, which means that overall we have generated a sample which is $\eps$-close in distribution to $\pi$ in total variation distance.
    
    The overall query complexity is a combination of finding $E, E'$ and then running the rejection sampling, for a total complexity of $d^{O(1)} \log \kappa + (d \log \frac{1}{\eps})^{O(d)}$. So, for any fixed dimension $d$ and error probability $\eps$, the query complexity for log-concave sampling is $O(\log \kappa)$. In addition, the dependence on the error probability is polylogarithmic for any fixed $d$.
\end{proof}

\begin{remark}
    We briefly note that the exponential dependence on $d$ is not necessary: using more sophisticated tools developed for sampling from convex bodies one should be able to obtain a complexity of $\log(\kappa)\, (d \log \frac{1}{\eps})^{O(1)}$. However, we choose to not optimize the dimension dependence in this result for the sake of simplicity, and since we are focused on the setting of $d = O(1)$.
\end{remark}

\section{Upper bound for sampling from Gaussians}\label{sec:gaussian_upper}

Finally, we show a simple proof that, using only $O(\min(\sqrt{\kappa} \log d, d))$ gradient queries, one can generate an approximate sample from a Gaussian $\mathcal{N}(0, \Sigma)$ in $d$ dimensions. Note that the density evaluated at $x$, up to an additive constant, equals $-\frac{1}{2}\, x^\top \Lambda x$ for $\Lambda = \Sigma^{-1}$, which means that a gradient query at $x$ amounts to receiving the matrix-vector product $\Lambda x$.

First, we require a well-known proposition from approximation theory.

\begin{proposition}[{\cite[Theorem 3.3]{sachdeva_survey}}] \label{prop:approx_theory_1}
    For any positive integer $s$ and $0 < \delta < 1,$ there exists a polynomial $p_{s, \delta}$ of degree $\lceil \sqrt{2s \ln (2/\delta)} \rceil$ such that $|p_{s, \delta}(x) - x^s| \le \delta$ for all $x \in [-1, 1]$.
\end{proposition}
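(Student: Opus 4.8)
The plan is to prove the proposition by writing down the exact Chebyshev expansion of $x^s$ and then discarding the high-degree terms, controlling the truncation error via a binomial tail bound. First I would substitute $x = \cos\theta$ for $\theta \in [0,\pi]$ and use the elementary identity
\[
\cos^s\theta = 2^{-s}\,(e^{i\theta}+e^{-i\theta})^s = 2^{-s}\sum_{k=0}^s \binom{s}{k}\, e^{i(s-2k)\theta} = 2^{-s}\sum_{k=0}^s \binom{s}{k}\cos\bigl((s-2k)\theta\bigr),
\]
which, recalling that $T_j(\cos\theta) = \cos(j\theta)$ and that $\cos$ is even, yields the exact expansion $x^s = 2^{-s}\sum_{k=0}^s \binom{s}{k}\, T_{|s-2k|}(x)$ valid for all $x\in[-1,1]$.

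Next I would define $p_{s,\delta}$ to be the partial sum obtained by keeping only those terms with $|s-2k|\le d_0$, where $d_0 \deq \lceil \sqrt{2s\ln(2/\delta)}\,\rceil$. Since every retained Chebyshev polynomial has degree at most $d_0$, the polynomial $p_{s,\delta}$ has degree at most $d_0$, as required. To bound the error I would use $|T_j(x)|\le 1$ on $[-1,1]$ to get
\[
|x^s - p_{s,\delta}(x)| \;\le\; 2^{-s}\sum_{k\,:\,|s-2k|>d_0}\binom{s}{k} \;=\; \Pr\bigl\{\,|B - s/2| > d_0/2\,\bigr\},
\]
where $B$ has the $\mathrm{Binomial}(s,1/2)$ law. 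Finally, Hoeffding's inequality gives $\Pr\{|B-s/2|\ge d_0/2\} \le 2\exp(-2(d_0/2)^2/s) = 2\exp(-d_0^2/(2s))$, and the choice $d_0 \ge \sqrt{2s\ln(2/\delta)}$ makes this at most $2\exp(-\ln(2/\delta)) = \delta$, which finishes the argument.

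This proof is short and I do not expect a serious obstacle; the only points requiring mild care are bookkeeping ones. One must confirm that the degrees appearing after truncation — which range over the values $|s-2k|$, all sharing the parity of $s$ and all $\le d_0$ by construction — never exceed $d_0$, and one must match the tail event $\{|s-2k| > d_0\}$ exactly to $\{|B - s/2| > d_0/2\}$ so that Hoeffding applies (the strict versus non-strict inequality is harmless since $\Pr\{\,\cdot\,>\,\cdot\,\} \le \Pr\{\,\cdot\,\ge\,\cdot\,\}$). Should one wish to avoid invoking a concentration inequality, an alternative is to estimate the binomial tail directly via Stirling's formula, but Hoeffding produces the stated degree bound most transparently.
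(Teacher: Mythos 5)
Your proof is correct and is essentially the argument given in the cited source (Sachdeva--Vishnoi, \emph{Faster Algorithms via Approximation Theory}, Theorem 3.3): expand $x^s$ in the Chebyshev basis via the binomial identity $\cos^s\theta = 2^{-s}\sum_k \binom{s}{k}\cos((s-2k)\theta)$, truncate at degree $d_0$, and control the tail by the concentration of $\mathrm{Binomial}(s,1/2)$ around $s/2$. The paper itself does not reprove this and simply cites it, so there is no in-paper proof to compare against; your bookkeeping on parities, degree, and the strict-versus-nonstrict tail inequality is all sound.
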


As a corollary, we have the following result.

\begin{proposition}[polynomial approximation of inverse square root] \label{prop:approx_theory_2}
    For any $\kappa \ge 2$ and $\delta < \frac{1}{2}$, there exists a polynomial $q_{\kappa, \delta}$ of degree $O(\sqrt{\kappa} \log \frac{\kappa}{\delta})$ such that $|q_{\kappa, \delta}(x) - x^{-1/2}| \le \delta/\sqrt\kappa$ for all $1 \le x \le \kappa$.
\end{proposition}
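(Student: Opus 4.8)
The plan is to follow the standard route of accelerating a truncated power series by replacing each monomial with a low-degree approximant (exactly as in the analysis of conjugate gradient), using Proposition~\ref{prop:approx_theory_1} as the per-monomial tool. First I would reduce to the interval $[1/\kappa,1]$ by the affine substitution $x=\kappa y$: since $x^{-1/2}=\kappa^{-1/2}\,y^{-1/2}$, it suffices to construct a polynomial $q$ of the claimed degree with $|q(y)-y^{-1/2}|\le\delta$ for all $y\in[1/\kappa,1]$, and then set $q_{\kappa,\delta}(x):=\kappa^{-1/2}\,q(x/\kappa)$, which has the same degree and error $\delta/\sqrt\kappa$ on $[1,\kappa]$.

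For the core construction, write $y=1-u$ with $u\in[0,1-1/\kappa]$ and use the generalized binomial series $y^{-1/2}=(1-u)^{-1/2}=\sum_{s\ge0}c_s\,u^s$ with $c_s:=\binom{2s}{s}4^{-s}$, whose coefficients are nonnegative and bounded by $1$. Truncating at degree $m$ leaves a tail $\sum_{s>m}c_s u^s\le\sum_{s>m}u^s=u^{m+1}/(1-u)\le\kappa\,(1-1/\kappa)^{m+1}\le\kappa\,e^{-(m+1)/\kappa}$, where I used $1-u=y\ge1/\kappa$; choosing $m+1=\lceil\kappa\ln(2\kappa/\delta)\rceil$ makes this at most $\delta/2$. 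Then I would replace each $u^s$ ($s\ge1$; the $s=0$ term is constant) by the polynomial $p_{s,\delta'}$ from Proposition~\ref{prop:approx_theory_1} with $\delta':=\delta/(2(m+1))$, and set $q(y):=\sum_{s=0}^{m}c_s\,p_{s,\delta'}(1-y)$. Since $[0,1-1/\kappa]\subset[-1,1]$, each $|p_{s,\delta'}(u)-u^s|\le\delta'$, so the substitution error is at most $\sum_{s=0}^{m}c_s\,\delta'\le(m+1)\delta'=\delta/2$, and combining with the truncation bound gives $|q(y)-y^{-1/2}|\le\delta$ on $[1/\kappa,1]$.

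It remains to bound the degree: $\deg q=\max_{0\le s\le m}\lceil\sqrt{2s\ln(2/\delta')}\rceil\le\lceil\sqrt{2m\ln(4(m+1)/\delta)}\rceil$. With $m=O(\kappa\log(\kappa/\delta))$ one has $\ln(4(m+1)/\delta)=O(\log(\kappa/\delta))$, so the degree is $O(\sqrt{\kappa\log(\kappa/\delta)}\cdot\sqrt{\log(\kappa/\delta)})=O(\sqrt\kappa\,\log(\kappa/\delta))$, as claimed. I expect the only mildly delicate points to be (i) recording that $\binom{2s}{s}4^{-s}\le1$ so that the per-monomial errors simply add, and (ii) checking that the extra logarithmic factor incurred by taking $\delta'=\delta/(2(m+1))$ is absorbed into $\log(\kappa/\delta)$ after the square root — i.e., that accelerating the degree-$O(\kappa\log(\kappa/\delta))$ truncation down to degree $O(\sqrt\kappa\log(\kappa/\delta))$ genuinely does not lose a logarithm. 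Neither is a real obstacle; the argument is bookkeeping around Proposition~\ref{prop:approx_theory_1}.
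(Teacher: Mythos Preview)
Your proposal is correct and follows essentially the same route as the paper: expand $y^{-1/2}=(1-u)^{-1/2}$ as a binomial/Taylor series, truncate at degree $O(\kappa\log(\kappa/\delta))$, replace each monomial by the low-degree approximant from Proposition~\ref{prop:approx_theory_1}, and rescale by $1/\sqrt\kappa$. The only cosmetic difference is that the paper takes per-monomial tolerance $\delta/(4t^2)$ rather than your uniform $\delta/(2(m+1))$, which is immaterial for the degree bound.
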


\begin{proof}
    First, consider the function $(1+x)^{-1/2}$. For $|x| \le 1-\frac{1}{\kappa} < 1$, we can use the Taylor series to write
    \[(1+x)^{-1/2} = 1 + \sum_{t=1}^\infty \frac{(\frac{1}{2}-1)\,(\frac{1}{2}-2)\,(\frac{1}{2}-3) \dotsm (\frac{1}{2}-t)}{t!} \,x^t = 1 + \sum_{i=1}^{\infty} c_t x^t\,,\]
    where $|c_t| \le 1$ for all $t \ge 1$.
    
    Note that for $|x| \le 1-\frac{1}{\kappa}$, $\left|\sum_{t > T} c_t x^t\right| \le \sum_{t > T} |x|^t \le \frac{|x|^T}{1-\abs x}$. For $T = O(\kappa \log \frac{\kappa}{\delta}),$ we can bound this by $\frac{(1-1/\kappa)^T}{1/\kappa} \le \frac{\delta}{2}$. Therefore, for all such $x$,
\[ \Bigl\lvert (1+x)^{-1/2} - \sum_{t=0}^{T} c_t x^t\Bigr\rvert \le \frac{\delta}{2}\,,\]
    where we have set $c_0  \deq 1$.
    
    Next, using Proposition \ref{prop:approx_theory_1}, we can replace each $x^t$ with $p_{t, \delta}(x)$ where $p_{t,\delta}$ is a polynomial of degree $O(\sqrt{t \log (t/\delta)})$ such that $|p_{t, \delta}(x) - x^t| \le \delta/(4t^2)$ for all $|x| \le 1$. (We also let $p_{0, \delta}$ simply be the constant function $1$.) Therefore, 
\[\Bigl\lvert (1+x)^{-1/2} - \sum_{t=0}^{T} c_t p_{t, \delta}(x)\Bigr\rvert \le \frac{\delta}{2} + \sum_{t = 1}^{T} |c_t| \, \frac{\delta}{4t^2} \le \delta\,.\]
    In addition, the polynomial $\hat p \deq \sum_{t=0}^{T} c_t p_{t, \delta}$ has degree at most $O(\sqrt{T \log (T/\delta)}) = O(\sqrt{\kappa} \log \frac{\kappa}{\delta})$.
    
    To finish, $|\hat{p}(x-1)-x^{-1/2}| \le \frac{\delta}{\kappa}$ for all $\frac{1}{\kappa} \le x \le 1$, which means that
    \begin{align*}
        \Bigl\lvert\hat{p}\bigl(\frac{x}{\kappa}-1\bigr) \, \frac{1}{\sqrt{\kappa}} - x^{-1/2}\Bigr\rvert \le \frac{\delta}{\sqrt\kappa} \qquad\text{for all}~1\le x \le \kappa\,.
    \end{align*}
    So, there exists a polynomial $q_{\kappa, \delta}$ with $q_{k,\delta}(x) = \hat{p}(\frac{x}{\kappa}-1) \, \frac{1}{\sqrt{\kappa}}$, such that $q_{\kappa, \delta}$ has degree $O(\sqrt{\kappa} \log \frac{\kappa}{\delta})$ and $|q_{\kappa, \delta}(x) - x^{-1/2}| \le \delta/\sqrt\kappa$ for all $1 \le x \le \kappa$.
\end{proof}

We are now ready to prove our query complexity upper bound.

\begin{theorem}[optimal algorithm for sampling from Gaussians]\label{thm:gaussian_upper}
    Let $\Lambda = \Sigma^{-1}$ be an unknown positive definite matrix with all eigenvalues between $1$ and $\kappa$. Then, using $O(\min(\sqrt{\kappa} \log \frac{d}{\eps}, d))$ adaptive matrix-vector queries to $\Lambda$, we can produce a sample from a distribution $\hat\pi$ such that $\KL(\hat\pi \mmid \mathcal{N}(0, \Sigma)) \le \varepsilon^2$.
\end{theorem}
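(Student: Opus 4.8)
The plan is to produce the sample by applying a polynomial in $\Lambda$ to a standard Gaussian vector, using the near-optimal polynomial approximation of $x\mapsto x^{-1/2}$ from Proposition~\ref{prop:approx_theory_2}. Concretely, fix a parameter $\delta\in(0,1/2)$ to be chosen, let $q\deq q_{\kappa,\delta}$ be the polynomial from Proposition~\ref{prop:approx_theory_2} (so $\deg q = O(\sqrt\kappa\log(\kappa/\delta))$ and $|q(x)-x^{-1/2}|\le \delta/\sqrt\kappa$ on $[1,\kappa]$), draw $g\sim\mathcal N(0,I_d)$, and output $X\deq q(\Lambda)\,g$. Since $q$ has degree $\deg q$, we can compute $X=\sum_{t=0}^{\deg q} c_t\,\Lambda^t g$ by forming the sequence $g,\Lambda g,\Lambda^2 g,\dots,\Lambda^{\deg q}g$, which costs exactly $\deg q$ adaptive matrix-vector queries (query $\Lambda^t g$ to obtain $\Lambda^{t+1}g$).

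Next I would identify the law $\widehat\pi$ of $X$ and bound its KL divergence to $\mathcal N(0,\Sigma)$. Because $\Lambda$ is symmetric, $q(\Lambda)$ is symmetric, and for $\delta<1$ it is positive definite (its eigenvalues lie in $[(1-\delta)/\sqrt\kappa,\ (1+\delta)/\sqrt\kappa]$ over the relevant spectral range); hence $\widehat\pi=\mathcal N(0,q(\Lambda)^2)$. Using the closed form $\KL(\mathcal N(0,\Sigma_1)\,\|\,\mathcal N(0,\Sigma_2))=\tfrac12\bigl(\tr(\Sigma_2^{-1}\Sigma_1)-d+\log\det(\Sigma_2\Sigma_1^{-1})\bigr)$ with $\Sigma_1=q(\Lambda)^2$ and $\Sigma_2=\Lambda^{-1}$, and diagonalizing in the eigenbasis of $\Lambda$, one gets $\KL(\widehat\pi\mmid\mathcal N(0,\Sigma))=\tfrac12\sum_{i=1}^d\bigl(t_i-1-\log t_i\bigr)$, where $t_i\deq\lambda_i\,q(\lambda_i)^2$ and $\lambda_1,\dots,\lambda_d\in[1,\kappa]$ are the eigenvalues of $\Lambda$. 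The approximation guarantee gives $|\sqrt{\lambda_i}\,q(\lambda_i)-1|=\sqrt{\lambda_i}\,|q(\lambda_i)-\lambda_i^{-1/2}|\le\sqrt\kappa\cdot\delta/\sqrt\kappa=\delta$, so $t_i=(\sqrt{\lambda_i}\,q(\lambda_i))^2\in[(1-\delta)^2,(1+\delta)^2]\subset[1/4,9/4]$ and $|t_i-1|\le 3\delta$. Since $h(t)\deq t-1-\log t$ satisfies $h(1)=h'(1)=0$ and $h''(t)=t^{-2}\le 16$ on $[1/4,9/4]$, we get $h(t_i)=O(\delta^2)$, hence $\KL(\widehat\pi\mmid\mathcal N(0,\Sigma))=O(d\delta^2)$. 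Choosing $\delta\asymp \eps/\sqrt d$ (and $\delta\le 1/2$, which we may assume by capping) makes this at most $\eps^2$, and the query count becomes $\deg q=O\bigl(\sqrt\kappa\log(\kappa\sqrt d/\eps)\bigr)$.

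It remains to reconcile the query count with the stated bound $O(\min(\sqrt\kappa\log(d/\eps),\,d))$. If $\sqrt\kappa\log(d/\eps)\ge d$, I would instead ignore the polynomial approach entirely: query $\Lambda e_1,\dots,\Lambda e_d$ with $d$ matrix-vector queries to recover $\Lambda$ exactly, then sample exactly from $\mathcal N(0,\Lambda^{-1})$ with no further queries (zero KL). Otherwise $\sqrt\kappa\log(d/\eps)<d$ forces $\kappa<d^2$, so $\log\kappa=O(\log d)$ and $\log(\kappa\sqrt d/\eps)=O(\log(d/\eps))$, whence the polynomial algorithm uses $O(\sqrt\kappa\log(d/\eps))$ queries as claimed. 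I expect the only nonroutine point to be the KL computation — specifically, verifying that one may reduce to a coordinatewise estimate in the eigenbasis and that $t-1-\log t=O(\delta^2)$ holds uniformly (which needs the $t_i$ to be bounded away from $0$, guaranteed by $t_i\ge(1-\delta)^2$) — together with the bookkeeping that converts the $\log(\kappa/\delta)$ factor in $\deg q$ into the advertised $\log(d/\eps)$ via the case split.
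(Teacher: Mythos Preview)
Your proposal is correct and follows essentially the same approach as the paper: output $q_{\kappa,\delta}(\Lambda)$ applied to a standard Gaussian, bound the KL via the eigenvalue-wise error $|\lambda_i q(\lambda_i)^2 - 1|$, set $\delta\asymp\eps/\sqrt d$, and handle the $\min$ by learning $\Lambda$ exactly with $d$ queries in the other regime. Your KL computation via $h(t)=t-1-\log t$ is a slightly more explicit version of the paper's chain of inequalities, and your case split to absorb $\log\kappa$ into $\log(d/\eps)$ makes explicit what the paper asserts in one line.
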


\begin{proof}
    Choose $X \sim \mathcal{N}(0, I_d)$, define $R = O(\sqrt{\kappa} \log \frac{\kappa}{\delta})$ be the degree of $q_{\kappa, \delta}$, and for simplicity write $q(x) \deq q_{\kappa, \delta}(x) \deq \sum_{i=0}^R a_i x^i$.
    The algorithm works as follows. Using the power method, we compute $X, \Lambda X, \Lambda^2 X, \dotsc, \Lambda^R X$. We output $Y = \sum_{i=0}^R a_i \,\Lambda^i X$.
    Note that $Y \sim \mc N(0, \hat\Sigma)$, where we set $\hat\Sigma \deq (\sum_{i=0}^R a_i \Lambda^i)^2$.
    If $\lambda_1,\dotsc,\lambda_d$ denote the eigenvalues of $\Lambda$, then the eigenvalues of $\hat\Sigma$ are ${q(\lambda_1)}^2,\dotsc, {q(\lambda_d)}^2$.
    The KL divergence is given by
    \begin{align*}
        \KL\bigl(\mc N(0,\hat\Sigma) \bigm\Vert \mc N(0,\Sigma)\bigr)
        &\lesssim \sum_{k=1}^d {\abs{q(\lambda_k)^2 \,\lambda_k - 1}^2}
        \lesssim \sum_{k=1}^d {\abs{q(\lambda_k)\,\lambda_k^{1/2} - 1}^2}
        \lesssim \sum_{k=1}^d \lambda_k\, {\abs{q(\lambda_k) - \lambda_k^{-1/2}}^2} \\
        &\lesssim d\kappa \, \frac{\delta^2}{\kappa}\,.
    \end{align*}
    If we set $\delta \asymp \varepsilon/\sqrt d$, then we obtain a KL divergence of at most $\varepsilon^2$.
    
    Finally, we can also learn $\Lambda$ by querying $\Lambda e_i$ for each unit basis vector $e_1, \dots, e_d$. So, we can thus learn $\Sigma$, and then generate a perfect random sample from $\mathcal{N}(0, \Sigma)$. Hence, the query complexity of generating a sample from $\mathcal{N}(0, \Sigma)$ is at most $O(\min(\sqrt{\kappa} \log \frac{\kappa d}{\eps}, d)) = O(\min(\sqrt{\kappa} \log \frac{d}{\eps}, d))$.
\end{proof}

\begin{remark}
    If $\pi$ is an $\alpha$-strongly log-concave distribution, then from Pinsker's inequality and Talagrand's transport inequality,
    \begin{align*}
        \max\{\norm{\mu-\pi}_{\rm TV}^2,\; \alpha \, W_2^2(\mu,\pi)\}
        &\lesssim \KL(\mu \mmid \pi)\,.
    \end{align*}
    Hence, this algorithmic result for Gaussians complements the two lower bounds in Corollary~\ref{cor:wishart_lower_bd}.
\end{remark}

\end{document}